\newcommand{\bl}{\mathbf{l}}
\newcommand{\ba}{\mathbf{a}}
\newcommand{\bb}{\mathbf{b}}
\newcommand{\bu}{\mathbf{u}}
\newcommand{\bv}{\mathbf{v}}
\newcommand{\by}{\mathbf{y}}
\newcommand{\bt}{\mathbf{t}}
\newcommand{\bz}{\mathbf{z}}
\newcommand{\bx}{\mathbf{x}}
\newcommand{\bw}{\mathbf{w}}
\newcommand{\bbm}{\mathbf{m}}
\newcommand{\bn}{\mathbf{n}}
\newcommand{\br}{\mathbf{r}}
\newcommand{\bnull}{\mathbf{0}}
\newcommand{\bTheta}{\mathbf{\Theta}}
\newcommand{\bPhi}{\mathbf{\Phi}}
\newcommand{\bR}{\mathbf{R}}
\newcommand{\bA}{\mathbf{A}}
\newcommand{\bE}{\mathbf{E}}
\newcommand{\bB}{\mathbf{B}}
\newcommand{\bT}{\mathbf{T}}
\newcommand{\bM}{\mathbf{M}}
\newcommand{\bH}{\mathbf{H}}
\newcommand{\bD}{\mathbf{D}}
\newcommand{\bG}{\mathbf{G}}
\newcommand{\bOmega}{\mathbf{\Omega}}
\newcommand{\bGamma}{\mathbf{\Gamma}}
\newcommand{\bQ}{\mathbf{Q}}
\newcommand{\bC}{\mathbf{C}}
\newcommand{\bP}{\mathbf{P}}
\newcommand{\bI}{\mathbf{I}}
\newcommand{\bU}{\mathbf{U}}
\newcommand{\bV}{\mathbf{V}}
\newcommand{\keywords}[1]{{\textit{Keywords---}} #1}
\newtheorem{lemma}{Lemma}[section]
\newtheorem{proposition}[lemma]{Proposition}
\newtheorem{corollary}[lemma]{Corollary}
\newtheorem{theorem}[lemma]{Theorem}
\newtheorem{remark}[lemma]{Remark}
\newtheorem{definition}[lemma]{Definition}
\begin{document}

	\title{Randomized linear algebra for model reduction. \\
		Part I: Galerkin methods and error estimation.
	}
	\author{ 
		Oleg Balabanov\footnotemark[1]~\footnotemark[2] ~and Anthony Nouy\footnotemark[1]~\footnotemark[3]
	}
	
	\renewcommand{\thefootnote}{\fnsymbol{footnote}}
	\footnotetext[1]{Centrale Nantes, LMJL, UMR CNRS 6629, France.}
	\footnotetext[2]{Polytechnic University of Catalonia, LaC\`an, Spain.}
	\footnotetext[3]{Corresponding author (anthony.nouy@ec-nantes.fr).}
	\renewcommand{\thefootnote}{\arabic{footnote}}
	\date{}	
	\maketitle

\begin{abstract}
We propose a probabilistic way for reducing the cost of classical projection-based model order reduction methods for parameter-dependent linear equations. 
A reduced order model is here approximated from its random sketch, which is a set of low-dimensional random projections of the reduced approximation space and the spaces of associated residuals.
This approach exploits the fact that the residuals associated with approximations in low-dimensional spaces are also contained in low-dimensional spaces. We provide conditions on the dimension of the random sketch for the resulting reduced order model to be quasi-optimal with high probability. Our approach can be used for reducing both complexity and memory requirements. The provided algorithms are well suited for any modern computational environment. Major operations, except solving linear systems of equations, are embarrassingly parallel. Our version of proper orthogonal decomposition  can be computed on multiple workstations with a communication cost independent of the dimension of the full order model. The reduced order model can even be constructed in a so-called streaming environment, i.e., under extreme memory constraints. In addition, we provide an efficient way for estimating the error of the reduced order model, which is not only more efficient than the classical approach but is also less sensitive to round-off errors. Finally, the methodology is validated on benchmark problems. 

\keywords{model reduction, reduced basis, proper orthogonal decomposition, random sketching, subspace embedding}
%\subclass{15B52 \and 35B30 \and 65F99 \and 65N15}
\end{abstract}

\section{Introduction}
Projection-based model order reduction (MOR) methods, including {the} reduced basis (RB) {method} or proper orthogonal decomposition (POD), are popular approaches for {approximating} large-scale parameter-dependent equations (see the recent surveys and monographs \cite{benner2015survey,quarteroni2015reduced,HesthavenRozzaStamm2015,morbook2017}). {They can be considered in the contexts of optimization, uncertainty quantification, inverse problems, real-time simulations, etc.}  An essential feature of MOR methods is offline/online splitting of the computations. 
The construction of the reduced order (or surrogate) model, which is usually the most computationally demanding task, is performed during the offline stage. This stage consists of (i) the generation of a reduced approximation space with a greedy algorithm for RB method or a principal component analysis of a set of samples of the solution for POD and (ii) the efficient representation of the reduced system of equations, usually obtained {through} (Petrov-)Galerkin projection, and of all the quantities needed for evaluating output quantities of interest and error estimators. {In the online stage, the reduced order model is evaluated for each value of the parameter and provides prediction of the output quantity of interest with a small computational cost, which is independent of the dimension of the initial system of equations.}

In this paper, we address the reduction of computational costs for both offline and online stages of projection-based model order reduction methods by adapting random sketching methods \cite{achlioptas2003database,sarlos2006improved} to the context of RB and POD. These methods were proven capable of significant complexity reduction for basic problems in numerical linear algebra such as computing {products} or factorizations of matrices~\cite{halko2011finding,woodruff2014sketching}. {We show how a reduced order model can be approximated from a small set, called a sketch, of efficiently computable random projections of the reduced basis vectors and the vectors involved in the affine expansion{\footnote{{A parameter-dependent quantity $\bv(\mu)$ with values in vector space $V$ over a field $\mathbb{K}$ is said to admit an affine representation (or be parameter-separable) if $\bv(\mu) = \sum^d_{i=1} \bv_i \lambda_i(\mu)$ with $\lambda_i(\mu) \in \mathbb{K}$ and $\bv_i \in V$. Note that for $V$ of finite dimension, $\bv(\mu)$ always admits an affine representation with a finite number of terms.}}}
of the residual, which is assumed to contain a small number of terms.} Standard algebraic operations are performed on the sketch, which avoids heavy operations on large-scale matrices and vectors. {Sufficient conditions on the dimension of the sketch for quasi-optimality of approximation of the reduced order model can be obtained by exploiting the fact that the residuals associated with reduced approximation spaces are contained in low-dimensional spaces.} 
Clearly, the randomization inevitably implies a probability of failure. This probability, however, is a user-specified parameter that can be chosen extremely small without affecting {considerably} the computational costs. 
{Even though this paper is concerned only with linear equations, similar considerations should also apply to a wide range of nonlinear problems.}

{Note that deterministic techniques have also been proposed for adapting POD methods to modern (e.g., multi-core or limited-memory) computational architectures~\cite{oxberry2017limited,himpe2016hierarchical,braconnier2011towards}.
{Compared to the aforementioned deterministic approaches, our randomized version of POD (see~Section\nobreakspace \ref {sk_pod}) has the advantage of not requiring the computation of the full reduced basis vectors, but only of their small random sketches.} In fact, maintaining and operating with large vectors can be completely avoided. This remarkable feature makes our algorithms particularly well suited for distributed computing and streaming context{s}.}

{Randomized linear algebra has} been employed for reducing the computational cost of MOR in~\cite{hochman2014reduced,alla2016randomized}, where the authors considered random sketching only as a tool for efficient evaluation of low-rank approximations of large matrices (using randomized versions of SVDs). They, however, did not adapt the MOR methodology itself and therefore did not fully exploit randomization techniques. In~\cite{buhr2017randomized} a probabilistic range finder based on random sketching has been used for combining {the} RB method with domain decomposition. Random sketching was also used for building parameter-dependent preconditioners for projection-based MOR in~\cite{zahm2016interpolation}.

The rest of the paper is organized as follows. Section\nobreakspace \ref {Contributions} presents the main contributions and discusses the benefits of the proposed methodology. In Section\nobreakspace \ref {MOR} we introduce the problem of interest and  present the ingredients of standard projection-based model order reduction methods.  In Section\nobreakspace \ref {RS}, we extend  the classical sketching technique in Euclidean spaces to a more general framework. In Section\nobreakspace \ref {l2embeddingsMOR}, we introduce the concept of a \emph{sketch of a model} and 
{propose} new and efficient randomized versions of Galerkin projection, residual based error estimation, and primal-dual correction. In  Section\nobreakspace \ref {efficient_RB}, we present and discuss {the} randomized greedy algorithm and POD for the efficient generation of reduced approximation spaces.  In~Section\nobreakspace \ref {Numerical}, the methodology is validated on two benchmarks. Finally, in~Section\nobreakspace \ref {Conclusions}, we provide conclusions and perspectives. 

{Proofs of propositions and theorems are provided in the Appendix.}

\subsection{Main contributions} \label{Contributions}

Our methodology can be used for the efficient  construction of a reduced order model. In classical projection-based methods, the  cost of evaluating samples (or snapshots) of the solution for a training set of {parameter} values can be much smaller than the cost of  other computations. This is the case when the samples are computed {using a sophisticated method for solving linear systems of equations requiring log-linear complexity,} or beyond the main routine, e.g., using a highly optimised commercial {solvers} or a server with limited budget, and possibly obtained using multiple workstations. 
This is also the case when, due to memory constraints, the computational time of algorithms for constructing the reduced order model are greatly affected by the number of passes taken over the data. In all these cases the cost of the offline stage is dominated by the post-processing of samples but not their computation. We here assume that the cost of solving high-dimensional systems is irreducible and focus on the reduction of other computational costs. The metric for efficiency depends on the computational environment and how data is presented to us. Our algorithms can be beneficial in basically all computational environments. \\

\subsubsection*{Complexity reduction}
Consider a parameter-dependent linear system of equations $\bA(\mu) \bu(\mu) = \bb(\mu)$ of dimension $n$ and assume that the parameter-dependent matrix $\bA(\mu)$ and vector $\bb(\mu)$ are parameter-separable with $m_A$ and $m_b$ terms, respectively (see Section\nobreakspace \ref {MOR} for more details). Let $r \ll n$ be the dimension of the reduced approximation space. Given a basis of this space, the classical construction of a reduced order model requires the evaluation of inner products between high-dimensional vectors. More precisely, it consists in multiplying each of the $r m_A+ m_b$ vectors in the affine expansion of the residual by $r$ vectors for constructing the reduced systems and by $r m_A + m_b$ other vectors for estimating the error. These two operations result in $\mathcal{O}(n r^2 m_A +  n r m_b)$ and $\mathcal{O}(n r^2 m^2_A + n m^2_b)$ flops respectively. It can be argued that the aforementioned complexities can dominate the {total complexity} of the offline stage {(see Section\nobreakspace \ref {sketch}).}
With the methodology presented in this work the complexities can be reduced to $\mathcal{O}(n r m_A \log{k} +  n m_b \log{k})$, where $r \leq k \ll n$. 

Let $m$ be the number of samples in the training set. The {computation} of the POD basis using a direct eigenvalue solver requires multiplication of two $n \times m$ matrices, i.e.,  $\mathcal{O}(n m \min(n, m))$ flops, while using a Krylov solver it requires multiplications of a $n \times m$ matrix by $\mathcal{O}(r)$ adaptively chosen vectors, i.e., $\mathcal{O}(n m r)$ flops. In the prior work \cite{alla2016randomized} on randomized algorithms for MOR, the authors proposed to use a randomized version of SVD introduced in~\cite{halko2011finding} for the  computation of the POD basis. More precisely, the SVD can be performed by applying Algorithms 4.5 and 5.1 in~\cite{halko2011finding} with complexities $\mathcal{O}(n m \log{k} + n k^2)$ and $\mathcal{O}(n m k)$, respectively. However, the authors in \cite{alla2016randomized} did not take any further advantage of random sketching methods, besides the SVD, and did not provide any theoretical analysis. In addition, they considered the Euclidean norm for the basis construction, which can be far from optimal. Here we reformulate the classical POD and obtain an algebraic form {(see Proposition\nobreakspace \ref {thm:approx_pod})} well suited for the application of efficient low-rank approximation algorithms, e.g., randomized or {incremental} SVDs~\cite{baker2012low}. We consider a general inner product associated with a self-adjoint positive  definite  matrix. More importantly, we provide a new version of POD (see Section\nobreakspace \ref {sk_pod}) which does not require  evaluation of high-dimensional basis vectors. In this way, the complexity of POD can be reduced to only $\mathcal{O}(n m \log{k}$).\\

\subsubsection*{Restricted memory and streaming environments}

Consider an environment where the memory consumption is the primary constraint. {The} classical offline stage involves evaluations of inner products of high-dimensional vectors. 
These operations require many passes over large data sets, e.g., a set of samples of the solution or the reduced basis, and can result in a computational burden. We show how to build the reduced order model with only one pass over the data. In extreme cases our algorithms may be employed in a streaming environment, where samples of the solution are provided as data-streams and storage of only a few large vectors is allowed. Moreover, with our methodology one can build a reduced order model without storing any high-dimensional vector.\\

\subsubsection*{Distributed computing}

The computations involved in our version of POD can be efficiently distributed among multiple workstations. Each sample of the solution can be evaluated and processed on a different machine with absolutely no communication. Thereafter, small sketches of the samples can be sent to the master workstation for building the reduced order model. The total amount of communication required by our algorithm is proportional to $k$ (the dimension of the sketch) and is independent of the dimension of the initial full order model.\\

\subsubsection*{Parallel computing}

Recently, parallelization was {considered} as a workaround to address large-scale computations~\cite{knezevic2011high}. The authors did not propose a new methodology but rather exploited the key opportunities for parallelization in a standard approach. We, on the other hand, propose a new methodology which can be better suited for parallelization than the classical one. The computations involved in our algorithms mainly consist in evaluating random matrix-vector products and solving high-dimensional systems of equations. The former operation is embarrassingly parallel (with a good choice of random matrices), while the latter one can be efficiently parallelized with state-of-the-art algorithms. \\

\subsubsection*{Online-efficient and robust error estimation} 
In addition, we provide a new way for estimating the error associated with a solution of the reduced order model, the error being defined as some norm of the residual. It does not require any assumption on the way to obtain the approximate solution and can be employed  separately from the rest  of the methodology. For example, it could be used for the efficient estimation of the error associated with a classical Galerkin projection. Our approach yields cost reduction for the offline stage but it is also online-efficient. Given the solution of the reduced order model, it requires only $\mathcal{O}(r m_A + m_b)$ flops for estimating the residual-based error while a classical procedure takes $\mathcal{O}(r^2 m^2_A +m^2_b)$ flops. Moreover, {compared to} the classical approach, our method is less sensitive to round-off errors.

\section{Projection-based model order reduction methods} \label{MOR}
In this section, we introduce the problem of interest and present the basic ingredients of classical MOR algorithms {in a form well suited for random sketching methods}. We consider a discrete setting, e.g, a problem arising after discretization of a parameter-dependent PDE {or integral equation}. We use notations that are standard in the context of variational methods for PDEs. However, for models simply described by algebraic equations, the notions of solution spaces, dual spaces, etc., can be disregarded.

Let $U := \mathbb{K}^{n}$ (with $\mathbb{K} = \mathbb{R}$ or  $\mathbb{C}$) denote the solution space equipped with inner product $\langle \cdot , \cdot  \rangle_U := \langle \bR_U \cdot,  \cdot \rangle$, where $\langle \cdot, \cdot \rangle$ is the canonical $\ell_2$-inner product on $\mathbb{K}^{n}$ and $\bR_U \in \mathbb{K}^{n \times n}$ is some self-adjoint (symmetric if $\mathbb{K} = \mathbb{R}$ and Hermitian if  $\mathbb{K} = \mathbb{C}$) positive definite matrix. The dual space of $U$ is identified with $U':= \mathbb{K}^{n}$, which is endowed with inner product $\langle \cdot , \cdot \rangle_{U'}:= \langle  \cdot, \bR_U^{-1}\cdot  \rangle $. For a matrix $\bM \in \mathbb{K}^{n\times n}$ we denote by $\bM^\mathrm{H}$ its adjoint (transpose if $\mathbb{K} = \mathbb{R}$ and Hermitian transpose if $\mathbb{K} = \mathbb{C}$).

%\begin{remark}
%The notions of $U$ and $U'$ (which both denote $\mathbb{K}^{n}$) are introduced to specify the origins of vectors and matrices involved in the discrete problem. In the framework of numerical methods for PDEs, if a vector represents an element from the solution space for the PDE, then it is said to belong to $U$. On the other hand, the vectors identifying elements from the dual space are considered to lie in $U'$. The inner product $\langle \cdot , \cdot  \rangle_U$ (or $\langle \cdot , \cdot  \rangle_{U'}$) between two vectors from $U$ (or $U'$) is equal to the inner product of the corresponding elements from the solution space for the PDE (or the dual space). Furthermore, the Euclidean inner product  between a vector from $U$ and a vector from $U'$ is equal to the canonical inner product between the quantities which these vectors represent.
%\end{remark}}
\begin{remark}
{The} matrix $\bR_U$ is seen as a map from $U $ to $U'$. In the framework of numerical methods for PDEs, the entries of $\bR_U$ can be obtained by evaluating inner products of corresponding basis functions. For example, if the PDE is defined on a space equipped with $H^1$ inner product, then $\bR_U$ is equal to the stiffness (discrete Laplacian) matrix. For algebraic parameter-dependent equations, $\bR_U$ can be taken as identity.
\end{remark}
Let $\mu$ denote parameters taking values in a set $\mathcal{P}$ {(which is typically a subset of  $\mathbb{K}^p$, but could also be a subset of function spaces, etc.)}.  Let parameter-dependent linear forms $\bb(\mu) \in U'$ and $\bl(\mu) \in U'$ represent the right-hand side and the extractor of a quantity of interest, respectively, and let $\bA (\mu): U \to U'$ represent the parameter-dependent operator. The problem of interest can be formulated as follows: for each given $\mu \in \mathcal{P}$ find the quantity of interest $s(\mu):= \langle \bl(\mu), \bu(\mu) \rangle$, where $\bu(\mu) \in U$ is such that
\begin{equation} \label{eq:initialproblem}
 \bA(\mu)\bu(\mu)= \bb(\mu).
\end{equation}

Further, we suppose that the solution manifold {$\{ \bu(\mu) : \mu \in \mathcal{P} \}$} can be well approximated by some low dimensional subspace of $U$. Let $U_r \subseteq U$ be such a subspace and $\bU_r \in \mathbb{K}^{n \times r}$ be a matrix whose column vectors form a basis for $U_r$.  The question of finding a good $U_r$ is addressed in Sections\nobreakspace \ref {Greedy} and\nobreakspace  \ref {POD}. In projection-based MOR methods, $\bu(\mu)$ is approximated by a projection $\bu_r(\mu) \in U_r$. 

\subsection{Galerkin projection}

Usually, a Galerkin projection $\bu_r(\mu)$ is obtained by imposing the following orthogonality condition to the residual~\cite{quarteroni2015reduced}:
\begin{equation} \label{eq:galproj} 
 \langle \br(\bu_r(\mu);\mu),\bw \rangle=0, ~\forall \bw \in U_r,  
\end{equation}
where $\br(\bx;\mu):= \bb(\mu)- \bA(\mu)\bx, ~\bx \in U$. 
This  condition can be expressed in a different form that will be particularly handy in further sections. For this we define the following semi-norm over $U'$:
\begin{equation} \label{eq:Urseminorm}
 \| \by \|_{U_r'} := \underset{ \bw \in U_r \backslash \{ \bnull \}} \max \frac{|\langle \by , \bw\rangle|}{\| \bw \|_U}, ~\by \in U'.
\end{equation}
Note that replacing $U_r$ by $U$ in definition~(\ref {eq:Urseminorm}) yields a norm consistent with the one induced by $\langle \cdot, \cdot \rangle_{U'}$. The relation~(\ref {eq:galproj}) can now be rewritten as
\begin{equation} \label{eq:galproj2}
 \| \br(\bu_r(\mu);\mu) \|_{U_r'}= 0. 
\end{equation}

Let us define the following parameter-dependent constants {characterizing quasi-optimality of Galerkin projection:}
\begin{subequations}
 \begin{align} 
  &\alpha_r(\mu):=  \underset{ \bx  \in U_r \backslash \{ \bnull \}} \min \frac{\| \bA(\mu) \bx  \|_{U_r'}}{\| \bx \|_U}, \label{eq:alphar} \\ 
  &\beta_r(\mu):=  \underset{ \bx  \in \left (\mathrm{span} \{ \bu(\mu) \}+ U_r \right ) \backslash \{ \bnull \}} \max \frac{ \| \bA(\mu) \bx \|_{U_r'}}{\| \bx \|_U}.\label{eq:betar}
 \end{align} 
\end{subequations}
It has to be mentioned that $\alpha_r(\mu)$ and $\beta_r(\mu)$ can be bounded by the coercivity constant $\theta(\mu)$ and the continuity constant (the maximal singular value) $\beta(\mu)$ of $\bA(\mu)$, respectively defined by
\begin{subequations} \label{eq:thetabeta}
	\begin{align} 
	\theta(\mu) &:=  \underset{ \bx  \in U \backslash \{ \bnull \}} \min \frac{\langle \bA(\mu) \bx, \bx \rangle}{\| \bx \|^2_U} \leq \alpha_r(\mu), \\
	\beta(\mu)  &:=  \underset{ \bx  \in U \backslash \{ \bnull \}} \max \frac{\| \bA(\mu) \bx  \|_{U'}}{\| \bx \|_U} \geq \beta_r(\mu). \label{eq:beta}
	\end{align} 
\end{subequations}
For some problems it is possible to provide lower and upper bounds for $\theta(\mu)$ and $\beta(\mu)$~\cite{haasdonk2017reduced}.

If $\alpha_r(\mu)$ is positive, then the reduced problem~(\ref {eq:galproj}) is well-posed. For given $V \subseteq U$, let $\bP_{V}:U \rightarrow V$ denote the orthogonal projection on $V$ with respect to $\| \cdot \|_{U}$, i.e.,
\begin{equation}
 \forall \bx \in U,~\bP_{V} \bx = \arg\min_{\bw \in V} \| \bx- \bw \|_{U}.
\end{equation}
We now provide {a quasi-optimality characterization} for the projection $\bu_r(\mu)$. 
\begin{proposition} [modified Cea's lemma] \label{thm:cea}
If $\alpha_r(\mu)>0$, then the solution $\bu_r(\mu)$ of~(\ref {eq:galproj}) is such that
\begin{equation} \label{eq:quasi-opt}
 \| \bu(\mu)- \bu_r(\mu) \|_{U} \leq (1+ \frac{\beta_r(\mu)}{\alpha_r(\mu)}) \| \bu(\mu)- \bP_{U_r} \bu(\mu) \|_{U}. 
\end{equation}
\begin{proof}
See appendix. 
\end{proof}
\end{proposition}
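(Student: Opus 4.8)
The plan is to follow the classical Cea's lemma argument, but adapted to the seminorm $\|\cdot\|_{U_r'}$ and the parameter-dependent constants $\alpha_r(\mu)$ and $\beta_r(\mu)$ defined in (\ref{eq:alphar})--(\ref{eq:betar}). Throughout, fix $\mu$ and abbreviate $\bu = \bu(\mu)$, $\bu_r = \bu_r(\mu)$, $\bP = \bP_{U_r}$, $\alpha_r = \alpha_r(\mu)$, $\beta_r = \beta_r(\mu)$, and $\bA = \bA(\mu)$. First I would observe that the triangle inequality gives
\begin{equation*}
\| \bu - \bu_r \|_U \leq \| \bu - \bP \bu \|_U + \| \bP \bu - \bu_r \|_U,
\end{equation*}
so it suffices to bound $\| \bP \bu - \bu_r \|_U$ by $\tfrac{\beta_r}{\alpha_r} \| \bu - \bP \bu \|_U$. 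Since $\bP \bu - \bu_r \in U_r$ and $\alpha_r > 0$, the definition of $\alpha_r$ yields $\| \bP \bu - \bu_r \|_U \leq \alpha_r^{-1} \| \bA (\bP \bu - \bu_r) \|_{U_r'}$, and in particular this shows $\bP\bu - \bu_r$ is the unique such element, so the reduced problem is well-posed.

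The next step is to rewrite $\bA(\bP\bu - \bu_r)$ using Galerkin orthogonality. Write $\bA(\bP \bu - \bu_r) = \bA(\bP \bu - \bu) + \bA(\bu - \bu_r) = \bA(\bP\bu - \bu) + \br(\bu_r;\mu) - \br(\bu;\mu)$, where I use $\br(\bx;\mu) = \bb(\mu) - \bA(\mu)\bx$. Now $\br(\bu;\mu) = \bnull$ because $\bu$ solves (\ref{eq:initialproblem}), and the Galerkin condition (\ref{eq:galproj}), equivalently (\ref{eq:galproj2}), says exactly that $\| \br(\bu_r;\mu) \|_{U_r'} = 0$. Since $\|\cdot\|_{U_r'}$ is a seminorm on $U'$ (it satisfies the triangle inequality, being a supremum of seminorms), we get
\begin{equation*}
\| \bA(\bP\bu - \bu_r) \|_{U_r'} \leq \| \bA(\bP\bu - \bu) \|_{U_r'} + \| \br(\bu_r;\mu) \|_{U_r'} = \| \bA(\bu - \bP\bu) \|_{U_r'}.
\end{equation*}
Here I would need to double-check that $\|\cdot\|_{U_r'}$ is genuinely subadditive — this is immediate from its definition as $\sup_{\bw} |\langle \cdot, \bw\rangle| / \|\bw\|_U$, since each $\bw$ gives a linear functional and the sup of subadditive functions is subadditive.

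Finally, $\bu - \bP\bu \in \mathrm{span}\{\bu(\mu)\} + U_r$ (indeed both $\bu$ and $\bP\bu$ lie in this space), so the definition (\ref{eq:betar}) of $\beta_r$ gives $\| \bA(\bu - \bP\bu) \|_{U_r'} \leq \beta_r \| \bu - \bP\bu \|_U$. Chaining the inequalities:
\begin{equation*}
\| \bP\bu - \bu_r \|_U \leq \frac{1}{\alpha_r} \| \bA(\bP\bu - \bu_r) \|_{U_r'} \leq \frac{1}{\alpha_r} \| \bA(\bu - \bP\bu) \|_{U_r'} \leq \frac{\beta_r}{\alpha_r} \| \bu - \bP\bu \|_U,
\end{equation*}
and combining with the triangle inequality from the first step yields (\ref{eq:quasi-opt}). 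I do not anticipate a serious obstacle here; the argument is a routine transcription of Cea's lemma. The only point requiring a little care is making sure the seminorm $\|\cdot\|_{U_r'}$ behaves well enough (triangle inequality, and the fact that a zero seminorm of $\br(\bu_r;\mu)$ is all one needs rather than $\br(\bu_r;\mu) = \bnull$), and that the element $\bu - \bP\bu$ indeed belongs to the enlarged space $\mathrm{span}\{\bu(\mu)\} + U_r$ appearing in the definition of $\beta_r$ rather than just to $U$ — which is why $\beta_r$ is defined on that smaller space and not via $\beta(\mu)$.
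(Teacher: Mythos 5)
Your proof is correct and follows essentially the same route as the paper's: bound $\|\bP_{U_r}\bu(\mu)-\bu_r(\mu)\|_U$ via $\alpha_r(\mu)$, use the Galerkin condition $\|\br(\bu_r(\mu);\mu)\|_{U_r'}=0$ together with the triangle inequality for the seminorm, bound the remaining term by $\beta_r(\mu)\|\bu(\mu)-\bP_{U_r}\bu(\mu)\|_U$, and conclude with the triangle inequality in $\|\cdot\|_U$. The only cosmetic difference is that the paper runs the argument for an arbitrary $\bx\in U_r$ and then specializes, whereas you work directly with $\bx=\bP_{U_r}\bu(\mu)$.
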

{Note that~Proposition\nobreakspace \ref{thm:cea} is a slightly modified version of the classical Cea's lemma with the continuity constant $\beta(\mu)$ replaced by $\beta_r(\mu)$.}

The coordinates of $\bu_r(\mu)$ in the basis $\bU_r$, i.e., $\ba_r(\mu) \in \mathbb{K}^{r}$ such that $\bu_r(\mu) = \bU_r \ba_r(\mu)$, can be found by solving the following system of equations 
\begin{equation} \label{eq:reduced_system}
 \bA_r(\mu) \ba_r(\mu) = \bb_r(\mu), 
\end{equation} 
where $\bA_r(\mu)= \bU_r^{\mathrm{H}}\bA(\mu)\bU_r \in \mathbb{K}^{r\times r}$ and $\bb_r(\mu)= \bU_r^{\mathrm{H}}\bb(\mu) \in \mathbb{K}^r$. The numerical stability of~(\ref {eq:reduced_system}) is usually obtained by orthogonalization of $\bU_r$.

\begin{proposition} \label{thm:clsstability}
If $\bU_r$ is orthogonal with respect to $\langle \cdot, \cdot \rangle_U$, then the condition number of $\bA_r(\mu)$ is bounded by $\frac{\beta_r(\mu)}{\alpha_r(\mu)}$.
\begin{proof}
See appendix. 
\end{proof}
\end{proposition}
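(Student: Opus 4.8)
The plan is to pass to coordinates and exploit that, because $\bU_r$ is orthogonal with respect to $\langle\cdot,\cdot\rangle_U$ (i.e.\ $\bU_r^{\mathrm H}\bR_U\bU_r = \bI$), the map $\ba \mapsto \bU_r\ba$ is an isometry from $(\mathbb{K}^r,\|\cdot\|_2)$ onto $(U_r,\|\cdot\|_U)$. First I would take an arbitrary nonzero $\bx = \bU_r\ba \in U_r$ and note $\|\bx\|_U^2 = \ba^{\mathrm H}\bU_r^{\mathrm H}\bR_U\bU_r\ba = \|\ba\|_2^2$. Then, writing a generic $\bw\in U_r$ as $\bw = \bU_r\bc$, I would unwind the semi-norm~(\ref{eq:Urseminorm}): the pairing is $\langle\bA(\mu)\bU_r\ba,\bU_r\bc\rangle = \bc^{\mathrm H}\bA_r(\mu)\ba$ and $\|\bw\|_U = \|\bc\|_2$, so by the elementary identity $\max_{\bc\neq\bnull}|\bc^{\mathrm H}\bv|/\|\bc\|_2 = \|\bv\|_2$ one obtains $\|\bA(\mu)\bx\|_{U_r'} = \|\bA_r(\mu)\ba\|_2$.

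Putting the two computations together gives, for all nonzero $\bx = \bU_r\ba \in U_r$,
\[
\frac{\|\bA(\mu)\bx\|_{U_r'}}{\|\bx\|_U} \;=\; \frac{\|\bA_r(\mu)\ba\|_2}{\|\ba\|_2}.
\]
Minimizing over $\ba\in\mathbb{K}^r\setminus\{\bnull\}$ in~(\ref{eq:alphar}) identifies $\alpha_r(\mu)$ with the smallest singular value $\sigma_{\min}(\bA_r(\mu))$, while maximizing identifies $\max_{\bx\in U_r\setminus\{\bnull\}}\|\bA(\mu)\bx\|_{U_r'}/\|\bx\|_U$ with $\sigma_{\max}(\bA_r(\mu)) = \|\bA_r(\mu)\|_2$. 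Since $U_r \subseteq \mathrm{span}\{\bu(\mu)\} + U_r$, the latter maximum is at most $\beta_r(\mu)$ from~(\ref{eq:betar}), hence $\sigma_{\max}(\bA_r(\mu)) \le \beta_r(\mu)$. As the (spectral) condition number is $\kappa(\bA_r(\mu)) = \sigma_{\max}(\bA_r(\mu))/\sigma_{\min}(\bA_r(\mu))$, dividing the two bounds yields $\kappa(\bA_r(\mu)) \le \beta_r(\mu)/\alpha_r(\mu)$; when $\alpha_r(\mu)=0$ both sides are $+\infty$ and there is nothing to prove, so one may as well assume $\alpha_r(\mu)>0$, which is precisely the condition under which $\bA_r(\mu)$ is invertible.

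There is no genuinely hard step. The two things to be careful about are: (i) correctly converting the dual semi-norm~(\ref{eq:Urseminorm}) on $U_r'$ into the Euclidean operator action of $\bA_r(\mu)$ — this is the place where orthonormality of $\bU_r$ is used twice, once for the denominator $\|\bx\|_U$ and once inside the supremum for $\|\bw\|_U$; and (ii) observing that $\beta_r(\mu)$ is defined over the enlarged space $\mathrm{span}\{\bu(\mu)\}+U_r$, so only the inequality $\sigma_{\max}(\bA_r(\mu))\le\beta_r(\mu)$ is available (and this inequality is all that is needed).
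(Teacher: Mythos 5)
Your proof is correct and follows essentially the same route as the paper's: both pass to coordinates via the $\langle\cdot,\cdot\rangle_U$-orthonormality of $\bU_r$ to identify $\|\bA_r(\mu)\ba\|_2/\|\ba\|_2$ with $\|\bA(\mu)\bx\|_{U_r'}/\|\bx\|_U$, and then read off the singular-value bounds from the definitions of $\alpha_r(\mu)$ and $\beta_r(\mu)$. Your added remark that $\beta_r(\mu)$ is taken over the enlarged space $\mathrm{span}\{\bu(\mu)\}+U_r$, so only an upper bound on $\sigma_{\max}(\bA_r(\mu))$ is obtained, is a correct and slightly more careful reading of a point the paper leaves implicit.
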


\subsection{Error estimation}
When an approximation $\bu_r(\mu)\in U_r$ of the exact solution $\bu(\mu)$ has been evaluated, it is important to be able to certify how close they are. 
The error $\| \bu(\mu)-  \bu_r(\mu)\|_{U}$ can be bounded by the following error indicator 
\begin{equation} \label{eq:errorind} 
 \Delta(\bu_r(\mu); \mu):= \frac{\| \br(\bu_r(\mu); \mu) \|_{U'}}{\eta(\mu)},
\end{equation}
where $\eta(\mu)$ is such that 
\begin{equation} \label{eq:eta}
 \eta(\mu)\leq \underset{ \bx  \in U \backslash \{ \bnull \}} \min \frac{\| \bA(\mu) \bx  \|_{U'}}{\| \bx \|_U}.
\end{equation}
In its turn, the certification of the output quantity of interest $s_r(\mu):= \langle \bl(\mu), \bu_r(\mu) \rangle$ is provided by
\begin{equation} \label{eq:scert}
 |s(\mu)-  s_r(\mu)| \leq  \| \bl(\mu) \|_{U'} \| \bu(\mu)-  \bu_r(\mu) \|_{U} \leq \| \bl(\mu) \|_{U'} \Delta(\bu_r(\mu); \mu).
\end{equation}

\subsection{Primal-dual correction}
The accuracy of the output quantity obtained by the aforementioned methodology can be improved by goal-oriented correction~\cite{rozza2008reduced} explained below. A dual problem can be formulated as follows: for each $\mu \in \mathcal{P}$, find $\bu^\mathrm{du}(\mu) \in U$ such that
\begin{equation} \label{eq:dualproblem}
 \bA(\mu)^{\mathrm{H}} \bu^\mathrm{du}(\mu) = -\bl(\mu).
\end{equation}
The dual problem can be tackled in the same manner as the primal problem. For this we can use a Galerkin projection onto a certain $r^{\mathrm{du}}$-dimensional subspace $U^{\mathrm{du}}_r \subseteq U$.

Now suppose that besides approximation $\bu_r(\mu)$ of $\bu(\mu)$, we also have obtained an approximation of $\bu^\mathrm{du}(\mu)$ denoted by $\bu_r^\mathrm{du}(\mu) \in U^{\mathrm{du}}_r$. The quantity of interest can be estimated by
\begin{equation} \label{eq:correction}
 {s_r^{\mathrm{pd}}(\mu)}:= s_r(\mu)- \langle \bu_r^\mathrm{du}(\mu), \br(\bu_r(\mu); \mu) \rangle.
\end{equation}

\begin{proposition}\label{thm:error_correction}
 The estimation $s_r^{\mathrm{pd}}(\mu)$ of $s(\mu)$ is such that 
 \begin{equation} \label{eq:error_correction}
  |s(\mu)- {s_r^{\mathrm{pd}}(\mu)}| \leq  \| \br^{\mathrm{du}}(\bu_r^\mathrm{du}(\mu); \mu) \|_{U'} \Delta(\bu_r(\mu); \mu),
 \end{equation}	
 where $\br^{\mathrm{du}}(\bu_r^\mathrm{du}(\mu); \mu):= -\bl(\mu) -\bA(\mu)^{\mathrm{H}}\bu_r^\mathrm{du}(\mu)$.
 \begin{proof}
 	See appendix.
 \end{proof}
\end{proposition}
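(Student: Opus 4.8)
The plan is to reduce the statement to a single pairing between the dual residual and the primal error by a direct algebraic manipulation, and then apply Cauchy--Schwarz together with the error indicator bound already implicit in~(\ref{eq:errorind})--(\ref{eq:eta}). No structural idea is needed beyond careful bookkeeping.

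First I would expand $s(\mu) - s_r^{\mathrm{pd}}(\mu)$ using the definitions of $s(\mu)$, $s_r(\mu)$, and $s_r^{\mathrm{pd}}(\mu)$, which gives
\begin{equation*}
 s(\mu) - s_r^{\mathrm{pd}}(\mu) = \langle \bl(\mu), \bu(\mu) - \bu_r(\mu) \rangle + \langle \bu_r^{\mathrm{du}}(\mu), \br(\bu_r(\mu);\mu) \rangle.
\end{equation*}
Since $\bb(\mu) = \bA(\mu)\bu(\mu)$ by~(\ref{eq:initialproblem}), the residual satisfies $\br(\bu_r(\mu);\mu) = \bA(\mu)(\bu(\mu) - \bu_r(\mu))$, so moving the operator onto the other factor of the canonical $\ell_2$ pairing (using $\langle \bx, \bM\by\rangle = \langle \bM^{\mathrm{H}}\bx, \by\rangle$) rewrites the second term as $\langle \bA(\mu)^{\mathrm{H}}\bu_r^{\mathrm{du}}(\mu), \bu(\mu) - \bu_r(\mu)\rangle$. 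Next I would substitute $\bA(\mu)^{\mathrm{H}}\bu_r^{\mathrm{du}}(\mu) = -\bl(\mu) - \br^{\mathrm{du}}(\bu_r^{\mathrm{du}}(\mu);\mu)$ from the definition of the dual residual; the $-\bl(\mu)$ contribution cancels the first term exactly, leaving the identity
\begin{equation*}
 s(\mu) - s_r^{\mathrm{pd}}(\mu) = -\langle \br^{\mathrm{du}}(\bu_r^{\mathrm{du}}(\mu);\mu), \bu(\mu) - \bu_r(\mu) \rangle.
\end{equation*}

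It then remains only to estimate this pairing. I would apply the Cauchy--Schwarz inequality for the duality pairing between $U'$ and $U$, namely $|\langle \by, \bx \rangle| \leq \| \by \|_{U'} \| \bx \|_{U}$ (obtained by writing $\langle \by, \bx\rangle = \langle \bR_U^{-1/2}\by, \bR_U^{1/2}\bx\rangle$ and using Cauchy--Schwarz for $\langle\cdot,\cdot\rangle$), to get $|s(\mu) - s_r^{\mathrm{pd}}(\mu)| \leq \| \br^{\mathrm{du}}(\bu_r^{\mathrm{du}}(\mu);\mu) \|_{U'} \, \| \bu(\mu) - \bu_r(\mu) \|_{U}$. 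Finally I would invoke the error indicator bound: from $\br(\bu_r(\mu);\mu) = \bA(\mu)(\bu(\mu) - \bu_r(\mu))$ and the defining property~(\ref{eq:eta}) of $\eta(\mu)$, one has $\| \bu(\mu) - \bu_r(\mu) \|_{U} \leq \| \br(\bu_r(\mu);\mu) \|_{U'}/\eta(\mu) = \Delta(\bu_r(\mu);\mu)$, which closes the estimate.

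There is no genuine obstacle here; the only point requiring attention is that the adjoint $\bA(\mu)^{\mathrm{H}}$ must be taken with respect to the canonical $\ell_2$ pairing that defines $s$, $s_r$, and the residuals (not with respect to $\langle\cdot,\cdot\rangle_U$), and that the $\bl(\mu)$ terms cancel with the correct sign. Once those are pinned down, the displayed identity for $s(\mu) - s_r^{\mathrm{pd}}(\mu)$ is exact and the two inequalities above are immediate.
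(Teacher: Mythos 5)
Your proposal is correct and follows essentially the same route as the paper's appendix proof: expand $s(\mu)-s_r^{\mathrm{pd}}(\mu)$, move $\bA(\mu)$ onto the dual approximation via the adjoint to recognize the pairing $\langle \br^{\mathrm{du}}(\bu_r^\mathrm{du}(\mu);\mu), \bu(\mu)-\bu_r(\mu)\rangle$, then apply Cauchy--Schwarz for the $U'$--$U$ duality and bound $\|\bu(\mu)-\bu_r(\mu)\|_U$ by $\Delta(\bu_r(\mu);\mu)$ using~(\ref{eq:eta}). The only cosmetic difference is that you make the cancellation of the $\bl(\mu)$ terms and the sign of the resulting identity explicit, which the paper absorbs into the absolute value.
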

We observe that the error bound~(\ref {eq:error_correction}) of the quantity of interest is now quadratic in the residual norm in contrast to~(\ref {eq:scert}). 

\subsection{Reduced basis generation}

Until now we have assumed that the reduced subspaces $U_r$ and $U^{\mathrm{du}}_r$ were given. Let us briefly outline the standard procedure for the reduced basis generation with {the} greedy algorithm and POD. {The POD is here presented in a general algebraic form, which allows a non-intrusive use of any low-rank approximation algorithm.} Below we consider only the primal problem noting that similar algorithms can be used for the dual one. We also assume that a training set $\mathcal{P}_{\mathrm{train}} \subseteq \mathcal{P}$ with finite cardinality $m$ is provided.

\subsubsection{Greedy algorithm} \label{Greedy}
 The approximation subspace $U_{r}$ can be constructed recursively with a (weak) greedy algorithm. At iteration $i$, the basis of $U_i$ is enriched by snapshot $\bu(\mu^{i+1})$, i.e., $$U_{i+1}:= U_{i}+\mathrm{span}(\bu(\mu^{i+1})),$$ evaluated at a parameter value  $\mu^{i+1}$ that maximizes a certain error indicator $\widetilde{\Delta}(U_{i}; \mu)$ over the training set. Note that for efficient evaluation of $\arg \max_{\mu \in \mathcal{P}_{\mathrm{train}}}{\widetilde{\Delta}(U_i; \mu)}$ a provisional online solver associated with $U_{i}$ has to be provided.

The error indicator $\widetilde{\Delta}(U_{i}; \mu)$ for the greedy selection is typically chosen as an upper bound or estimator of $\| \bu(\mu)- \bP_{U_i}\bu(\mu) \|_{U}$. One can readily take $\widetilde{\Delta}(U_{i}; \mu) := {\Delta}(\bu_i(\mu); \mu) $, where $\bu_i(\mu)$ is the Galerkin projection defined by~(\ref {eq:galproj2}). The quasi-optimality of such $\widetilde{\Delta}(U_{i}, \mu)$ can then be characterized by using~Proposition\nobreakspace \ref{thm:cea} and definitions~(\ref{eq:beta}) and (\ref{eq:errorind}). 
%More precisely, if $\mu^{i+1}=\arg \max_{\mu \in \mathcal{P}_{\mathrm{train}}}{{\Delta}(\bu_i(\mu); \mu)}$, then 
% \begin{equation} \label{eq:greedyqualitystandard}
%  \| \bu(\mu^{i+1})- \bP_{U_i}\bu(\mu^{i+1}) \|_{U} \geq \frac{1}{\gamma_i}~ \underset{\mu \in \mathcal{P}_{\mathrm{train}}} \max \| \bu(\mu)- \bP_{U_i}\bu(\mu) \|_{U},
% \end{equation}
% where $\gamma_i= \underset{\mu \in \mathcal{P}_{\mathrm{train}}} \max {(1+\frac{\beta_i(\mu)}{\alpha_i(\mu)})} \frac{\beta(\mu)}{\eta(\mu)}$.

\subsubsection{Proper Orthogonal Decomposition} \label{POD}

In the context of POD we assume that the samples (snapshots) of $\bu(\mu)$, associated with the training set, are available. Let them be denoted as $\{ \bu (\mu^i) \}_{i=1}^{m}$, where $\mu^i \in \mathcal{P}_{\mathrm{train}}$, $1\leq i \leq m$. Further, let us define $\bU_m:= \left [ \bu (\mu^1), \bu (\mu^2), ..., \bu (\mu^m) \right ] \in \mathbb{K}^{n\times m}$ and $U_m:= \mathrm{range}(\bU_m)$. POD aims at finding a low dimensional subspace $U_r \subseteq U_m$ for the approximation of the set of vectors $\{ \bu (\mu^i) \}^{m}_{i=1}$.

For each $r\leq \mathrm{dim}(U_m)$ we define
\begin{equation} \label{eq:poddef}
POD_r(\bU_m, \| \cdot \|_U ):=\arg\min_{\substack{W_r  \subseteq U_m  \\ \mathrm{dim}(W_r) =r}}
\sum^{m}_{i=1} \| \bu (\mu^i) - \bP_{W_r} \bu (\mu^i) \|^2_{U}.
\end{equation}
The standard POD consists {in choosing $U_r$ as $POD_r(\bU_m, \| \cdot \|_U)$ and using the method of snapshots~\cite{sirovich1987turbulence}, or SVD of matrix $\bR^{1/2}_U \bU_m$, for computing the basis vectors. For large-scale problems, however, performing the method of snapshots or the SVD can become a computational burden. In such a case the standard eigenvalue decomposition and SVD have to be replaced by other low-rank approximations, e.g., incremental SVD, randomized SVD, hierarchical SVD, etc. For each of them it can be important to characterize quasi-optimality of the approximate POD basis. 
Below we provide a generalized algebraic version of POD well suited for a combination with low-rank approximation algorithms as well as state-of-the-art SVD. Note that obtaining (e.g., using a spectral decomposition) and operating with $\bR^{1/2}_U$ can be expensive and should be avoided for large-scale problems. The usage of this matrix for constructing the POD basis can be easily circumvented (see~Remark \nobreakdash \ref{rmk:Q_eval} ).	} 

\begin{proposition} \label{thm:approx_pod}
	Let $\bQ \in \mathbb{K}^{s\times n}$ be such that $\bQ^{\mathrm{H}}\bQ= \bR_U$. Let  $\bB^*_r \in \mathbb{K}^{s \times m}$ be a best rank-$r$ approximation of $\bQ \bU_m$ with respect to the Frobenius norm $\|\cdot \|_{F}$. Then for any rank-$r$ matrix $\bB_r \in \mathbb{K}^{s\times m}$, it holds
	\begin{equation}
	\frac{1}{m} \| \bQ \bU_m - {\bB^*_r}\|^2_{F} \leq \frac{1}{m} \sum^{m}_{i=1} \|\bu (\mu^i) - \bP_{{U_r}} \bu (\mu^i) \|^2_{U}  \leq  \frac{1}{m} \| \bQ \bU_m - {\bB_r}\|^2_{F},
	\end{equation}
	where ${U_r}:= \{ \bR_{U}^{-1}\bQ^{\mathrm{H}}\bb : \bb \in \mathrm{span}({\bB_r}) \}$.
\begin{proof}
See appendix.
 \end{proof} \end{proposition}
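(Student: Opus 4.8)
The plan is to reduce the statement to elementary facts about best rank-$r$ Frobenius approximation in $\mathbb{K}^s$, transported through the isometry induced by $\bQ$. The starting observation is that, since $\bQ^{\mathrm{H}}\bQ = \bR_U$, for every $\bx \in U$ we have $\|\bQ\bx\|_2^2 = \langle \bQ^{\mathrm{H}}\bQ\bx,\bx\rangle = \langle \bR_U\bx,\bx\rangle = \|\bx\|_U^2$, so $\bQ$ is a linear isometry from $(U,\|\cdot\|_U)$ onto the $n$-dimensional subspace $V := \mathrm{range}(\bQ) \subseteq \mathbb{K}^s$. Its left inverse is $\bR_U^{-1}\bQ^{\mathrm{H}}$ (indeed $\bR_U^{-1}\bQ^{\mathrm{H}}\bQ = \bI$), and the $\ell_2$-orthogonal projector onto $V$ is $\bP_V = \bQ\bR_U^{-1}\bQ^{\mathrm{H}}$. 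In particular $\bQ U_r = \bQ\bR_U^{-1}\bQ^{\mathrm{H}}(\mathrm{span}(\bB_r)) = \bP_V(\mathrm{span}(\bB_r))$ and $\dim(\bQ U_r) = \dim U_r \le r$.

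The next step is an intertwining property of the isometry with orthogonal projections: for any subspace $W \subseteq U$ one has $\bQ\bP_W = \bP_{\bQ W}\bQ$, where $\bP_{\bQ W}$ is the $\ell_2$-orthogonal projector onto $\bQ W$. This is immediate from the defining orthogonality of $\bP_W$, since for all $\bw \in W$, $\langle \bQ\bx - \bQ\bP_W\bx,\bQ\bw\rangle = \langle \bx - \bP_W\bx,\bw\rangle_U = 0$ while $\bQ\bP_W\bx \in \bQ W$. Combined with the isometry property this gives $\|\bx - \bP_W\bx\|_U = \|\bQ\bx - \bP_{\bQ W}\bQ\bx\|_2$ for every $\bx$. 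Taking $W = U_r$, $\bx = \bu(\mu^i)$ and summing over $i$ yields the identity $\sum_{i=1}^m \|\bu(\mu^i) - \bP_{U_r}\bu(\mu^i)\|_U^2 = \|\bQ\bU_m - \bP_{\bQ U_r}\bQ\bU_m\|_F^2$, which is the bridge between the middle quantity of the proposition and matrix approximation of $\bQ\bU_m$.

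With this in hand both inequalities follow quickly. For the left inequality, $\bP_{\bQ U_r}\bQ\bU_m$ has rank at most $r$, so optimality of $\bB^*_r$ gives $\|\bQ\bU_m - \bB^*_r\|_F \le \|\bQ\bU_m - \bP_{\bQ U_r}\bQ\bU_m\|_F$, and the identity above rewrites the right-hand side as $\sum_i \|\bu(\mu^i) - \bP_{U_r}\bu(\mu^i)\|_U^2$. For the right inequality, let $\bb_i$ be the $i$-th column of $\bB_r$; then $\bP_V\bb_i \in \bP_V(\mathrm{span}(\bB_r)) = \bQ U_r$, so using first that $\bP_{\bQ U_r}\bQ\bu(\mu^i)$ is the nearest point of $\bQ U_r$ to $\bQ\bu(\mu^i)$, and then that $\bQ\bu(\mu^i) \in V$ (so replacing $\bb_i$ by its $V$-projection does not increase the distance, by Pythagoras), one gets $\|\bQ\bu(\mu^i) - \bP_{\bQ U_r}\bQ\bu(\mu^i)\|_2 \le \|\bQ\bu(\mu^i) - \bP_V\bb_i\|_2 \le \|\bQ\bu(\mu^i) - \bb_i\|_2$; squaring, summing over $i$, and dividing by $m$ gives the claim.

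The one genuinely delicate point is that $\bQ$ need not be surjective, i.e.\ $s$ may exceed $n$; one must carry the projector $\bP_V$ through the argument, observe that $\bQ U_r$ equals $\bP_V(\mathrm{span}(\bB_r))$ rather than $\mathrm{span}(\bB_r)$, and exploit that every snapshot image $\bQ\bu(\mu^i)$ lies in $V$ to discard the component of $\bB_r$ orthogonal to $V$ for free. Everything else is bookkeeping with the isometry; in particular no explicit eigenvalue or SVD computation is needed beyond invoking the assumed existence of the best rank-$r$ approximation $\bB^*_r$.
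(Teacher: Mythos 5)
Your proof is correct and follows essentially the same route as the paper's: the left inequality comes from the fact that $\bQ\bP_{U_r}\bU_m$ has rank at most $r$ together with the isometry $\|\bQ\bx\| = \|\bx\|_U$, and the right inequality comes from projecting $\bQ\bU_m - \bB_r$ onto $\mathrm{range}(\bQ)$ via $\bQ\bR_U^{-1}\bQ^{\mathrm{H}}$ (your $\bP_V$) and then invoking the best-approximation property of $\bP_{U_r}$. The intertwining lemma $\bQ\bP_W = \bP_{\bQ W}\bQ$ is a clean way to organize the bookkeeping, but the underlying argument is the one in the appendix.
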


\begin{corollary}\label{thm:exact_pod}
	Let $\bQ \in \mathbb{K}^{s\times n}$ be such that $\bQ^{\mathrm{H}}\bQ= \bR_U$. Let $\bB^*_r \in \mathbb{K}^{s\times m}$ be a best rank-$r$ approximation of $\bQ\bU_m$ with respect to the Frobenius norm $\|\cdot \|_{F}$. Then 
	\begin{equation}
	POD_r(\bU_m, \| \cdot \|_U )= \{ \bR_{U}^{-1}\bQ^{\mathrm{H}}\bb : \bb \in \mathrm{range}(\bB^*_r) \}.
	\end{equation}
\end{corollary}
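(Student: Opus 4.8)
The plan is to derive the corollary from Proposition~\ref{thm:approx_pod} by evaluating its bounds at two well-chosen rank-$r$ matrices, after recording the elementary fact that makes the correspondence $\bB_r \mapsto U_r$ of that proposition essentially invertible. Since $\bQ^{\mathrm{H}}\bQ = \bR_U$ is positive definite, $\bQ$ has full column rank and $\bR_U^{-1}\bQ^{\mathrm{H}}$ is a left inverse of $\bQ$, that is $\bR_U^{-1}\bQ^{\mathrm{H}}\bQ = \bI$. Writing $\Phi(\by):=\bR_U^{-1}\bQ^{\mathrm{H}}\by$ for $\by\in\mathbb{K}^s$, this means $\Phi$ restricted to $\mathrm{range}(\bQ\bU_m)=\bQ U_m$ is precisely the inverse of $\bQ|_{U_m}$; in particular $\Phi$ maps $r$-dimensional subspaces of $\bQ U_m$ bijectively onto $r$-dimensional subspaces of $U_m$, and conversely $\bQ$ maps any $r$-dimensional subspace $W_r\subseteq U_m$ to an $r$-dimensional subspace of $\bQ U_m$ whose $\Phi$-image is again $W_r$.

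First I would apply Proposition~\ref{thm:approx_pod} with $\bB_r:=\bB^*_r$. Since a best rank-$r$ Frobenius approximation of $\bQ\bU_m$ has its range contained in $\mathrm{range}(\bQ\bU_m)$ (if this is not built into the definition of $\bB^*_r$, it follows by replacing $\bB^*_r$ by its $\|\cdot\|_F$-orthogonal projection onto $\mathrm{range}(\bQ\bU_m)$, which does not increase the error and keeps the rank $\leq r$), the subspace $W^*_r:=\Phi(\mathrm{range}(\bB^*_r))$ is an $r$-dimensional subspace of $U_m$ and coincides with the set $\{\bR_U^{-1}\bQ^{\mathrm{H}}\bb:\bb\in\mathrm{range}(\bB^*_r)\}$ in the statement. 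For this choice the lower and upper bounds of Proposition~\ref{thm:approx_pod} are equal, so
\[ \frac{1}{m}\sum_{i=1}^{m}\|\bu(\mu^i)-\bP_{W^*_r}\bu(\mu^i)\|_U^2=\frac{1}{m}\|\bQ\bU_m-\bB^*_r\|_F^2 . \]

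Next I would show this common value is the minimum in~\eqref{eq:poddef}. Let $W_r\subseteq U_m$ be any $r$-dimensional subspace with basis $\bw_1,\dots,\bw_r$, and let $\bB_r\in\mathbb{K}^{s\times m}$ be a rank-$r$ matrix whose column span equals $\bQ W_r=\mathrm{span}\{\bQ\bw_1,\dots,\bQ\bw_r\}$ (possible since $r\leq\dim(U_m)\leq m$: take its first $r$ columns to be $\bQ\bw_1,\dots,\bQ\bw_r$, independent because $\bQ$ is injective, and fill the remaining columns with copies of one of them). By the left-inverse identity the subspace associated with $\bB_r$ in Proposition~\ref{thm:approx_pod} is $\Phi(\bQ W_r)=W_r$, so its upper bound yields
\[ \frac{1}{m}\sum_{i=1}^{m}\|\bu(\mu^i)-\bP_{W_r}\bu(\mu^i)\|_U^2\geq\frac{1}{m}\|\bQ\bU_m-\bB^*_r\|_F^2 . \]
Comparing with the previous display shows $W^*_r$ attains the minimum over all $r$-dimensional subspaces of $U_m$, hence $W^*_r=POD_r(\bU_m,\|\cdot\|_U)$, which is the claim.

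The only delicate point is the range containment $\mathrm{range}(\bB^*_r)\subseteq\mathrm{range}(\bQ\bU_m)$ used in the first step (needed so that $\dim W^*_r=r$ and $W^*_r\subseteq U_m$); it is automatic when $\bB^*_r$ is a truncated SVD of $\bQ\bU_m$ and otherwise follows from the one-line projection argument above. Everything else is bookkeeping around $\bR_U^{-1}\bQ^{\mathrm{H}}\bQ=\bI$. One should also note that when the $r$-th and $(r+1)$-th singular values of $\bQ\bU_m$ coincide the minimizer in~\eqref{eq:poddef} need not be unique, and the equality in the statement is then to be read as the assertion that $W^*_r$ is \emph{a} minimizer.
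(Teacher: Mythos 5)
Your argument is correct and is essentially the one the paper intends: the corollary carries no separate proof because it follows from Proposition~\ref{thm:approx_pod} exactly as you describe, by taking $\bB_r=\bB_r^*$ so that the two bounds coincide and the associated subspace attains the value $\frac{1}{m}\|\bQ\bU_m-\bB_r^*\|_F^2$, and by representing an arbitrary $r$-dimensional $W_r\subseteq U_m$ through a rank-$r$ matrix whose column span is $\bQ W_r$ (using $\bR_U^{-1}\bQ^{\mathrm{H}}\bQ=\bI$) to conclude that no candidate subspace does better; your attention to $\mathrm{range}(\bB_r^*)\subseteq\mathrm{range}(\bQ\bU_m)$ and to non-uniqueness of the minimizer is a welcome addition. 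The only cosmetic slip is that the displayed inequality for a general $W_r$ comes from the first (lower-bound) inequality of the proposition, not from its ``upper bound'' as you label it.
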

It follows that the approximation subspace $U_r$ for $\{ \bu(\mu^i) \}^{m}_{i=1}$ can be obtained by computing a low-rank approximation of $\bQ \bU_m$. According to  Proposition\nobreakspace \ref{thm:approx_pod}, for given $r$, quasi-optimality of {$U_r$} can be guaranteed by quasi-optimality of {$\bB_r$}.

\begin{remark} \label{rmk:Q_eval}
  {The} matrix $\bQ$ in Proposition\nobreakspace \ref {thm:approx_pod} and Corollary\nobreakspace \ref {thm:exact_pod} can be seen as a map from $U$ to $ \mathbb{K}^{s}$. Clearly, it can be computed with a Cholesky (or spectral) decomposition of $\bR_U$. For large-scale problems, however, it might be a burden to obtain, store or operate with such a matrix. We would like to underline that $\bQ$ does not have to be a square matrix. It can be easily obtained in the framework of numerical methods for PDEs (e.g., finite elements, finite volumes, etc.). Suppose that $\bR_U$ can be expressed as an assembly of smaller self-adjoint positive semi-definite matrices $\bR_U^{(i)}$ each corresponding to the contribution, for example, of a finite element or subdomain. In other words, 
  \begin{equation*}
  \bR_U = \sum_{i=1}^l\bE^{(i)} \bR_U^{(i)} [\bE^{(i)}]^{\mathrm{T}},
  \end{equation*}
  where $\bE^{(i)}$ is an extension operator mapping a local vector to the global one (usually a boolean matrix). Since $\bR_U^{(i)}$ are small matrices, their Cholesky (or spectral) decompositions are easy to compute. Let $\bQ^{(i)}$ denote the adjoint of the Cholesky factor of $\bR_U^{(i)}$. It can be easily verified that 
  \begin{equation*}
  \bQ :=\left[ \begin{array}{c}
  \bQ^{(1)} [\bE^{(1)}]^{\mathrm{T}} \\
  \bQ^{(2)} [\bE^{(2)}]^{\mathrm{T}}  \\ 
  ... \\ 
  \bQ^{(l)} [\bE^{(l)}]^{\mathrm{T}} 
  \end{array}\right] 
  \end{equation*}
  satisfies $ \bQ^{\mathrm{H}}\bQ= \bR_U$.
\end{remark}
The POD procedure using low-rank approximations is depicted in~Algorithm\nobreakspace \ref {alg:approx_pod}.
\begin{algorithm} \caption{Approximate Proper Orthogonal Decomposition} \label{alg:approx_pod}
 \begin{algorithmic}
  \STATE{\textbf{Given:} $\mathcal{P}_{\mathrm{train}}$, $\bA(\mu)$, $\bb(\mu)$, $\bR_U$}
  \STATE{\textbf{Output}: $\bU_r$ and $\Delta^{\mathrm{POD}}$}
  \STATE{1. Compute the snapshot matrix $\bU_m$.}
  \STATE{2. Determine $\bQ$ such that $\bQ^{\mathrm{H}}\bQ= \bR_U$.}
  \STATE{3. Compute {a} rank-$r$ approximation, $\bB_r $, of $\bQ\bU_m$.}
  \STATE{4. Compute an upper bound, $\Delta^{\mathrm{POD}}$, of $\frac{1}{m} \| \bQ\bU_m- \bB_r \|^2_{F}$. }
  \STATE{5. Find a matrix, $\bC_r$ whose column space is $\mathrm{span}(\bB_r)$.}
  \STATE{6. Evaluate $\bU_r:=\bR_{U}^{-1}\bQ^{\mathrm{H}}\bC_r$.}
 \end{algorithmic}
\end{algorithm}

\section{Random sketching} \label{RS}
In this section, we adapt the classical sketching theory in Euclidean spaces~\cite{woodruff2014sketching} to a slightly more general framework. The sketching technique is seen as a modification of inner product for a given subspace. The modified inner product is approximately equal to the original one but it is much easier to operate with. Thanks to such interpretation of the methodology, integration of the sketching technique to the context of projection-based MOR will become straightforward. 

\subsection{$\ell_2$-embeddings} \label{l2embeddings}

Let $X:= \mathbb{K}^{n}$ be endowed with inner product $\langle \cdot, \cdot \rangle_{X}:= \langle \bR_X \cdot, \cdot \rangle $ for some self-adjoint positive definite matrix $\bR_X \in \mathbb{K}^{n\times n}$, and let $Y$ be a subspace of $X$ of moderate dimension. The dual of $X$ is identified with $X':=\mathbb{K}^{n}$ and the dual of $Y$ is identified with $Y':=\{ \bR_{X} \by : \by \in Y \}$. $X'$ and $Y'$ are both equipped with inner product $\langle \cdot, \cdot \rangle_{X'}:=\langle \cdot, \bR_X^{-1} \cdot \rangle$. The inner products $\langle \cdot , \cdot \rangle_{X}$ and $\langle \cdot, \cdot \rangle_{X'}$ can be very expensive to evaluate.  The computational cost can be reduced drastically if we are interested {solely} in operating with vectors lying in subspaces $Y$ or $Y'$. For this we introduce the concept of $X \to \ell_2$ subspace embeddings.

Let $\bTheta \in \mathbb{K}^{k\times n}$ with $k\leq n$. Further, $\bTheta$ is seen as an embedding for subspaces of $X$. It maps vectors from the subspaces of $X$ to vectors from $\mathbb{K}^{k}$ equipped with {the canonical} {$\ell_2$}-inner product $\langle \cdot, \cdot \rangle$, so $\bTheta$ is referred to as an $X \to \ell_2$ subspace embedding. 
Let us now introduce the following semi-inner products on $X$:
\begin{equation} \label{eq:thetainnerdef}
 \langle \cdot, \cdot \rangle^{\bTheta}_{X}:= \langle \bTheta \cdot, \bTheta \cdot \rangle, \textup{ and }
 \langle \cdot, \cdot \rangle^{\bTheta}_{X'} := \langle \bTheta \bR_X^{-1} \cdot, \bTheta \bR_X^{-1} \cdot \rangle.
\end{equation}
Let $\| \cdot \|^{\bTheta}_{X}$ and $\| \cdot \|^{\bTheta}_{X'}$ denote the associated semi-norms. In general, $\bTheta$ is chosen so that $ \langle \cdot, \cdot \rangle^{\bTheta}_{X}$ approximates well $\langle \cdot, \cdot \rangle_{X}$ for all vectors in $Y$ or, in other words, $\bTheta$ is $X \to \ell_2$ $\varepsilon$-subspace embedding for $Y$, as defined below.
\begin{definition} \label{def:epsilon_embedding}
 If $\bTheta$ satisfies 
 \begin{equation} \label{eq:epsilon_embedding}
  \forall \bx, \by \in Y, \ \left | \langle \bx, \by \rangle_X - \langle \bx, \by \rangle^{\bTheta}_{X} \right |\leq \varepsilon \| \bx \|_X \| \by \|_X,
 \end{equation}
 for some $\varepsilon \in [0,1)$, then it is called a $X \to \ell_2$ $\varepsilon$-subspace embedding {(or simply, $\varepsilon$-embedding)} for $Y$. 
\end{definition}

\begin{corollary} \label{thm:dual_embedding}
 If $\bTheta$ is a $X \to \ell_2$ $\varepsilon$-subspace embedding for $Y$, then 
 \begin{equation*}
  \forall  \bx', \by' \in Y', \ \left | \langle \bx', \by' \rangle_{X'}- \langle \bx', \by' \rangle^{\bTheta}_{X'} \right | \leq \varepsilon \| \bx' \|_{X'} \| \by' \|_{X'}.
 \end{equation*}
 \end{corollary}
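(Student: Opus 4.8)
The plan is to reduce the dual statement to Definition~\ref{def:epsilon_embedding} by the change of variables induced by $\bR_X$. The crucial observation is that, by the very definition $Y' = \{\bR_X \by : \by \in Y\}$ together with the invertibility of $\bR_X$, the map $\bR_X$ restricts to a bijection from $Y$ onto $Y'$; moreover this bijection is an isometry from $(Y, \langle\cdot,\cdot\rangle_X)$ onto $(Y', \langle\cdot,\cdot\rangle_{X'})$ which at the same time transports $\langle\cdot,\cdot\rangle^{\bTheta}_X$ to $\langle\cdot,\cdot\rangle^{\bTheta}_{X'}$. So I would fix $\bx',\by' \in Y'$ and write $\bx' = \bR_X\bx$, $\by' = \bR_X\by$ with $\bx = \bR_X^{-1}\bx' \in Y$ and $\by = \bR_X^{-1}\by' \in Y$ uniquely determined.

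First I would compute the exact dual inner product: using $\langle\cdot,\cdot\rangle_{X'} = \langle\cdot,\bR_X^{-1}\cdot\rangle$ and the definition $\langle\cdot,\cdot\rangle_X = \langle\bR_X\cdot,\cdot\rangle$,
\[
\langle \bx',\by'\rangle_{X'} = \langle \bR_X\bx, \bR_X^{-1}\bR_X\by\rangle = \langle \bR_X\bx,\by\rangle = \langle \bx,\by\rangle_X .
\]
Then I would do the same for the sketched version: since $\bR_X^{-1}\bx' = \bx$ and $\bR_X^{-1}\by' = \by$, definition~(\ref{eq:thetainnerdef}) gives
\[
\langle \bx',\by'\rangle^{\bTheta}_{X'} = \langle \bTheta\bR_X^{-1}\bx', \bTheta\bR_X^{-1}\by'\rangle = \langle \bTheta\bx, \bTheta\by\rangle = \langle \bx,\by\rangle^{\bTheta}_X .
\]
Specializing the first identity to $\bx' = \by'$ also yields $\|\bx'\|_{X'} = \|\bx\|_X$ and, likewise, $\|\by'\|_{X'} = \|\by\|_X$.

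Finally I would combine the three facts: the quantity to be bounded equals $\bigl|\langle\bx,\by\rangle_X - \langle\bx,\by\rangle^{\bTheta}_X\bigr|$, and since $\bx,\by \in Y$, Definition~\ref{def:epsilon_embedding} bounds it by $\varepsilon\|\bx\|_X\|\by\|_X = \varepsilon\|\bx'\|_{X'}\|\by'\|_{X'}$, which is exactly the claim. There is no genuine obstacle here; the only point requiring a line of justification is that the substitution $\bx' = \bR_X\bx$ is a well-defined bijection $Y \to Y'$ with inverse $\bR_X^{-1}$, so that the hypothesis, which only applies to vectors of $Y$, indeed covers all pairs $\bx',\by' \in Y'$ through their preimages.
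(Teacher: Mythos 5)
Your argument is correct and is exactly the reasoning the paper leaves implicit by stating this as a corollary without proof: the map $\bR_X$ is a bijection from $Y$ onto $Y'$ that identifies $\langle\cdot,\cdot\rangle_{X'}$ with $\langle\cdot,\cdot\rangle_{X}$ and $\langle\cdot,\cdot\rangle^{\bTheta}_{X'}$ with $\langle\cdot,\cdot\rangle^{\bTheta}_{X}$, so the claim reduces directly to Definition~\ref{def:epsilon_embedding}. Nothing is missing.
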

\begin{proposition} \label{thm:innerproduct}
 If $\bTheta$ is a $X \to \ell_2$ $\varepsilon$-subspace embedding for $Y$, then $\langle \cdot, \cdot \rangle^{\bTheta}_{X}$ and $\langle  \cdot , \cdot  \rangle^{\bTheta}_{X'}$ are inner products on $Y$ and $Y'$, respectively.  
\begin{proof}
 See appendix.   
\end{proof} 
\end{proposition}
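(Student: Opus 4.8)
The plan is to check the inner-product axioms for $\langle \cdot, \cdot \rangle^{\bTheta}_{X}$ restricted to $Y$, the only non-trivial one being positive definiteness. Sesquilinearity (bilinearity if $\mathbb{K}=\mathbb{R}$) and Hermitian symmetry are inherited directly from the canonical $\ell_2$-inner product on $\mathbb{K}^k$, since $\langle \bx, \by \rangle^{\bTheta}_{X} = \langle \bTheta \bx, \bTheta \by \rangle$ is merely its pullback under the linear map $\bTheta$; likewise $\langle \bx, \bx \rangle^{\bTheta}_{X} = \|\bTheta \bx\|^2 \geq 0$, so the form is positive semi-definite on all of $X$. I would explicitly note that on $X$ it need not be definite, because $\bTheta$ may have a non-trivial kernel when $k<n$; the content of the proposition is precisely that this degeneracy disappears once we restrict to $Y$.

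For definiteness on $Y$, I would set $\bx = \by$ in the $\varepsilon$-embedding inequality~(\ref{eq:epsilon_embedding}), obtaining
\begin{equation*}
 \left| \| \bx \|_X^2 - \left( \| \bx \|^{\bTheta}_{X} \right)^2 \right| \leq \varepsilon \| \bx \|_X^2, \qquad \bx \in Y,
\end{equation*}
hence the lower bound $\left( \| \bx \|^{\bTheta}_{X} \right)^2 \geq (1-\varepsilon)\| \bx \|_X^2$. Since $\varepsilon \in [0,1)$, the factor $1-\varepsilon$ is strictly positive, so $\| \bx \|^{\bTheta}_{X} = 0$ forces $\| \bx \|_X = 0$, i.e.\ $\bx = \bnull$ because $\| \cdot \|_X$ is a genuine norm. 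Combined with the three elementary properties above, this shows $\langle \cdot, \cdot \rangle^{\bTheta}_{X}$ is an inner product on $Y$.

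The statement for $\langle \cdot, \cdot \rangle^{\bTheta}_{X'}$ on $Y'$ follows by the same argument, now invoking Corollary~\ref{thm:dual_embedding} in place of Definition~\ref{def:epsilon_embedding}: taking $\bx' = \by'$ there yields $\left( \| \bx' \|^{\bTheta}_{X'} \right)^2 \geq (1-\varepsilon)\| \bx' \|_{X'}^2$ for all $\bx' \in Y'$, which gives definiteness, while sesquilinearity, symmetry and positive semi-definiteness of $\langle \bx', \by' \rangle^{\bTheta}_{X'} = \langle \bTheta \bR_X^{-1}\bx', \bTheta \bR_X^{-1}\by' \rangle$ are again immediate as a pullback of the canonical inner product on $\mathbb{K}^k$. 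There is essentially no genuine obstacle here — the result is a one-line consequence of the embedding inequality — so the only points worth being careful about are recording that the hypothesis $\varepsilon<1$ is exactly what is needed for definiteness, and that the claim is a statement about the restriction of a globally semi-definite form to the subspaces $Y$ and $Y'$.
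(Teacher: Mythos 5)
Your proof is correct and follows essentially the same route as the paper's: the easy axioms are inherited from the canonical $\ell_2$-inner product, and definiteness on $Y$ (resp.\ $Y'$) follows from Definition\nobreakspace\ref{def:epsilon_embedding} (resp.\ Corollary\nobreakspace\ref{thm:dual_embedding}) with $\bx=\by$, using $\varepsilon<1$. You simply make explicit the lower bound $(\|\bx\|^{\bTheta}_X)^2\geq(1-\varepsilon)\|\bx\|_X^2$ that the paper leaves implicit.
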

Let $Z \subseteq Y$ be a subspace of $Y$. A semi-norm $ \| \cdot \|_{Z'}$ over $Y'$ can be defined by
\begin{equation} \label{eq:seminorm}
 \| \by' \|_{Z'}:= \underset{\bx \in Z \backslash \{ \bnull \}} \max \frac{|\langle \by', \bx \rangle|}{\| \bx \|_X}=\underset{\bx \in Z \backslash \{ \bnull \}} \max \frac{|\langle \bR_X^{-1} \by', \bx \rangle_{X}|}{\| \bx  \|_{X}},~ \by' \in Y'.
\end{equation}
We propose to approximate $\| \cdot \|_{Z'}$  by the semi norm $\| \cdot \|^{\bTheta}_{Z'}$ given by
\begin{equation} \label{eq:skseminorm}
 \| \by' \|^{\bTheta}_{Z'}:= \underset{\bx \in Z \backslash \{ \bnull \}} \max \frac{|\langle \bR_X^{-1}\by', \bx \rangle^{\bTheta}_{X}|}{\| \bx  \|^{\bTheta}_{X}},~ \by' \in Y'.
\end{equation} 
Observe that letting $Z= Y$ in~Equations\nobreakspace \textup {(\ref {eq:seminorm})} and\nobreakspace  \textup {(\ref {eq:skseminorm})} leads to norms on $Y'$ which are induced by $\langle \cdot, \cdot \rangle_{X'}$ and $\langle \cdot, \cdot \rangle^{\bTheta}_{X'}$. 
\begin{proposition} \label{thm:skseminorm_ineq}
 If $\bTheta$ is a $X \to \ell_2$ $\varepsilon$-subspace embedding for $Y$, then for all $\by' \in Y'$,
 \begin{equation} \label{eq:skseminorm_ineq}
  \frac{1}{\sqrt{1+\varepsilon}} (\| \by' \|_{Z'}- \varepsilon\| \by' \|_{X'})\leq \| \by' \|^{\bTheta}_{Z'} \leq \frac{1}{\sqrt{1-\varepsilon}}(\| \by' \|_{Z'}+ \varepsilon\| \by' \|_{X'}).
 \end{equation}
\begin{proof}
	See appendix.  
 \end{proof} 
\end{proposition}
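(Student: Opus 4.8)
The plan is to reduce the statement to two elementary two-sided comparisons coming from the $\varepsilon$-embedding property applied to pairs of vectors lying in $Y$, and then to propagate these through the maxima defining the two seminorms. Throughout, set $\bz := \bR_X^{-1} \by'$; since $\by' \in Y'$ we have $\bz \in Y$, and moreover $\| \bz \|_X = \| \by' \|_{X'}$, $\langle \bR_X^{-1}\by', \bx \rangle_X = \langle \bz, \bx \rangle_X$, $\langle \bR_X^{-1}\by', \bx \rangle^{\bTheta}_X = \langle \bz, \bx \rangle^{\bTheta}_X$ for every $\bx$. First I would record the two facts. Taking $\bx = \by$ in Definition~\ref{def:epsilon_embedding} gives $(1-\varepsilon)\| \bx \|_X^2 \le (\| \bx \|^{\bTheta}_X)^2 \le (1+\varepsilon)\| \bx \|_X^2$, hence $\sqrt{1-\varepsilon}\,\| \bx \|_X \le \| \bx \|^{\bTheta}_X \le \sqrt{1+\varepsilon}\,\| \bx \|_X$ for all $\bx \in Y$. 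Applying~(\ref{eq:epsilon_embedding}) to the pair $(\bz, \bx)$ and using $\big|\,|a|-|b|\,\big| \le |a-b|$ gives $\big|\,|\langle \bz, \bx \rangle^{\bTheta}_X| - |\langle \bz, \bx \rangle_X|\,\big| \le \varepsilon \| \bz \|_X \| \bx \|_X$ for all $\bx \in Y$. Both hold in particular for any $\bx \in Z \subseteq Y$.

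For the right inequality, I would fix an arbitrary $\bx \in Z \backslash \{ \bnull \}$ and bound the defining ratio of $\| \by' \|^{\bTheta}_{Z'}$ termwise: the numerator by $|\langle \bz, \bx \rangle_X| + \varepsilon \| \bz \|_X \| \bx \|_X$ from above and the denominator by $\sqrt{1-\varepsilon}\,\| \bx \|_X$ from below, so that the ratio is at most $\frac{1}{\sqrt{1-\varepsilon}}\big( \frac{|\langle \bz, \bx \rangle_X|}{\| \bx \|_X} + \varepsilon \| \bz \|_X \big) \le \frac{1}{\sqrt{1-\varepsilon}}\big( \| \by' \|_{Z'} + \varepsilon \| \by' \|_{X'} \big)$, the last step by the definition~(\ref{eq:seminorm}) of $\| \by' \|_{Z'}$ and $\| \bz \|_X = \| \by' \|_{X'}$. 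Taking the maximum over $\bx$ gives the upper bound.

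For the left inequality, note it is trivial when $\| \by' \|_{Z'} \le \varepsilon \| \by' \|_{X'}$ since $\| \by' \|^{\bTheta}_{Z'} \ge 0$, so assume $\| \by' \|_{Z'} > \varepsilon \| \by' \|_{X'}$. Since $Z$ is finite-dimensional, the maximum in~(\ref{eq:seminorm}) is attained at some $\bx^\star \in Z \backslash \{ \bnull \}$ with $|\langle \bz, \bx^\star \rangle_X| = \| \bx^\star \|_X \, \| \by' \|_{Z'}$ (alternatively one argues with a maximizing sequence). Evaluating the ratio defining $\| \by' \|^{\bTheta}_{Z'}$ at $\bx^\star$ and using the recorded bounds in the opposite directions — the numerator from below by $|\langle \bz, \bx^\star \rangle_X| - \varepsilon \| \bz \|_X \| \bx^\star \|_X = \| \bx^\star \|_X\big( \| \by' \|_{Z'} - \varepsilon \| \by' \|_{X'} \big)$, which is nonnegative by the assumption, and the denominator from above by $\sqrt{1+\varepsilon}\,\| \bx^\star \|_X$ — yields $\| \by' \|^{\bTheta}_{Z'} \ge \frac{1}{\sqrt{1+\varepsilon}}\big( \| \by' \|_{Z'} - \varepsilon \| \by' \|_{X'} \big)$.

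The only delicate point is the sign bookkeeping in the lower bound: one must know the lower estimate of the numerator is nonnegative before comparing it with the upper estimate of the denominator, which is precisely what forces the case split on the sign of $\| \by' \|_{Z'} - \varepsilon \| \by' \|_{X'}$; and one must secure existence of the extremizer $\bx^\star$ (finite dimension of $Z$, or a limiting argument). Everything else is the triangle inequality together with the two-sided norm and inner-product comparison inherited from the embedding.
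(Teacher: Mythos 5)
Your proposal is correct and follows essentially the same route as the paper's proof: both bound the numerator of the quotient defining $\| \by' \|^{\bTheta}_{Z'}$ via the inner-product perturbation $|\langle \bR_X^{-1}\by', \bx \rangle^{\bTheta}_X - \langle \bR_X^{-1}\by', \bx \rangle_X| \leq \varepsilon \|\by'\|_{X'}\|\bx\|_X$ and the denominator via $\sqrt{1-\varepsilon}\,\|\bx\|_X \leq \|\bx\|^{\bTheta}_X \leq \sqrt{1+\varepsilon}\,\|\bx\|_X$, then handle the lower bound with the same case split on the sign of $\| \by' \|_{Z'} - \varepsilon \| \by' \|_{X'}$. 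Your explicit use of a maximizer $\bx^\star$ merely makes transparent the sign bookkeeping that the paper leaves implicit when it pushes the maximum through the chain of inequalities.
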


\subsection{Data-oblivious embeddings} \label{obleddings}
Here we show how to build a $X \to \ell_2$ $\varepsilon$-subspace embedding  $\bTheta$ as a realization of a carefully chosen probability distribution over matrices. A reduction of the complexity of an algorithm can be obtained when $\bTheta$ is a structured matrix (e.g., sparse or hierarchical)~\cite{woodruff2014sketching} so that it can be efficiently multiplied by a vector. In such a case $\bTheta$ has to be operated {as a function outputting products with vectors}. For environments where the memory consumption or the cost of communication between cores is the primary constraint, unstructured $\bTheta$ can still provide drastic reductions and be more expedient~\cite{halko2011finding}. 
\begin{definition} \label{def:oblepsilon_embedding}
 $\bTheta$ is called a $( \varepsilon, \delta, d)$ oblivious $X \to \ell_2$ subspace embedding if for any $d$-dimensional subspace {$V$} of $X$ it holds 
 \begin{equation} \label{eq:oblepsilon_embedding}
  \mathbb{P} \left (\bTheta \text{ is a }X \to \ell_2 \text{ subspace embedding for } {V} \right) \geq 1-\delta.
 \end{equation}
\end{definition}
\begin{corollary} \label{thm:oblepsilon_prime}
 If $\bTheta$ is a $( \varepsilon, \delta, d)$ oblivious  $X \to \ell_2$ subspace embedding, then $\bTheta \bR_X^{-1}$ is a $( \varepsilon, \delta, d)$ oblivious $X' \to \ell_2$ subspace embedding.
 \end{corollary}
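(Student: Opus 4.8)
The plan is to transfer the defining property of $\bTheta$ over to $\bTheta\bR_X^{-1}$ through the isomorphism $\bR_X \colon X \to X'$, using Corollary~\ref{thm:dual_embedding} as the deterministic counterpart of the statement.

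First I would fix an arbitrary $d$-dimensional subspace $W$ of $X'$ and introduce $Y := \bR_X^{-1} W = \{ \bR_X^{-1}\bx' : \bx' \in W \}$. Since $\bR_X$ is invertible, $Y$ is a $d$-dimensional subspace of $X$, and, with the identification $Y' = \{ \bR_X \by : \by \in Y \}$ from Section~\ref{l2embeddings}, one has $Y' = W$.

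Next I would invoke Corollary~\ref{thm:dual_embedding}: whenever $\bTheta$ is an $X \to \ell_2$ $\varepsilon$-subspace embedding for $Y$, it holds that $\left| \langle \bx', \by' \rangle_{X'} - \langle \bx', \by' \rangle^{\bTheta}_{X'} \right| \le \varepsilon \| \bx' \|_{X'} \| \by' \|_{X'}$ for all $\bx', \by' \in Y' = W$; since $\langle \cdot, \cdot \rangle^{\bTheta}_{X'} = \langle \bTheta\bR_X^{-1}\cdot, \bTheta\bR_X^{-1}\cdot\rangle$ by~\eqref{eq:thetainnerdef}, this says exactly that $\bTheta\bR_X^{-1}$ is an $X' \to \ell_2$ $\varepsilon$-subspace embedding for $W$ (Definition~\ref{def:epsilon_embedding} with $X'$, $W$ in place of $X$, $Y$). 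In other words, the event $\{\bTheta \text{ is an } X\to\ell_2 \text{ embedding for } Y\}$ is contained in the event $\{\bTheta\bR_X^{-1} \text{ is an } X'\to\ell_2 \text{ embedding for } W\}$. Taking probabilities and using that $\bTheta$ is $(\varepsilon,\delta,d)$ oblivious applied to the $d$-dimensional subspace $Y$,
\begin{equation*}
\mathbb{P}\big(\bTheta\bR_X^{-1} \text{ is an } X'\to\ell_2 \text{ embedding for } W\big) \;\ge\; \mathbb{P}\big(\bTheta \text{ is an } X\to\ell_2 \text{ embedding for } Y\big) \;\ge\; 1-\delta .
\end{equation*}
Since $W$ was an arbitrary $d$-dimensional subspace of $X'$, this is precisely the definition (Definition~\ref{def:oblepsilon_embedding}) of $\bTheta\bR_X^{-1}$ being a $(\varepsilon,\delta,d)$ oblivious $X' \to \ell_2$ subspace embedding.

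There is no genuine obstacle; the only points that warrant a line of care are that $Y \mapsto \bR_X Y$ is a bijection between $d$-dimensional subspaces of $X$ and of $X'$ (so the oblivious property transfers with the same $d$ and $\delta$), and that the embedding constant $\varepsilon$ is preserved exactly rather than up to a constant — both immediate from $\bR_X$ being invertible and self-adjoint, and already encapsulated in Corollary~\ref{thm:dual_embedding}. If one prefers not to cite that corollary, the same identities $\langle\bx',\by'\rangle_{X'} = \langle\bR_X^{-1}\bx',\bR_X^{-1}\by'\rangle_X$ and $\langle\bTheta\bR_X^{-1}\bx',\bTheta\bR_X^{-1}\by'\rangle = \langle\bTheta(\bR_X^{-1}\bx'),\bTheta(\bR_X^{-1}\by')\rangle$ can be verified by hand, together with $\|\bx'\|_{X'}=\|\bR_X^{-1}\bx'\|_X$, and substituted directly into Definition~\ref{def:epsilon_embedding}.
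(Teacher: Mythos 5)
Your proof is correct and follows the route the paper intends: since the paper states this corollary without proof, the implicit argument is exactly yours — the invertible map $\bR_X$ puts $d$-dimensional subspaces $W \subseteq X'$ in bijection with $d$-dimensional subspaces $Y = \bR_X^{-1}W \subseteq X$, Corollary\nobreakspace\ref{thm:dual_embedding} gives the deterministic implication that the embedding property for $Y$ transfers to $W = Y'$ with the same $\varepsilon$, and a probability comparison on the nested events yields the $(\varepsilon,\delta,d)$ oblivious property. No gaps.
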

The advantage of oblivious embeddings is that they do not require any a priori knowledge of the embedded subspace. In this work we shall consider three well-known oblivious $\ell_2 \to \ell_2$ subspace embeddings: the rescaled Gaussian distribution, the rescaled Rademacher distribution, and the partial Subsampled Randomized Hadamard Transform (P-SRHT). The rescaled Gaussian distribution is such that the entries of $\bTheta$ are independent normal random variables with mean $0$ and variance $k^{-1}$. For the rescaled Rademacher distribution, the entries of $\bTheta$ are independent random variables satisfying $\mathbb{P} \left ( [\bTheta]_{i,j}= \pm k^{-1/2} \right )=1/2$. Next we recall a standard result that states that the rescaled Gaussian and Rademacher distributions with sufficiently large $k$ are $(\varepsilon, \delta, d)$ oblivious $\ell_2 \to \ell_2$ subspace embeddings. This can be found in~\cite{sarlos2006improved,woodruff2014sketching}. The authors, however, provided the bounds for $k$ in $\mathcal{O}$ (asymptotic) notation with no concern about the constants. These bounds can be impractical for certification (both a priori and a posteriori) of the solution. Below we provide explicit bounds for $k$. 

\begin{proposition} \label{thm:Rademacher}
 Let $\varepsilon$ and $\delta$ be such that $0<\varepsilon<0.572$ and $0 <\delta <1$. The rescaled Gaussian and the rescaled Rademacher distributions over $\mathbb{R}^{k \times n}$ with $k\geq 7.87 \varepsilon^{-2}({6.9} d + {\log ({1}/\delta)})$ for $\mathbb{K} = \mathbb{R}$ and $k\geq 7.87 \varepsilon^{-2}({13.8} d + {\log ({1}/\delta)})$ for $\mathbb{K} = \mathbb{C}$ are $( \varepsilon, \delta, d)$ oblivious $\ell_2 \to \ell_2$ subspace embeddings.
\begin{proof}
See appendix.
 \end{proof} \end{proposition}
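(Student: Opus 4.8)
The plan is to recognize that, since $\bR_X = \bI$ in the $\ell_2 \to \ell_2$ setting, being a $\varepsilon$-embedding for a $d$-dimensional subspace is a purely spectral statement about $\bTheta$ restricted to that subspace, and then to verify it with the classical ingredients: an $\varepsilon$-net of the unit sphere, a union bound, and a sharp Johnson--Lindenstrauss tail inequality with explicit constants.

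Step one is the reformulation. Fix a $d$-dimensional subspace $V \subseteq X$ and let $\bV \in \mathbb{K}^{n \times d}$ have $\ell_2$-orthonormal columns spanning $V$. By Definition~\ref{def:epsilon_embedding} (with $\langle \cdot,\cdot\rangle_X = \langle\cdot,\cdot\rangle$), $\bTheta$ is a $\varepsilon$-embedding for $V$ iff $|\langle\bx,\by\rangle - \langle\bTheta\bx,\bTheta\by\rangle| \le \varepsilon\|\bx\|\|\by\|$ for all $\bx,\by \in V$; substituting $\bx=\bV\bz$, $\by=\bV\bz'$ and using that $(\bTheta\bV)^{\mathrm H}(\bTheta\bV) - \bI_d$ is self-adjoint, this is equivalent to the single spectral inequality $\|(\bTheta\bV)^{\mathrm H}(\bTheta\bV) - \bI_d\|_2 \le \varepsilon$, i.e. all singular values of $\bTheta\bV$ lie in $[\sqrt{1-\varepsilon},\sqrt{1+\varepsilon}]$. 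So it suffices to bound the probability that this fails.

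Step two is the net argument. Choose a $\rho$-net $\mathcal N$ of the unit sphere of $\mathbb{K}^d$ with $|\mathcal N| \le (1+2/\rho)^{d}$; by the standard covering lemma, $\|(\bTheta\bV)^{\mathrm H}(\bTheta\bV) - \bI_d\|_2 \le (1-2\rho)^{-1}\max_{\bz\in\mathcal N}|\,\|\bTheta\bV\bz\|^2 - 1\,|$, so it is enough to control $|\,\|\bTheta\bV\bz\|^2 - 1\,|$ by $\varepsilon' := (1-2\rho)\varepsilon$ simultaneously over $\mathcal N$. For a single unit vector $\bx = \bV\bz$, write $\|\bTheta\bx\|^2 = \sum_{i=1}^k (\bTheta\bx)_i^2$, a sum of $k$ i.i.d.\ terms. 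In the Gaussian case $(\bTheta\bx)_i \sim \mathcal N(0,1/k)$, so $\|\bTheta\bx\|^2 \sim \tfrac1k\chi^2_k$ and the exact Chernoff (Laurent--Massart) bound gives $\mathbb{P}(|\,\|\bTheta\bx\|^2 - 1\,| > t) \le 2\exp(-\tfrac k2(t - \log(1+t)))$. In the Rademacher case each $(\bTheta\bx)_i$ is a normalized Rademacher sum, hence sub-Gaussian with variance proxy $1/k$ and satisfying the same moment generating function bound $\mathbb{E}\exp(\lambda(\bTheta\bx)_i^2) \le (1-2\lambda/k)^{-1/2}$ for $0 \le \lambda < k/2$, so the identical two-sided tail follows (alternatively, Achlioptas's form of the Johnson--Lindenstrauss lemma can be quoted directly). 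A union bound over $\mathcal N$ then bounds the overall failure probability by $(1+2/\rho)^{d}\cdot 2\exp(-\tfrac k2(\varepsilon' - \log(1+\varepsilon')))$.

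Step three is the bookkeeping: requiring the last quantity to be at most $\delta$ gives $k \ge 2(\varepsilon' - \log(1+\varepsilon'))^{-1}(d\log(1+2/\rho) + \log(2/\delta))$, and one then (i) fixes a convenient $\rho$, (ii) uses $\varepsilon' \le \varepsilon < 0.572$ to bound $\varepsilon' - \log(1+\varepsilon')$ from below by a constant multiple of $\varepsilon'^2$ (equivalently of $\varepsilon^2$), and (iii) folds $\log 2$ into the $d$-term, arriving at $k \ge 7.87\,\varepsilon^{-2}(6.9\,d + \log(1/\delta))$. The case $\mathbb{K} = \mathbb{C}$ is handled by the same argument with a net of the complex unit sphere, whose covering numbers carry the exponent $2d$ instead of $d$ (equivalently, by reducing to the real case on the real span of the real and imaginary parts of the vectors in $V$, a real subspace of dimension at most $2d$); either way the sole change is $d \mapsto 2d$, which accounts for the factor $2$ between $6.9$ and $13.8$. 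The main obstacle is not conceptual but quantitative: making all constants come out this small requires a sharp single-vector tail bound — in particular, establishing for the Rademacher ensemble the $(1-2\lambda/k)^{-1/2}$ moment generating function estimate so that it matches the Gaussian case rather than losing a factor — together with a careful joint choice of $\rho$ and of the lower bound on $\varepsilon' - \log(1+\varepsilon')$ on $(0,0.572)$.
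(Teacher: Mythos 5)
Your proposal is correct and follows the same overall template as the paper's proof: a $\gamma$-net of the unit sphere of the subspace with covering number $(1+2/\gamma)^d$, a scalar Johnson--Lindenstrauss tail bound per net point, a union bound, and a reduction of the complex case to a real subspace of dimension $2d$ (whence the doubled constant). The two places where you genuinely diverge are worth noting. First, to pass from the net to the whole subspace you reformulate the embedding property as $\|(\bTheta\bV)^{\mathrm H}\bTheta\bV-\bI_d\|_2\le\varepsilon$ and invoke the operator-norm covering lemma with factor $(1-2\rho)^{-1}$, taking a union bound over the $|\mathcal N|$ net points only; the paper instead controls $\|\bTheta(\bx+\by)\|^2$ over \emph{pairs} of net points (hence the $2d\log(1+2/\gamma)$ term) and extends to arbitrary unit vectors by the series expansion $\bn=\sum_i\alpha_i\bn_i$ with $\alpha_i\le\gamma^i$, paying $(1-\gamma)^{-2}$ on the deviation, then polarizes. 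Your route is cleaner and, combined with the exact $\chi^2$ Chernoff exponent $\tfrac k2(t-\log(1+t))$ rather than the paper's simplified $k\eta^2/6$, yields constants at least as good as $7.87$ and $6.9$, so the stated bound follows a fortiori. Second, the only soft spot is the Rademacher case: the Gaussian-comparison trick $\mathbb E\,e^{\lambda Z^2}=\mathbb E_g\mathbb E_Z e^{\sqrt{2\lambda}gZ}\le(1-2\lambda)^{-1/2}$ cleanly gives the \emph{upper} tail, but the matching \emph{lower}-tail bound ($\mathbb E\,e^{-\lambda Z^2}\le(1+2\lambda)^{-1/2}$) does not follow from sub-Gaussianity alone and needs a separate argument; your fallback of quoting Achlioptas's two-sided lemma directly is exactly what the paper does, and is the safe way to close this step.
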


\begin{remark} \label{rmk:complexGauss}
	For $\mathbb{K} = \mathbb{C}$, an embedding with a better theoretical bound for $k$ than the one in~Proposition\nobreakspace \ref {thm:Rademacher} can be obtained by taking $\bTheta:= \frac{1}{\sqrt{2}}(\bTheta_{\mathrm{Re}} + j \bTheta_{\mathrm{Im}})$, where $j = \sqrt{-1}$ and $\bTheta_{\mathrm{Re}}, \bTheta_{\mathrm{Im}} \in \mathbb{R}^{k \times n}$ are rescaled Gaussian matrices. It can be shown that such $\bTheta$ is an $( \varepsilon, \delta, d)$ oblivious $\ell_2 \to \ell_2$ subspace embedding for $k\geq 3.94 \varepsilon^{-2}(13.8 d + \log (1/\delta))$. A detailed proof of this fact is provided in the supplementary material.  In this work, however, we shall consider only real-valued embeddings.
\end{remark}

For {the} P-SRHT distribution, $\bTheta$ is taken to be the first $n$ columns of the matrix $k^{-1/2} (\bR \bH_s \bD) \in \mathbb{R}^{k\times s}$, where $s$ is the power of 2 such that $n\leq s <2n$, $\bR \in \mathbb{R}^{k\times s}$ are the first $k$ rows of a random permutation of {rows} of the identity matrix, $\bH_s \in  \mathbb{R}^{s\times s}$ is a Walsh-Hadamard matrix\footnote{{The Walsh-Hadamard matrix $\bH_s$ of dimension $s$, with $s$ being a power of $2$, is a structured matrix defined recursively by $\bH_s = \bH_{s/2} \otimes \bH_2$, with $\bH_2 := \begin{bmatrix}	1& 1\\ 	1& -1	\end{bmatrix}$. A product of $\bH_s$ with a vector can be computed with $s \log_2{(s)}$ flops by using the fast Walsh-Hadamard transform.}}, and  $\bD \in  \mathbb{R}^{s\times s}$ is a random diagonal matrix with random entries such that $\mathbb{P} \left ( [\bD]_{i,i} =\pm 1 \right )=1/2$.
\begin{proposition} \label{thm:P-SRHT}
 Let $\varepsilon$ and $\delta$ be such that $0<\varepsilon<{1}$ and $0<\delta <1$. The P-SRHT distribution over $\mathbb{R}^{k\times n}$ with $k\geq {2( \varepsilon^{2} - \varepsilon^3/3)^{-1}} \left [\sqrt{d}+ \sqrt{8 \log(6 n/\delta)} \right ]^2 \log (3 d/\delta)$ is a $( \varepsilon, \delta, d)$ oblivious $\ell_2 \to \ell_2$ subspace embedding.
\begin{proof}
See appendix.
 \end{proof} \end{proposition}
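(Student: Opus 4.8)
The plan is to follow the classical analysis of the subsampled randomized Hadamard transform, keeping careful track of constants; we write adjoints throughout, so the argument covers both $\mathbb{K}=\mathbb{R}$ and $\mathbb{K}=\mathbb{C}$. Fix a $d$-dimensional subspace $V\subseteq X=\mathbb{K}^{n}$; since we are in the $\ell_2\to\ell_2$ setting, $\bR_X=\bI$ and $\langle\cdot,\cdot\rangle_X$ is the canonical inner product. Let $\bU\in\mathbb{K}^{n\times d}$ have orthonormal columns spanning $V$. Writing $\bx=\bU\ba$ and $\by=\bU\bb$ in condition~(\ref{eq:epsilon_embedding}), one sees that $\bTheta$ is an $\varepsilon$-embedding for $V$ if and only if $|\ba^{\mathrm H}(\bM-\bI)\bb|\le\varepsilon\|\ba\|\,\|\bb\|$ for all $\ba,\bb\in\mathbb{K}^{d}$, i.e.\ if and only if $\|\bM-\bI\|_2\le\varepsilon$, where $\bM:=(\bTheta\bU)^{\mathrm H}(\bTheta\bU)$. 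So it suffices to bound $\mathbb{P}\{\|\bM-\bI\|_2>\varepsilon\}$ by $\delta$. To this end, embed into $\mathbb{R}^{s}$: let $\widetilde{\bU}\in\mathbb{K}^{s\times d}$ be $\bU$ padded with $s-n$ zero rows, so $\widetilde{\bU}$ still has orthonormal columns and $\bTheta\bU=k^{-1/2}\bR\bH_s\bD\widetilde{\bU}$, and set $\bW:=s^{-1/2}\bH_s\bD\widetilde{\bU}$. Since $s^{-1/2}\bH_s$ and $\bD$ are real orthogonal, $\bW$ has orthonormal columns and $\bTheta\bU=\sqrt{s/k}\,\bR\bW$, so
\begin{equation*}
\bM=\frac{s}{k}\,\bW^{\mathrm H}\bR^{\mathrm H}\bR\bW=\frac{s}{k}\sum_{i\in S}\bw_i\bw_i^{\mathrm H},
\end{equation*}
where $\bw_i^{\mathrm H}$ denotes the $i$-th row of $\bW$ and $S\subseteq\{1,\dots,s\}$, $|S|=k$, is the uniformly random set of $k$ row indices selected by $\bR$. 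Because $\sum_{i=1}^{s}\bw_i\bw_i^{\mathrm H}=\bW^{\mathrm H}\bW=\bI$ and $\mathbb{P}(i\in S)=k/s$, we have $\mathbb{E}[\bM]=\bI$.

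The decisive quantity is the largest row norm of $\bW$. Conditioning on $\bD$, we have $\mathbb{E}_{\bD}\|\bw_i\|_2^{2}=\|\widetilde{\bU}\|_F^{2}/s=d/s$, and $\|\bw_i\|_2$ is a Lipschitz function of the diagonal signs of $\bD$. A standard concentration bound for the randomized Hadamard transform (cf.\ \cite{woodruff2014sketching}), combined with a union bound over the $s<2n$ rows with per-row failure probability $\delta/(3s)$, gives that with probability at least $1-\delta/3$,
\begin{equation*}
\max_{1\le i\le s}\|\bw_i\|_2^{2}\le\frac{1}{s}\Bigl(\sqrt{d}+\sqrt{8\log(6n/\delta)}\Bigr)^{2}=:\frac{\gamma}{s}.
\end{equation*}
This "flattening" estimate is the step I expect to require the most care: it is the only place where the constant $8$ and the argument $6n/\delta$ of the logarithm enter, and I would derive it from a sub-Gaussian tail for $\sqrt{s}\,\|\bw_i\|_2$ about $\sqrt{d}$ with variance proxy $8$, followed by the union bound just described and the crude bound $3s<6n$.

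Finally, condition on $\bD$ lying in the above good event, so $\bW$ is fixed with $\max_i\|\bw_i\|_2^{2}\le\gamma/s$, and treat $S$ as the only remaining source of randomness. Each summand $\tfrac{s}{k}\bw_i\bw_i^{\mathrm H}$ is positive semidefinite with largest eigenvalue at most $L:=\gamma/k$, and $\lambda_{\min}(\mathbb{E}[\bM])=\lambda_{\max}(\mathbb{E}[\bM])=1$. The matrix Chernoff inequality, in the form valid for uniform sampling without replacement, then gives
\begin{equation*}
\mathbb{P}\{\lambda_{\max}(\bM)\ge1+\varepsilon\}\le d\,e^{-f(\varepsilon)/L},\qquad
\mathbb{P}\{\lambda_{\min}(\bM)\le1-\varepsilon\}\le d\,e^{-\phi(\varepsilon)/L},
\end{equation*}
with $f(\varepsilon):=(1+\varepsilon)\log(1+\varepsilon)-\varepsilon$ and $\phi(\varepsilon):=(1-\varepsilon)\log(1-\varepsilon)+\varepsilon$. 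Since $\|\bM-\bI\|_2=\max_i|\lambda_i(\bM)-1|$, a union bound over the two tails, together with the elementary inequalities $f(\varepsilon)\ge\tfrac12(\varepsilon^{2}-\varepsilon^{3}/3)$ and $\phi(\varepsilon)\ge\tfrac12(\varepsilon^{2}-\varepsilon^{3}/3)$ for $0\le\varepsilon<1$ (each obtained by noting that the function and its first derivative vanish at $0$ while its second derivative dominates $1-\varepsilon$), yields
\begin{equation*}
\mathbb{P}\{\|\bM-\bI\|_2>\varepsilon\mid\bD\text{ good}\}\le 2d\,e^{-\frac{k}{2\gamma}(\varepsilon^{2}-\varepsilon^{3}/3)}\le\frac{2\delta}{3},
\end{equation*}
the last inequality because the assumed lower bound on $k$ forces $\tfrac{k}{2\gamma}(\varepsilon^{2}-\varepsilon^{3}/3)\ge\log(3d/\delta)$. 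Combining with the $\delta/3$ probability that $\bD$ is not good, the total failure probability is at most $\delta$, which is what we wanted. Apart from the flattening bound, the only subtlety is to invoke the without-replacement (rather than i.i.d.) version of the matrix Chernoff inequality, since $\bR$ samples rows without replacement.
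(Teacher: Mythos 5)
Your proposal is correct and follows essentially the same route as the paper: the paper reduces the claim to the statement that all singular values of $\bTheta^*\bV^*$ lie in $[\sqrt{1-\varepsilon},\sqrt{1+\varepsilon}]$ and then cites \cite[Lemma~4.1]{boutsidis2013improved} and \cite[Theorem~3.1]{tropp2011improved} (deferring the constant-tracking details and the complex case to the supplementary material), which is precisely the row-norm flattening plus without-replacement matrix Chernoff argument you carry out explicitly. Your constants --- $\gamma=\bigl(\sqrt{d}+\sqrt{8\log(6n/\delta)}\bigr)^2$ from the $\delta/3$ flattening event over the $s<2n$ rows, and the bounds $f(\varepsilon),\phi(\varepsilon)\ge\tfrac12(\varepsilon^{2}-\varepsilon^{3}/3)$ in the two Chernoff tails --- reproduce the stated lower bound on $k$ exactly.
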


\begin{remark} \label{rmk:P-SRHT-Gaussian}
 A product of P-SRHT and Gaussian (or Rademacher) matrices can lead to oblivious $\ell_2 \to \ell_2$ subspace embeddings that have better theoretical bounds for $k$ than P-SRHT but still {have low complexity of multiplication by a vector.}
\end{remark}

 We observe that the lower bounds in Propositions\nobreakspace \ref {thm:Rademacher} and\nobreakspace  \ref {thm:P-SRHT} are independent or only weakly (logarithmically) dependent on the dimension $n$ and the probability of failure $\delta$. In other words, $\bTheta$ with a moderate $k$ can be  guaranteed to satisfy~(\ref {eq:oblepsilon_embedding}) even for extremely large $n$ and small $\delta$. {Note that the theoretical bounds for $k$ shall be useful only for problems with rather high initial dimension, say with $n/r>10^4$. Furthermore, in our experiments we revealed that the presented theoretical bounds are pessimistic. {Another way for selecting the size for {the} random sketching matrix $\bTheta$ such that it is an $\varepsilon$-embedding for a given subspace $V$ is the adaptive procedure proposed in~\cite{balabanov2018}.}}
 
 The rescaled Rademacher distribution and P-SRHT provide database-friendly matrices, which are easy to operate with. The rescaled Rademacher distribution is attractive from the data structure point of view and it can be efficiently implemented using standard SQL primitives~\cite{achlioptas2003database}. {The P-SRHT has a hierarchical structure allowing multiplications by vectors with only $s  \log_2{(s)}$ flops, where $s$ is a power of $2$ and $n\leq s < 2n$, using the fast Walsh-Hadamard transform or even $2s \log_2(k + 1)$ flops using a more sophisticated procedure proposed in~\cite{ailon2009fast}}. In the algorithms P-SRHT distribution shall be preferred. However for multi-core computing, where the hierarchical structure of P-SRHT cannot be fully exploited, Gaussian or Rademacher matrices can be more expedient. Finally, we would like to point out that a random sequence needed for constructing a realization of Gaussian, Rademacher or P-SRHT distribution can be generated using a seeded random number generator. In this way, an embedding can be efficiently maintained with negligible communication (for parallel and distributed computing) and storage costs. 

The following proposition can be used for constructing oblivious $X \to \ell_2$ subspace embeddings for general inner product  $\langle \bR_X \cdot, \cdot \rangle$ from classical $\ell_2 \to \ell_2$ subspace embeddings.
\begin{proposition} \label{thm:buildepsilon_embedding}
 Let $\bQ \in \mathbb{K}^{s\times n}$ be any matrix such that $\bQ^{\mathrm{H}}\bQ= \bR_X$. If $\bOmega \in \mathbb{K}^{k\times s}$ is a $(\varepsilon, \delta, d)$ oblivious $\ell_2 \to \ell_2$ subspace embedding, then $\bTheta=\bOmega\bQ$ is a $(\varepsilon, \delta, d)$ oblivious $X \to \ell_2$ subspace embedding.
\begin{proof}
 See appendix.
 \end{proof} \end{proposition}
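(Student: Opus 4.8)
The plan is to reduce the general $X \to \ell_2$ statement to the classical $\ell_2 \to \ell_2$ property of $\bOmega$ by pushing subspaces of $X$ through $\bQ$. Fix an arbitrary $d$-dimensional subspace $V \subseteq X$. The first step is to observe that $\bQ$ is injective: if $\bQ\bx = \bnull$ then $\langle \bx, \bx \rangle_X = \langle \bR_X \bx, \bx \rangle = \langle \bQ^{\mathrm{H}}\bQ\bx, \bx\rangle = \| \bQ\bx \|_2^2 = 0$, hence $\bx = \bnull$ since $\langle \cdot, \cdot \rangle_X$ is an inner product. Consequently $\bQ V := \{ \bQ\bx : \bx \in V \}$ is a subspace of $\mathbb{K}^s$ of dimension exactly $d$.

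Next I would apply the hypothesis that $\bOmega$ is a $(\varepsilon, \delta, d)$ oblivious $\ell_2 \to \ell_2$ subspace embedding to the particular $d$-dimensional subspace $\bQ V$: with probability at least $1 - \delta$, $\bOmega$ is an $\ell_2 \to \ell_2$ $\varepsilon$-subspace embedding for $\bQ V$, i.e.
\begin{equation*}
 \forall \bu, \bv \in \bQ V, \quad \left| \langle \bu, \bv \rangle - \langle \bOmega \bu, \bOmega \bv \rangle \right| \leq \varepsilon \| \bu \|_2 \| \bv \|_2 .
\end{equation*}
It remains to translate this back to $V$. For $\bx, \by \in V$ put $\bu = \bQ\bx$ and $\bv = \bQ\by$, both in $\bQ V$. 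Using $\bQ^{\mathrm{H}}\bQ = \bR_X$ and $\bTheta = \bOmega\bQ$ one gets $\langle \bu, \bv \rangle = \langle \bQ^{\mathrm{H}}\bQ\bx, \by \rangle = \langle \bx, \by \rangle_X$, then $\langle \bOmega\bu, \bOmega\bv \rangle = \langle \bTheta\bx, \bTheta\by \rangle = \langle \bx, \by \rangle_X^{\bTheta}$, and likewise $\| \bu \|_2 = \| \bx \|_X$, $\| \bv \|_2 = \| \by \|_X$. Substituting these identities into the displayed inequality yields precisely
\begin{equation*}
 \forall \bx, \by \in V, \quad \left| \langle \bx, \by \rangle_X - \langle \bx, \by \rangle_X^{\bTheta} \right| \leq \varepsilon \| \bx \|_X \| \by \|_X ,
\end{equation*}
which is the $\varepsilon$-embedding condition of Definition~\ref{def:epsilon_embedding} for $V$. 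Hence the event ``$\bOmega$ is an $\ell_2 \to \ell_2$ $\varepsilon$-embedding for $\bQ V$'' is contained in the event ``$\bTheta$ is an $X \to \ell_2$ $\varepsilon$-embedding for $V$'', so the latter has probability at least $1 - \delta$. Since $V$ was an arbitrary $d$-dimensional subspace of $X$, this establishes that $\bTheta$ is a $(\varepsilon, \delta, d)$ oblivious $X \to \ell_2$ subspace embedding.

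There is essentially no hard step here; the only point requiring care is the dimension bookkeeping, namely that $\bQ$ (which need not be square, since in general $s \neq n$) maps a $d$-dimensional subspace onto a $d$-dimensional subspace, and this is handled by the injectivity argument above. Everything else is a direct unwinding of the definitions~(\ref{eq:thetainnerdef}) and~(\ref{eq:epsilon_embedding}). One could also phrase the argument slightly more abstractly by noting that $\bQ$ is an isometry from $(X, \langle\cdot,\cdot\rangle_X)$ into $(\mathbb{K}^s, \langle\cdot,\cdot\rangle)$, but the elementary computation is already short enough.
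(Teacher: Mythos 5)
Your proof is correct and follows essentially the same route as the paper's: map $V$ to $\bQ V \subseteq \mathbb{K}^s$, invoke the oblivious $\ell_2 \to \ell_2$ embedding property of $\bOmega$ there, and pull the inequality back through the identities $\langle \bQ\cdot, \bQ\cdot\rangle = \langle \cdot,\cdot\rangle_X$ and $\langle \bOmega\bQ\cdot, \bOmega\bQ\cdot\rangle = \langle \cdot,\cdot\rangle_X^{\bTheta}$. Your explicit injectivity argument showing $\dim(\bQ V) = d$ is a detail the paper's one-line proof leaves implicit, and it is a worthwhile addition.
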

Note that {the} matrix $\bQ$ in Proposition\nobreakspace \ref {thm:buildepsilon_embedding} can be efficiently obtained block-wise (see Remark\nobreakspace \ref {rmk:Q_eval}). In addition, there is no need to evaluate $\bTheta=\bOmega\bQ$ explicitly.

\section{$\ell_2$-embeddings for projection-based MOR} \label{l2embeddingsMOR}
In this section we integrate the sketching technique in the context of {model order reduction methods from Section\nobreakspace \ref {MOR}. Let us define the following subspace of $U$:
\begin{equation}  \label{eq:Y_r}
 Y_r(\mu) :=  U_r+ \mathrm{span} \{ \bR_U^{-1} \br(\bx; \mu) : \bx \in U_r \},
\end{equation}
where $\br(\bx; \mu) = \bb(\mu)- \bA(\mu)\bx$, and identify its dual space with $Y_r(\mu)' :=\mathrm{span} \{ \bR_U \bx : \bx \in Y_r(\mu) \}$.}
{Furthermore, let $\bTheta \in \mathbb{K}^{k\times n}$ be a certain sketching matrix seen as an $U \to \ell_2$ subspace embedding.}
\subsection {Galerkin projection} \label{SKgalproj}
{We propose to use random sketching for estimating the Galerkin projection. For any $\bx \in U_r$ the residual $ \br(\bx; \mu)$ belongs to $Y_r(\mu)'$.} Consequently, taking into account Proposition\nobreakspace \ref {thm:skseminorm_ineq}, if $\bTheta$ is a  $U \to \ell_2$ $\varepsilon$-subspace embedding for $Y_r(\mu)$, then for all $\bx \in U_r$ the semi-norm $\| \br(\bx; \mu) \|_{U_r'}$ in~(\ref {eq:galproj2}) can be well approximated by $\| \br(\bx; \mu) \|^{\bTheta}_{U_r'}$. This leads to the sketched version of the Galerkin orthogonality condition:
\begin{equation} \label{eq:SKgalproj}
 \| \br(\bu_r(\mu);\mu) \|^{\bTheta}_{U_r'}=0.  
\end{equation}
The quality of projection $\bu_r(\mu)$ satisfying~(\ref {eq:SKgalproj}) can be characterized by the following coefficients:
\begin{subequations} \label{eq:skalpharbetar}
\begin{align} 
 &\alpha^{\bTheta}_r(\mu):= \underset{ \bx \in U_r \backslash \{ \bnull \}} \min \frac{\| \bA(\mu)\bx \|^{\bTheta}_{U_r'}}{\| \bx \|_U}, \label{eq:skalphar} \\ 
 &\beta^{\bTheta}_r(\mu):= \underset{ \bx \in \left ( \mathrm{span} \{ \bu(\mu) \}+ U_r \right ) \backslash \{ \bnull \}} \max \frac{ \| \bA(\mu) \bx \|^{\bTheta}_{U_r'}}{\| \bx \|_U}.
\end{align} 
\end{subequations}

\begin{proposition}[Cea's lemma for sketched Galerkin projection] \label{thm:SKquasi-opt}
 Let $\bu_r(\mu)$ satisfy~(\ref {eq:SKgalproj}). If $\alpha^{\bTheta}_r(\mu)>0$, then the following relation holds 
 \begin{equation} \label{eq:SKquasi-opt}
  \| \bu(\mu)- \bu_r(\mu) \|_{U} \leq (1+\frac{\beta^{\bTheta}_r(\mu)}{\alpha^{\bTheta}_r(\mu)}) \| \bu(\mu)- \bP_{U_r}\bu(\mu) \|_{U}.
 \end{equation}
\begin{proof}
See appendix.
 \end{proof} \end{proposition}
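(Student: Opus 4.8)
The plan is to reproduce the classical proof of Cea's lemma, but working throughout with the sketched dual semi-norm $\|\cdot\|^{\bTheta}_{U_r'}$ and the sketched stability constants $\alpha^{\bTheta}_r(\mu)$, $\beta^{\bTheta}_r(\mu)$ in place of $\|\cdot\|_{U_r'}$, $\alpha_r(\mu)$, $\beta_r(\mu)$. The structural facts I would rely on are: (i) $\|\cdot\|^{\bTheta}_{U_r'}$ is a semi-norm (it is a supremum of absolute values of linear functionals normalised by $\|\cdot\|^{\bTheta}_U$), hence satisfies the triangle inequality; (ii) for any $\bx \in U_r$ one has $\br(\bx;\mu) = \bb(\mu) - \bA(\mu)\bx = \bA(\mu)(\bu(\mu) - \bx)$, and this vector lies in $Y_r(\mu)'$; and (iii) the ratio defining $\alpha^{\bTheta}_r(\mu)$ applies to every nonzero vector of $U_r$, while the ratio defining $\beta^{\bTheta}_r(\mu)$ applies to every nonzero vector of $\mathrm{span}\{\bu(\mu)\}+U_r$.

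Concretely, I would put $\bw_r := \bP_{U_r}\bu(\mu) \in U_r$ and estimate $\|\bu_r(\mu) - \bw_r\|_U$. Since $\bu_r(\mu) - \bw_r \in U_r$ and $\alpha^{\bTheta}_r(\mu) > 0$, definition~\eqref{eq:skalphar} gives $\alpha^{\bTheta}_r(\mu)\,\|\bu_r(\mu) - \bw_r\|_U \leq \|\bA(\mu)(\bu_r(\mu) - \bw_r)\|^{\bTheta}_{U_r'}$. Writing $\bA(\mu)(\bu_r(\mu) - \bw_r) = \br(\bw_r;\mu) - \br(\bu_r(\mu);\mu)$, applying the triangle inequality for $\|\cdot\|^{\bTheta}_{U_r'}$, and using the hypothesis $\|\br(\bu_r(\mu);\mu)\|^{\bTheta}_{U_r'} = 0$ from~\eqref{eq:SKgalproj}, the right-hand side is at most $\|\br(\bw_r;\mu)\|^{\bTheta}_{U_r'}$. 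Since $\br(\bw_r;\mu) = \bA(\mu)(\bu(\mu) - \bw_r)$ with $\bu(\mu) - \bw_r \in \mathrm{span}\{\bu(\mu)\} + U_r$, the definition of $\beta^{\bTheta}_r(\mu)$ bounds this by $\beta^{\bTheta}_r(\mu)\,\|\bu(\mu) - \bw_r\|_U$. Chaining the estimates yields $\|\bu_r(\mu) - \bw_r\|_U \leq \tfrac{\beta^{\bTheta}_r(\mu)}{\alpha^{\bTheta}_r(\mu)}\|\bu(\mu) - \bw_r\|_U$, and then the triangle inequality $\|\bu(\mu) - \bu_r(\mu)\|_U \leq \|\bu(\mu) - \bw_r\|_U + \|\bw_r - \bu_r(\mu)\|_U$ together with $\|\bu(\mu) - \bw_r\|_U = \|\bu(\mu) - \bP_{U_r}\bu(\mu)\|_U$ gives exactly~\eqref{eq:SKquasi-opt}.

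The only delicate point is bookkeeping about well-posedness: one should observe that $\bTheta$ is (implicitly) an $\varepsilon$-embedding for $Y_r(\mu)$, so that by Proposition~\ref{thm:innerproduct} the semi-norm $\|\cdot\|^{\bTheta}_U$ restricts to a genuine norm on $U_r$, making the ratios defining $\alpha^{\bTheta}_r(\mu)$, $\beta^{\bTheta}_r(\mu)$ and the semi-norm $\|\cdot\|^{\bTheta}_{U_r'}$ finite and meaningful, and that all residuals occurring above indeed belong to $Y_r(\mu)'$ (note $\bb(\mu) = \br(\bnull;\mu)$ and $\bnull \in U_r$). Beyond this, I expect no real obstacle: the argument is purely algebraic and, in contrast with the probabilistic statements of Sections~\ref{RS} and~\ref{obleddings}, requires no concentration estimate. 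It is also worth noting that the proof never uses that $\bu_r(\mu)$ is a Galerkin solution for the exact inner product — only that it annihilates the sketched residual semi-norm — which is precisely what legitimises the sketched scheme.
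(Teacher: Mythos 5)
Your proof is correct and is essentially identical to the paper's argument: the paper proves this by repeating the proof of Proposition\nobreakspace \ref{thm:cea} verbatim with $\| \cdot \|_{U_r'}$ replaced by $\| \cdot \|^{\bTheta}_{U_r'}$, which is exactly the chain of inequalities you wrote (with the particular choice $\bx = \bP_{U_r}\bu(\mu)$). The extra remarks on well-posedness are sensible but not needed beyond the stated hypothesis $\alpha^{\bTheta}_r(\mu)>0$.
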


\begin{proposition} \label{thm:skcea}
 Let 
 \begin{equation*}
  a_r(\mu):= \underset{ \bw \in U_r \backslash \{ \bnull \}} \max \frac{ \| \bA(\mu)\bw \|_{U'} }{\| \bA(\mu)\bw \|_{U_r'}}.
 \end{equation*}
 If $\bTheta$ is a $U \to \ell_2$  $\varepsilon$-embedding for $Y_r(\mu)$, then 
 \begin{subequations}
 \begin{align} \label{eq:skalphabetabounds}
  &\alpha^{\bTheta}_r(\mu)\geq \frac{1}{\sqrt{1+\varepsilon}}(1-\varepsilon a_r(\mu)) \alpha_r(\mu), \\   
  &\beta^{\bTheta}_r(\mu)\leq \frac{1}{\sqrt{1-\varepsilon}}(\beta_r(\mu)+ \varepsilon \beta(\mu)).
 \end{align} 
 \end{subequations}
\begin{proof}
See appendix.
 \end{proof} \end{proposition}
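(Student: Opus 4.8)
The plan is to bound the two sketched quantities $\alpha_r^{\bTheta}(\mu)$ and $\beta_r^{\bTheta}(\mu)$ by relating the sketched semi-norm $\|\cdot\|_{U_r'}^{\bTheta}$ to the exact semi-norm $\|\cdot\|_{U_r'}$ via Proposition \ref{thm:skseminorm_ineq}. The key observation is that for any $\bw \in U_r$, the residual-like vector $\bA(\mu)\bw$ actually lies in $Y_r(\mu)'$: indeed $\bA(\mu)\bw = \bb(\mu) - \br(\bw;\mu)$, and both $\bR_U^{-1}\bb(\mu)$ (as $\br(\bnull;\mu)$ with $\bnull \in U_r$) and $\bR_U^{-1}\br(\bw;\mu)$ belong to $Y_r(\mu)$ by definition \eqref{eq:Y_r}; hence $\bA(\mu)\bw \in Y_r(\mu)'$. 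Similarly, for $\bx \in \mathrm{span}\{\bu(\mu)\} + U_r$ one has $\bA(\mu)\bx \in U'$ and, when $\bx \in U_r$, $\bA(\mu)\bx \in Y_r(\mu)'$. This is what lets us apply Proposition \ref{thm:skseminorm_ineq} with $Z = U_r \subseteq Y = Y_r(\mu)$.

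For the lower bound on $\alpha_r^{\bTheta}(\mu)$, I would fix $\bx \in U_r \backslash \{\bnull\}$, set $\by' = \bA(\mu)\bx \in Y_r(\mu)'$, and apply the left inequality of \eqref{eq:skseminorm_ineq}:
\begin{equation*}
 \|\bA(\mu)\bx\|_{U_r'}^{\bTheta} \geq \frac{1}{\sqrt{1+\varepsilon}}\left(\|\bA(\mu)\bx\|_{U_r'} - \varepsilon \|\bA(\mu)\bx\|_{U'}\right).
\end{equation*}
Now using the definition of $a_r(\mu)$, which gives $\|\bA(\mu)\bx\|_{U'} \leq a_r(\mu)\,\|\bA(\mu)\bx\|_{U_r'}$ for $\bx \in U_r$, the right-hand side is at least $\frac{1}{\sqrt{1+\varepsilon}}(1 - \varepsilon a_r(\mu))\|\bA(\mu)\bx\|_{U_r'}$. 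Dividing by $\|\bx\|_U$ and taking the minimum over $\bx \in U_r \backslash \{\bnull\}$ yields $\alpha_r^{\bTheta}(\mu) \geq \frac{1}{\sqrt{1+\varepsilon}}(1-\varepsilon a_r(\mu))\,\alpha_r(\mu)$, using the definition \eqref{eq:alphar} of $\alpha_r(\mu)$.

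For the upper bound on $\beta_r^{\bTheta}(\mu)$, I would fix $\bx \in (\mathrm{span}\{\bu(\mu)\} + U_r) \backslash \{\bnull\}$ and apply the right inequality of \eqref{eq:skseminorm_ineq} to $\by' = \bA(\mu)\bx$; here one must be slightly careful, since $\bA(\mu)\bx$ need not lie in $Y_r(\mu)'$ when $\bx$ has a component along $\bu(\mu)$. The resolution is that Proposition \ref{thm:skseminorm_ineq} requires $\bTheta$ to be an $\varepsilon$-embedding for $Y_r(\mu)$ and $\by' \in Y_r(\mu)'$ — so the honest route is to instead bound $\|\bA(\mu)\bx\|_{U_r'}^{\bTheta}$ directly from the definition of the sketched semi-norm \eqref{eq:skseminorm} restricted to $\bw \in U_r$, where both arguments of $\langle\cdot,\cdot\rangle_U^{\bTheta}$ lie in $Y_r(\mu)$, and then pass to exact inner products through \eqref{eq:epsilon_embedding}. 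This gives $|\langle \bA(\mu)\bx, \bw\rangle| \lesssim$ (bound involving $\|\bA(\mu)\bx\|_{U'}$ and $\|\bw\|_U$), from which $\|\bA(\mu)\bx\|_{U_r'}^{\bTheta} \leq \frac{1}{\sqrt{1-\varepsilon}}(\|\bA(\mu)\bx\|_{U_r'} + \varepsilon\|\bA(\mu)\bx\|_{U'})$; then $\|\bA(\mu)\bx\|_{U_r'} \leq \beta_r(\mu)\|\bx\|_U$ by \eqref{eq:betar} and $\|\bA(\mu)\bx\|_{U'} \leq \beta(\mu)\|\bx\|_U$ by \eqref{eq:beta}, and dividing by $\|\bx\|_U$ and taking the supremum gives the claim. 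The main obstacle is precisely this subtlety about which subspace the vector $\bA(\mu)\bx$ lives in when $\bx$ ranges over $\mathrm{span}\{\bu(\mu)\}+U_r$ rather than $U_r$; the cleanest fix is to work with $\|\cdot\|_{U_r'}^{\bTheta}$ through a $Y_r(\mu)$-embedding estimate applied only to the $U_r$-valued test vector $\bw$, exactly as in the proof of Proposition \ref{thm:skseminorm_ineq}, rather than trying to invoke that proposition as a black box on a dual vector outside $Y_r(\mu)'$.
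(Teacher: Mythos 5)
Your proof is correct and takes essentially the same route as the paper: both bounds follow by applying Proposition~\ref{thm:skseminorm_ineq} with $Z = U_r$, $Y = Y_r(\mu)$ to $\by' = \bA(\mu)\bx$, then invoking the definitions of $a_r(\mu)$, $\alpha_r(\mu)$, $\beta_r(\mu)$ and $\beta(\mu)$. The ``subtlety'' you flag for the $\beta^{\bTheta}_r(\mu)$ bound is not actually an obstacle: since $\bA(\mu)\bu(\mu) = \bb(\mu)$ and $\bR_U^{-1}\bb(\mu) = \bR_U^{-1}\br(\bnull;\mu) \in Y_r(\mu)$ (because $\bnull \in U_r$), one has $\bA(\mu)\bx \in Y_r(\mu)'$ for every $\bx \in \mathrm{span}\{\bu(\mu)\} + U_r$, so Proposition~\ref{thm:skseminorm_ineq} applies as a black box there too --- which is exactly what the paper does, and your inline re-derivation of it is merely an equivalent detour.
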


There are two ways to select a random distribution for $\bTheta$ such that it is guaranteed to be a $U \to \ell_2$  $\varepsilon$-embedding for $Y_r(\mu)$ for all $\mu \in \mathcal{P}$, simultaneously, with probability at least $1-\delta$. 
A first way applies when $\mathcal{P}$ is of finite cardinality. We can choose $\bTheta$ such that it is a $( \varepsilon, \delta {\# \mathcal{P}}^{-1}, d)$ oblivious $U \to \ell_2$ subspace embedding, where $d: = \max_{\mu  \in \mathcal{P} } {\textup{ dim} (Y_r(\mu))}$ and apply a union bound for the probability of success. Since $d \leq 2r +1$,   $\bTheta$ can be selected of moderate size.
When  $\mathcal{P}$ is infinite, we make a standard assumption that $\bA(\mu)$ and $\bb(\mu)$ admit affine representations. 
It then follows directly from the definition of $Y_r(\mu)$  that $ \bigcup_{\mu  \in \mathcal{P}}  Y_r(\mu)$  is contained in a low-dimensional space $Y^*_r$. Let $d^*$ be the dimension of this space. By definition, if $\bTheta$ is a $( \varepsilon, \delta, d^*)$ oblivious $U \to \ell_2$ subspace embedding, then it is a $U \to \ell_2$ $\varepsilon$-embedding for $Y^*_r$, and hence for every  $Y_r(\mu)$, simultaneously, with probability at least $1-\delta$.

The lower bound for $\alpha^{\bTheta}_r(\mu)$ in~Proposition\nobreakspace \ref {thm:skcea} depends on the product $\varepsilon a_r(\mu)$. {In particular, to guarantee positivity of $\alpha^{\bTheta}_r(\mu)$ and ensure well-posedness of~(\ref{eq:SKgalproj}), condition $\varepsilon a_r(\mu) <1$ has to be satisfied.} The coefficient $a_r(\mu)$ is {bounded from above} by $\frac{\beta(\mu)}{\alpha_r(\mu)}$. Consequently, $a_r(\mu)$ for coercive well-conditioned operators is expected to be lower than for non-coercive ill-conditioned $\bA(\mu)$. The condition number and coercivity of $\bA(\mu)$, however, do not fully characterize $a_r(\mu)$. This coefficient rather reflects how well $U_r$ corresponds to its image $\{ \bA(\mu) \bx: \bx \in U_r \}$ through the map $\bA(\mu)$. 
For example, if the basis for $U_r$ is formed from eigenvectors of $\bA(\mu)$ then $a_r(\mu)=1$. We also would like to note that the performance of {the} random sketching technique depends on {the operator,} only when it is employed for estimating the Galerkin projection. The accuracy of estimation of the residual error and the goal-oriented correction depends on the quality of sketching matrix $\bTheta$ but not on $\bA(\mu)$. In addition, to make the performance of random sketching completely insensitive to the operator's properties, one can consider another type of projection (randomized minimal residual projection) for $\bu_r(\mu)$ as is discussed in~\cite{balabanov2018}.

The {coordinates} of the solution $\bu_r(\mu)$ of~(\ref {eq:SKgalproj}) can be found by solving 
\begin{equation} \label{eq:skreduced_system}
 \bA_r(\mu)\ba_r(\mu) = \bb_r(\mu), 
\end{equation} 
where $\bA_r(\mu):= \bU_r^{\mathrm{H}}\bTheta^{\mathrm{H}}\bTheta\bR_U^{-1}\bA(\mu)\bU_r \in \mathbb{K}^{r\times r}$ and $\bb_r(\mu):= \bU_r^{\mathrm{H}}\bTheta^{\mathrm{H}}\bTheta\bR_U^{-1}\bb(\mu) \in \mathbb{K}^{r}$. 
\begin{proposition} \label{thm:skalgstability}
 Let $\bTheta$ be a $U \to \ell_2$ $\varepsilon$-embedding for $U_r$, and let $\bU_r$ be orthogonal with respect to $\langle \cdot, \cdot  \rangle^{\bTheta}_U$. Then the condition number of $\bA_r(\mu)$ in~(\ref {eq:skreduced_system}) is bounded by $\sqrt{\frac{1+\varepsilon}{1-\varepsilon}} \frac{\beta^\bTheta_r(\mu)}{\alpha^\bTheta_r(\mu)}$.
\begin{proof}
See appendix.
\end{proof} 
\end{proposition}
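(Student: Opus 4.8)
The goal is to bound the condition number of $\bA_r(\mu) = \bU_r^{\mathrm{H}}\bTheta^{\mathrm{H}}\bTheta\bR_U^{-1}\bA(\mu)\bU_r$ when $\bU_r$ is orthonormal with respect to $\langle\cdot,\cdot\rangle^{\bTheta}_U$. The plan is to relate $\bA_r(\mu)$ directly to the quantities $\alpha^{\bTheta}_r(\mu)$ and $\beta^{\bTheta}_r(\mu)$ by reading the sketched semi-norms off in coordinates, and then to pay a factor $\sqrt{(1+\varepsilon)/(1-\varepsilon)}$ to pass from the $\langle\cdot,\cdot\rangle^{\bTheta}_U$-normalization of the columns of $\bU_r$ to the $\langle\cdot,\cdot\rangle_U$-normalization in which $\alpha^{\bTheta}_r$ and $\beta^{\bTheta}_r$ are defined.

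First I would observe that for any $\ba \in \mathbb{K}^r$ with $\bx = \bU_r\ba \in U_r$, the sketched dual semi-norm of the residual-type vector $\bA(\mu)\bx$ can be written explicitly: since $\bx' := \bR_U(\bR_U^{-1}\bA(\mu)\bx) = \bA(\mu)\bx$ lies in $Y_r(\mu)'$ and $U_r$ is $\langle\cdot,\cdot\rangle^{\bTheta}_U$-orthonormal (so $\| \bU_r\bw\|^{\bTheta}_U = \|\bw\|$ for all $\bw$), the maximization in~(\ref{eq:skseminorm}) over $\bw\in U_r$ becomes a maximization over $\bw\in\mathbb{K}^r$ of $|\langle \bTheta\bR_U^{-1}\bA(\mu)\bU_r\ba,\ \bTheta\bU_r\bw\rangle|/\|\bw\|$. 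Writing $\bB := (\bTheta\bU_r)^{\mathrm{H}}\bTheta\bR_U^{-1}\bA(\mu)\bU_r = \bA_r(\mu)$ — here I use that $\bU_r^{\mathrm{H}}\bTheta^{\mathrm{H}}\bTheta\bU_r = \bI$ is not needed, only the definition of $\bA_r$ — the inner product equals $\langle \bA_r(\mu)\ba, \bw\rangle$, so the max over $\bw$ of its modulus divided by $\|\bw\|$ is exactly $\|\bA_r(\mu)\ba\|$ (Euclidean). Hence
\begin{equation*}
\| \bA(\mu)\bU_r\ba \|^{\bTheta}_{U_r'} = \| \bA_r(\mu)\ba \|, \qquad \ba\in\mathbb{K}^r.
\end{equation*}

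Next I would convert $\|\bx\|_U = \|\bU_r\ba\|_U$ appearing in the definitions~(\ref{eq:skalpharbetar}) into $\|\ba\|$. Because $\bTheta$ is an $\varepsilon$-embedding for $U_r$, Definition~\ref{def:epsilon_embedding} gives $(1-\varepsilon)\|\bU_r\ba\|_U^2 \le \|\bU_r\ba\|_U^{\bTheta,2} \le (1+\varepsilon)\|\bU_r\ba\|_U^2$, and $\|\bU_r\ba\|^{\bTheta}_U = \|\ba\|$ by the assumed $\bTheta$-orthonormality. Therefore $\tfrac{1}{\sqrt{1+\varepsilon}}\|\ba\| \le \|\bU_r\ba\|_U \le \tfrac{1}{\sqrt{1-\varepsilon}}\|\ba\|$. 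Combining with the identity above and the definitions $\alpha^{\bTheta}_r(\mu) = \min_{\bx\in U_r\setminus\{\bnull\}} \|\bA(\mu)\bx\|^{\bTheta}_{U_r'}/\|\bx\|_U$ and $\beta^{\bTheta}_r(\mu) \ge \max_{\bx\in U_r\setminus\{\bnull\}} \|\bA(\mu)\bx\|^{\bTheta}_{U_r'}/\|\bx\|_U$ (the $\beta^{\bTheta}_r$ maximization is over a superspace of $U_r$, so this one-sided bound is all we get, which is fine), one reads off
\begin{equation*}
\sigma_{\min}(\bA_r(\mu)) \ge \sqrt{1-\varepsilon}\,\alpha^{\bTheta}_r(\mu), \qquad \sigma_{\max}(\bA_r(\mu)) \le \sqrt{1+\varepsilon}\,\beta^{\bTheta}_r(\mu),
\end{equation*}
and dividing gives $\kappa(\bA_r(\mu)) \le \sqrt{\tfrac{1+\varepsilon}{1-\varepsilon}}\,\tfrac{\beta^{\bTheta}_r(\mu)}{\alpha^{\bTheta}_r(\mu)}$, as claimed.

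The only mildly delicate point — the main obstacle, such as it is — is bookkeeping the two different normalizations consistently: $\alpha^{\bTheta}_r$ and $\beta^{\bTheta}_r$ are defined with $\|\cdot\|_U$ in the denominator, whereas the hypothesis orthonormalizes $\bU_r$ in $\|\cdot\|^{\bTheta}_U$, and one must be careful that the $\varepsilon$-embedding hypothesis is invoked only for the subspace $U_r$ (as stated) and not for the larger $Y_r(\mu)$, since $\alpha^{\bTheta}_r,\beta^{\bTheta}_r$ are already defined through the $\bTheta$-semi-norm and need no further embedding property. Everything else is a direct unwinding of definitions, so I would keep the write-up short.
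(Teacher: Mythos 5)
Your proposal is correct and follows essentially the same route as the paper's proof: use the $\langle\cdot,\cdot\rangle^{\bTheta}_U$-orthonormality of $\bU_r$ to identify $\|\bA_r(\mu)\ba\|/\|\ba\|$ with $\|\bA(\mu)\bx\|^{\bTheta}_{U_r'}/\|\bx\|^{\bTheta}_U$ for $\bx=\bU_r\ba$, then invoke the $\varepsilon$-embedding property on $U_r$ to trade $\|\bx\|^{\bTheta}_U$ for $\|\bx\|_U$ and read off the bound from the definitions of $\alpha^{\bTheta}_r(\mu)$ and $\beta^{\bTheta}_r(\mu)$. One small slip: since $\sqrt{1-\varepsilon}\,\|\bx\|_U\leq\|\ba\|\leq\sqrt{1+\varepsilon}\,\|\bx\|_U$, the singular-value bounds that actually follow are $\sigma_{\min}(\bA_r(\mu))\geq\frac{1}{\sqrt{1+\varepsilon}}\,\alpha^{\bTheta}_r(\mu)$ and $\sigma_{\max}(\bA_r(\mu))\leq\frac{1}{\sqrt{1-\varepsilon}}\,\beta^{\bTheta}_r(\mu)$; your stated $\sigma_{\max}\leq\sqrt{1+\varepsilon}\,\beta^{\bTheta}_r(\mu)$ is not justified by the argument (it is stronger than what the embedding gives), while your $\sigma_{\min}\geq\sqrt{1-\varepsilon}\,\alpha^{\bTheta}_r(\mu)$ is valid but weaker than needed. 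The two discrepancies cancel in the ratio, so your final condition-number bound $\sqrt{\tfrac{1+\varepsilon}{1-\varepsilon}}\,\beta^{\bTheta}_r(\mu)/\alpha^{\bTheta}_r(\mu)$ agrees with the paper's; just correct the intermediate constants in the write-up.
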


\subsection{Error estimation} 
Let $\bu_r(\mu) \in U_r$ be an approximation of $\bu(\mu)$. Consider the following error estimator:
\begin{equation} \label{eq:skerrorind}
 \Delta^{\bTheta}(\bu_r(\mu);\mu):= \frac{\| \br(\bu_r(\mu);\mu) \|^{\bTheta}_{U'}}{\eta(\mu)},
\end{equation}
where $\eta(\mu)$ is defined by~(\ref {eq:eta}). Below we show that under certain conditions,  $\Delta^{\bTheta}(\bu_r(\mu);\mu)$ is guaranteed to be close to the classical error indicator $\Delta(\bu_r(\mu);\mu)$.
\begin{proposition} \label{thm:skerrorind}
 If $\bTheta$ is a $U \to \ell_2$ $\varepsilon$-embedding for $\mathrm{span} \{ \bR^{-1}_U\br(\bu_r(\mu);\mu) \}$, then 
 \begin{equation} \label{eq:skerroropt}
  \sqrt{1-\varepsilon}\Delta(\bu_r(\mu);\mu)\leq \Delta^{\bTheta}(\bu_r(\mu);\mu)\leq \sqrt{1+\varepsilon}\Delta(\bu_r(\mu);\mu).
 \end{equation}
\begin{proof}
 See appendix.
 \end{proof} \end{proposition}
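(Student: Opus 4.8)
The plan is to reduce the claimed two-sided bound on the error indicators to a one-sided perturbation estimate for the dual norm of a single fixed vector, namely the residual $\br := \br(\bu_r(\mu);\mu)$. Since both $\Delta(\bu_r(\mu);\mu)$ and $\Delta^{\bTheta}(\bu_r(\mu);\mu)$ are obtained by dividing $\| \br \|_{U'}$, respectively $\| \br \|^{\bTheta}_{U'}$, by the same positive quantity $\eta(\mu)$, it suffices to establish
\[
 \sqrt{1-\varepsilon}\,\| \br \|_{U'} \leq \| \br \|^{\bTheta}_{U'} \leq \sqrt{1+\varepsilon}\,\| \br \|_{U'}.
\]

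First I would observe that $\br$ belongs to the dual subspace associated with $Y := \mathrm{span}\{ \bR_U^{-1}\br \}$, that is, to $Y' = \mathrm{span}\{ \bR_U \bx : \bx \in Y \}$: indeed $\bR_U^{-1}\br \in Y$ by definition, hence $\br = \bR_U(\bR_U^{-1}\br) \in Y'$. This is the only structural observation required, and it is precisely where the one-dimensional subspace $\mathrm{span}\{ \bR_U^{-1}\br \}$ appearing in the hypothesis is used. Now, since $\bTheta$ is assumed to be a $U \to \ell_2$ $\varepsilon$-embedding for $Y$, Corollary\nobreakspace \ref {thm:dual_embedding} applies and gives, for all $\bx', \by' \in Y'$, the bound $| \langle \bx', \by' \rangle_{U'} - \langle \bx', \by' \rangle^{\bTheta}_{U'} | \leq \varepsilon \| \bx' \|_{U'} \| \by' \|_{U'}$. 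Taking $\bx' = \by' = \br$ and using that $\| \cdot \|_{U'}$ and $\| \cdot \|^{\bTheta}_{U'}$ are the (semi-)norms associated with $\langle \cdot, \cdot \rangle_{U'}$ and $\langle \cdot, \cdot \rangle^{\bTheta}_{U'}$, this reads
\[
 \left| \| \br \|^2_{U'} - \left( \| \br \|^{\bTheta}_{U'} \right)^2 \right| \leq \varepsilon \| \br \|^2_{U'},
\]
so that $(1-\varepsilon)\| \br \|^2_{U'} \leq ( \| \br \|^{\bTheta}_{U'} )^2 \leq (1+\varepsilon)\| \br \|^2_{U'}$. Taking square roots and dividing by $\eta(\mu)>0$ yields exactly~(\ref {eq:skerroropt}).

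There is essentially no obstacle beyond keeping track of the primal versus dual spaces; the estimate is an immediate consequence of Corollary\nobreakspace \ref {thm:dual_embedding} evaluated at the single vector $\br$. I would only note in passing that invoking Proposition\nobreakspace \ref {thm:skseminorm_ineq} with $Z = Y$ instead (using that $\| \cdot \|_{Z'}$ then coincides with $\| \cdot \|_{U'}$ on $Y'$) would give the weaker constants $\tfrac{1-\varepsilon}{\sqrt{1+\varepsilon}}$ and $\tfrac{1+\varepsilon}{\sqrt{1-\varepsilon}}$, so the route through the dual $\varepsilon$-embedding is the one that delivers the sharp factors $\sqrt{1-\varepsilon}$ and $\sqrt{1+\varepsilon}$ stated in the proposition.
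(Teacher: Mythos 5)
Your proof is correct and follows essentially the same route as the paper, whose appendix proof simply states that the result "directly follows" from the definitions~(\ref{eq:errorind}), (\ref{eq:thetainnerdef}), (\ref{eq:epsilon_embedding}) and (\ref{eq:skerrorind}); you have merely written out explicitly the step of evaluating the embedding property (equivalently, Corollary\nobreakspace\ref{thm:dual_embedding}) at the single vector $\br$ and taking square roots. Your closing remark that Proposition\nobreakspace\ref{thm:skseminorm_ineq} with $Z=Y$ would only give the weaker constants is accurate and correctly identifies why the direct route is the right one.
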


\begin{corollary} \label{thm:skerrorind1}
 If $\bTheta$ is a $U \to \ell_2$ $\varepsilon$-embedding for $Y_r(\mu)$, then relation~(\ref {eq:skerroropt}) holds. 
\end{corollary}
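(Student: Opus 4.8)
The plan is to reduce Corollary~\ref{thm:skerrorind1} to Proposition~\ref{thm:skerrorind}, using only the elementary fact that the $\varepsilon$-embedding property is inherited by subspaces. The single point to verify is the set inclusion
\[
\mathrm{span}\{ \bR^{-1}_U \br(\bu_r(\mu);\mu) \} \subseteq Y_r(\mu).
\]

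First I would recall the definition~(\ref{eq:Y_r}) of $Y_r(\mu)$, which contains the subspace $\mathrm{span}\{ \bR_U^{-1}\br(\bx;\mu) : \bx \in U_r \}$. Since $\bu_r(\mu) \in U_r$, the vector $\bR^{-1}_U\br(\bu_r(\mu);\mu)$ is one of the generators appearing in this span, so the displayed inclusion holds. Next I would invoke the obvious monotonicity of Definition~\ref{def:epsilon_embedding}: if $\bTheta$ satisfies~(\ref{eq:epsilon_embedding}) for all $\bx,\by \in Y_r(\mu)$, then it satisfies~(\ref{eq:epsilon_embedding}) for all $\bx,\by$ lying in any subspace of $Y_r(\mu)$; in particular $\bTheta$ is a $U \to \ell_2$ $\varepsilon$-embedding for $\mathrm{span}\{ \bR^{-1}_U\br(\bu_r(\mu);\mu) \}$. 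Applying Proposition~\ref{thm:skerrorind} with this one-dimensional (or trivial) subspace then yields~(\ref{eq:skerroropt}) verbatim.

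There is no genuine obstacle here; the statement is a convenience corollary that lets one work with the single, $\mu$-uniform space $Y_r(\mu)$ (or even its affine-expansion superspace $Y_r^*$, as discussed after Proposition~\ref{thm:skcea}) rather than re-checking the embedding property for the residual direction attached to each particular approximation $\bu_r(\mu)$. The only thing that needs care is to make sure the residual in question is evaluated at a point of $U_r$ — which is exactly what is assumed, since $\bu_r(\mu) \in U_r$ — so that it is covered by the span over $\bx \in U_r$ in~(\ref{eq:Y_r}); the rest is bookkeeping.
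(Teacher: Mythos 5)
Your argument is correct and is exactly the (implicit) justification the paper relies on: since $\bu_r(\mu)\in U_r$, the vector $\bR_U^{-1}\br(\bu_r(\mu);\mu)$ is one of the generators of the span appearing in the definition~(\ref{eq:Y_r}) of $Y_r(\mu)$, so the $\varepsilon$-embedding property for $Y_r(\mu)$ restricts to the one-dimensional subspace required by Proposition~\ref{thm:skerrorind}. The paper states the corollary without proof precisely because it is this immediate restriction argument; nothing further is needed.
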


\subsection{Primal-dual correction} \label{sk_pd_correction}
The sketching technique can be applied to the dual problem in exactly the same manner as to the primal problem. 

Let $\bu_r(\mu) \in U_r$ and $\bu_r^\mathrm{du}(\mu) \in  U^{\mathrm{du}}_r$ be approximations of $\bu(\mu)$ and $\bu^\mathrm{du}(\mu)$, respectively. 
The sketched version of the primal-dual correction~(\ref {eq:correction}) can be expressed as follows 
\begin{equation}  \label{eq:skcorrection}
	 s_r^{\mathrm{spd}}(\mu):= s_r(\mu)- \langle  \bu_r^\mathrm{du}(\mu), \bR_U^{-1} \br(\bu_r(\mu);\mu) \rangle^{\bTheta}_{U}.
\end{equation}

\begin{proposition} \label{thm:skerror_correction}
 If $\bTheta$ is $U \to \ell_2$ $\varepsilon$-embedding for $\mathrm{span}\{ \bu_r^\mathrm{du}(\mu), \bR^{-1}_U\br(\bu_r(\mu);\mu) \}$, then 
 \begin{equation} \label{eq:skerror_correction}
  |s(\mu)-  s_r^{\mathrm{spd}}(\mu)|\leq \frac{\| \br(\bu_r(\mu);\mu) \|_{U'}}{\eta(\mu)} ((1+\varepsilon)\| \br^{\mathrm{du}}(\bu_r^\mathrm{du}(\mu);\mu) \|_{U'}+ \varepsilon\| \bl(\mu) \|_{U'}).
 \end{equation}
\begin{proof}
See appendix.
 \end{proof} \end{proposition}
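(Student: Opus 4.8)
The plan is to decompose the error $|s(\mu) - s_r^{\mathrm{spd}}(\mu)|$ into a part that is exactly the classical primal-dual error (which is already controlled by Proposition~\ref{thm:error_correction}) plus a perturbation term coming from replacing the exact inner product $\langle \cdot, \cdot \rangle_U$ by the sketched one $\langle \cdot, \cdot \rangle^{\bTheta}_U$ in the correction~(\ref{eq:skcorrection}). First I would recall the definition $s_r^{\mathrm{pd}}(\mu) = s_r(\mu) - \langle \bu_r^\mathrm{du}(\mu), \br(\bu_r(\mu);\mu) \rangle$ and note that $\langle \bu_r^\mathrm{du}(\mu), \br(\bu_r(\mu);\mu) \rangle = \langle \bu_r^\mathrm{du}(\mu), \bR_U^{-1}\br(\bu_r(\mu);\mu) \rangle_U$, so that
\begin{equation*}
 s_r^{\mathrm{pd}}(\mu) - s_r^{\mathrm{spd}}(\mu) = \langle \bu_r^\mathrm{du}(\mu), \bR_U^{-1}\br(\bu_r(\mu);\mu) \rangle^{\bTheta}_U - \langle \bu_r^\mathrm{du}(\mu), \bR_U^{-1}\br(\bu_r(\mu);\mu) \rangle_U.
\end{equation*}
By the triangle inequality, $|s(\mu) - s_r^{\mathrm{spd}}(\mu)| \leq |s(\mu) - s_r^{\mathrm{pd}}(\mu)| + |s_r^{\mathrm{pd}}(\mu) - s_r^{\mathrm{spd}}(\mu)|$.

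For the first term I would apply Proposition~\ref{thm:error_correction} directly, giving $|s(\mu) - s_r^{\mathrm{pd}}(\mu)| \leq \|\br^{\mathrm{du}}(\bu_r^\mathrm{du}(\mu);\mu)\|_{U'} \, \Delta(\bu_r(\mu);\mu) = \|\br^{\mathrm{du}}(\bu_r^\mathrm{du}(\mu);\mu)\|_{U'} \, \|\br(\bu_r(\mu);\mu)\|_{U'}/\eta(\mu)$. For the second term, both $\bu_r^\mathrm{du}(\mu)$ and $\bR_U^{-1}\br(\bu_r(\mu);\mu)$ lie in $Z := \mathrm{span}\{\bu_r^\mathrm{du}(\mu), \bR_U^{-1}\br(\bu_r(\mu);\mu)\}$, which is at most $2$-dimensional, and $\bTheta$ is a $U \to \ell_2$ $\varepsilon$-embedding for $Z$. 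By Definition~\ref{def:epsilon_embedding} applied to these two vectors,
\begin{equation*}
 |s_r^{\mathrm{pd}}(\mu) - s_r^{\mathrm{spd}}(\mu)| \leq \varepsilon \, \|\bu_r^\mathrm{du}(\mu)\|_U \, \|\bR_U^{-1}\br(\bu_r(\mu);\mu)\|_U = \varepsilon \, \|\bu_r^\mathrm{du}(\mu)\|_U \, \|\br(\bu_r(\mu);\mu)\|_{U'},
\end{equation*}
using that $\|\bR_U^{-1}\by\|_U = \|\by\|_{U'}$ for $\by \in U'$.

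The last step is to combine the two bounds and rewrite $\|\bu_r^\mathrm{du}(\mu)\|_U$ in terms of the quantities appearing in~(\ref{eq:skerror_correction}). Here I would use the dual residual identity $\br^{\mathrm{du}}(\bu_r^\mathrm{du}(\mu);\mu) = -\bl(\mu) - \bA(\mu)^{\mathrm{H}}\bu_r^\mathrm{du}(\mu)$ together with the well-posedness/stability of the dual problem: since $\eta(\mu) \leq \min_{\bx} \|\bA(\mu)^{\mathrm{H}}\bx\|_{U'}/\|\bx\|_U$ (the adjoint has the same smallest singular value), we get $\eta(\mu)\|\bu_r^\mathrm{du}(\mu)\|_U \leq \|\bA(\mu)^{\mathrm{H}}\bu_r^\mathrm{du}(\mu)\|_{U'} = \|\bl(\mu) + \br^{\mathrm{du}}(\bu_r^\mathrm{du}(\mu);\mu)\|_{U'} \leq \|\bl(\mu)\|_{U'} + \|\br^{\mathrm{du}}(\bu_r^\mathrm{du}(\mu);\mu)\|_{U'}$. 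Substituting $\|\bu_r^\mathrm{du}(\mu)\|_U \leq (\|\bl(\mu)\|_{U'} + \|\br^{\mathrm{du}}(\bu_r^\mathrm{du}(\mu);\mu)\|_{U'})/\eta(\mu)$ into the perturbation bound and adding the classical term yields exactly the factor $(1+\varepsilon)\|\br^{\mathrm{du}}(\bu_r^\mathrm{du}(\mu);\mu)\|_{U'} + \varepsilon\|\bl(\mu)\|_{U'}$ multiplying $\|\br(\bu_r(\mu);\mu)\|_{U'}/\eta(\mu)$.

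The main obstacle I anticipate is making sure the bound on $\|\bu_r^\mathrm{du}(\mu)\|_U$ is clean: one must be careful that $\eta(\mu)$ really is a valid lower bound for the smallest singular value of $\bA(\mu)^{\mathrm{H}}$ (it is, since $\bA(\mu)$ and $\bA(\mu)^{\mathrm{H}}$ share singular values and $\|\bA(\mu)\bx\|_{U'}/\|\bx\|_U$ defines the same singular values whether one uses $\bA$ or its adjoint in the $U$--$U'$ duality pairing). Everything else is a routine triangle-inequality estimate plus one invocation each of Definition~\ref{def:epsilon_embedding} and Proposition~\ref{thm:error_correction}.
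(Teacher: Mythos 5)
Your proposal is correct and follows essentially the same route as the paper's own proof: split off the classical primal-dual error via the triangle inequality, bound the inner-product perturbation by $\varepsilon \|\br(\bu_r(\mu);\mu)\|_{U'}\|\bu_r^\mathrm{du}(\mu)\|_U$ using the $\varepsilon$-embedding property, and then control $\|\bu_r^\mathrm{du}(\mu)\|_U$ by $(\|\br^{\mathrm{du}}(\bu_r^\mathrm{du}(\mu);\mu)\|_{U'}+\|\bl(\mu)\|_{U'})/\eta(\mu)$ through the singular-value bound on $\bA(\mu)^{\mathrm{H}}$. Your explicit remark that $\bA(\mu)$ and $\bA(\mu)^{\mathrm{H}}$ share their smallest singular value is a point the paper uses silently, so the proposal is if anything slightly more careful.
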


\begin{remark}
 We observe that the new version of primal-dual correction~(\ref {eq:skcorrection}) and its error bound~(\ref {eq:skerror_correction}) are no longer symmetric {in terms} of the primal and dual solutions. When the residual error of $\bu_r^\mathrm{du}(\mu)$ is smaller than the residual error of $\bu_r(\mu)$, it can be more beneficial to consider the dual problem as the primal one and vice versa. 
\end{remark}

\begin{remark} \label{rmk:compliant}
 Consider the so called ``compliant case'', i.e., $\bA(\mu)$ is self-adjoint, and $\bb(\mu)$ is equal to $\bl(\mu)$  up to a scaling factor. In such a case the same solution (up to a scaling factor) should be used for both the primal and the dual problems. If the approximation $\bu_r(\mu)$ of $\bu(\mu)$ is obtained with the classical Galerkin projection then the primal-dual correction is automatically included to the primal output quantity, i.e., $s_r(\mu) = s_r^{\mathrm{pd}}(\mu)$. {A} similar scenario can be observed for the sketched  Galerkin projection. If $\bu_r(\mu)$ satisfies~(\ref {eq:SKgalproj}) and the same $\bTheta$ is considered for both the projection and the inner product in~(\ref {eq:skcorrection}), then $s_r(\mu)= s^{\mathrm{spd}}_r(\mu)$. 
\end{remark}

It follows that if $\varepsilon$ is of the order of $\| \br^{\mathrm{du}}(\bu_r^\mathrm{du}(\mu);\mu) \|_{U'}/ \| \bl(\mu) \|_{U'}$, then the quadratic dependence in residual norm of the error bound is preserved. For relatively large $\varepsilon$, however, the error is expected to be proportional to $\varepsilon \| \br(\bu_r(\mu); \mu) \|_{U'}$. Note that $\varepsilon$ can decrease slowly with $k$ ({typically} $\varepsilon = \mathcal{O}(k^{-1/2})$, see~Propositions\nobreakspace \ref {thm:Rademacher} and\nobreakspace  \ref {thm:P-SRHT}). Consequently, preserving high precision of the primal-dual correction can require large sketching matrices. 

More accurate but yet efficient estimation of $s^{\mathrm{pd}}(\mu)$ can be obtained by introducing an approximation $\bw^\mathrm{du}_r(\mu)$ of $\bu_r^\mathrm{du}(\mu)$ such that the inner products with $\bw^\mathrm{du}_r(\mu)$ are efficiently computable. Such approximation does not have to be very precise. As it will become clear later, it is sufficient to have $\bw^\mathrm{du}_r(\mu)$ such that $\|\bu_r^\mathrm{du}(\mu) - \bw^\mathrm{du}_r(\mu) \|_U$ is of the order of $\varepsilon^{-1} \|\bu_r^\mathrm{du}(\mu) - \bu^\mathrm{du}(\mu)\|_U$. A possible choice is to let $\bw^\mathrm{du}_r(\mu)$ be the orthogonal projection of $\bu_r^\mathrm{du}(\mu)$ on a certain subspace $W^\mathrm{du}_r \subset U$, where $W^\mathrm{du}_r$ is such that it approximates well $\{ \bu_r^\mathrm{du} (\mu):~\mu \in \mathcal{P} \}$ but is much cheaper to operate with than $U^\mathrm{du}_r$, e.g., if it has a smaller dimension. One can simply take $W^\mathrm{du}_r = U^\mathrm{du}_i$ (the subspace spanned
by the first $i^\mathrm{du}$ basis vectors obtained during the generation of 
$U^\mathrm{du}_r$), for some small $i^\mathrm{du} < r^\mathrm{du}$.  A better approach consists in using a greedy algorithm or the POD method with a training set $\{ \bu_r^\mathrm{du}(\mu) : \mu \in \mathcal{P}_{\mathrm{train}} \}$. {We could also choose $W^\mathrm{du}_r$ as the subspace associated with a coarse-grid interpolation of the solution.} In this case, even if $W^\mathrm{du}_r$ has a high dimension, it can be operated with efficiently because its basis vectors are sparse. Strategies for the efficient construction of approximation spaces for $\bu_r^\mathrm{du}(\mu)$ (or $\bu_r(\mu)$) are provided in~\cite{balabanov2018}.  Now, let us assume that $\bw^\mathrm{du}_r(\mu)$ is given and consider the following estimation of $s_r^\mathrm{pd}(\mu)$:
\begin{equation}  \label{eq:skcorrection2}
	 s^{\mathrm{spd+}}_r(\mu):= s_r(\mu) - \langle \bw^\mathrm{du}_r(\mu), \br(\bu_r(\mu);\mu) \rangle - \langle \bu_r^\mathrm{du}(\mu) - \bw^\mathrm{du}_r(\mu), \bR_U^{-1}\br(\bu_r(\mu);\mu) \rangle^{\bTheta}_{U}.
\end{equation}
We notice that $s^{\mathrm{spd+}}_r(\mu)$ can be evaluated efficiently but, at the same time, it has better accuracy than $s_r^{\mathrm{spd}}(\mu)$ in~(\ref {eq:skerror_correction}). By similar consideration as in~Proposition\nobreakspace \ref {thm:skerror_correction} it can be shown that for preserving quadratic dependence in the error for $s^{\mathrm{spd+}}_r(\mu)$, it is sufficient to have $\varepsilon$ of the order of $\| \bu_r^\mathrm{du}(\mu) - \bu^\mathrm{du}(\mu) \|_{U'}/\| \bu_r^\mathrm{du}(\mu) - \bw^\mathrm{du}_r(\mu) \|_{U'}$.

Further, we assume that the accuracy of $s_r^{\mathrm{spd}}(\mu)$ is sufficiently good so that there is no need to consider a corrected estimation $s^{\mathrm{spd+}}_r(\mu)$. For other cases the methodology can be applied similarly.

\subsection{Computing the sketch} \label{sketch}
In this section we introduce the concept of a sketch of the reduced order model. A sketch contains all the information needed for estimating the output quantity and certifying this estimation. It can be efficiently computed in basically any computational environment.

We restrict ourselves to solving the primal problem. Similar considerations also apply for the dual problem and primal-dual correction. The $\bTheta$-sketch of a reduced model associated with a subspace $U_r$ is defined as
\begin{equation} \label{eq:sketchUr}
\left \{ \left \{ \bTheta \bx, \bTheta \bR_U^{-1} \br(\bx; \mu), \langle \bl(\mu), \bx \rangle  \right \}: ~~ \bx \in U_r \right \}
\end{equation}
In practice, each element of~(\ref {eq:sketchUr}) can be represented by the coordinates of $\bx$ associated with $\bU_r$, i.e., a vector $\ba_r \in \mathbb{K}^{r}$ such that $\bx=\bU_r \ba_r$, the sketched reduced basis matrix $\bU^{\bTheta}_r:=\bTheta \bU_r$ and the following small parameter-dependent matrices and vectors:
\begin{equation} \label{eq:sketch}
\bV^{\bTheta}_r(\mu):=\bTheta \bR_U^{-1}\bA(\mu) \bU_r,~~ {\bb^{\bTheta}}(\mu):=\bTheta \bR_U^{-1} \bb(\mu), ~~\bl_r(\mu)^{\mathrm{H}}:= \bl(\mu)^{\mathrm{H}} \bU_r. 
\end{equation}
Throughout the paper, matrix $\bU^{\bTheta}_r$ and the affine expansions of $\bV^{\bTheta}_r(\mu)$, $\bb^{\bTheta}(\mu)$ and $\bl_r(\mu)$ shall be referred to as the $\bTheta$-sketch of $\bU_r$. This object should not be confused with the $\bTheta$-sketch associated with a subspace $U_r$ defined by~(\ref{eq:sketchUr}). The $\bTheta$-sketch of $\bU_r$ shall be used for characterizing the elements of the $\bTheta$-sketch associated with $U_r$ similarly as $\bU_r$ is used for characterizing the vectors in $U_r$.

The affine expansions of $\bV^{\bTheta}_r(\mu)$, ${\bb^{\bTheta}}(\mu)$ and $\bl_r(\mu)$ can be obtained either by considering the affine expansions of $\bA(\mu)$, $\bb(\mu)$, and $\bl(\mu)$\footnote{{For instance, if $\bA(\mu)=\sum^{m_A}_{i=1} \phi_i(\mu) \bA_i$, then $ \bV_r^{\bTheta}(\mu) = \sum^{m_A}_{i=1} \phi_i(\mu) \left (\bTheta \bR_U^{-1}\bA_i \bU_r \right )$. Similar relations can also be derived for $\bb^{\bTheta}(\mu)$ and $\bl_r(\mu)$.}} or with {the} empirical interpolation method (EIM)~\cite{maday2007general}. Given the sketch, the affine expansions of the quantities (e.g., $\bA_r(\mu)$ in~(\ref {eq:skreduced_system})) needed for efficient evaluation of the output can be computed with negligible cost. Computation of the $\bTheta$-sketch determines the cost of the offline stage and it has to be performed depending on the computational environment. We assume that the affine factors of $\bl_r(\mu)$ are cheap to evaluate. Then the remaining computational cost is  mainly associated with the following three operations: computing the samples (snapshots) of the solution (i.e., solving the full order problem for several $\mu \in \mathcal{P}$), performing matrix-vector products with $\bR_U^{-1}$ and the affine factors of $\bA(\mu)$ (or $\bA(\mu)$ evaluated at the interpolation points for EIM), and evaluating matrix-vector products with $\bTheta$.

{The cost of obtaining the snapshots is assumed to be low compared to the cost of other offline computations such as evaluations of high dimensional inner and matrix-vector products.}
This is the case when the snapshots are computed beyond the main routine {using} highly optimised linear solver or a powerful server with limited budget. This is also the case when the snapshots are obtained on distributed machines with expensive communication costs.
{Solutions of linear systems of equations should have only a minor impact on the overall cost of an algorithm even when the basic metrics of efficiency, such as the complexity (number of floating point operations) and memory consumption, are considered. 
For large-scale problems solved in sequential or limited memory environments the computation of each snapshot should have log-linear (i.e., $\mathcal{O}(n (\log{n})^d)$, for some small $d$) complexity and memory requirements. Higher complexity or memory requirements are usually not acceptable {with} standard architectures.
In fact, in recent years there was an extensive development of methods for solving large-scale linear systems of equations~\cite{hackbusch2015book,bebendorf2008book,Elman2014} allowing computation of the snapshots with log-linear number of flops and bytes of memory (see for instance \cite{Xia2009,bjorn2011,Martinsson2009,Lee2013,Boman2008}). On the other hand, for classical model reduction, the evaluation of multiple inner products for the affine terms of reduced systems~(\ref {eq:reduced_system}) and the quantities for error estimation (see Section\nobreakspace\ref{efficient_res_norm}) require  $\mathcal{O}(n r^2 m^2_A + n m^2_b)$ flops, with $m_A$ and $m_b$ being the numbers of terms in affine expansions of $\bA(\mu)$ and $\bb(\mu)$, respectively, and $\mathcal{O}(n r)$ bytes of memory. We see that indeed the complexity and memory consumption of the offline stage can be highly dominated by the postprocessing of the snapshots but not their computation.}

{{The} matrices $\bR_U$ and $\bA(\mu)$ should be sparse or maintained in a hierarchical format~\cite{hackbusch2015book}, so that they can be multiplied by a vector using (log-)linear complexity and {storage consumption}.} {Multiplication of $\bR_U^{-1}$ by a vector should also be an inexpensive operation with the cost comparable to the cost of computing matrix-vector products with $\bR_U$.} For many problems it can be beneficial to precompute a factorization of $\bR_U$ and to use it for efficient multiplication of $\bR^{-1}_U$ by multiple vectors. {Note that for the typical $\bR_U$ (such as stiffness and mass matrices) originating from standard discretizations of partial differential equations in two spatial dimensions, a sparse Cholesky decomposition can be precomputed using $\mathcal{O}(n^{3/2})$ flops and then used for multiplying $\bR_U^{-1}$ by vectors with $\mathcal{O}(n \log n)$ flops.} For discretized PDEs in higher spatial dimensions, or problems where $\bR_U$ is dense, the classical Cholesky decomposition can be more burdensome to obtain and use. {For better efficiency, the matrix $\bR_U$ can be approximated by $\tilde{\bQ}^\mathrm{H} \tilde{\bQ}$ (with log-linear number of flops) using incomplete or hierarchical \cite{Bebendorf2007} Cholesky factorizations. Iterative Krylov methods with good preconditioning {are} an alternative way for computing products of $\bR_U^{-1}$ with vectors with log-linear complexity~\cite{Boman2008}. Note that although multiplication of $\bR_U^{-1}$ by a vector and computation of a snapshot both require solving high-dimensional systems of equations, the cost of the former operation should be considerably less than the cost of the later one due to good properties of $\bR_U$ (such as  positive-definiteness, symmetry, and parameter-independence providing ability of precomputing a decomposition).} 
In a streaming environment, where the snapshots are provided as data-streams, a special care has to be payed to the memory constraints. It can be important to maintain $\bR_U$ and the affine factors (or evaluations at EIM interpolation points) of $\bA(\mu)$ with a reduced storage consumption. For discretized PDEs, for example, the entries of these matrices {(if they are sparse)} can be generated subdomain-by-subdomain on the fly. In such a case the conjugate gradient method can be a good choice for evaluating products of $\bR^{-1}_U$ with vectors. In very extreme cases, e.g., where storage of even a single large vector is forbidden, $\bR_U$ can be approximated by a block matrix and inverted block-by-block on the fly.

Next we discuss an efficient implementation of $\bTheta$. We assume that $$\bTheta=\bOmega \bQ,$$ where $\bOmega \in \mathbb{K}^{k \times s}$ is a classical oblivious $\ell_2 \to \ell_2$ subspace embedding and $\bQ \in \mathbb{K}^{s\times n}$ is such that $\bQ^\mathrm{H}\bQ= \bR_U$ (see~Propositions\nobreakspace \ref {thm:Rademacher},  \ref {thm:P-SRHT} and\nobreakspace  \ref {thm:buildepsilon_embedding}). 
  
{The} matrix $\bQ$ can be expected to have a cost of multiplication by a vector comparable to $\bR_U$. If needed, this matrix can be generated block-wise (see Remark\nobreakspace \ref {rmk:Q_eval}) on the fly similarly to $\bR_U$. 

{For environments where the measure of efficiency is the number of flops, a sketching matrix $\bOmega$  with fast matrix-vector multiplications such as P-SRHT {is preferable}. The complexity of {a} matrix-vector product for P-SRHT is only $2s \log_2(k+1)$, with $s$ being the power of $2$ such that $n \leq s < 2n$~\cite{ailon2009fast,boutsidis2013improved}\footnote{{The straightforward implementation of P-SRHT using the fast Walsh-Hadamard transform results in $s \log_2{(s)}$ complexity of multiplication by a vector, which yields similar computational costs as the procedure from~\cite{ailon2009fast}.}}. Consequently, assuming that $\bA(\mu)$ is sparse, that multiplications of $\bQ$ and $\bR^{-1}_U$ by a vector take $\mathcal{O}(n(\log{n})^d)$ flops, and that $\bA(\mu)$ and $\bb(\mu)$ admit affine expansions with $m_A$ and $m_b$ terms respectively, the overall complexity of computation of a $\bTheta$-sketch of $\bU_r$, using {a} P-SRHT matrix as $\bOmega$, from the snapshots is only $$\mathcal{O}(n[ r m_A \log{k} +  m_b \log{k} + r m_A (\log{n})^d]).$$
This complexity can be much less than the complexity of construction of the classical reduced model from $\bU_r$, which is $\mathcal{O}(n [r^2 m^2_A + m^2_b + r m_A (\log{n})^d])$ (including the precomputation of quantities needed for online evaluation of the residual error).} 
The efficiency of an algorithm can be also measured {in terms of} the number of passes taken over the data. Such a situation may arise when there is a restriction on the accessible amount of fast memory. In this scenario, both structured and unstructured matrices may provide drastic reductions of the computational cost. Due to robustness and simplicity of implementation, we suggest using Gaussian or Rademacher matrices over the others. For these matrices a seeded random number generator has to be utilized. It allows accessing the entries of $\bOmega$ on the fly with negligible storage costs~\cite{halko2011finding}. In a streaming environment, multiplication of Gaussian or Rademacher matrices by a vector can be performed block-wise. 

Note that all aforementioned operations are well suited for parallelization. Regarding distributed computing, a sketch of each snapshot can be obtained on a separate machine with absolutely no communication. The cost of transferring the sketches to the master machine will depend on the number of rows of $\bTheta$ but not the size of the full order problem. 

Finally, let us comment on orthogonalization of $\bU_r$ with respect to {$\langle \cdot, \cdot \rangle_U^{\bTheta}$}. This procedure is particularly important for numerical stability of the reduced system of equations (see Proposition\nobreakspace \ref {thm:skalgstability}). In our applications we are interested in obtaining a sketch of the orthogonal matrix but not the matrix itself. In such a case, operating with large-scale matrices and vectors is not necessary. Let us assume to be given a sketch of $\bU_r$ associated with $\bTheta$. Let $\bT_r \in \mathbb{K}^{r \times r}$ be such that $ \bU^{\bTheta}_r \bT_r$ is  orthogonal  with respect to {$\langle \cdot, \cdot \rangle$}. Such a matrix can be obtained with a standard algorithm, e.g., QR factorization. It can be easily verified that $\bU^*_r := \bU_r \bT_r$ is  orthogonal with respect to {$\langle \cdot, \cdot \rangle_U^{\bTheta}$}. We have,
\begin{equation*}
 \bTheta\bU^*_r= \bU^{\bTheta}_r\bT_r,~~ \bTheta\bR_U^{-1}\bA(\mu)\bU^*_r= \bV^{\bTheta}_r(\mu)\bT_r, \textup{ and } \bl(\mu)^{\mathrm{H}}\bU^*_r= \bl_r(\mu)^{\mathrm{H}}\bT_r.
\end{equation*}
Therefore, the sketch of $\bU^*_r$ can be computed, simply, by multiplying $\bU^{\bTheta}_r$ and the affine factors of $\bV^{\bTheta}_r(\mu)$, and $\bl_r(\mu)^{\mathrm{H}}$, by $\bT_r$.

\subsection{Efficient evaluation of the residual norm} \label{efficient_res_norm}
Until now we discussed how random sketching can be used for reducing the offline cost of precomputing  factors of affine decompositions of the reduced operator and the reduced right-hand side. Let us now focus on the cost of the online stage. Often, the most expensive part of the online stage is the evaluation of the quantities needed for computing the residual norms for a posteriori error estimation due to many summands in their affine expansions. In addition, as was indicated in~\cite{casenave2014,buhr2014numerically}, the classical procedure for the evaluation of the residual norms can be sensitive to round-off errors. Here we provide a {less expensive} way of computing the residual norms, which simultaneously offers a better numerical stability. 

Let $\bu_r(\mu) \in U_r$ be an approximation of $\bu(\mu)$, and $\ba_r(\mu) \in \mathbb{K}^{r}$ be the coordinates of $\bu_r(\mu)$ associated with $\bU_r$, i.e.,  $\bu_r(\mu) = \bU_r \ba_r(\mu)$. The classical algorithm for evaluating the residual norm $\| \br(\bu_r(\mu);\mu) \|_{U'}$ for a large finite set of parameters $\mathcal{P}_{\mathrm{test}} \subseteq \mathcal{P}$ proceeds with expressing $\| \br(\bu_r(\mu);\mu) \|^2_{U'}$ in the following form~\cite{haasdonk2017reduced}
\begin{equation} \label{eq:compute_res}
 \| \br(\bu_r(\mu); \mu) \|^2_{U'}= \langle \ba_r(\mu), \bM(\mu)\ba_r(\mu) \rangle+ 2\mathrm{Re}(\langle \ba_r(\mu), \bbm(\mu) \rangle) +m(\mu), 
\end{equation}
where affine expansions of $\bM(\mu):= \bU_r^{\mathrm{H}}\bA(\mu)^{\mathrm{H}}\bR_U^{-1}\bA(\mu)\bU_r$, $\bbm(\mu):= \bU_r^{\mathrm{H}}\bA(\mu)^{\mathrm{H}}\bR_U^{-1} \bb(\mu)$ and $m(\mu):=\bb(\mu)^{\mathrm{H}}\bR_U^{-1}\bb(\mu)$ can be precomputed during the offline stage and used for efficient online evaluation of these quantities for each $\mu \in \mathcal{P}_{\mathrm{test}}$. {If $\bA(\mu)$ and $\bb(\mu)$ admit affine representations with $m_A$ and $m_b$ terms, respectively, then the associated affine expansions of $\bM(\mu)$, $\bbm(\mu)$ and $m(\mu)$ contain $\mathcal{O}(m^2_A), \mathcal{O}(m_A m_b), \mathcal{O}(m^2_b)$ terms respectively, therefore requiring $\mathcal{O}(r^2 m^2_A+m^2_b)$ flops for their online evaluations.}

An approximation of the residual norm can be obtained in a more efficient and numerically stable way  with {the} random sketching technique. Let us assume that $\bTheta \in \mathbb{K}^{k\times n} $ is a $U \to \ell_2$ embedding such that $\| \br(\bu_r(\mu);\mu) \|^{\bTheta}_{U'}$ approximates well $\| \br(\bu_r(\mu);\mu)  \|_{U'}$   (see Proposition\nobreakspace \ref {thm:skerrorind}). Let us also assume that the factors of affine decompositions of $\bV^{\bTheta}_r(\mu)$ and $ \bb^{\bTheta}(\mu)$ have been precomputed and are available. 
For each $\mu \in \mathcal{P}_{\mathrm{test}}$ an estimation of the residual norm can be provided by
\begin{equation} \label{eq:skcompres}
 \| \br(\bu_r(\mu);\mu) \|_{{U}'} \approx \| \br(\bu_r(\mu);\mu) \|^{\bTheta}_{{U}'}= \|\bV^{\bTheta}_r(\mu)\ba_r(\mu)- \bb^{\bTheta}(\mu) \|.
\end{equation}
We notice that ${\bV^{\bTheta}_r}(\mu)$ and  $\bb^{\bTheta}(\mu)$ have less  terms in their affine expansions than the quantities in~(\ref {eq:compute_res}). The sizes of ${\bV^{\bTheta}_r}(\mu)$ and $\bb^{\bTheta}(\mu)$, however, can be too large to provide any online cost reduction.
%\begin{remark}
% The sketched residual norm $\| \br(\bu_r(\mu); \mu)  \|_{U'}^{\bTheta}$ can be also evaluated in a classical way, i.e., by expressing  $(\| \br(\bu_r(\mu); \mu) \|^{\bTheta}_{U'})^2$ as 
% \begin{equation}
%  (\| \br(\bu_r(\mu); \mu) \|_{U'}^{\bTheta})^2= \langle \ba_r(\mu), %\bM^{\bTheta}(\mu)\ba_r(\mu) \rangle + 2\mathrm{Re}( \langle %\ba_r(\mu), \bbm^{\bTheta}(\mu) \rangle) +m^{\bTheta}(\mu), 
% \end{equation}
% where $\bM^\bTheta(\mu):= \bV^{\bTheta}_r(\mu)^{\mathrm{H}}\bV^{\bTheta}_r(\mu)$, $\bbm^\bTheta(\mu):= \bV^{\bTheta}_r(\mu)^{\mathrm{H}}\bb^{\bTheta}(\mu)$ and $m^\bTheta(\mu):= \bb^{\bTheta}(\mu)^{\mathrm{H}}\bb^{\bTheta}(\mu)$ are precomputed during the offline stage. 
%\end{remark}
In order to improve the efficiency, we introduce an additional  $(\varepsilon, \delta, 1)$ oblivious $\ell_2 \to \ell_2$ subspace embedding $\bGamma \in \mathbb{K}^{k'\times k}$. The theoretical bounds for the number of rows of Gaussian, Rademacher and P-SRHT matrices sufficient to satisfy the $(\varepsilon, \delta, 1)$ oblivious $\ell_2 \to \ell_2$ subspace embedding property can be obtained from~\cite[Lemmas~4.1 and 5.1]{achlioptas2003database}~and~Proposition\nobreakspace \ref {thm:P-SRHT}. They are presented in Table\nobreakspace \ref {tab:numrows}. Values are shown for $\varepsilon= 0.5$ and varying probabilities of failure $\delta$. We note that in order to account for the case $\mathbb{K} = \mathbb{C}$ we have to employ \cite[Lemmas~4.1 and 5.1]{achlioptas2003database} for the real part and the imaginary part of a vector, separately, with a union bound for the probability of success.
\begin{table}[tbhp]
 \caption{The number of rows of Gaussian (or Rademacher) and P-SRHT matrices sufficient to satisfy {the} $(1/2, \delta, 1)$ oblivious $\ell_2 \to \ell_2$ $\varepsilon$-subspace embedding property.}
 \label{tab:numrows}
 \centering
 \scalebox{0.9}{
 \begin{tabular}{|l|l|l|l|l|} \hline
  & $\delta = 10^{-3}$ & $\delta = 10^{-6}$ & $\delta = 10^{-12}$ & $\delta = 10^{-18}$ \\ \hline
  {Gaussian} & $200$ & $365$ & $697$ & $1029$ \\ [2pt] \hline
  {P-SRHT} & ${{96.4}(8\log{k}+69.6)}$ & ${{170}(8\log{k}+125)}$ & ${{313}(8\log{k}+236)}$ & ${{454}(8\log{k}+346)}$  \\ \hline
 \end{tabular}
 }
\end{table}

\begin{remark}
 In practice the  bounds provided in Table\nobreakspace \ref {tab:numrows} are  pessimistic (especially for P-SRHT) and much smaller $k'$ (say, $k' =100$) may provide desirable results. In addition, in our experiments any significant difference in performance between Gaussian matrices, Rademacher matrices and P-SRHT  has not been revealed.
\end{remark}

We observe that the number of rows of $\bGamma$ can be chosen independent (or weakly dependent) of the number of rows of $\bTheta$. Let $\bPhi:= \bGamma\bTheta$. By definition, for each $\mu \in \mathcal{P}_{\mathrm{test}}$
\begin{equation} \label{eq:res_concentration}
 \mathbb{P} \left ( \left | (\| \br(\bu_r(\mu);\mu) \|^{\bTheta}_{V'})^2- (\| \br(\bu_r(\mu);\mu) \|^{\bPhi}_{V'})^2 \right | \leq \varepsilon (\| \br(\bu_r(\mu);\mu) \|^{\bTheta}_{V'})^2\right ) \geq 1-\delta;
\end{equation}
which means that $\| \br(\bu_r(\mu);\mu) \|^{\bPhi}_{V'}$ is an {$\mathcal{O}(\varepsilon)$-}accurate approximation of $\| \br(\bu_r(\mu);\mu) \|_{V'}$ with high probability. The probability of success for all $\mu \in \mathcal{P}_{\mathrm{test}}$ simultaneously can be guaranteed with a union bound. In its turn, $\| \br(\bu_r(\mu);\mu) \|^{\bPhi}_{V'}$ can be computed from
\begin{equation} \label{eq:eff_comp_res}
 \| \br(\bu_r(\mu); \mu) \|^{\bPhi}_{V'}= \|\bV^{\bPhi}_r(\mu)\ba_r(\mu)- \bb^{\bPhi}(\mu) \|, 
\end{equation}
where $\bV^{\bPhi}_r(\mu):= \bGamma {\bV^{\bTheta}_r}(\mu)$ and $\bb^{\bPhi}(\mu):= \bGamma\bb^{\bTheta}(\mu)$. The efficient way of computing $\| \br(\bu_r(\mu);\mu) \|^{\bPhi}_{V'}$ for every $\mu \in \mathcal{P}_{\mathrm{test}}$ consists in two stages. Firstly, we generate $\bGamma$ and precompute affine expansions of $\bV^{\bPhi}_r(\mu)$ and $\bb^{\bPhi}(\mu)$ by multiplying each affine factor of $\bV^{\bTheta}_r(\mu)$ and $\bb^{\bTheta}(\mu)$ by $\bGamma$. {The cost of this stage is independent of $ \# \mathcal{P}_\mathrm{test}$ (and $n$, of course) and becomes negligible for $\mathcal{P}_\mathrm{test}$ of large size.}  In the second stage, for each parameter $\mu \in \mathcal{P}_{\mathrm{test}}$, $\| \br(\bu_r(\mu);\mu) \|^{\bPhi}_{V'}$ is evaluated from~(\ref {eq:eff_comp_res}) using precomputed affine expansions. 
{The quantities $\bV^{\bPhi}_r(\mu)$ and  $\bb^{\bPhi}(\mu)$ contain at most the same number of terms as $\bA(\mu)$ and $\bb(\mu)$ in their affine expansion. Consequently, if $\bA(\mu)$ and $\bb(\mu)$ are parameter-separable with $m_A$ and $m_b$ terms, respectively, then each evaluation of $\| \br(\bu_r(\mu);\mu) \|^{\bPhi}_{V'}$ from $\ba_r(\mu)$ requires only $\mathcal{O}(k' r m_A+k' m_b)$ flops, which can be much less than the $\mathcal{O}(r^2 m^2_A+m^2_b)$ flops required for evaluating~(\ref{eq:compute_res}). 
	Note that the classical computation of the residual norm by taking the square root of $\| \br(\bu_r(\mu); \mu) \|^2_{U'}$ evaluated using~(\ref{eq:compute_res}) can suffer from round-off errors. On the other hand, the evaluation of $\| \br(\bu_r(\mu); \mu) \|^{\bPhi}_{V'}$ using~(\ref{eq:eff_comp_res}) is less sensitive to round-off errors since here we proceed with direct evaluation of the (sketched) residual norm but not its square.}   
	
\begin{remark}
 {If $\mathcal{P}_{\mathrm{test}}$ is provided a priori, then the random matrix $\bGamma$ can be generated and multiplied by the affine factors of $\bV^{\bTheta}_r(\mu)$ and $\bb^{\bTheta}(\mu)$ during the offline stage.}
\end{remark}
\begin{remark}
 For algorithms where $\mathcal{P}_{\mathrm{test}}$ or $U_r$ are selected adaptively based on a criterion depending on the residual norm (e.g., the classical greedy algorithm outlined in Section\nobreakspace \ref {Greedy}), a new realization of $\bGamma$ has to be generated at  each iteration. If the same realization of $\bGamma$ is used for several iterations of the adaptive algorithm, care must be taken when characterizing the probability of success. This probability can decrease exponentially with the number of iterations, which requires to use considerably larger $\bGamma$. 	 Such option can be justified only for the cases when the cost of multiplying affine factors by $\bGamma$  greatly dominates the cost of the second stage, i.e., evaluating  $\| \br(\bu_r(\mu);\mu) \|^{\bPhi}_{V'}$ for all $\mu \in 
\mathcal{P}_{\mathrm{test}}$. 
\end{remark}

\section{Efficient reduced basis generation} \label{efficient_RB}
In this section we show how the sketching technique can be used for improving the generation of reduced approximation spaces with greedy algorithm for RB, or a POD. Let $\bTheta \in \mathbb{K}^{k\times n}$ be a $U \to \ell_2$ subspace embedding. 

\subsection{Greedy algorithm}
Recall that at each iteration of the greedy algorithm (see Section\nobreakspace\ref{Greedy}) the basis is enriched with a new sample (snapshot) $\bu(\mu^{i+1})$, selected based on error indicator $\widetilde{\Delta}(U_i; \mu)$. The standard choice is $\widetilde{\Delta}(U_i; \mu):= \Delta (\bu_i(\mu);\mu)$ where $\bu_i(\mu) \in U_i$ satisfies~(\ref {eq:galproj}).  Such error indicator, however, can lead to very expensive computations. The error indicator can be modified to $\widetilde{\Delta}(U_i; \mu):= \Delta^{\bTheta}(\bu_i(\mu);\mu)$, where $\bu_i(\mu) \in U_i$ is an approximation of $\bu(\mu)$ which does not necessarily satisfy~(\ref {eq:galproj}). Further, we restrict ourselves  to the case when  $\bu_i(\mu)$ is the sketched Galerkin projection~(\ref {eq:SKgalproj}). If there is no interest in reducing the cost of evaluating inner products but only reducing the cost of evaluating residual norms, it can be more relevant to consider the classical Galerkin projection~(\ref {eq:galproj}) instead of~(\ref {eq:SKgalproj}). 

A quasi-optimality guarantee for the greedy selection with $\widetilde{\Delta}(U_i; \mu):= \Delta^{\bTheta}(\bu_i(\mu);\mu)$ can be derived from Propositions\nobreakspace \ref {thm:SKquasi-opt} and\nobreakspace  \ref {thm:skcea} and\nobreakspace Corollary\nobreakspace \ref {thm:skerrorind1}. At iteration $i$ of the greedy algorithm, we need $\bTheta$ to be a $U \to \ell_2$ $\varepsilon$-subspace embedding for $Y_i(\mu)$ defined in~(\ref {eq:Y_r}) for all $\mu \in \mathcal{P}_{\mathrm{train}}$. One way to achieve this is to generate a new realization of an oblivious $U \to \ell_2$ subspace embedding $\bTheta$ at each iteration of the greedy algorithm. Such approach, however, will lead to extra complexities and storage costs compared to the case where the same realization is employed for the entire procedure. In this work, we shall consider algorithms where $\bTheta$ is generated only once. When it is known that the set $\bigcup_{\mu \in \mathcal{P}_{\mathrm{train}}} Y_r(\mu)$ belongs to a subspace $Y^*_m$ of moderate dimension (e.g., when we operate on a small training set), then $\bTheta$ can be chosen such that it is a $U \to \ell_2$ $\varepsilon$-subspace embedding for $Y^*_m$ with high probability. Otherwise, care must be taken when characterizing the probability of success because of the adaptive nature of the greedy algorithm. {In such cases, all possible outcomes for $U_r$ should be considered by using a union bound for the probability of success.}

\begin{proposition}\label{thm:sk_greedy_finite}
 Let $U_r \subseteq U $ be a subspace obtained with $r$ iterations of the greedy algorithm with error indicator depending on $\bTheta$. If $\bTheta$ is a $(\varepsilon, \allowbreak m^{-1}\binom{m}{r}^{-1}\delta, \allowbreak 2 r+ 1)$ oblivious $U \to \ell_2$ subspace embedding, then it is a $U \to \ell_2$ $\varepsilon$-subspace embedding for $Y_r(\mu)$ defined in~(\ref {eq:Y_r}), for all $\mu \in \mathcal{P}_{\mathrm{train}}$, with probability at least $1- \delta$.
\begin{proof}
See appendix.
 \end{proof} \end{proposition}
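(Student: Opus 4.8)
The plan is to decouple the randomness in $\bTheta$ from the adaptive (hence $\bTheta$-dependent) choice of $U_r$ by a union bound over all possible outcomes of the greedy run. After $r$ iterations the algorithm has enriched the basis with snapshots $\bu(\mu^1),\dots,\bu(\mu^r)$ evaluated at parameters $\mu^1,\dots,\mu^r\in\mathcal{P}_{\mathrm{train}}$, so $U_r=\mathrm{span}\{\bu(\nu):\nu\in S\}$ for some subset $S\subseteq\mathcal{P}_{\mathrm{train}}$ with $\#S\le r$. There are exactly $\binom{m}{r}$ subsets of $\mathcal{P}_{\mathrm{train}}$ of cardinality $r$, and (since $m\ge r$) every subset of cardinality $\le r$ is contained in one of them. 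For a fixed subset $S$ with $\#S=r$ I write $U_S:=\mathrm{span}\{\bu(\nu):\nu\in S\}$ and, for $\mu\in\mathcal{P}_{\mathrm{train}}$, $Y_S(\mu):=U_S+\mathrm{span}\{\bR_U^{-1}\br(\bx;\mu):\bx\in U_S\}$, mirroring~(\ref{eq:Y_r}).

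First I would record two elementary facts. (i) \emph{Monotonicity}: if $S'\subseteq S$ then $U_{S'}\subseteq U_S$ and, since $\{\bR_U^{-1}\br(\bx;\mu):\bx\in U_{S'}\}\subseteq\{\bR_U^{-1}\br(\bx;\mu):\bx\in U_S\}$, also $Y_{S'}(\mu)\subseteq Y_S(\mu)$; moreover, directly from Definition~\ref{def:epsilon_embedding}, a $U\to\ell_2$ $\varepsilon$-embedding for a subspace is a fortiori an $\varepsilon$-embedding for each of its subspaces. (ii) \emph{Dimension bound}: since $\br(\bx;\mu)=\bb(\mu)-\bA(\mu)\bx$, the set $\{\bR_U^{-1}\br(\bx;\mu):\bx\in U_S\}$ spans $\mathrm{span}\{\bR_U^{-1}\bb(\mu)\}+\bR_U^{-1}\bA(\mu)U_S$, of dimension at most $1+\#S$, whence $\dim Y_S(\mu)\le 2\#S+1\le 2r+1$.

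The core step is then the following. For each pair $(S,\mu)$ with $\#S=r$ and $\mu\in\mathcal{P}_{\mathrm{train}}$, fact (ii) gives $\dim Y_S(\mu)\le 2r+1$, so since $\bTheta$ is a $(\varepsilon,\,m^{-1}\binom{m}{r}^{-1}\delta,\,2r+1)$ oblivious $U\to\ell_2$ subspace embedding it fails to be an $\varepsilon$-embedding for $Y_S(\mu)$ with probability at most $m^{-1}\binom{m}{r}^{-1}\delta$. There are at most $m\binom{m}{r}$ such pairs, so a union bound shows that with probability at least $1-m\binom{m}{r}\cdot m^{-1}\binom{m}{r}^{-1}\delta=1-\delta$ the matrix $\bTheta$ is \emph{simultaneously} an $\varepsilon$-embedding for every $Y_S(\mu)$ with $\#S=r$, $\mu\in\mathcal{P}_{\mathrm{train}}$. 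On this event, whatever subspace $U_r$ the greedy run actually produces — it corresponds to some $S$ with $\#S\le r$, which may be enlarged to some $\tilde S\supseteq S$ with $\#\tilde S=r$ — one has $Y_r(\mu)=Y_S(\mu)\subseteq Y_{\tilde S}(\mu)$ for all $\mu$, and by fact (i) $\bTheta$ is an $\varepsilon$-embedding for $Y_r(\mu)$ for every $\mu\in\mathcal{P}_{\mathrm{train}}$, which is the assertion.

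I expect the main obstacle to be exactly the adaptivity: because the greedy indicator $\widetilde\Delta(U_i;\mu)=\Delta^{\bTheta}(\bu_i(\mu);\mu)$ depends on $\bTheta$, the resulting $U_r$ is a random subspace correlated with $\bTheta$, so one cannot simply apply the oblivious-embedding property to ``the'' subspace $Y_r(\mu)$. Enumerating all $\binom{m}{r}$ possible outcomes in advance and charging the failure probability accordingly — which is precisely why the factor $m^{-1}\binom{m}{r}^{-1}$ appears in the hypothesis — removes this dependence; the monotonicity observation (i) is what keeps the union-bound count at $m\binom{m}{r}$ rather than $m\sum_{j\le r}\binom{m}{j}$, and also supplies, for free, the $\varepsilon$-embedding property for the intermediate spaces $Y_i(\mu)$, $i<r$, needed to invoke Propositions~\ref{thm:SKquasi-opt} and~\ref{thm:skcea} and Corollary~\ref{thm:skerrorind1} along the greedy iterations.
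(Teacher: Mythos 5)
Your proof is correct and follows essentially the same route as the paper's: enumerate the at most $\binom{m}{r}$ possible spans of $r$ snapshots, note $\dim Y_r(\mu)\leq 2r+1$, and apply a union bound over the $m\binom{m}{r}$ pairs of candidate subspaces and training parameters to absorb the adaptivity of the greedy selection. Your explicit monotonicity observation (handling subsets of cardinality less than $r$) and the spelled-out dimension count are details the paper leaves implicit, but the argument is the same.
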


\begin{remark}
 Theoretical bounds for the number of rows needed to construct $( \varepsilon, \allowbreak m^{-1}  \binom{m}{r}^{-1} \delta, \allowbreak 2 r+ 1)$ oblivious $U \to \ell_2$ subspace embeddings using Gaussian, Rademacher or P-SRHT distributions can be obtained from Propositions\nobreakspace \ref {thm:Rademacher},  \ref {thm:P-SRHT} and\nobreakspace  \ref {thm:buildepsilon_embedding}. For Gaussian or Rademacher matrices they are proportional to $r$, while for  P-SRHT they are proportional to $r^2$. In practice, however, embeddings built with P-SRHT, Gaussian or Rademacher distributions perform equally well. 
\end{remark}

%\begin{remark}
% If necessary, the cost of evaluating inner products can be further reduced by letting $\bTheta:= \bGamma\bTheta_0$, where $\bTheta_0 \in \mathbb{K}^{k_0 \times n}$ is a $U \to \ell_2$ $\varepsilon$-subspace embedding for $Y_i(\mu)$ for all $\mu \in \mathcal{P}_{\mathrm{train}}$ with high probability, and $\bGamma \in \mathbb{K}^{k\times k_0}$ with $k< k_0$, is an $(\varepsilon,  \delta, 2 i + 1)$ oblivious $\ell_2 \to \ell_2$ subspace embedding. We can use the same realization of $\bTheta_0$ for the entire algorithm, but generate a new realization of $\bGamma$ at each iteration. Observe that the sketch associated with $\bTheta_0$ can be precomputed and used for efficient evaluation of the sketch associated with each realization of $\bTheta$.
%\end{remark}

Evaluating $\| \br(\bu_r(\mu);\mu) \|^{\bTheta}_{V'}$ for very large training sets can be much more expensive than other costs. The complexity of this step can be reduced using the procedure explained in~Section\nobreakspace \ref {efficient_res_norm}. The {efficient sketched greedy algorithm} is summarized in Algorithm\nobreakspace \ref {alg:sk_greedy_online}. 
\begin{algorithm} \caption{efficient sketched greedy algorithm} \label{alg:sk_greedy_online}
 \begin{algorithmic}
  \STATE{\textbf{Given:} $\mathcal{P}_{\mathrm{train}}$, $\bA(\mu)$, $\bb(\mu)$, $\bl(\mu)$, $\bTheta$, $\tau$.}
  \STATE{\textbf{Output}: $U_{r}$}
  \STATE{1. Set $i:=0$, $U_{0}= \{ \bnull \}$, and pick $\mu^{1} \in \mathcal{P}_{\mathrm{train}}$.}
  \WHILE{$\underset{\mu \in \mathcal{P}_{\mathrm{train}}}{\max} {\widetilde{\Delta}(U_i;\mu)}\geq \tau$}
   \STATE{2. Set $i:=i+1$.}
   \STATE{3. Evaluate $\bu(\mu^{i})$ and set $U_{i}:= U_{i-1}+\mathrm{span}(\bu(\mu^{i}))$.}
   \STATE{4. Update  affine factors of $\bA_i(\mu)$, $\bb_i(\mu)$, {$\bV^{\bTheta}_i(\mu)$} and $\bb^{\bTheta}(\mu)$.}
   \STATE{5. Generate $\bGamma$ and evaluate affine factors of {$\bV^{\bPhi}_i(\mu)$} and $\bb^{\bPhi}(\mu)$.}
   \STATE{6. Set $\widetilde{\Delta}(U_i;\mu):= \Delta^{\bPhi}(\bu_i(\mu);\mu)$.}
   \STATE{7. Use~(\ref {eq:eff_comp_res}) to find $\mu^{i+1}:=\underset{\mu \in \mathcal{P}_{\mathrm{train}}}{\mathrm{argmax}~}{\widetilde{\Delta}(U_i;\mu)}$.}
  \ENDWHILE
 \end{algorithmic}
\end{algorithm}
From Propositions\nobreakspace \ref {thm:SKquasi-opt} and\nobreakspace  \ref {thm:skcea}, Corollary\nobreakspace \ref {thm:skerrorind1} and (\ref {eq:res_concentration}), we can prove the quasi-optimality of the greedy selection in Algorithm\nobreakspace \ref {alg:sk_greedy_online} with high probability.

\subsection{Proper Orthogonal Decomposition} \label{sk_pod} 

Now we introduce the sketched version of POD. We first note that random sketching is a popular technique for obtaining low-rank approximations of large matrices~\cite{woodruff2014sketching}. It can be easily combined with Proposition\nobreakspace \ref {thm:approx_pod} and\nobreakspace Algorithm\nobreakspace \ref {alg:approx_pod} for finding POD vectors. For large-scale problems, however, evaluating and storing POD vectors can be too expensive or even unfeasible, e.g., in a streaming or a distributed environment. We here propose a POD where evaluation of the full vectors is not necessary. We give a special attention to  distributed computing. The computations involved in our version of POD can be distributed among separate machines with a communication cost independent of the dimension of the full order problem.

We observe that a complete reduced order model can be constructed from a sketch (see Section\nobreakspace \ref {l2embeddingsMOR}). Assume that we are given the sketch of a matrix $\bU_m$ containing $m$ solutions samples associated with $\bTheta$, i.e.,
\begin{equation*}
 \bU^{\bTheta}_m:= \bTheta\bU_m,~~ \bV^{\bTheta}_m(\mu):= \bTheta\bR_U^{-1}\bA(\mu)\bU_m,~~ \bl_m(\mu)^\mathrm{H}:= \bl(\mu)^{\mathrm{H}}\bU_m,~~ \bb^{\bTheta}(\mu):= \bTheta\bR_U^{-1}\bb(\mu).
\end{equation*}
Recall that sketching a set of vectors can be efficiently performed basically in any modern computational environment, e.g., a distributed environment with expensive communication cost (see~Section\nobreakspace \ref {sketch}). Instead of computing a full matrix of reduced basis vectors, $\bU_r \in \mathbb{K}^{n \times r}$,  as in classical methods, we look for a small matrix $\bT_r \in \mathbb{K}^{m\times r}$ such that $\bU_r= \bU_m\bT_r$. Given $\bT_r$, the sketch of $\bU_r$ can be {computed} without operating with the whole $\bU_m$ but only with its sketch: 
\begin{equation*}
\bTheta \bU_r= \bU^{\bTheta}_m\bT_r,~~ \bTheta \bR_U^{-1}\bA(\mu)\bU_r= \bV^{\bTheta}_m(\mu)\bT_r, \textup{ and } \bl(\mu)^{\mathrm{H}}\bU_r= \bl_m(\mu)^{\mathrm{H}}\bT_r.
\end{equation*}
Further we propose an efficient way for obtaining $\bT_r$ such that the quality of $U_r:= \mathrm{span}(\bU_r)$ is close to optimal. 

For each $r\leq \mathrm{rank}(\bU^\bTheta_m)$, let $U_r$ be an $r$-dimensional subspace obtained with the method of snapshots associated with norm $\| \cdot \|^{\bTheta}_U $, presented below. 
\begin{definition}[Sketched method of snapshots]\label{thm:sk_pod}
	Consider the following eigenvalue problem
	\begin{equation} \label{eq:sk_eig_gram}
	\bG \bt = \lambda \bt 
	\end{equation}
	where  $\bG:= (\bU^\bTheta_m)^\mathrm{H}\bU^\bTheta_m$. Let $l= \mathrm{rank}(\bU^\bTheta_m)\geq r$ and let $\{ (\lambda_i, \bt_i) \}_{i=1}^{l}$ be the solutions to~(\ref {eq:sk_eig_gram}) ordered such that $ \lambda_1  \ge \hdots \ge \lambda_l$. Define 
	\begin{equation} \label{eq:sk_podbasis}
	U_r:=\mathrm{range} (\bU_m\bT_r), 
	\end{equation}
	where $\bT_r:= [\bt_1, ..., \bt_r]$.
\end{definition}

For given $V \subseteq U_m$, let $\bP^\bTheta_{V}:U_m \rightarrow V$ denote an orthogonal projection on $V$ with respect to $\| \cdot \|^\bTheta_{U}$, i.e.,
\begin{equation} \label{eq:P*}
\forall \bx \in U_m,~ \bP^\bTheta_{V}\bx= \arg\min_{\bw \in V} \| \bx-\bw \|^\bTheta_{U},
\end{equation}
and define the following error indicator:
\begin{equation}\label{eq:sk_poderror}
\Delta^\mathrm{POD}(V) := \frac{1}{m} \sum^{m}_{i=1} \left (\| \bu (\mu^i)- \bP^\bTheta_{V}\bu(\mu^i)\|^{\bTheta}_{U} \right )^2.
\end{equation} 

\begin{proposition} \label{thm:sk_poderror}
	Let $\{ \lambda_i \}_{i=1}^{l}$ be the set of eigenvalues from Definition\nobreakspace\ref{thm:sk_pod}. Then 
	\begin{equation} \label{eq:lambda}
	\Delta^\mathrm{POD}(U_r):=  \frac{1}{m} \sum^{l}_{i=r+1} \lambda_i.
	\end{equation}
	Moreover, for all $V_r \subseteq U_m$ with $\mathrm{dim}(V_r)\leq r$,
	\begin{equation}
	\Delta^\mathrm{POD}(U_r)\leq \Delta^\mathrm{POD}(V_r).
	\end{equation}
\begin{proof}
See appendix.
 \end{proof} \end{proposition}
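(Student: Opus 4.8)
The plan is to push everything through the sketching matrix $\bTheta$, so that the statement reduces to the classical optimality of the method of snapshots (equivalently, Eckart--Young) for the single small matrix $\bU^\bTheta_m := \bTheta\bU_m \in \mathbb{K}^{k\times m}$ in the genuine Euclidean norm on $\mathbb{K}^k$. The first step is to rewrite the error indicator. For a subspace $V\subseteq U_m$, put $\bTheta V := \{\bTheta\bw : \bw\in V\}\subseteq\mathbb{K}^k$. Since $\|\bx-\bw\|^\bTheta_U = \|\bTheta\bx-\bTheta\bw\|$ and $\bTheta\bw$ runs over all of $\bTheta V$ as $\bw$ runs over $V$, we get $\|\bu(\mu^i)-\bP^\bTheta_V\bu(\mu^i)\|^\bTheta_U = \min_{\bz\in\bTheta V}\|\bTheta\bu(\mu^i)-\bz\| = \|(\bI-\bP_{\bTheta V})\bTheta\bu(\mu^i)\|$, where $\bP_{\bTheta V}$ is the $\ell_2$-orthogonal projector onto $\bTheta V$. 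As $\bTheta\bu(\mu^i)$ is the $i$-th column of $\bU^\bTheta_m$ and the squared Frobenius norm is the sum of the squared column norms, this yields
\begin{equation*}
 \Delta^\mathrm{POD}(V) = \frac{1}{m}\|(\bI-\bP_{\bTheta V})\bU^\bTheta_m\|^2_{F}.
\end{equation*}
Note this makes $\Delta^\mathrm{POD}(V)$ well defined even though $\|\cdot\|^\bTheta_U$ is only a semi-norm on $U_m$: only the value of the minimum enters, not the (possibly non-unique) minimizer.

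Next I would identify the eigenproblem of Definition~\ref{thm:sk_pod} with the method of snapshots for $\bU^\bTheta_m$. Taking the $\bt_i$ orthonormal, a direct computation gives $\langle\bU^\bTheta_m\bt_i,\bU^\bTheta_m\bt_j\rangle = \bt_j^\mathrm{H}\bG\bt_i = \lambda_i$ if $i=j$ and $0$ otherwise, so the vectors $\bU^\bTheta_m\bt_i$, $1\le i\le l$, are pairwise orthogonal with squared norm $\lambda_i>0$ (since $l=\mathrm{rank}(\bU^\bTheta_m)$, exactly $l$ eigenvalues are positive); after normalization they are dominant left singular vectors of $\bU^\bTheta_m$, and $\lambda_i$ is the square of the $i$-th singular value. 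Hence $\bTheta U_r = \mathrm{range}(\bTheta\bU_m\bT_r) = \mathrm{range}(\bU^\bTheta_m\bT_r) = \mathrm{span}\{\bU^\bTheta_m\bt_1,\dots,\bU^\bTheta_m\bt_r\}$ is exactly the span of the $r$ dominant left singular vectors and has dimension exactly $r$. Writing out an SVD of $\bU^\bTheta_m$ then gives $\|(\bI-\bP_{\bTheta U_r})\bU^\bTheta_m\|^2_{F} = \sum_{i=r+1}^{l}\lambda_i$, which together with the first-step identity proves~(\ref{eq:lambda}).

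For the optimality claim, take any $V_r\subseteq U_m$ with $\mathrm{dim}(V_r)\le r$. Then $W:=\bTheta V_r$ is a subspace of $\mathbb{K}^k$ with $\mathrm{dim}(W)\le r$, and by the Eckart--Young theorem (equivalently, optimality of the method of snapshots) $\|(\bI-\bP_{W})\bU^\bTheta_m\|^2_{F} \ge \sum_{i=r+1}^{l}\lambda_i$, the minimum over rank-$\le r$ orthogonal projectors being attained at $\bP_{\bTheta U_r}$. Combining this with the displayed identity for $V = V_r$ and with~(\ref{eq:lambda}) gives $\Delta^\mathrm{POD}(U_r)\le\Delta^\mathrm{POD}(V_r)$.

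The only genuine subtlety — and the step that needs care — is the possible rank deficiency of $\bTheta$ restricted to $U_m$, which makes $\|\cdot\|^\bTheta_U$ merely a semi-norm, $\bP^\bTheta_V$ possibly non-unique, and $\mathrm{dim}(\bTheta V_r)$ possibly strictly smaller than $\mathrm{dim}(V_r)$. The reduction in the first step removes the non-uniqueness issue (only the minimal distance appears), and $\mathrm{dim}(\bTheta V_r)\le\mathrm{dim}(V_r)\le r$ is all that the Eckart--Young lower bound requires; equality for $U_r$ is safe because we separately verified $\mathrm{dim}(\bTheta U_r)=r$. Everything else is routine linear algebra once the problem has been transported into $\mathbb{K}^k$.
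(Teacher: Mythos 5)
Your proof is correct and follows essentially the same route as the paper: both reduce $\Delta^\mathrm{POD}(V)$ to $\frac{1}{m}\|(\bI-\bP_{\bTheta V})\bU_m^\bTheta\|_F^2$, identify $\bP_{\bTheta U_r}\bU_m^\bTheta$ (equivalently $\bTheta\bP^\bTheta_{U_r}\bU_m$) with the rank-$r$ truncated SVD of $\bU_m^\bTheta$, and conclude by standard SVD/Eckart--Young properties. Your write-up simply supplies the details the paper leaves implicit, including the useful observation that only the minimal distance (not the possibly non-unique minimizer $\bP^\bTheta_V$) enters the indicator.
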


Observe that {{the} matrix $\bT_r$ (characterizing $U_r$) can be much cheaper to obtain than the basis vectors for $U^*_r = POD_r(\bU_m, \| \cdot \|_U )$.}  For this, we need to operate only with the sketched matrix $\bU^\bTheta_m$ but not with the full snapshot matrix $\bU_m$. Nevertheless, the quality of $U_r$ can be guaranteed to be close {to the quality of $U^*_r$.} 

\begin{theorem} \label{thm:sk_podopt}
	Let $Y \subseteq U_m$ be a subspace of $U_m$ with $\mathrm{dim}(Y)\geq r$, and let 
	\begin{equation*}
	\Delta_Y= \frac{1}{m} \sum^{m}_{i=1} \| \bu(\mu^i)- \bP_{Y}\bu(\mu^i) \|^2_{U}.
	\end{equation*}
	If $\bTheta$ is a $U \to \ell_2$ $\varepsilon$-subspace embedding for $Y$ and every subspace in $\left \{ \mathrm{span} (\bu(\mu^i)- \bP_{Y}\bu(\mu^i)) \right \}_{i=1}^{m}$ and $\left \{ \mathrm{span}(\bu(\mu^i)- {\bP_{U^*_r}}\bu(\mu^i)) \right \}_{i=1}^{m}$, then 
	\begin{equation} \label{eq:sk_podoptY}
	\begin{split}
	\frac{1}{m} \sum^{m}_{i=1} \| \bu(\mu^i)- \bP_{U_r}\bu(\mu^i) \|&_{U}^2 \leq \frac{2}{1-\varepsilon}\Delta^\mathrm{POD}(U_r)+ (\frac{2(1+\varepsilon)}{1-\varepsilon}+1)\Delta_Y \\
	&\leq \frac{2(1+\varepsilon)}{1-\varepsilon} \frac{1}{m} \sum^{m}_{i=1} \| \bu(\mu^i)- \bP_{U^*_r}\bu(\mu^i) \|^2_{U}+ (\frac{2(1+\varepsilon)}{1-\varepsilon}+1)\Delta_Y.
	\end{split}
	\end{equation}
	Moreover, if $\bTheta$ is $U \to \ell_2$ $\varepsilon$-subspace embedding for $U_{m}$, then
	\begin{equation} \label{eq:sk_podoptUm}
	\begin{split}
	\frac{1}{m}\sum^{m}_{i=1} \| \bu(\mu^i)- \bP_{U_r}\bu(\mu^i) \|_{U}^2 &\leq \frac{1}{1-\varepsilon}\Delta^\mathrm{POD}(U_r)\leq \frac{1+\varepsilon}{1-\varepsilon} \frac{1}{m}\sum^{m}_{i=1} \| \bu(\mu^i) - \bP_{U^*_r} \bu(\mu^i) \|_{U}^2.
	\end{split}
	\end{equation}
	\begin{proof}
		See appendix. 
	\end{proof}
\end{theorem}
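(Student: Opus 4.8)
The plan is to prove the two displayed chains by establishing separately: (i) an upper bound on $\Delta^{\mathrm{POD}}(U_r)$ by the exact POD error $\frac{1}{m}\sum_{i}\|\bu(\mu^i)-\bP_{U^*_r}\bu(\mu^i)\|_U^2$ of $U^*_r$ — the common right‑hand part of both \eqref{eq:sk_podoptY} and \eqref{eq:sk_podoptUm} — and (ii) an upper bound on the true error $\frac{1}{m}\sum_i\|\bu(\mu^i)-\bP_{U_r}\bu(\mu^i)\|_U^2$ of $U_r$ itself in terms of $\Delta^{\mathrm{POD}}(U_r)$ (plus a multiple of $\Delta_Y$ in the general case). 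Throughout I write $\bz_i:=\bP_Y\bu(\mu^i)\in Y$ and $\be_i:=\bu(\mu^i)-\bz_i$, so that $\bu(\mu^i)=\bz_i+\be_i$ and $\frac{1}{m}\sum_i\|\be_i\|_U^2=\Delta_Y$; I write $\bP^{\bTheta}_V$ for the projection onto $V\subseteq U_m$ that is orthogonal for $\langle\cdot,\cdot\rangle^{\bTheta}_U$ as in \eqref{eq:P*}; and I repeatedly use that, by Proposition~\ref{thm:sk_poderror}, $U_r$ minimises $\Delta^{\mathrm{POD}}(\cdot)$ over all $r$‑dimensional subspaces of $U_m$, together with the norm equivalences of Definition~\ref{def:epsilon_embedding} on the subspaces for which $\bTheta$ is an $\varepsilon$‑embedding and the elementary inequality $(a+b)^2\le 2a^2+2b^2$.

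For (i): since $U^*_r=POD_r(\bU_m,\|\cdot\|_U)$ is an $r$‑dimensional subspace of $U_m$, Proposition~\ref{thm:sk_poderror} gives $\Delta^{\mathrm{POD}}(U_r)\le\Delta^{\mathrm{POD}}(U^*_r)$, and since $\bP_{U^*_r}\bu(\mu^i)\in U^*_r$ the $\|\cdot\|^{\bTheta}_U$‑distance from $\bu(\mu^i)$ to $U^*_r$ is at most $\|\bu(\mu^i)-\bP_{U^*_r}\bu(\mu^i)\|^{\bTheta}_U$, whence $\Delta^{\mathrm{POD}}(U^*_r)\le\frac{1}{m}\sum_i\|\bu(\mu^i)-\bP_{U^*_r}\bu(\mu^i)\|^{\bTheta}_U{}^2$. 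As $\bTheta$ is an $\varepsilon$‑embedding for each $\mathrm{span}(\bu(\mu^i)-\bP_{U^*_r}\bu(\mu^i))$, Definition~\ref{def:epsilon_embedding} (with $\bx=\by$) bounds the last sum by $(1+\varepsilon)\frac{1}{m}\sum_i\|\bu(\mu^i)-\bP_{U^*_r}\bu(\mu^i)\|_U^2$. This is precisely the factor needed to pass from the first to the second line in both \eqref{eq:sk_podoptY} and \eqref{eq:sk_podoptUm}.

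For (ii), consider first the ``moreover'' case $Y=U_m$, where $\Delta_Y=0$ and $\bTheta$ is an $\varepsilon$‑embedding for all of $U_m\supseteq U_r$. Since $\bP_{U_r}$ is the $\|\cdot\|_U$‑best approximation in $U_r$ and $\bP^{\bTheta}_{U_r}\bu(\mu^i)\in U_r$, $\|\bu(\mu^i)-\bP_{U_r}\bu(\mu^i)\|_U\le\|\bu(\mu^i)-\bP^{\bTheta}_{U_r}\bu(\mu^i)\|_U$, and the vector on the right lies in $U_m$, so Definition~\ref{def:epsilon_embedding} yields $\|\bu(\mu^i)-\bP^{\bTheta}_{U_r}\bu(\mu^i)\|_U^2\le\frac{1}{1-\varepsilon}\|\bu(\mu^i)-\bP^{\bTheta}_{U_r}\bu(\mu^i)\|^{\bTheta}_U{}^2$; summing over $i$ and using (i) gives \eqref{eq:sk_podoptUm}. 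In the general case one again starts from $\|\bu(\mu^i)-\bP_{U_r}\bu(\mu^i)\|_U\le\|\bu(\mu^i)-\bP^{\bTheta}_{U_r}\bu(\mu^i)\|_U$ and uses the splitting $\bu(\mu^i)-\bP^{\bTheta}_{U_r}\bu(\mu^i)=(\bz_i-\bP^{\bTheta}_{U_r}\bz_i)+(\be_i-\bP^{\bTheta}_{U_r}\be_i)$ coming from linearity of $\bP^{\bTheta}_{U_r}$, together with the $\|\cdot\|^{\bTheta}_U$‑non‑expansiveness of $\bP^{\bTheta}_{U_r}$ and comparisons of $U_r$ with $r$‑dimensional subspaces of $Y$ furnished by Proposition~\ref{thm:sk_poderror}: the aim is to invoke the norm equivalence of Definition~\ref{def:epsilon_embedding} only on $Y$ and on the lines $\mathrm{span}(\be_i)$, where $\bTheta$ is an $\varepsilon$‑embedding, while absorbing all residual terms into $\Delta_Y$ via triangle inequalities, with $(a+b)^2\le2a^2+2b^2$ producing the constants $\frac{2}{1-\varepsilon}$ and $\frac{2(1+\varepsilon)}{1-\varepsilon}+1$; (i) then rewrites the $\Delta^{\mathrm{POD}}(U_r)$ term as the $U^*_r$ error to give the second line of \eqref{eq:sk_podoptY}.

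The hard part is (ii) in the general case. The sketched POD space $U_r$ is fixed by $\bTheta$ and is in general neither contained in nor well aligned with the reference space $Y$, so $\bTheta$ carries no quasi‑isometry on $U_r$, nor on $Y+U_r$; consequently $\|\bu(\mu^i)-\bP^{\bTheta}_{U_r}\bu(\mu^i)\|_U$ cannot be replaced directly by a sketched norm, and the decomposition through $\bz_i$ and $\be_i$ must be organised so that every application of Definition~\ref{def:epsilon_embedding} occurs inside $Y$ or inside a single line $\mathrm{span}(\be_i)$ — recall that one is not handed an embedding for the sums $Y+\mathrm{span}(\be_i)$. The price of this rerouting is triangle‑inequality contributions of order $\Delta_Y$, and making it close with exactly the stated constants (rather than a worse multiple of $\Delta_Y$) is the delicate bookkeeping; by contrast, (i) and the ``moreover'' case are essentially immediate from Proposition~\ref{thm:sk_poderror} and Definition~\ref{def:epsilon_embedding}.
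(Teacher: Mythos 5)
Your part (i) and the ``moreover'' case are correct and coincide with the paper's argument: the chain $\Delta^{\mathrm{POD}}(U_r)\le\Delta^{\mathrm{POD}}(U^*_r)\le\frac{1}{m}\sum_i(\|\bu(\mu^i)-\bP_{U^*_r}\bu(\mu^i)\|^{\bTheta}_U)^2\le\frac{1+\varepsilon}{m}\sum_i\|\bu(\mu^i)-\bP_{U^*_r}\bu(\mu^i)\|_U^2$ is exactly how the right-hand parts of both displays are obtained. But the general case of (ii) — which you yourself flag as the hard part — is left as an unexecuted plan, and the specific route you sketch would not close. The splitting $\bu(\mu^i)-\bP^{\bTheta}_{U_r}\bu(\mu^i)=(\bz_i-\bP^{\bTheta}_{U_r}\bz_i)+(\be_i-\bP^{\bTheta}_{U_r}\be_i)$ produces terms containing $\bP^{\bTheta}_{U_r}\bz_i,\ \bP^{\bTheta}_{U_r}\be_i\in U_r$, and on $U_r$ (or on $Y+U_r$, or on $\mathrm{span}(\be_i)+U_r$) you have no embedding, so neither $\|\bz_i-\bP^{\bTheta}_{U_r}\bz_i\|_U$ nor $\|\be_i-\bP^{\bTheta}_{U_r}\be_i\|_U$ can be converted to a sketched norm or absorbed into $\Delta_Y$ by a per-index triangle inequality; indeed $\bP^{\bTheta}_{U_r}\be_j$ mixes the residuals of all snapshots into each error term, so the $i$-th contribution is not individually controlled by $\|\be_i\|_U$.

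The paper closes this step with two ingredients you do not have. First, the decomposition is taken on the \emph{output} side with the $U$-orthogonal projector onto $Y$: $\bu(\mu^i)-\bP^{\bTheta}_{U_r}\bu(\mu^i)=\bP_Y(\cdots)+(\bI-\bP_Y)(\cdots)$, so that the first component genuinely lies in $Y$ where the $\varepsilon$-embedding applies (and Pythagoras replaces your factor-2 loss on that split). Second, the complementary components are controlled only \emph{in aggregate}, using the explicit form $\bP^{\bTheta}_{U_r}\bU_m=\bU_m\bT_r\bT_r^{\mathrm{H}}$ with $\bT_r$ orthonormal, which gives
\begin{equation*}
\sum_{i=1}^m\|(\bI-\bP_Y)(\bu(\mu^i)-\bP^{\bTheta}_{U_r}\bu(\mu^i))\|_U^2=\|\bQ(\bI-\bP_Y)\bU_m(\bI-\bT_r\bT_r^{\mathrm{H}})\|_F^2\le\|\bQ(\bI-\bP_Y)\bU_m\|_F^2\,\|\bI-\bT_r\bT_r^{\mathrm{H}}\|^2\le m\,\Delta_Y,
\end{equation*}
and the analogous bound $\sum_i(\|(\bI-\bP_Y)(\cdots)\|^{\bTheta}_U)^2\le(1+\varepsilon)m\,\Delta_Y$ after invoking the embedding on each line $\mathrm{span}(\be_i)$. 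This Frobenius-norm/spectral-norm factorization is the missing idea; without it (or an equivalent aggregate argument) the stated constants — and in fact any finite multiple of $\Delta_Y$ — cannot be reached along the path you describe.
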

{By an union bound argument and the definition of an oblivious embedding, the hypothesis in the first part of Theorem\nobreakspace \ref {thm:sk_podopt} can be satisfied with probability at least $1-3\delta$ if $\bTheta$ is a $( \varepsilon, \delta, \mathrm{dim}(Y))$ and $( \varepsilon, \delta/m, 1)$  oblivious $U \to \ell_2$ embedding.} A subspace $Y$ can be taken as $U^*_{r}$, or a larger subspace making $\Delta_Y$ as small as possible. It is important to note that even if $U_r$ is quasi-optimal, there is no guarantee that $\bTheta$ is a $U \to \ell_2$ $\varepsilon$-subspace embedding for $U_r$ unless it is a $U \to \ell_2$ $\varepsilon$-subspace embedding for the whole $U_m$. Such guarantee can be unfeasible to achieve for large training sets. One possible solution is to maintain two sketches of $\bU_m$: one for {the method of snapshots}, and one for Galerkin projections and residual norms. Another way (following considerations similar to~\cite{halko2011finding}) is to replace $\bU_m$ by its low-rank approximation $\widetilde{\bU}_m = \bP^\bTheta_{W}\bU_m$, with $W = \mathrm{span}(\bU_m \bOmega^*)$, where $\bOmega^*$ is a small random matrix (e.g., Gaussian matrix). The latter procedure can be also used for improving the efficiency of the algorithm when $m$ is large. Finally, if $\bTheta$ is a $U \to \ell_2$ $\varepsilon$-subspace embedding for every subspace in $\{ \mathrm{span} (\bu(\mu^i)- \bP^\bTheta_{U_r}\bu(\mu^i)) \}_{i=1}^{m}$ then the error indicator $\Delta^\mathrm{POD}(U_r)$ is quasi-optimal. However, if only the first hypothesis of Theorem\nobreakspace \ref{thm:sk_podopt} is satisfied then the quality of $\Delta^\mathrm{POD}(U_r)$ will depend on $\Delta_Y$. In such a case the error can be {certified} using $\Delta^{\mathrm{POD}}(\cdot)$ defined with a new realization of $\bTheta$.

\section{Numerical examples} \label{Numerical}
In this section the approach is validated numerically
and compared against classical methods. For simplicity in all our experiments, we chose a coefficient  $\eta(\mu)=1$ in~Equations\nobreakspace \textup {(\ref {eq:errorind})} and\nobreakspace  \textup {(\ref {eq:skerrorind})} for the error estimation. The experiments revealed that the theoretical bounds for $k$ in~Propositions\nobreakspace \ref {thm:Rademacher} and\nobreakspace  \ref {thm:P-SRHT} and\nobreakspace Table\nobreakspace \ref {tab:numrows} are pessimistic.
In practice, much smaller random matrices still provide good estimation of the output. In addition, we did not detect any significant difference in performance between Rademacher matrices, Gaussian matrices and P-SRHT, even though the theoretical bounds for P-SRHT are worse.
Finally, the results obtained with Rademacher matrices are not presented. They are similar to those for Gaussian matrices and P-SRHT. 

\subsection{3D thermal block}
We use a 3D version of the thermal block benchmark from~\cite{haasdonk2017reduced}.  This problem describes a  heat transfer phenomenon through a domain $\Omega:= [0,1]^3$  made of an assembly of  blocks, each composed of a different material. The boundary value problem for modeling the thermal block is as follows 
\begin{equation} \label{eq:BVP1}
 \left \{
 \begin{array}{rll}
  -\boldsymbol{\nabla} \cdot (\kappa \boldsymbol{\nabla} T)&= 0,~~ & \textup{in }  \Omega \\
  T &=0,~~ & \textup{on }  \Gamma_{D} \\
  \boldsymbol{n} \cdot (\kappa \boldsymbol{\nabla} T)&=0,~~ &  \textup{on }  \Gamma_{N,1}\\
  \boldsymbol{n} \cdot (\kappa \boldsymbol{\nabla} T)&=1,~~ & \textup{on } \Gamma_{N,2},
 \end{array}
 \right.
\end{equation}
where $T$ is the temperature field, $\boldsymbol{n}$ is the outward normal vector to the boundary, $\kappa$ is the thermal conductivity, and $\Gamma_{D}$, $\Gamma_{N,1}$, $\Gamma_{N,2}$ are parts of the boundary defined by $\Gamma_{D} := \{(x,y,z) \in \partial\Omega: y=1\}$, $\Gamma_{N,2} := \{(x,y,z) \in \partial\Omega: y=0\}$ and $\Gamma_{N,1} :=\partial\Omega \backslash (\Gamma_D \cup \Gamma_{N,2})$. $\Omega$ is partitioned into $2\times 2\times 2$ subblocks $\Omega_i$ of equal size. A different thermal conductivity $\kappa_i$ is assigned to each  $\Omega_i$, i.e., 
$ \kappa(x)= \kappa_i$, $x \in \Omega_i.$ 
We are interested in estimating the mean temperature in $\Omega_1 := [0, \frac{1}{2}]^3$ for each $\mu:= (\kappa_1,..., \kappa_{8}) \in \mathcal{P} := [\frac{1}{10}, 10]^{8}$. The $\kappa_i$ are independent random variables with log-uniform distribution over $ [\frac{1}{10}, 10]$. 

Problem~(\ref {eq:BVP1}) was discretized using the classical finite element method with approximately $n=120000$ degrees of freedom. A function $w$ in the finite element approximation space is identified with a vector $\bw \in U$. The space $U$ is equipped with an inner product compatible with the 
$H^1_0$ inner product, i.e., $\| \bw \|_{U}:= \| \boldsymbol{\nabla}w \|_{L_2}$. The training set $\mathcal{P}_{\mathrm{train}}$ and the test set $\mathcal{P}_{\mathrm{test}}$ were taken as $10000$ and $1000$ independent samples{,} respectively. The factorization of $\bR_U$ was precomputed only once and used for efficient multiplication of $\bR^{-1}_U$  by multiple vectors. The sketching matrix $\bTheta$ was constructed with ~Proposition\nobreakspace \ref {thm:buildepsilon_embedding}, i.e., $\bTheta:= \bOmega\bQ$, where $\bOmega \in \mathbb{R}^{k\times s}$ is a classical oblivious $\ell_2 \to \ell_2$ subspace embedding and $\bQ \in \mathbb{R}^{s\times n}$ is such that $\bQ^\mathrm{T}\bQ=\bR_U$. Furthermore, $\bQ$ was taken as the transposed Cholesky factor of $\bR_U$. Different distributions and sizes of {the} matrix $\bOmega$ were considered. The same realizations of $\bOmega$ were used for all parameters and greedy iterations within each experiment. A seeded random number generator was used for memory-efficient operations on random matrices. For P-SRHT, a fast implementation of the {fast} Walsh-Hadamard transform was employed for multiplying the Walsh-Hadamard matrix by a vector in $s \log_2{(s)}$ time. In Algorithm\nobreakspace \ref {alg:sk_greedy_online}, we used  $\bPhi:= \bGamma\bTheta$, where $\bGamma \in \mathbb{R}^{k'\times k}$ is a Gaussian matrix and $k'=100$. The same realizations of $\bGamma$ were used for all the parameters but it was regenerated at each greedy iteration. 

\emph{Galerkin projection and primal-dual correction.}
Let us investigate how the quality of the solution depends on the distribution and size of $\bOmega$. We first generated sufficiently accurate reduced subspaces $U_r$ and $U^{\mathrm{du}}_r$ for the primal and the  dual problems. The subspaces were spanned by snapshots evaluated at some points in $\mathcal{P}_{\mathrm{train}}$. The interpolation points were obtained by $r=100$ iterations of the  {efficient sketched greedy algorithm} (Algorithm\nobreakspace \ref {alg:sk_greedy_online}) with P-SRHT and $k =1000$ rows. Thereafter, $\bu(\mu)$ was approximated by a projection $\bu_r(\mu) \in U_r$. The classical Galerkin projection~(\ref {eq:galproj}) and its sketched version~(\ref {eq:SKgalproj}) with different distributions and sizes of $\bOmega$ were considered. The quality of a parameter-dependent projection is measured by $e_\mathcal{P}:=\max_{\mu \in \mathcal{P}_{\mathrm{test}}} \|\bu(\mu) - \bu_r(\mu)\|_{U} / \max_{\mu \in \mathcal{P}_{\mathrm{test}}} \|\bu(\mu)\|_{U}$ and $\Delta_\mathcal{P}:=\max_{\mu \in \mathcal{P}_{\mathrm{test}}} \| \br(\bu_r(\mu); \mu) \|_{U'}/\max_{\mu \in \mathcal{P}_{\mathrm{test}}} \|\bb(\mu)\|_{U'}$.  For each random projection 20 samples of $e_\mathcal{P}$ and $\Delta_\mathcal{P}$ were evaluated. Figure\nobreakspace \ref {fig:Ex1_1} describes how $e_\mathcal{P}$ and $\Delta_\mathcal{P}$ depend on the number of rows $k$\footnote{{The $p$-quantile of a random variable $X$ is  defined as $\inf \{ t : \mathbb{P}(X \leq t) \geq p \}$ and can be estimated by replacing the cumulative distribution function $\mathbb{P}(X \leq t)$ by its empirical estimation. Here we use 20 samples for this estimation.}}. We observe that the error associated with the sketched Galerkin projection is large when $k$ is close to $r$, but as $k$ increases, it asymptotically approaches the error of the classical Galerkin projection. The residual errors of the classical and the sketched projections become almost identical already for $k=500$ while the exact errors become close for $k=1000$. We also observe that for the aforementioned $k$ there is practically no deviation of $\Delta_\mathcal{P}$ and only a little deviation of $e_\mathcal{P}$. 

{
	Note that the theoretical bounds for $k$ to preserve the quasi-optimality constants of the classical Galerkin projection can be derived using Propositions\nobreakspace \ref{thm:Rademacher} and \ref{thm:P-SRHT} combined with Proposition\nobreakspace \ref{thm:SKquasi-opt} and a union bound for the probability of success. As was noted in Section\nobreakspace\ref{obleddings}, however, the theoretical bounds for $k$ in Propositions\nobreakspace \ref{thm:Rademacher} and \ref{thm:P-SRHT} shall be useful only for large problems with, say $n/r>10^4$, which means they should not be applicable here. Indeed, we see that for ensuring that 
	$$\mathbb{P}(\forall \mu \in \mathcal{P}_{\mathrm{test}}: \varepsilon a_r(\mu) <1) >1-10^{-6}, $$
	using the theoretical bounds, we need impractical values $k \geq {39280}$ for Gaussian matrices and $k =n \approx 100000$ for P-SRHT. 	
	In practice, the value for $k$ can be determined using the adaptive procedure proposed in~\cite{balabanov2018}.}	
%. 
\begin{figure}[h!]
 \centering
 \begin{subfigure}[b]{.4\textwidth}
  \centering
  \includegraphics[width=\textwidth]{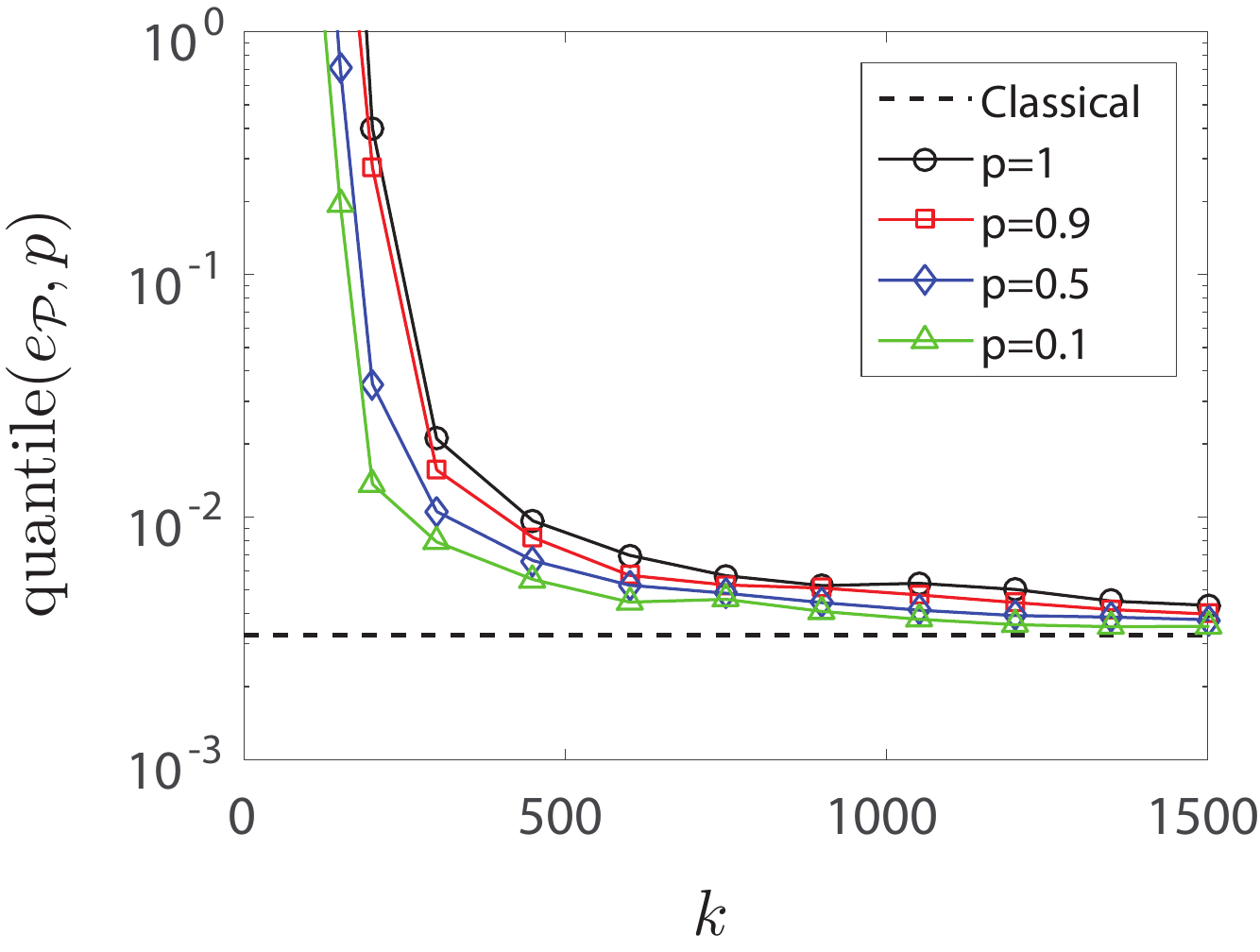}
  \caption{}
  \label{fig:Ex1_1a}
 \end{subfigure} \hspace{.01\textwidth}
 \begin{subfigure}[b]{.4\textwidth}
  \centering
  \includegraphics[width=\textwidth]{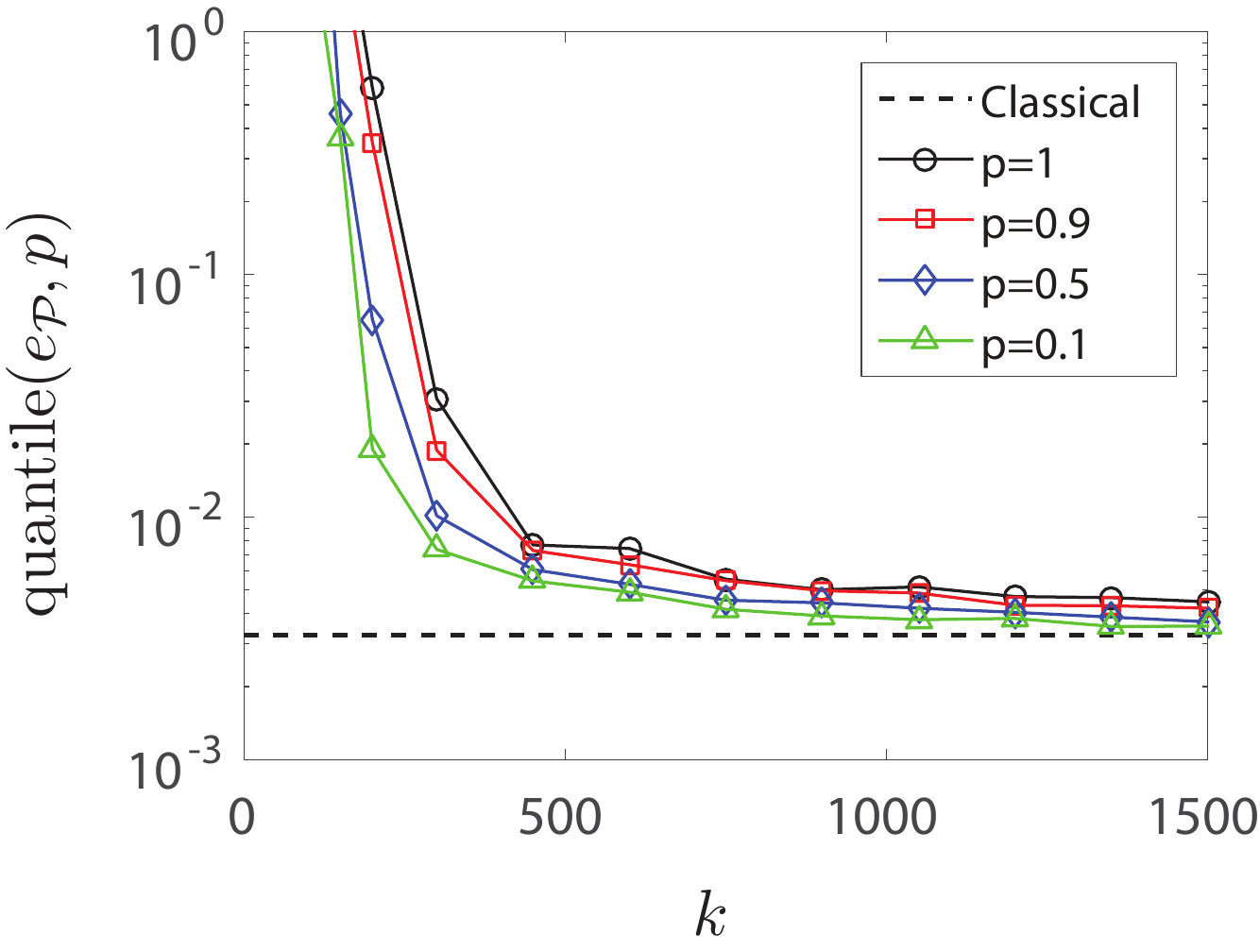}
  \caption{}
  \label{fig:Ex1_1b}
   \end{subfigure}
   
  \begin{subfigure}[b]{.4\textwidth}
  \centering
  \includegraphics[width=\textwidth]{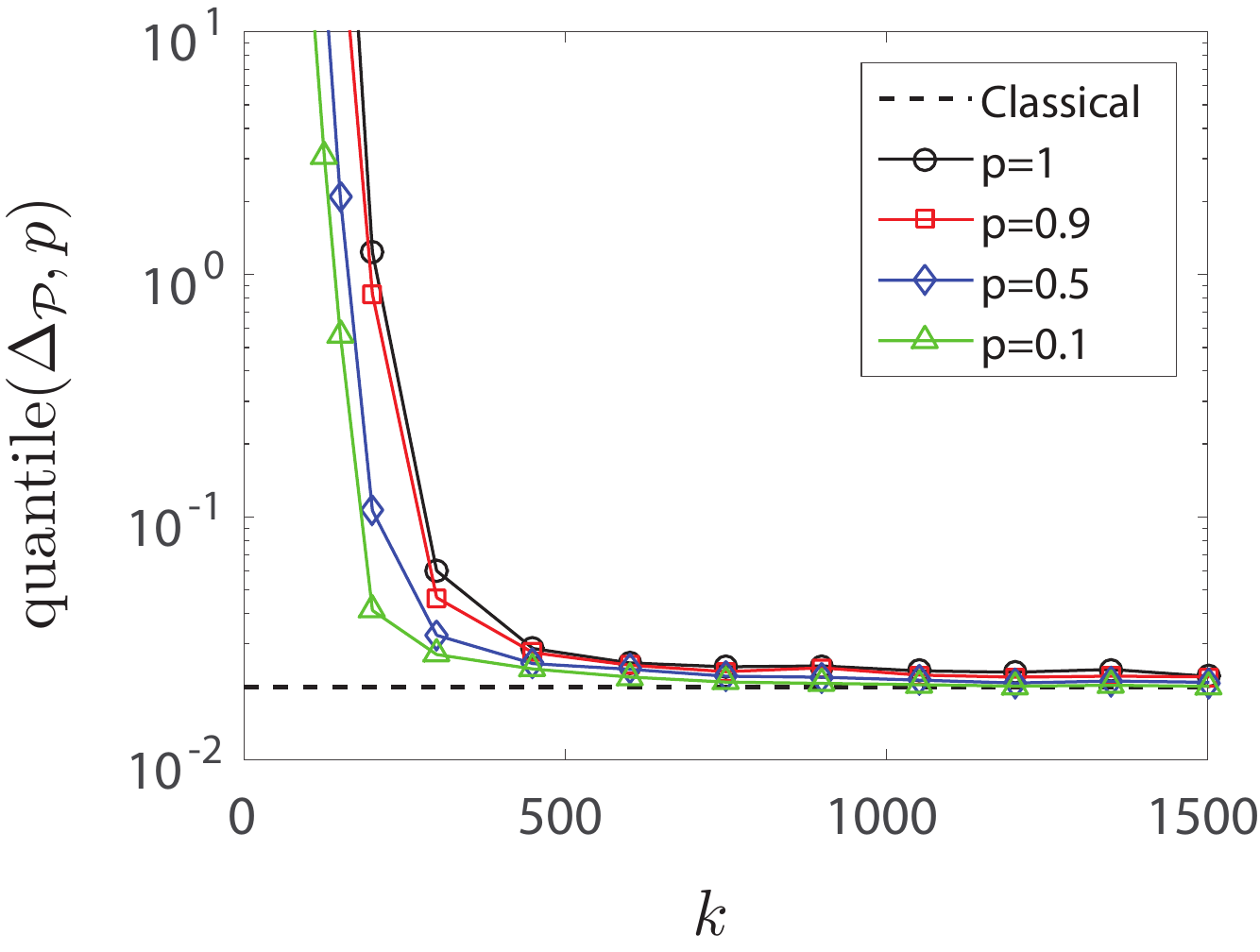}
  \caption{}
  \label{fig:Ex1_1c}
 \end{subfigure} \hspace{.01\textwidth}
 \begin{subfigure}[b]{.4\textwidth}
  \centering
  \includegraphics[width=\textwidth]{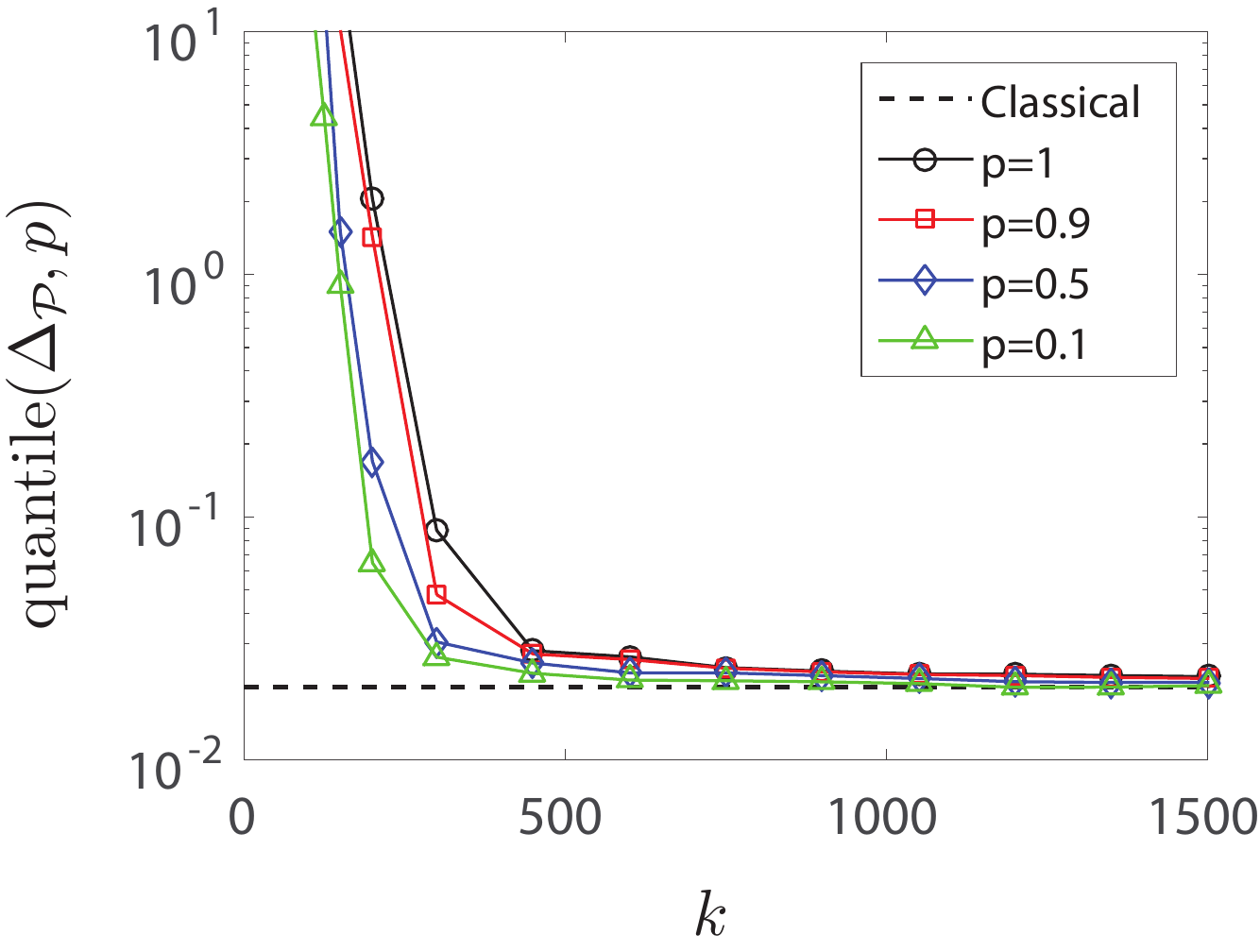}
  \caption{}
  \label{fig:Ex1_1d}
 \end{subfigure}
 \caption{Errors $e_\mathcal{P}$ and $\Delta_\mathcal{P}$ of the classical Galerkin projection and quantiles of probabilities $p=1, 0.9, 0.5$ and $0.1$ over 20 samples of $e_\mathcal{P}$ and $\Delta_\mathcal{P}$ of the randomized Galerkin projection versus the number of rows of $\bOmega$. (a) The exact error $e_\mathcal{P}$ with rescaled Gaussian distribution as $\bOmega$. (b) The exact error $e_\mathcal{P}$ with P-SRHT matrix as $\bOmega$. (c) The residual error $\Delta_\mathcal{P}$ with rescaled Gaussian distribution as $\bOmega$. (d) The residual error $\Delta_\mathcal{P}$ with P-SRHT matrix as $\bOmega$.}
 \label{fig:Ex1_1}
\end{figure}

Thereafter, we let $\bu_r(\mu) \in U_r$ and $\bu_r^\mathrm{du}(\mu) \in U^{\mathrm{du}}_r$ be the sketched Galerkin projections, where $\bOmega$ was taken as P-SRHT with $k =500$ rows. For the fixed $\bu_r(\mu)$ and $\bu_r^\mathrm{du}(\mu)$ the classical primal-dual correction $s^{\mathrm{pd}}_r(\mu)$~(\ref {eq:correction}), and the sketched primal-dual correction $s^{\mathrm{spd}}_r(\mu)$~(\ref {eq:skcorrection}) were evaluated using different sizes and distributions of $\bOmega$. In addition, the approach introduced in~Section\nobreakspace \ref {sk_pd_correction} for improving the accuracy of the sketched correction was employed. For $\bw^\mathrm{du}_r(\mu)$ we chose the orthogonal projection of $\bu_r^\mathrm{du}(\mu)$ on $W^\mathrm{du}_r:=U^{\mathrm{du}}_i$ with $i^\mathrm{du}=30$ (the subspace spanned by the first $i^\mathrm{du}=30$ basis vectors obtained during the generation of $U^{\mathrm{du}}_r$). With such $\bw_r^\mathrm{du}(\mu)$ the improved correction $s^{\mathrm{spd+}}_r(\mu)$ defined by~(\ref {eq:skcorrection2}) was computed. It has to be mentioned that $s^{\mathrm{spd+}}_r(\mu)$ yielded additional computations. They, however, are cheaper than the computations required for constructing the classical reduced systems and evaluating the classical output quantities in about $10$ times in terms of complexity and $6.67$ times in terms of memory. We define the error by $d_\mathcal{P}:=\max_{\mu \in \mathcal{P}_{\mathrm{test}}} |s(\mu) - \widetilde{s}_r(\mu) | / \max_{\mu \in \mathcal{P}_{\mathrm{test}}} |s(\mu)|$, where $\widetilde{s}_r(\mu)=s^{\mathrm{pd}}_r(\mu), s^{\mathrm{spd}}_r(\mu)$ or $s^{\mathrm{spd+}}_r(\mu)$. For each random correction we computed $20$ samples of $d_\mathcal{P}$. The errors on the output quantities versus the numbers of rows of $\bTheta$ are presented in~Figure\nobreakspace \ref {fig:Ex1_2}. We see that the error of $s^{\mathrm{spd}}_r(\mu)$ is proportional to $k^{-1/2}$. It can be explained by the fact that for considered sizes of random matrices, $\varepsilon$ is large compared to the residual error of the dual solution. As was noted in~Section\nobreakspace \ref {sk_pd_correction} in such a case the error bound for $s^{\mathrm{spd}}_r(\mu)$ is equal to $\mathcal{O}(\varepsilon \| \br(\bu_r(\mu);\mu)) \|_{U'})$. By~Propositions\nobreakspace \ref {thm:Rademacher} and\nobreakspace  \ref {thm:P-SRHT} it follows that $\varepsilon = \mathcal{O}(k^{-1/2})$, which explains the behavior of the error in~Figure\nobreakspace \ref {fig:Ex1_2}. Note that the convergence of $s^{\mathrm{spd}}_r(\mu)$ is not expected to be reached even for $k$ close to the dimension of the discrete problem. For large enough problems, however, the quality of the classical output will be always attained with $k \ll n$. In general, the error of the sketched primal-dual correction does not depend (or weakly depends for P-SRHT) on the dimension of the full order problem, but only on the accuracies of the approximate solutions $\bu_r(\mu)$ and $\bu_r^\mathrm{du}(\mu)$. 
On the other hand, we see that $s^{\mathrm{spd+}}_r(\mu)$ reaches the accuracy of the classical primal-dual correction for moderate $k$.   

\begin{figure}[h!]
 \centering
 \begin{subfigure}{.4\textwidth}
  \centering
  \includegraphics[width=\textwidth]{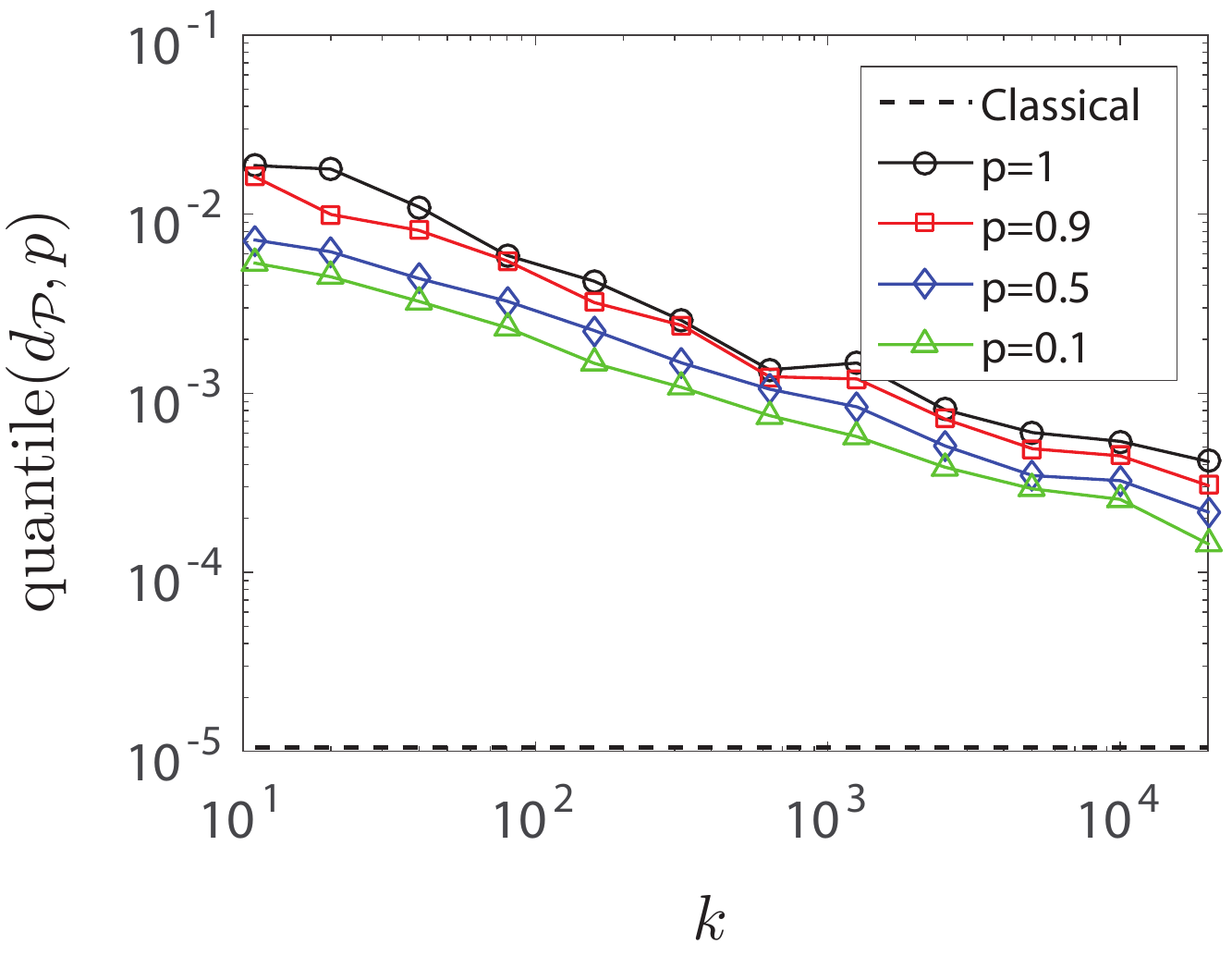}
  \caption{}
  \label{fig:Ex1_2a}
 \end{subfigure} \hspace{.01\textwidth}
 \begin{subfigure}{.4\textwidth}
  \centering
  \includegraphics[width=\textwidth]{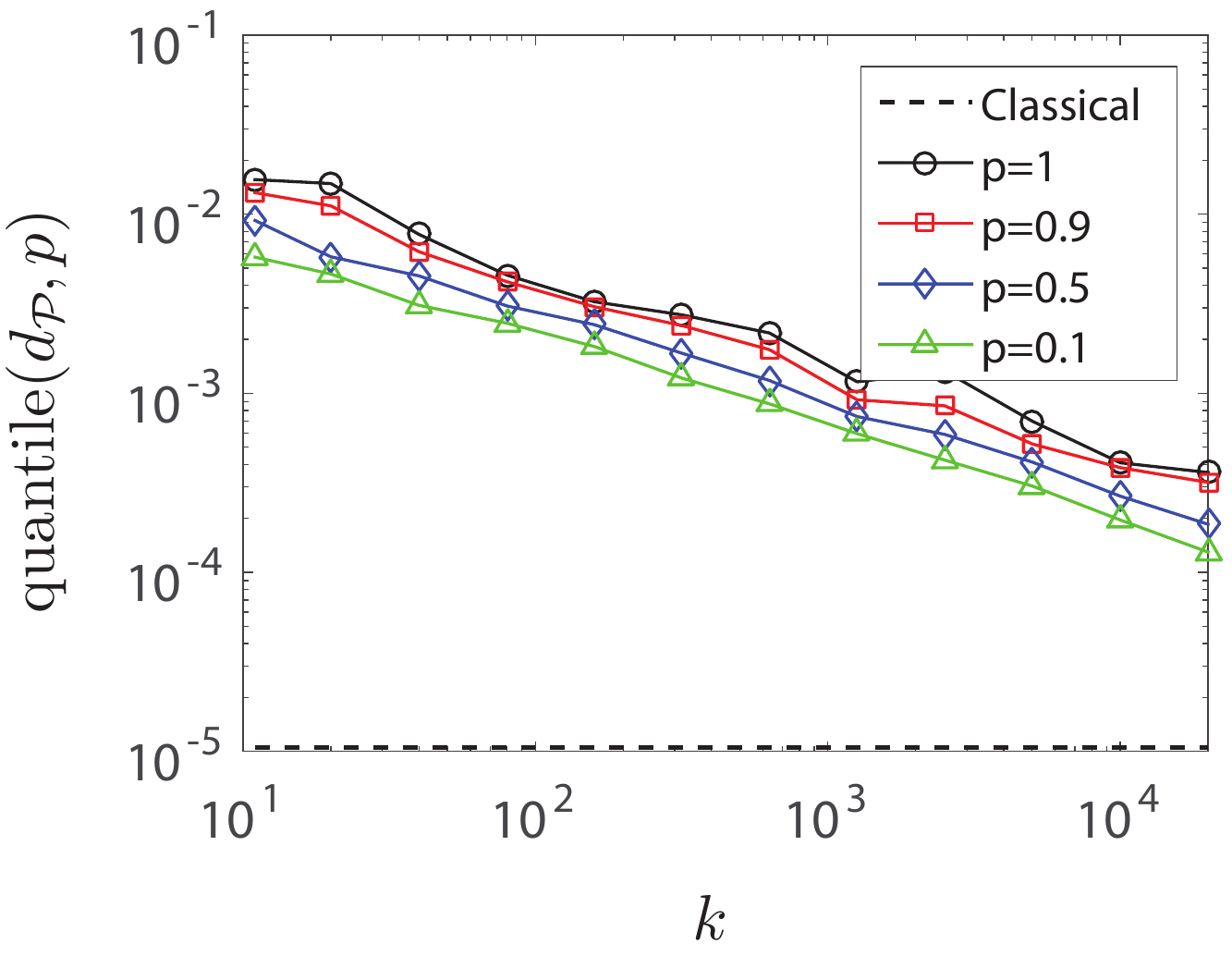}
  \caption{}
  \label{fig:Ex1_2b}
 \end{subfigure}
 \begin{subfigure}{.4\textwidth}
  \centering
  \includegraphics[width=\textwidth]{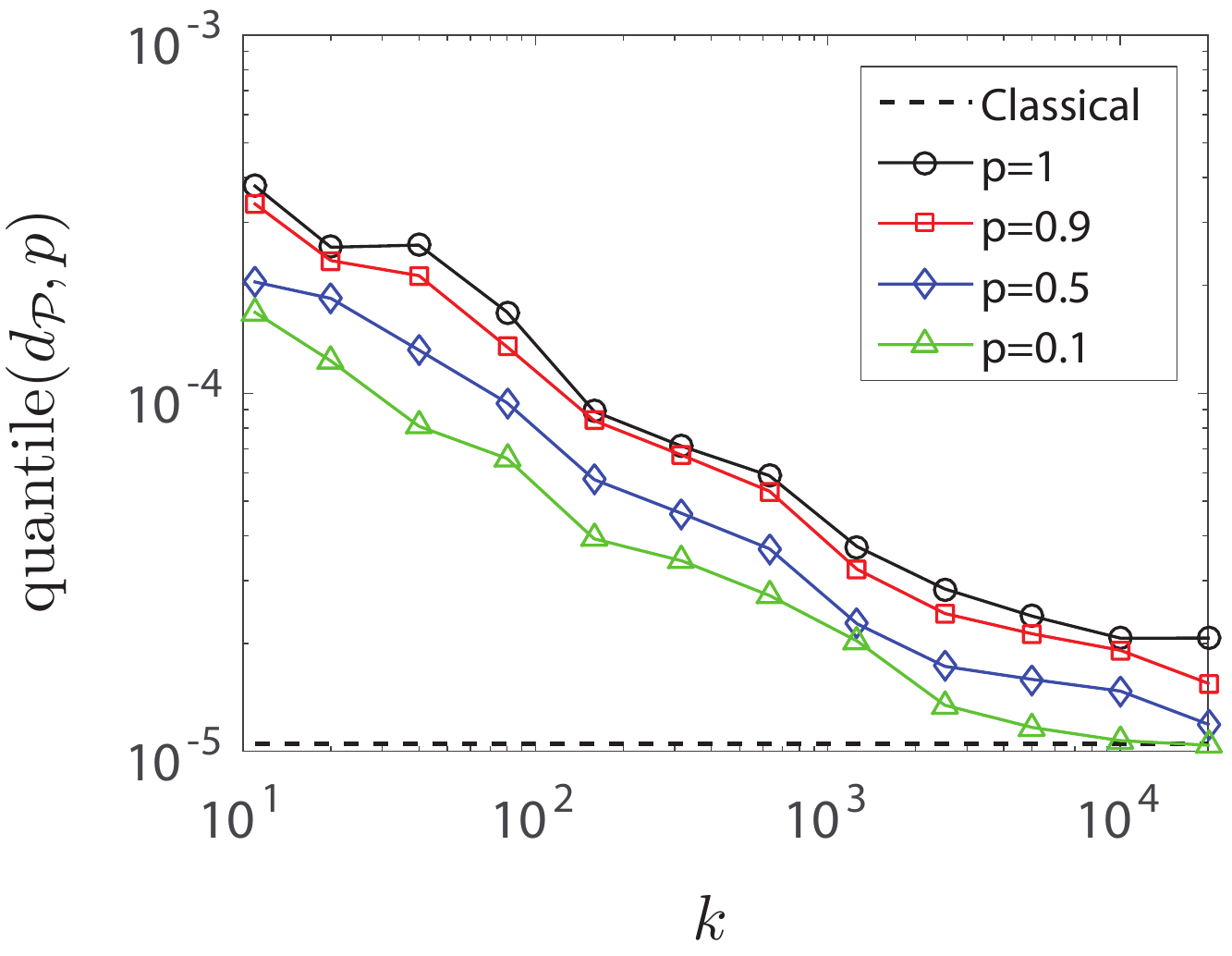}
  \caption{}
  \label{fig:Ex1_2c}
 \end{subfigure} \hspace{.01\textwidth}
 \begin{subfigure}{.4\textwidth}
  \centering
  \includegraphics[width=\textwidth]{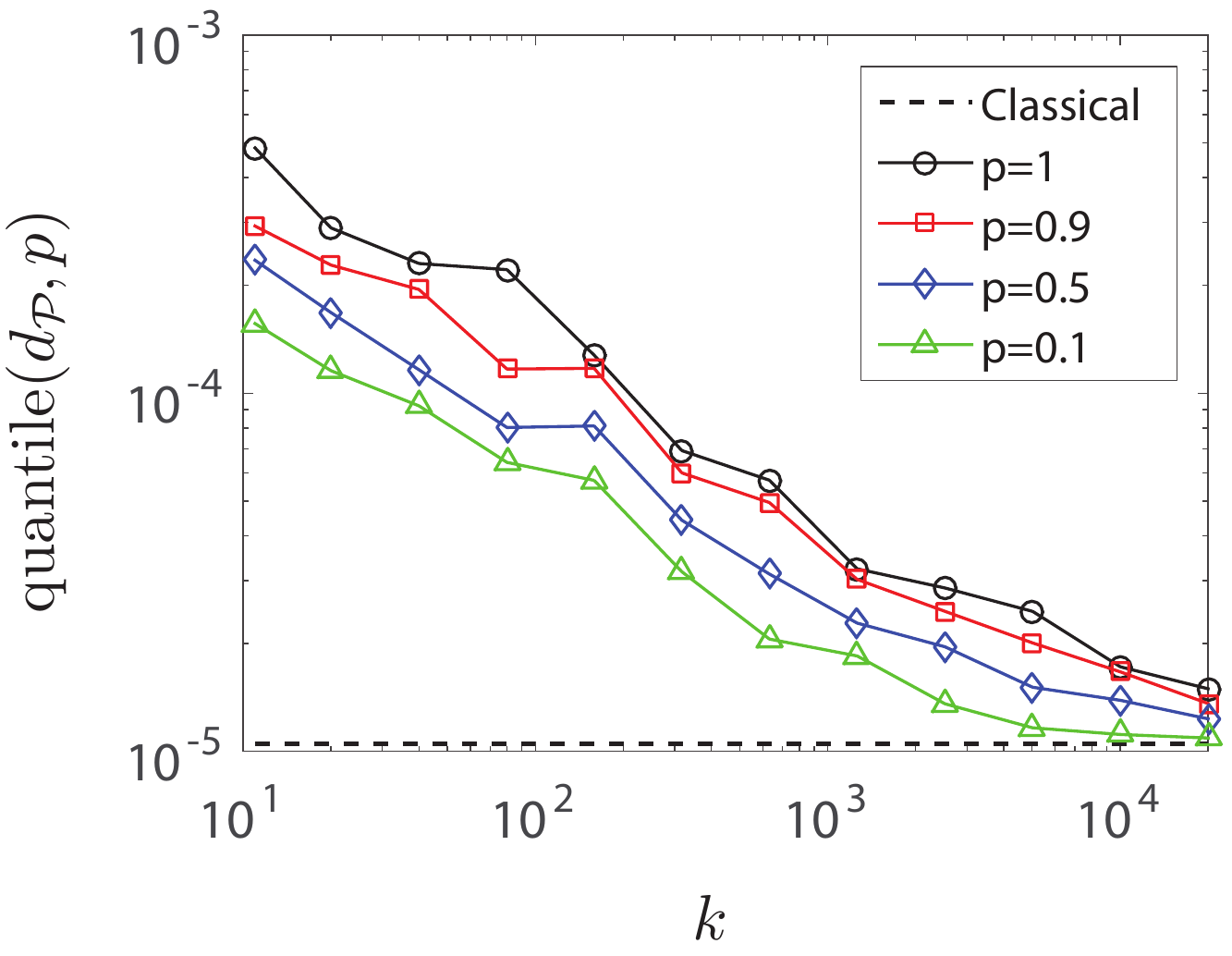}
  \caption{}
  \label{fig:Ex1_2d}
 \end{subfigure}
 \caption{The error $d_\mathcal{P}$ of the classical primal-dual correction and quantiles of probabilities $p=1, 0.9, 0.5$ and $0.1$ over 20 samples of $d_\mathcal{P}$ of the randomized primal-dual corrections with fixed $\bu_r(\mu)$ and $\bu_r^\mathrm{du}(\mu)$ versus the number of rows of $\bOmega$. (a) The errors of $s^{\mathrm{pd}}_r(\mu)$ and $s^{\mathrm{spd}}_r(\mu)$ with Gaussian matrix as $\bOmega$. (b) The errors of $s^{\mathrm{pd}}_r(\mu)$ and $s^{\mathrm{spd}}_r(\mu)$ with P-SRHT distribution as $\bOmega$. (c) The errors of $s^{\mathrm{pd}}_r(\mu)$ and $s^{\mathrm{spd+}}_r(\mu)$ with Gaussian matrix as $\bOmega$ and $W^\mathrm{du}_r:=U^{\mathrm{du}}_i$, $i^\mathrm{du}=30$.  (d) The errors of $s^{\mathrm{pd}}_r(\mu)$ and $s^{\mathrm{spd+}}_r(\mu)$ with P-SRHT distribution as $\bOmega$ and $W^\mathrm{du}_r:=U^{\mathrm{du}}_i$, $i^\mathrm{du}=30$.}
 \label{fig:Ex1_2}
\end{figure}

Further we focus only on the primal problem noting that similar results were observed also for the dual one.

\emph{Error estimation.} 
We let $U_r$ and $\bu_r(\mu)$ be the subspace and the approximate solution from the previous experiment. The classical error indicator $\Delta(\bu_r(\mu); \mu)$ and the sketched error indicator $\Delta^\bTheta (\bu_r(\mu); \mu)$ were evaluated for every $\mu \in \mathcal{P}_{\mathrm{test}}$. For $\Delta^\bTheta(\bu_r(\mu);\mu)$ different distributions and sizes of $\bOmega$ were considered. The quality of $\Delta^\bTheta(\bu_r(\mu);\mu)$ as estimator for $\Delta(\bu_r(\mu);\mu)$ can be characterized by $e^{\mathrm{ind}}_{\mathrm{\mathcal{P}}}:= \max_{\mu \in \mathcal{P}_{\mathrm{test}}} |\Delta(\bu_r(\mu);\mu)-\Delta^\bTheta(\bu_r(\mu);\mu)| / \max_{\mu \in \mathcal{P}_{\mathrm{test}}} \Delta(\bu_r(\mu);\mu)$. For each $\bOmega$, $20$ samples of $e^{\mathrm{ind}}_{\mathrm{\mathcal{P}}}$ were evaluated. Figure\nobreakspace \ref {fig:Ex1_3b} shows how $e^{\mathrm{ind}}_{\mathrm{\mathcal{P}}}$ depends on $k$. The convergence of the error is proportional to $k^{-1/2}$, similarly as for the primal-dual correction. In practice, however, $\Delta^\bTheta(\bu_r(\mu);\mu)$ does not have to be so accurate as the approximation of the quantity of interest. For many problems, estimating $\Delta(\bu_r(\mu);\mu)$ with relative error less than {1/2} is already good enough. Consequently, $\Delta^\bTheta(\bu_r(\mu);\mu)$ employing $\bOmega$ with $k=100$ or even $k=10$ rows can be readily used as a reliable error estimator. Note that $\mathcal{P}_{\mathrm{test}}$ and $U_r$ were formed independently of $\bOmega$. Otherwise, a larger $\bOmega$ should be considered with an additional embedding $\bGamma$ as explained in Section\nobreakspace \ref {efficient_res_norm}.

\begin{figure}[h!]
 \centering
 \begin{subfigure}{.4\textwidth}
  \centering
  \includegraphics[width=\textwidth]{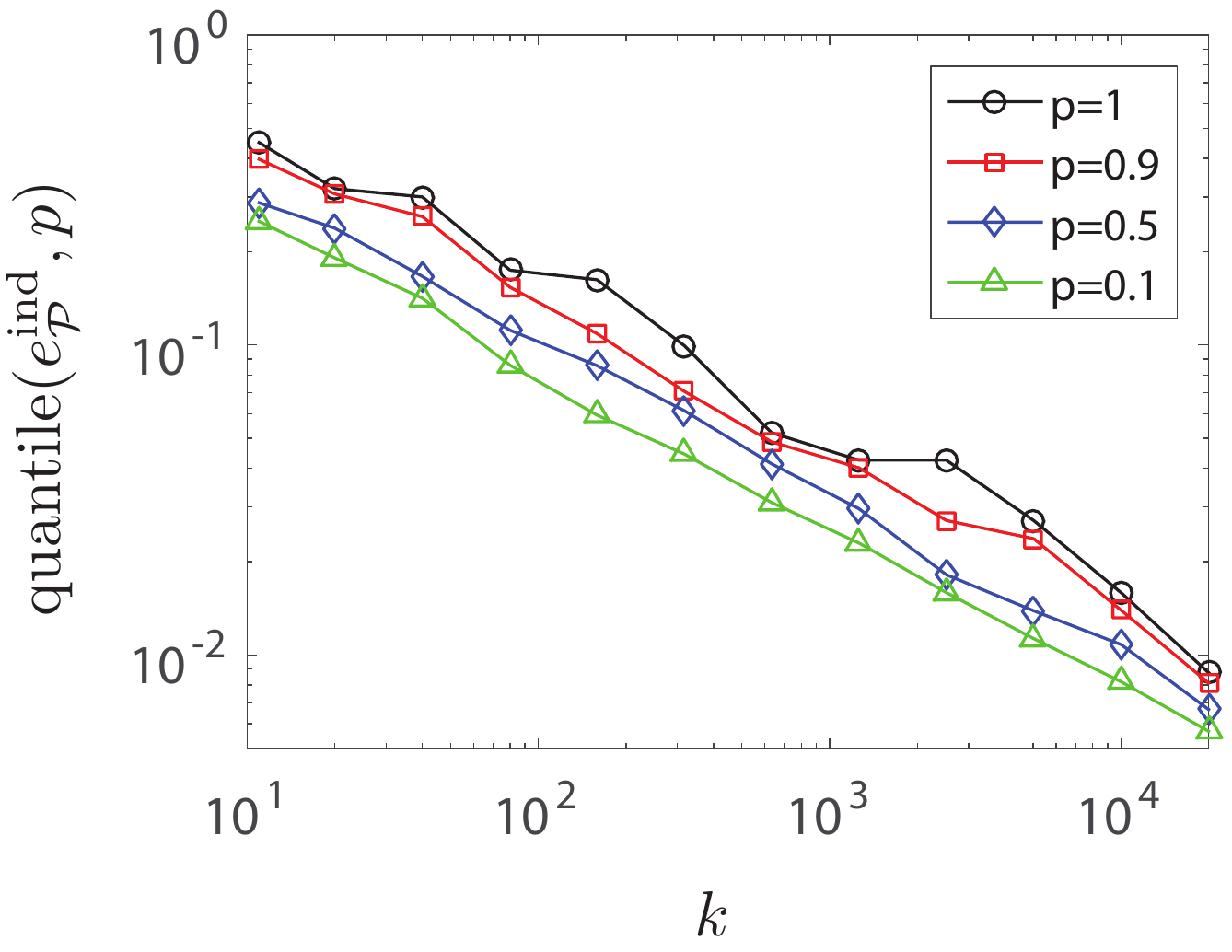}
  \caption{}
  \label{fig:Ex1_3a}
 \end{subfigure} \hspace{.01\textwidth}
 \begin{subfigure}{.4\textwidth}
  \centering
  \includegraphics[width=\textwidth]{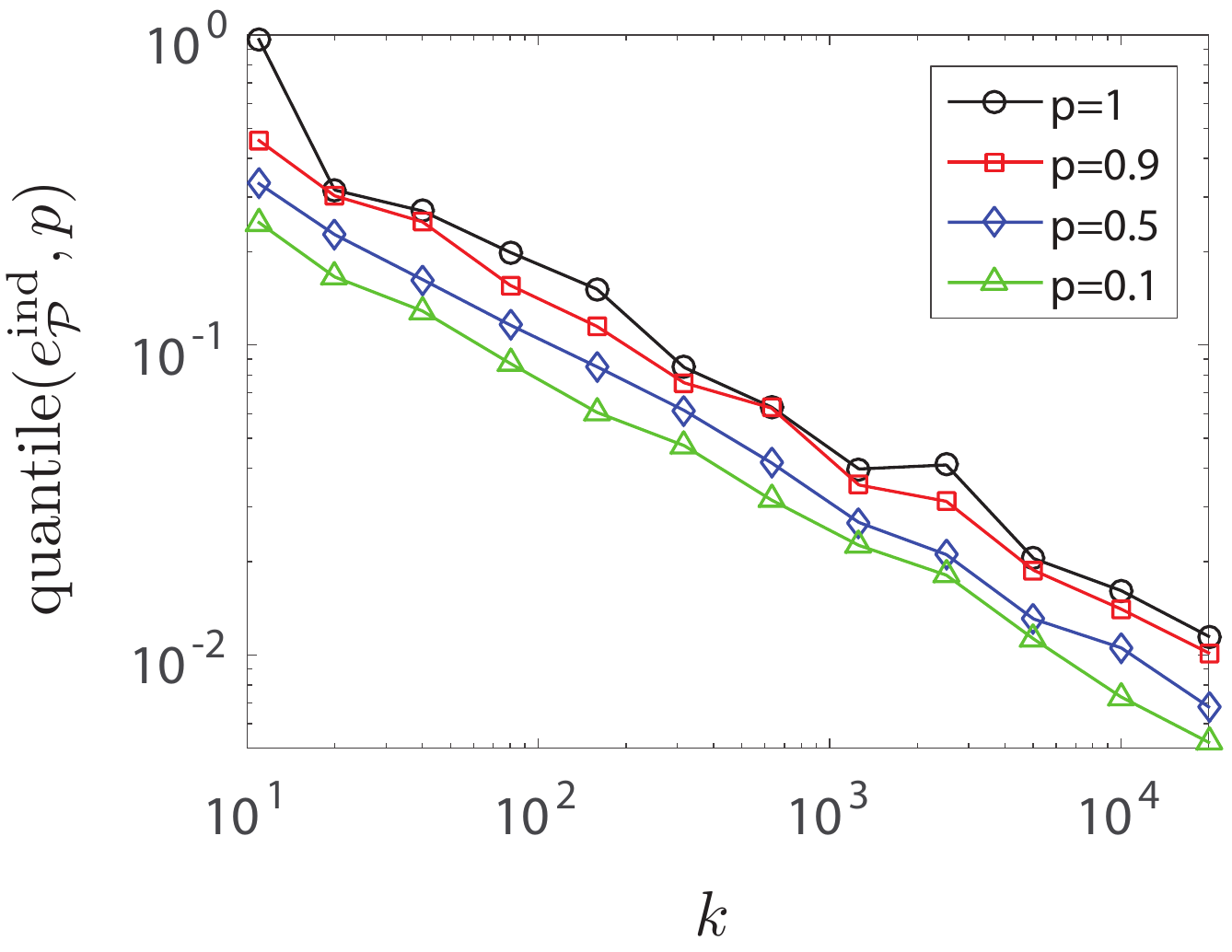}
  \caption{}
  \label{fig:Ex1_3b}
 \end{subfigure}
 \caption{Quantiles of probabilities $p=1, 0.9, 0.5$ and $0.1$ over 20 samples of the error $e^{\mathrm{ind}}_\mathcal{P}$ of $\Delta^\bTheta(\bu_r(\mu);\mu)$ as estimator of $\Delta(\bu_r(\mu);\mu)$. (a) The error of $\Delta^\bTheta(\bu_r(\mu);\mu)$ with Gaussian distribution. (b) The error of $\Delta^\bTheta(\bu_r(\mu);\mu)$ with P-SRHT distribution.}
\label{fig:Ex1_3}
\end{figure}

To validate the claim that our  approach (see~Section\nobreakspace \ref {efficient_res_norm}) for error estimation provides more numerical stability than the classical one, we performed the following experiment. For fixed $\mu \in \mathcal{P}$ such that $\bu(\mu) \in U_r$ we picked several vectors $\bu^*_i \in U_r$ at different distances of $\bu(\mu)$. For each such $\bu^*_i$ we evaluated  $\Delta(\bu_i^*;\mu)$ and $\Delta^\bTheta(\bu_i^*;\mu)$. The classical error indicator $\Delta(\bu_i^*;\mu)$ was evaluated using the traditional procedure, i.e., expressing $\| \br(\bu_i^*;\mu) \|^2_{U'}$ in the form~(\ref {eq:compute_res}), while $\Delta^\bTheta(\bu_i^*;\mu)$ was evaluated  with relation~(\ref {eq:skcompres}). The sketching matrix $\bOmega$ was generated from the P-SRHT or the rescaled Gaussian distribution with $k=100$ rows. Note that $\mu$ and $\bu_i^*$  were chosen independently of $\bOmega$ so there is no point to use larger $\bOmega$ with additional embedding $\bGamma$ (see~Section\nobreakspace \ref {efficient_res_norm}). Figure\nobreakspace \ref {fig:Ex1_4} clearly reveals the failure of the classical error indicator at $\Delta(\bu_i^*;\mu)/\| \bb(\mu) \|_{U'} \approx 10^{-7}$. On the contrary, the indicators computed with random sketching technique remain reliable even for $\Delta(\bu_i^*;\mu)/\| \bb(\mu) \|_{U'}$ close to the machine precision. 
\begin{figure}[h!]
 \centering
 \includegraphics[width=0.5\textwidth]{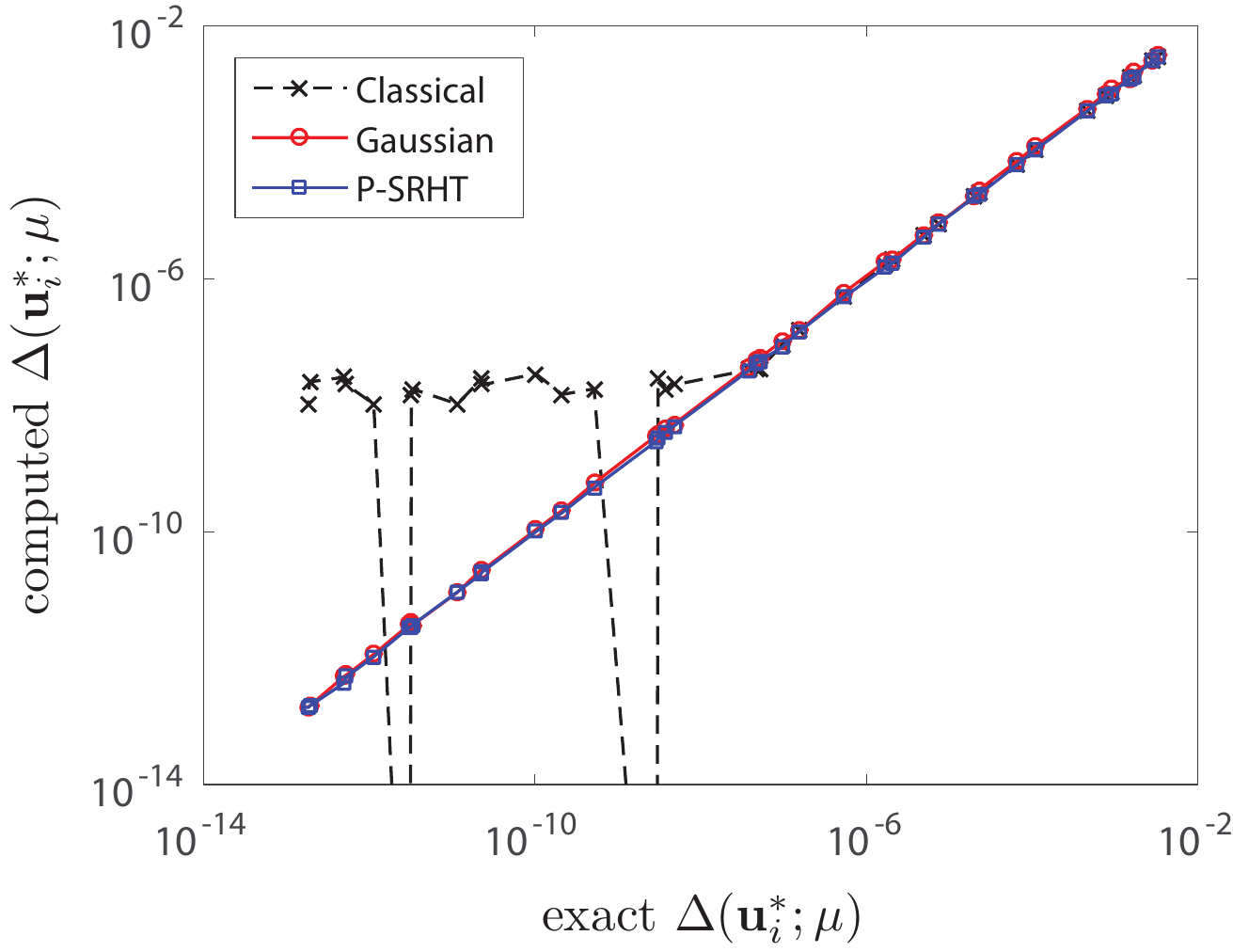}
 \caption{Error indicator $\Delta(\bu_i^*;\mu)$ (rescaled by $\| \bb(\mu) \|_{U'}$) computed with the classical procedure and its estimator $\Delta^\bTheta(\bu_i^*;\mu )$ computed with relation~(\ref {eq:skcompres}) employing P-SRHT or Gaussian distribution with $k=100$ rows versus the exact value of $\Delta(\bu_i^*;\mu)$ (rescaled by $\| \bb(\mu) \|_{U'}$).}
 \label{fig:Ex1_4}
\end{figure}

\emph{{Efficient sketched greedy algorithm}.} Further, we validate the performance of the  {efficient sketched greedy algorithm} (Algorithm\nobreakspace \ref {alg:sk_greedy_online}). For this we generated a subspace $U_r$ of dimension $r=100$ using the classical greedy algorithm (depicted in Section\nobreakspace \ref {Greedy}) and its randomized version (Algorithm\nobreakspace \ref {alg:sk_greedy_online}) employing $\bOmega$ of different types and sizes. In~Algorithm\nobreakspace \ref {alg:sk_greedy_online}, $\bGamma$ was generated from a Gaussian distribution with $k'=100$ rows. The error at $i$-th iteration is identified with $\Delta_\mathcal{P}:=\max_{\mu \in \mathcal{P}_{\mathrm{train}}} \| \br(\bu_i(\mu); \mu) \|_{U'}/\max_{\mu \in \mathcal{P}_{\mathrm{train}}} \|\bb(\mu)\|_{U'}$.  The {convergence} is depicted in Figure\nobreakspace \ref {fig:Ex1_6}. For the  {efficient sketched greedy algorithm} with $k=250$ and $k=500$ a slight difference in performance is detected compared to the classical algorithm. The difference is more evident for $k=250$ at higher iterations. The {behaviors} of~the classical algorithm and~Algorithm\nobreakspace \ref {alg:sk_greedy_online} with $k=1000$ are almost identical. 

\begin{figure}[h!] 
 \centering
 \begin{subfigure}{.4\textwidth}
  \centering
  \includegraphics[width=\textwidth]{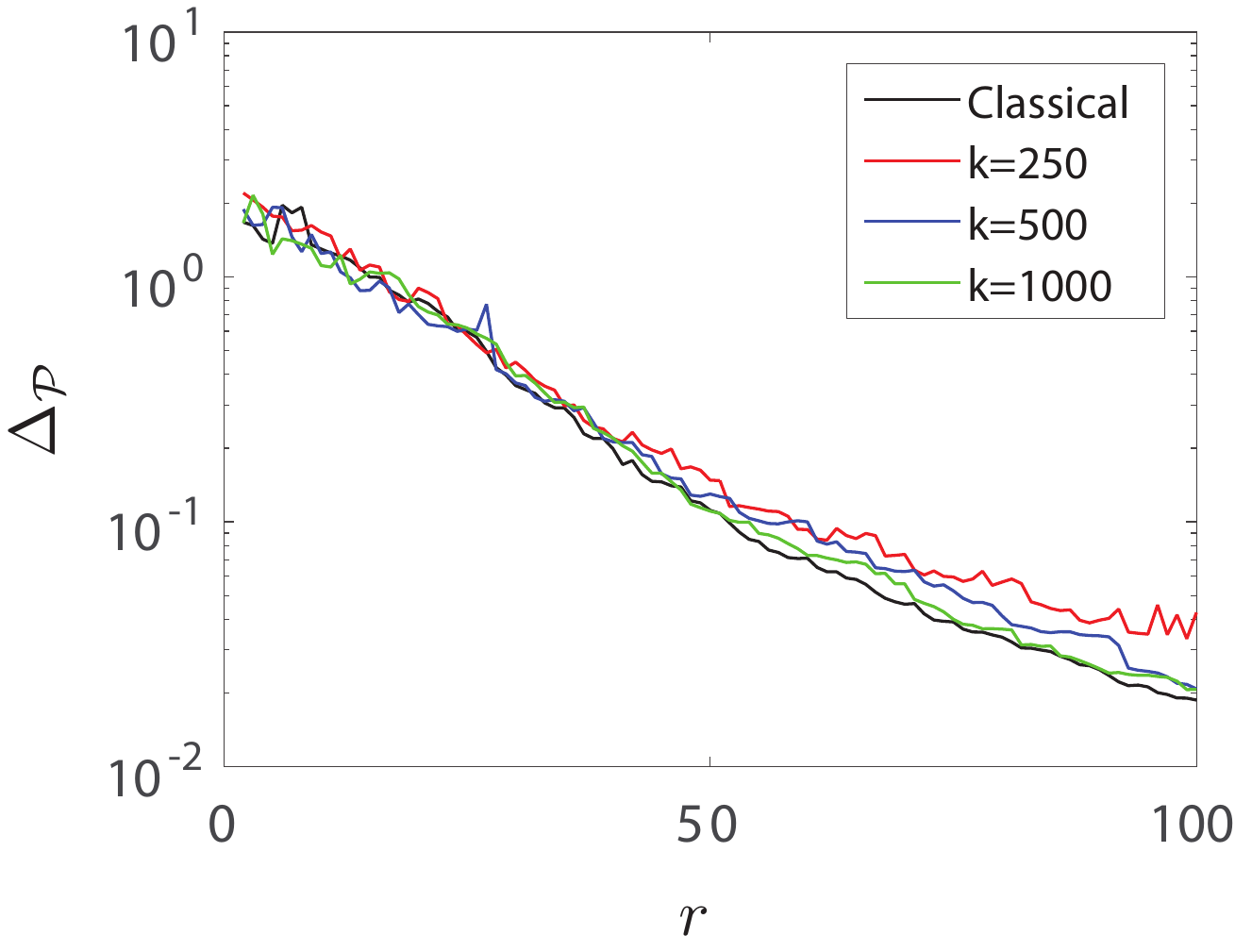}
  \caption{}
  \label{fig:Ex1_6a}
 \end{subfigure} \hspace{.01\textwidth}
 \begin{subfigure}{.4\textwidth}
  \centering
  \includegraphics[width=\textwidth]{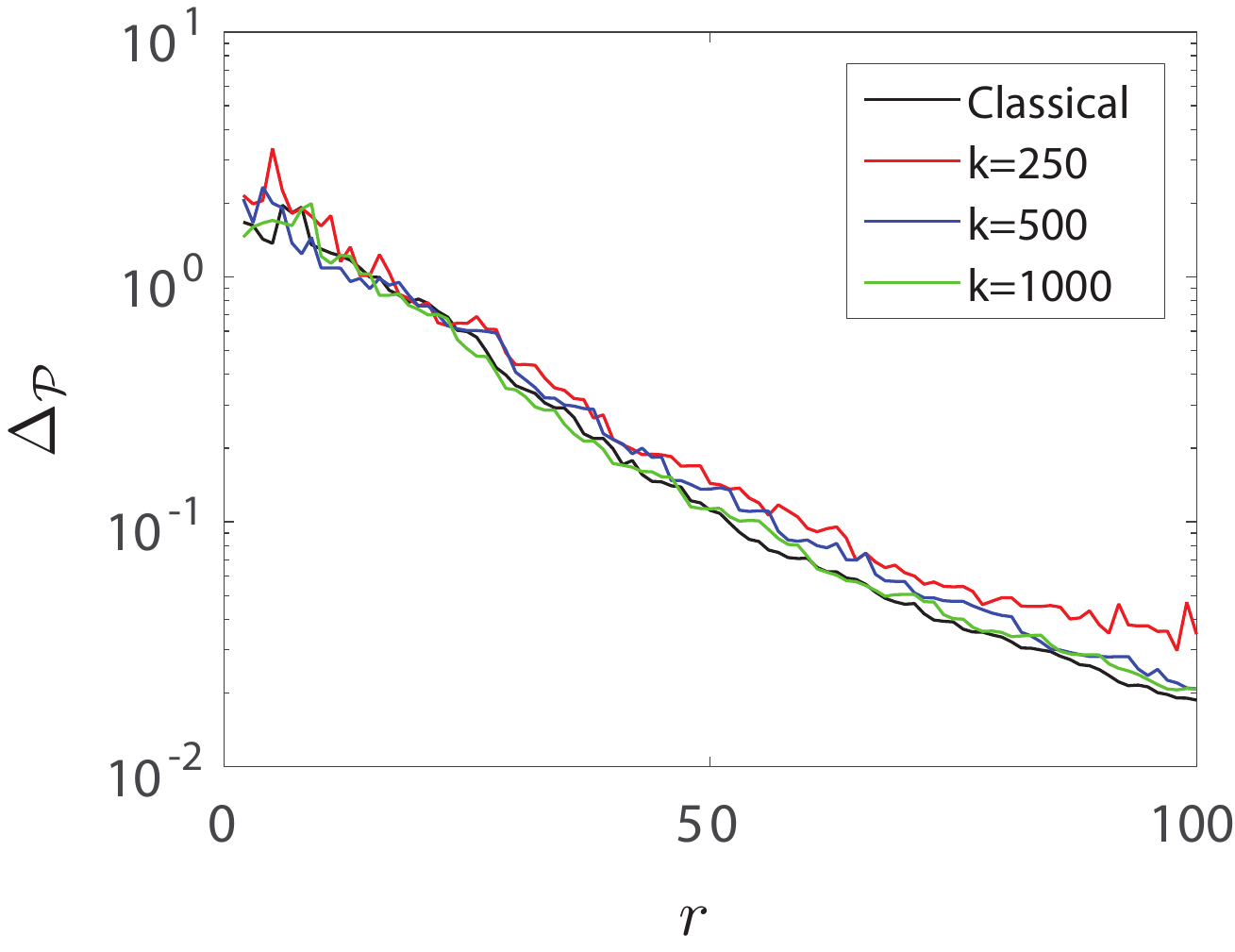}
  \caption{}
  \label{fig:Ex1_6b}
 \end{subfigure}
 \caption{{Convergence} of the classical greedy algorithm (depicted in Section\nobreakspace \ref {Greedy}) and its efficient randomized version (Algorithm\nobreakspace \ref {alg:sk_greedy_online}) using $\bOmega$ drawn from (a) Gaussian distribution or (b) P-SRHT distribution.}
 \label{fig:Ex1_6}
\end{figure}

\emph{Efficient Proper Orthogonal Decomposition.}
We finish with validation of the efficient randomized version of POD. For this experiment only $m=1000$ points from $\mathcal{P}_{\mathrm{train}}$ were considered as the training set. The POD bases were obtained with the classical method of snapshots, i.e.,~Algorithm\nobreakspace \ref {alg:approx_pod} where $\bB_r$ was computed from SVD of $\bQ \bU_m$,  or the randomized version of POD introduced in~Section\nobreakspace \ref {sk_pod}. The same $\bOmega$ was used for both the basis generation and the error estimation with $\Delta^{\mathrm{POD}}(U_r)$, defined in~(\ref {eq:sk_poderror}). From~Figure\nobreakspace \ref {fig:Ex1_5a} we observe that for large enough $k$ the quality of {the} POD basis formed with the new efficient algorithm is close to the quality of the the basis obtained with the classical method. Construction of $r=100$ basis vectors using $\bOmega$ with only $k=500$ rows provides almost optimal error.  As expected, the error indicator  $\Delta^{\mathrm{POD}}(U_r)$ is close to the exact error for large enough $k$, but it represents the error poorly for small $k$. Furthermore, $\Delta^{\mathrm{POD}}(U_r)$ is always smaller than the true error and is increasing monotonically with $k$. Figure\nobreakspace \ref {fig:Ex1_5b} depicts how the errors of the classical and randomized (with $k=500$) POD bases depend on the dimension of $U_r$. We see that the qualities of the basis and the error indicator obtained with the new version of POD remain close to the optimal ones up to dimension $r=150$. However, as $r$ becomes larger the quasi-optimality of the randomized POD degrades {so that for $r \geq 150$ the sketching size $k=500$ becomes insufficient.} 

\begin{figure}[h!]
 \centering
 \begin{subfigure}{.4\textwidth}
  \centering
  \includegraphics[width=\textwidth]{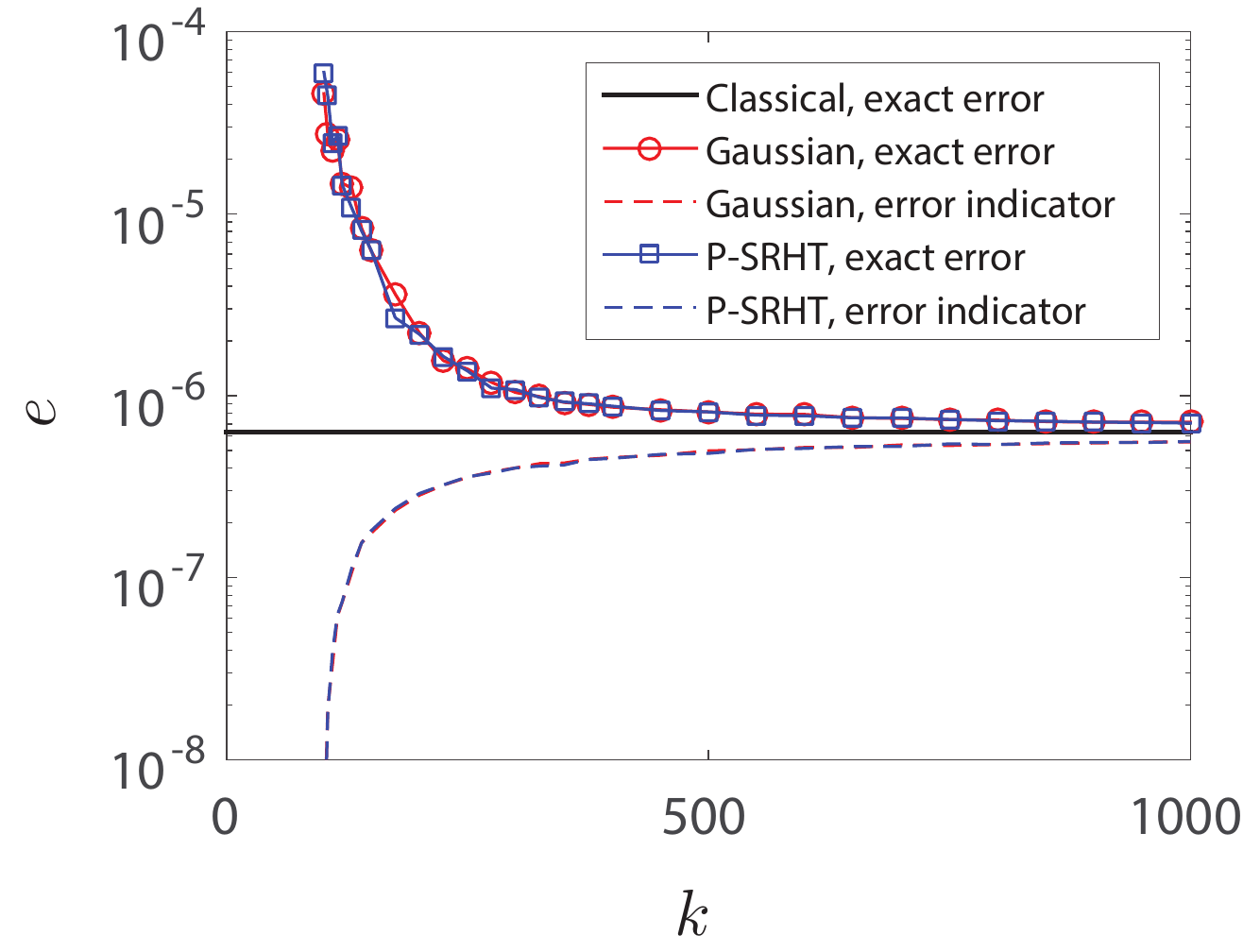}
  \caption{}
  \label{fig:Ex1_5a}
 \end{subfigure} \hspace{.01\textwidth}
 \begin{subfigure}{.4\textwidth}
  \centering
  \includegraphics[width=\textwidth]{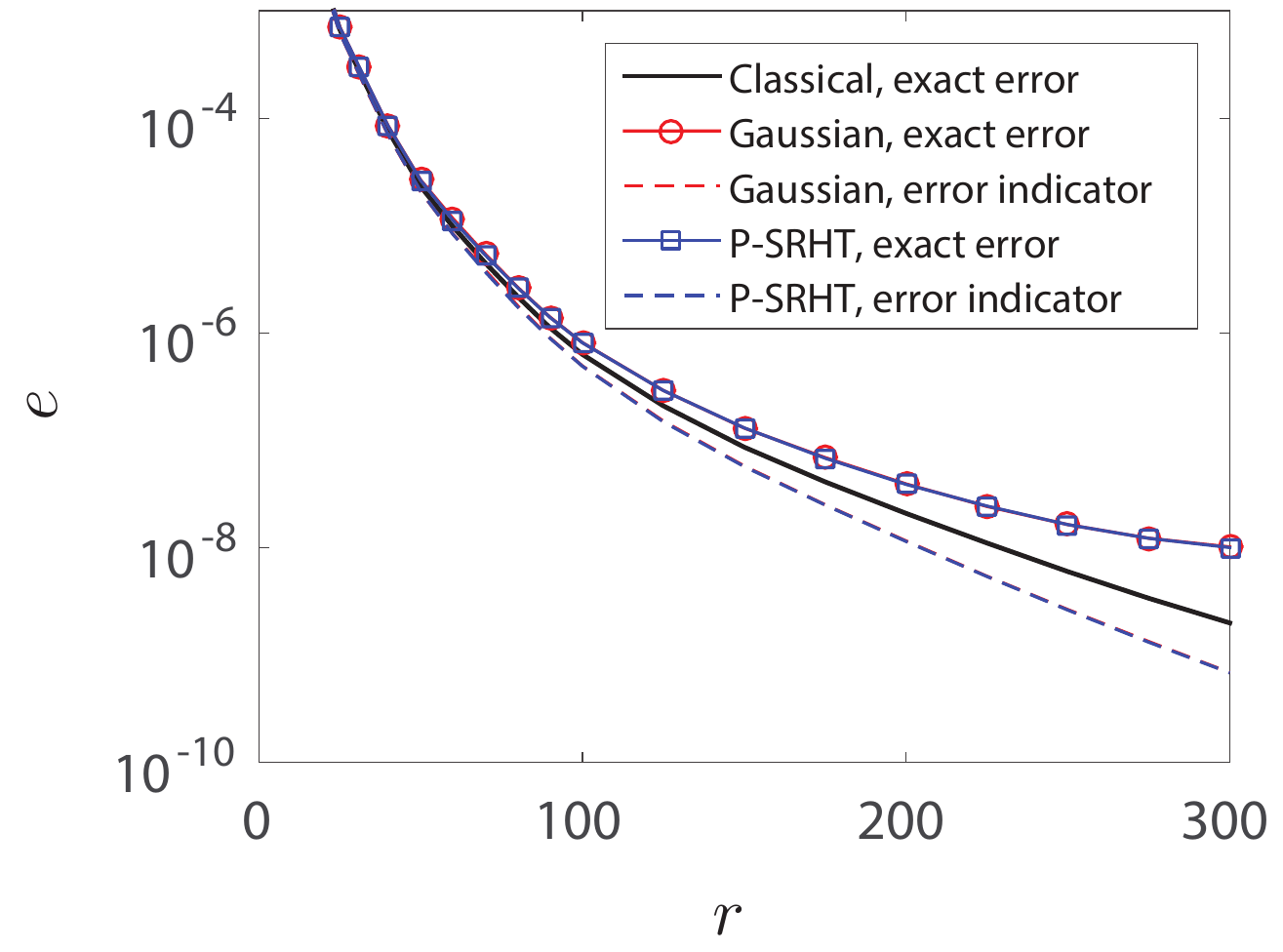}
  \caption{}
  \label{fig:Ex1_5b}
 \end{subfigure}
 \caption{Error $e=\frac{1}{m} \sum^{m}_{i=1} \| \bu(\mu^i)- \bP_{U_r}\bu(\mu^i) \|_{U}^2 /  ( \frac{1}{m} \sum^{m}_{i=1} \| \bu(\mu^i) \|_{U}^2 )$ and error indicator $e= \Delta^{\mathrm{POD}}(U_r)/  (\frac{1}{m}\sum^{m}_{i=1} \| \bu(\mu^i) \|_{U}^2 )$ associated with $U_r$ computed with traditional POD and its efficient randomized version introduced in~Section\nobreakspace \ref {sk_pod}. (a) Errors and indicators versus the number of rows of $\bOmega$ for $r=100$. (b) Errors and indicators versus the dimension of $U_r$  for $k=500$.}
\label{fig:Ex1_5}
\end{figure}

\subsection{Multi-layered acoustic cloak}
In the previous numerical example we considered a problem with strongly coercive well-conditioned operator. But as was discussed in~Section\nobreakspace \ref {SKgalproj}, random sketching with a fixed number of rows is expected to perform worse for approximating the Galerkin projection with non-coercive ill-conditioned  $\bA(\mu)$. Further, we would like to validate the methodology on such a problem.  The benchmark consists in a scattering problem of a 2D wave with perfect scatterer covered in a multi-layered cloak. For this experiment we solve the following Helmholtz equation with first order absorbing boundary conditions 
\begin{equation} \label{eq:BVP2}
 \left \{
 \begin{array}{rll}
  \Delta u + \kappa^2 u &= 0,~~  & \textup{in } \Omega \\
  i \kappa u + \frac{\partial u}{\partial \boldsymbol{n}}  &=0,~~ & \textup{on } \Gamma_{out} \\
  i \kappa u + \frac{\partial u}{\partial \boldsymbol{n}}  &=2i \kappa,~~ & \textup{on } \Gamma_{in}\\
  \frac{\partial u}{\partial \boldsymbol{n}} &= 0,~~ & \textup{on } \Gamma_{s},
 \end{array}
 \right.
\end{equation}
where $u$ is the solution field (primal unknown), $\kappa$ is the wave number and the geometry of the problem is defined in~Figure\nobreakspace \ref {fig:Ex2_intial_problem_a}. The background has a fixed wave number $\kappa=\kappa_0:=50$. The cloak consists of 10 layers of equal thicknesses enumerated in the order corresponding to the distance to the scatterer. The $i$-th layer is composed of a material with wave number $\kappa=\kappa_i$. The quantity of interest is the average of the solution field on $\Gamma_{in}$. The aim is to estimate the quantity of interest for each parameter $\mu:=(\kappa_1, ..., \kappa_{10}) \in [\kappa_0, \sqrt{2} \kappa_0]^{10} := \mathcal{P}$. The $\kappa_i$ are considered as independent random variables with log-uniform distribution over $[\kappa_0, \sqrt{2} \kappa_0]$.
The solution for a randomly chosen $\mu \in \mathcal{P}$ is illustrated in~Figure\nobreakspace \ref {fig:Ex2_intial_problem_b}.
\begin{figure}[h!]
 \centering
 \begin{minipage}[c]{.4\textwidth}
  \centering
  \includegraphics[width=.96\textwidth]{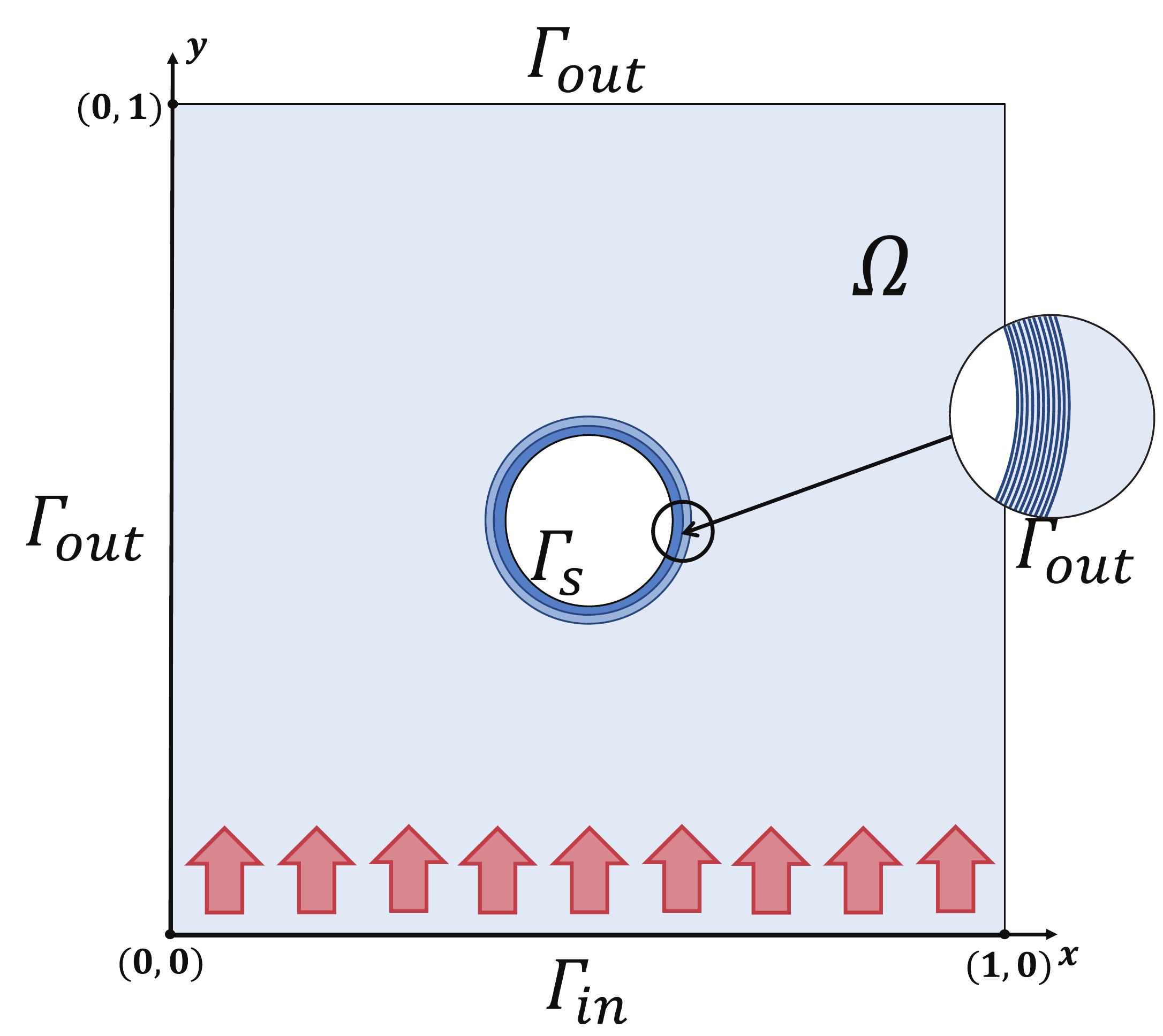}
 \end{minipage}%
 \begin{minipage}[c]{.4\textwidth}
  \centering
  \includegraphics[width=.84\textwidth]{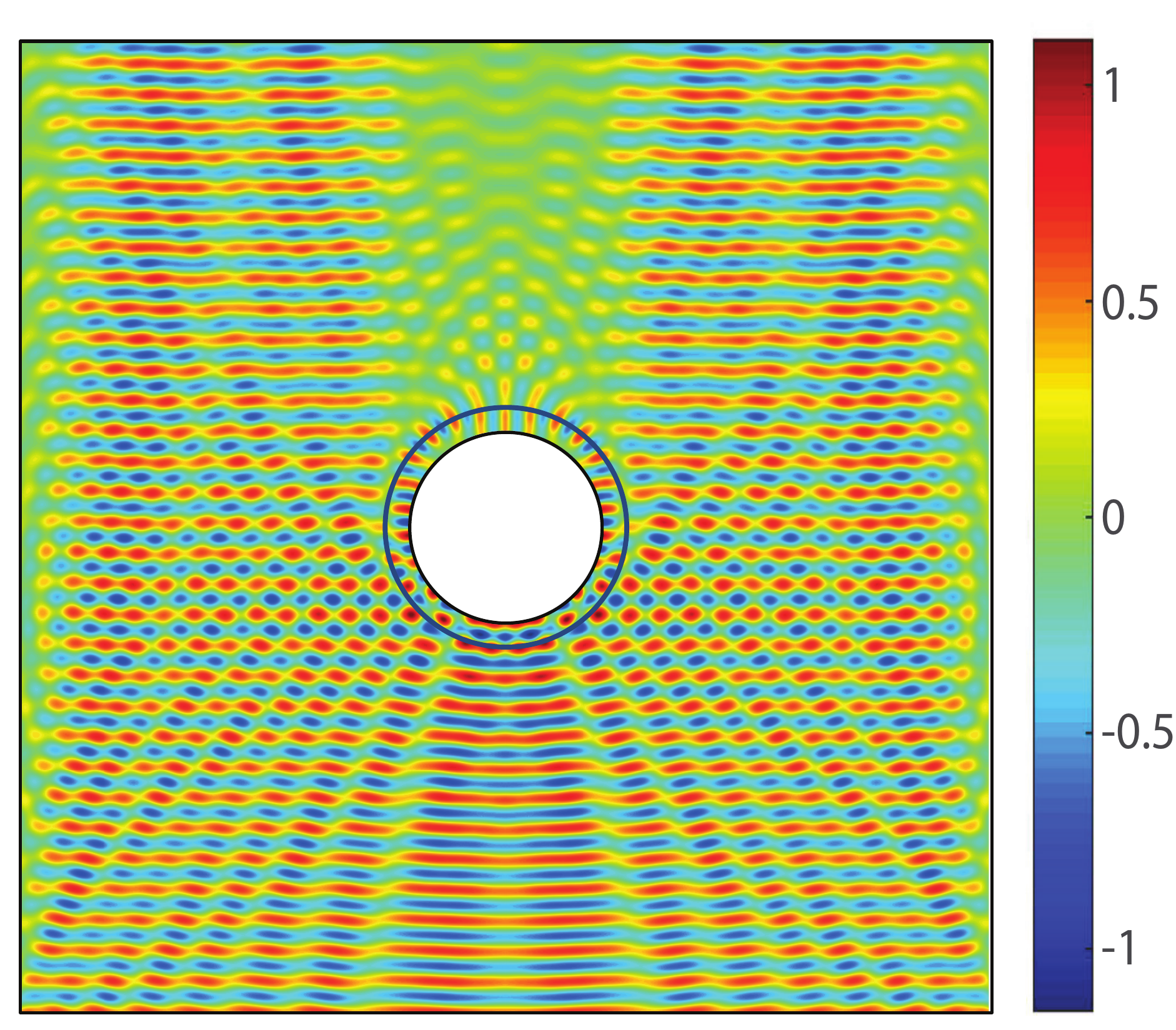}
 \end{minipage} \\ 
 \begin{minipage}[b]{.4\textwidth}
  \centering
  \subcaption{Geometry}
  \label{fig:Ex2_intial_problem_a}
 \end{minipage}%
 \begin{minipage}[b]{.4\textwidth}
  \subcaption{Solution at random $\mu$}
  \label{fig:Ex2_intial_problem_b}
 \end{minipage}
 \caption{(a) Geometry of acoustic cloak benchmark. (b) The real component of $u$  for randomly picked parameter $\mu=\allowbreak (66.86,\allowbreak 54.21,\allowbreak 61.56,\allowbreak 64.45,\allowbreak 66.15,\allowbreak 58.42,\allowbreak 54.90,\allowbreak  63.79,\allowbreak 58.44,\allowbreak 63.09)$.}
 \label{fig:Ex2_intial_problem}
\end{figure}

The problem has a symmetry with respect to the vertical axis $x = 0.5$. Consequently, only half of the domain has to be considered for discretization. The discretization was performed {using quadratic triangular finite elements with} approximately {17} complex degrees of freedom per wavelength, i.e., around $200000$ complex degrees of freedom in total. A function $w$ in the approximation space is identified with a vector $\bw \in U$. The solution space $U$ is equipped with an inner product compatible with the $H^1$ inner product, i.e., 
\begin{equation*}
 \|\bw \|_{U}^2:= \| \boldsymbol{\nabla}w \|^2_{L_2}+ \kappa_0^2 \| w \|^2_{L_2} .
\end{equation*}
 Further, $20000$ and $1000$  independent samples were considered as the training set $\mathcal{P}_{\mathrm{train}}$ and the test set $\mathcal{P}_{\mathrm{test}}$, respectively. {The} sketching matrix $\bTheta$ was constructed as in the thermal block benchmark, i.e., $\bTheta:= \bOmega\bQ$, where $\bOmega \in \mathbb{R}^{k\times s}$ is either a Gaussian matrix or P-SRHT and $\bQ \in \mathbb{R}^{s\times n}$ is the transposed Cholesky factor of $\bR_U$. In addition, we used  $\bPhi:= \bGamma\bTheta$, where $\bGamma \in \mathbb{R}^{k'\times k}$ is a Gaussian matrix and $k'=200$.

Below we present validation of the Galerkin projection and the greedy algorithm only. The performance of our methodology for error estimation and POD does not depend on the operator and is similar to the performance observed in the previous numerical example. 

\emph{Galerkin projection.} 
A subspace $U_r$ was generated with $r=150$ iterations of the randomized greedy algorithm (Algorithm\nobreakspace \ref {alg:sk_greedy_online}) with a $\bOmega$ drawn from the  P-SRHT distribution with $k =20000$ rows. Such $U_r$  was then used for validation of the Galerkin projection. We evaluated multiple approximations of $\bu(\mu)$ using either the classical projection (\ref {eq:galproj}) or its randomized version (\ref {eq:SKgalproj}). Different $\bOmega$ were considered for (\ref {eq:SKgalproj}). As before, the approximation and residual errors are respectively defined by
$e_\mathcal{P}:=\max_{\mu \in \mathcal{P}_{\mathrm{test}}} \|\bu(\mu) - \bu_r(\mu)\|_{U} / \max_{\mu \in \mathcal{P}_{\mathrm{test}}} \|\bu(\mu)\|_{U}$ and $\Delta_\mathcal{P}:=\max_{\mu \in \mathcal{P}_{\mathrm{test}}} \| \br(\bu_r(\mu); \mu) \|_{U'} /\max_{\mu \in \mathcal{P}_{\mathrm{test}}} \|\bb(\mu)\|_{U'}$. For each type and size of $\bOmega$, 20 samples of $e_\mathcal{P}$ and $\Delta_\mathcal{P}$ were evaluated. The errors are presented in Figure\nobreakspace \ref {fig:Ex2_1}.  This experiment reveals that indeed the performance of random sketching is worse than in the thermal block benchmark (see Figure\nobreakspace \ref {fig:Ex1_1}). For $k=1000$ the error of the randomized version of Galerkin projection is much larger than the error of the classical projection. Whereas for the same value of $k$ in the thermal block benchmark practically no difference between the qualities of the classical projection and its sketched version was observed. It can be explained by the fact that the quality of randomized Galerkin projection depends on the coefficient $a_r(\mu)$ defined in Proposition\nobreakspace \ref {thm:skcea}, which in its turn depends on the operator. {In both numerical examples the coefficient $a_r(\mu)$ was measured over $\mathcal{P}_{\mathrm{test}}$. We observed that {here} $\max_{\mu \in \mathcal{P}_{\mathrm{test}}} a_r(\mu) = 28.3$, while in the thermal block benchmark $\max_{\mu \in \mathcal{P}_{\mathrm{test}}} a_r(\mu) = 2.65$.}  In addition, here we work on the complex field instead of the real field and consider slightly larger reduced subspaces, which {could} also have an impact on the accuracy of random sketching.  Reduction of performance, however, is not that severe and already starting from  $k=15000$ the sketched version of Galerkin projection has an error close to the classical one. Such size of~$\bOmega$ is still very small compared to the dimension of the discrete problem and provides drastic reduction of the computational cost. Let us also note that one could obtain a good approximation of $\bu(\mu)$ from the sketch with $k \ll 15000$ by considering another type of projection (a randomized minimal residual projection) proposed in~\cite{balabanov2018}.
\begin{figure}[h!]
	\centering
 \begin{subfigure}[b]{.4\textwidth}
  \centering
  \includegraphics[width=\textwidth]{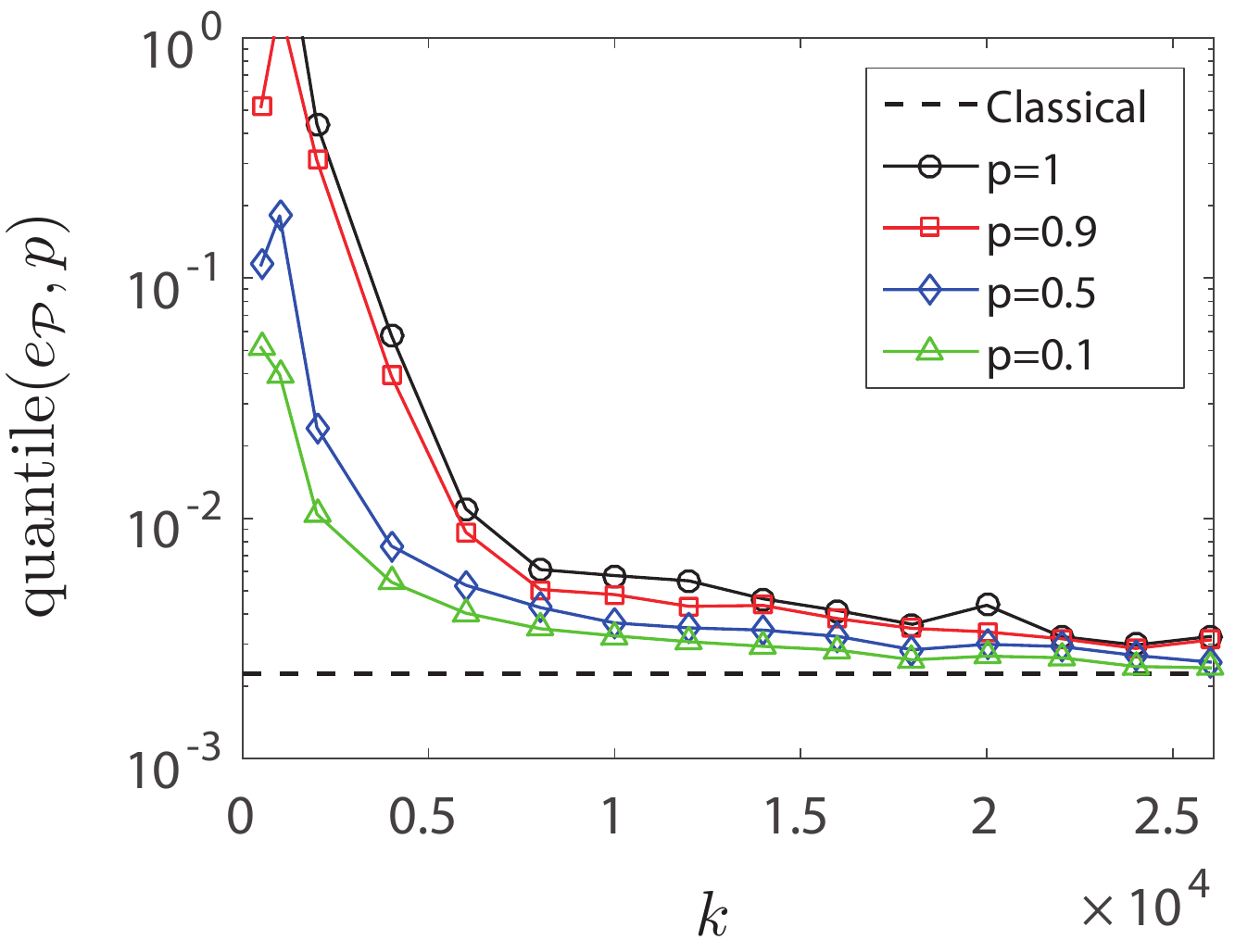}
  \caption{}
  \label{fig:Ex2_1a}
 \end{subfigure} \hspace{.01\textwidth}
 \begin{subfigure}[b]{.4\textwidth}
  \centering
  \includegraphics[width=\textwidth]{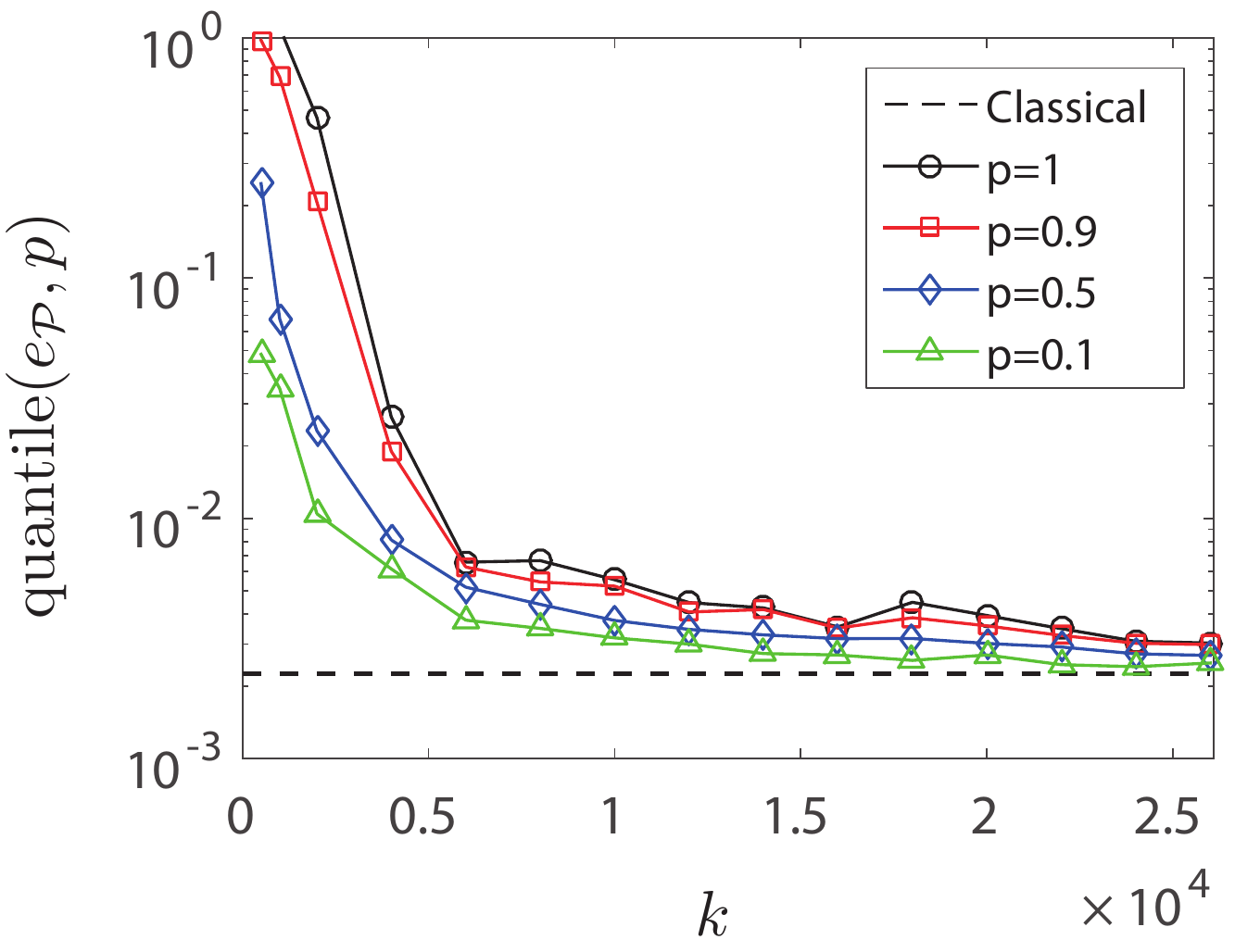}
  \caption{}
  \label{fig:Ex2_1b}
   \end{subfigure}
   
  \begin{subfigure}[b]{.4\textwidth}
  \centering
  \includegraphics[width=\textwidth]{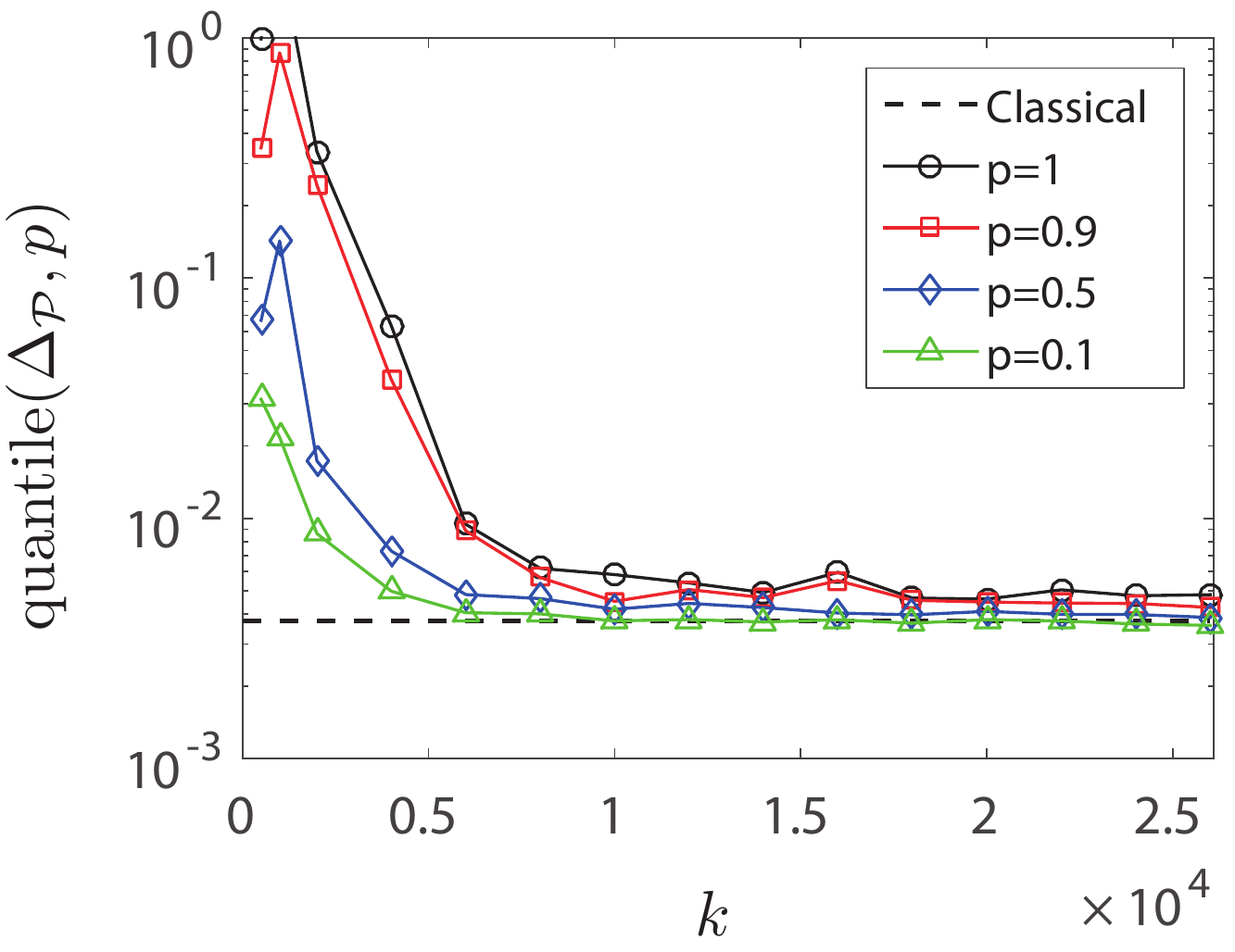}
  \caption{}
  \label{fig:Ex2_1c}
 \end{subfigure} \hspace{.01\textwidth}
 \begin{subfigure}[b]{.4\textwidth}
  \centering
  \includegraphics[width=\textwidth]{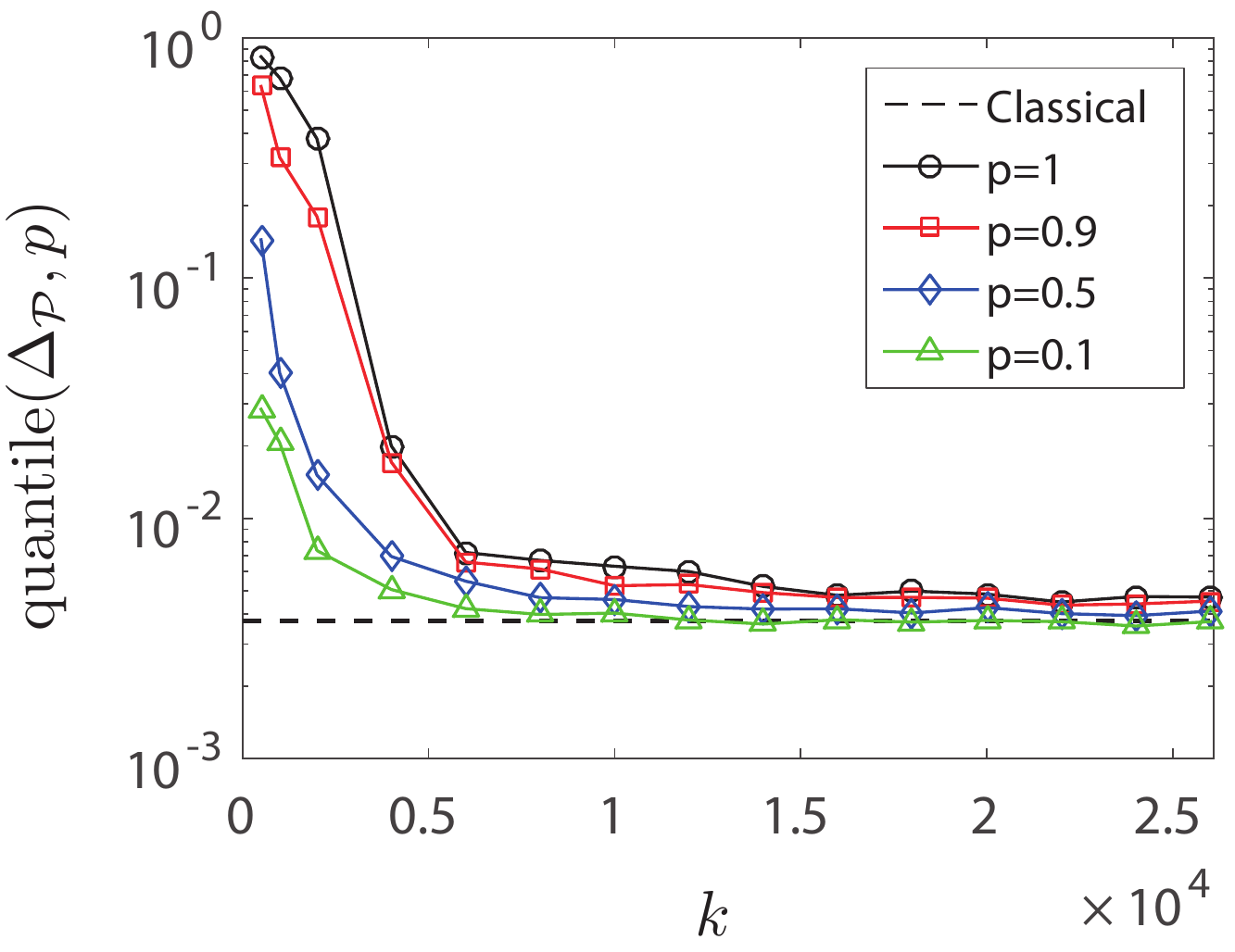}
  \caption{}
  \label{fig:Ex2_1d}
 \end{subfigure}
 \caption{Error $e_\mathcal{P}$ and residual error $\Delta_\mathcal{P}$ of the classical Galerkin projection and quantiles of probabilities $p=1, 0.9, 0.5$ and $0.1$ over 20 samples of $e_\mathcal{P}$ and $\Delta_\mathcal{P}$ of the randomized Galerkin projection versus the number of rows of $\bOmega$. (a) Exact error $e_\mathcal{P}$, with rescaled Gaussian distribution as $\bOmega$. (b) Exact error $e_\mathcal{P}$, with P-SRHT matrix as $\bOmega$. (c) Residual error $\Delta_\mathcal{P}$, with rescaled Gaussian distribution as $\bOmega$. (d) Residual error $\Delta_\mathcal{P}$, with P-SRHT matrix as $\bOmega$.}
   \label{fig:Ex2_1}
 \end{figure}

Let us further note that we are in the so called ``compliant case'' (see Remark\nobreakspace \ref {rmk:compliant}). Thus, for the classical Galerkin projection we have $s_r(\mu) =s^{\mathrm{pd}}_r(\mu)$ and for the sketched Galerkin projection, $s_r(\mu)=s^{\mathrm{spd}}_r(\mu)$. The output quantity $s_r(\mu)$ was computed with the classical Galerkin projection and with the randomized Galerkin projection employing different $\bOmega$. For each $\bOmega$ we also computed the improved sketched correction $s^{\mathrm{spd+}}_r(\mu)$ (see~Section\nobreakspace \ref {sk_pd_correction}) using $W^\mathrm{du}_r:=U^{\mathrm{du}}_i$ with $i^\mathrm{du}=30$. It required inexpensive additional computations which are in about $5$ times cheaper (in terms of both complexity and memory) than the computations involved in the classical method. The error on the output quantity is measured by $d_\mathcal{P}:=\max_{\mu \in \mathcal{P}_{\mathrm{test}}} |s(\mu) - \widetilde{s}_r(\mu) | / \max_{\mu \in \mathcal{P}_{\mathrm{test}}} |s(\mu)|$, where $\widetilde{s}_r(\mu)=s_r(\mu)$ or $s^{\mathrm{spd+}}_r(\mu)$.        
For each random distribution type $20$ samples of $d_\mathcal{P}$ were evaluated. Figure\nobreakspace \ref {fig:Ex2_2} describes how the error of the output quantity depends on $k$. For small $k$ the error is large because of the poor quality of the projection and lack of precision when approximating the inner product for $s^{\mathrm{pd}}_r(\mu)$ in~(\ref {eq:correction}) by the one in~(\ref {eq:skcorrection}). But starting from $k=15000$ we see that the quality of $s_r(\mu)$ obtained with the random sketching technique becomes close to the quality of the output computed with the classical Galerkin projection. For $k \geq 15000$ the randomized Galerkin projection has practically the same accuracy as the classical one. Therefore, for such values of $k$ the error depends mainly on the precision of the approximate inner product for $s^{\mathrm{pd}}_r(\mu)$. Unlike in the thermal block problem (see Figure\nobreakspace \ref {fig:Ex1_2}), in this experiment  the quality of the classical method is attained by $s_r(\mu)=s^{\mathrm{spd}}_r(\mu)$ with $k \ll n$. Consequently, the benefit of employing the improved correction $s^{\mathrm{spd+}}_r(\mu)$ here is not as evident as in the previous numerical example. This experiment only proves that  the error associated with approximation of the inner product for $s^{\mathrm{pd}}_r(\mu)$ does not depend on the condition number and the dimension of the operator. 

\begin{figure}[h!]
 \centering
 \begin{subfigure}{.4\textwidth}
  \centering
  \includegraphics[width=\textwidth]{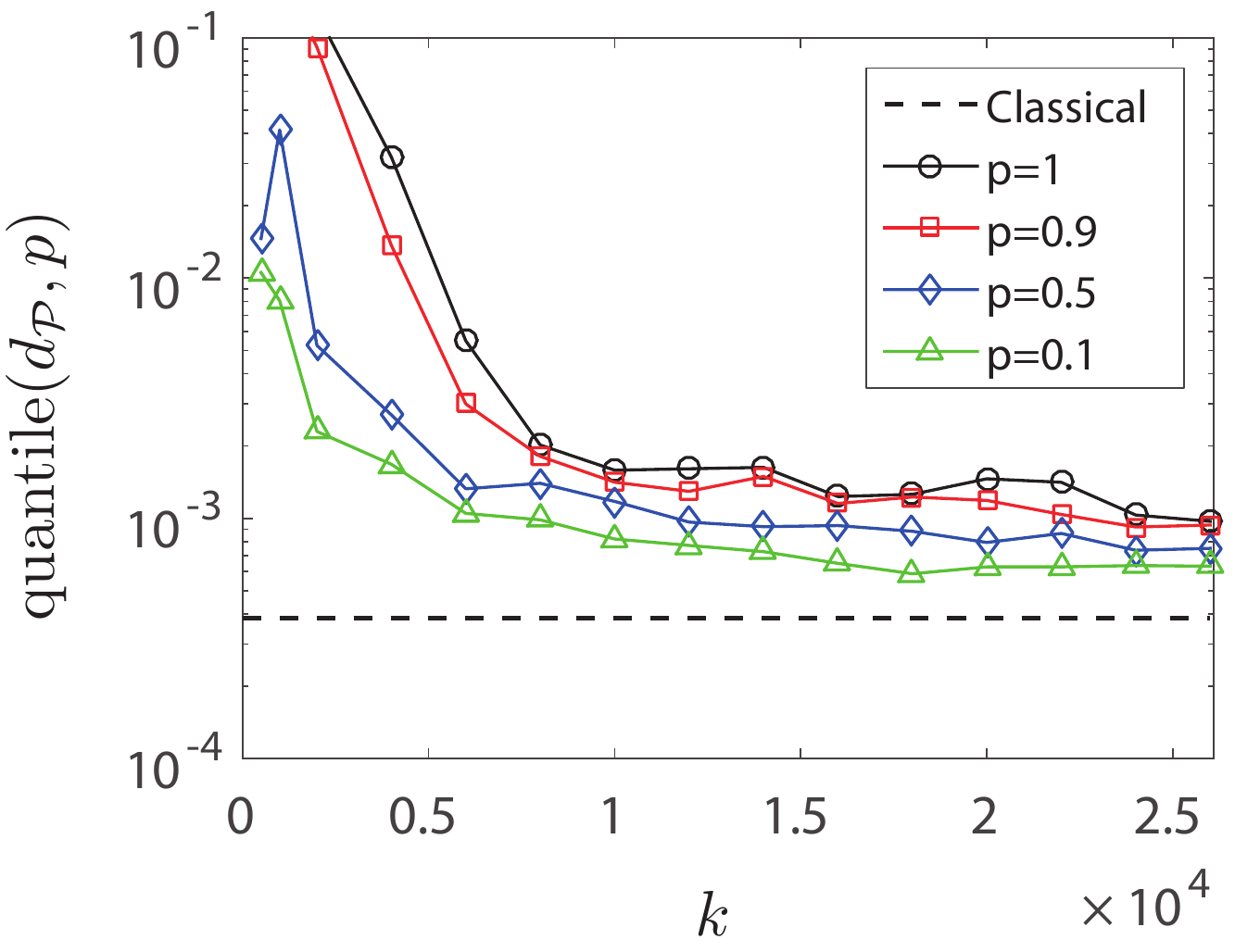}
  \caption{}
  \label{fig:Ex2_2a}
 \end{subfigure} \hspace{.01\textwidth}
 \begin{subfigure}{.4\textwidth}
  \centering
  \includegraphics[width=\textwidth]{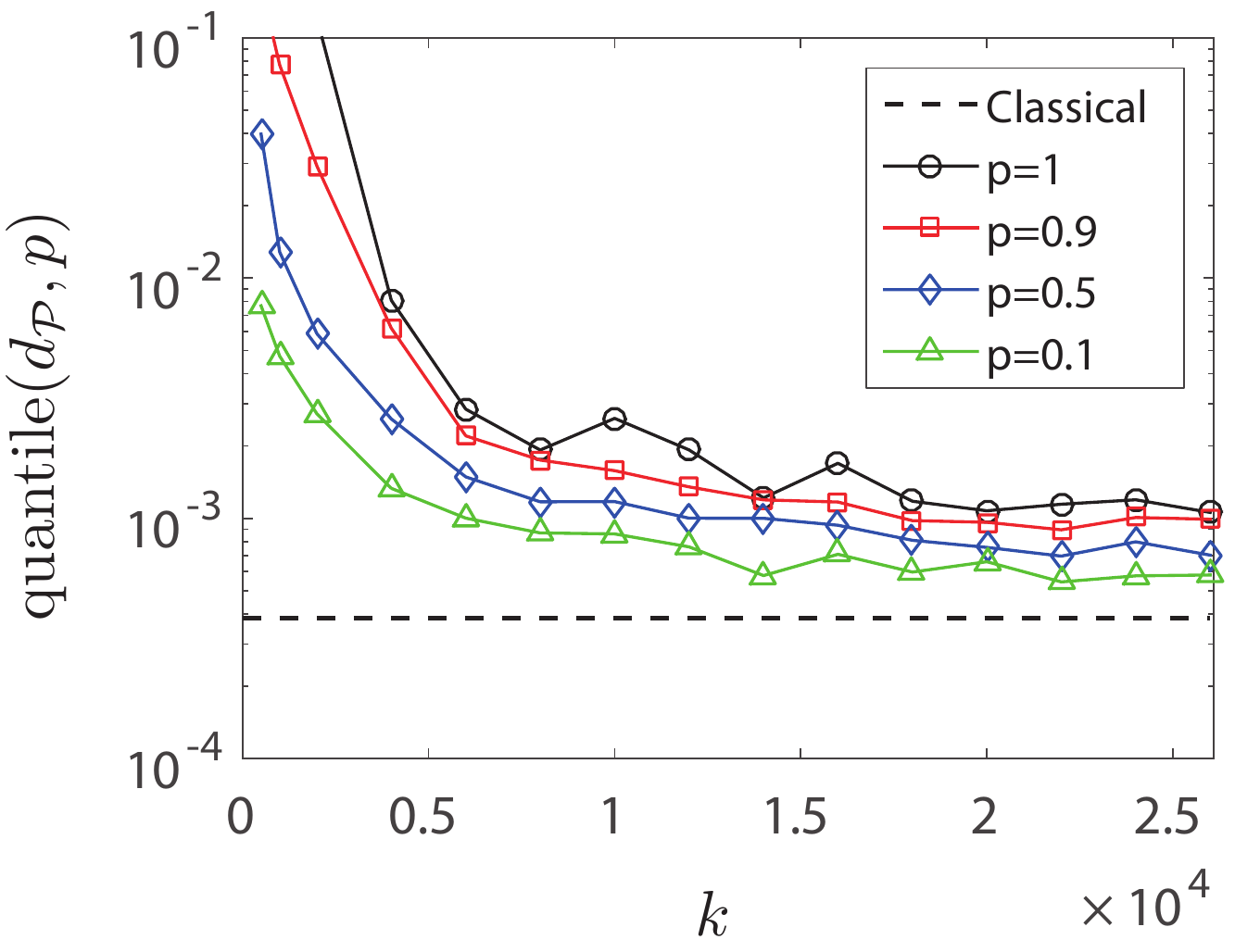}
  \caption{}
  \label{fig:Ex2_2b}
 \end{subfigure}
 \begin{subfigure}{.4\textwidth}
  \centering
  \includegraphics[width=\textwidth]{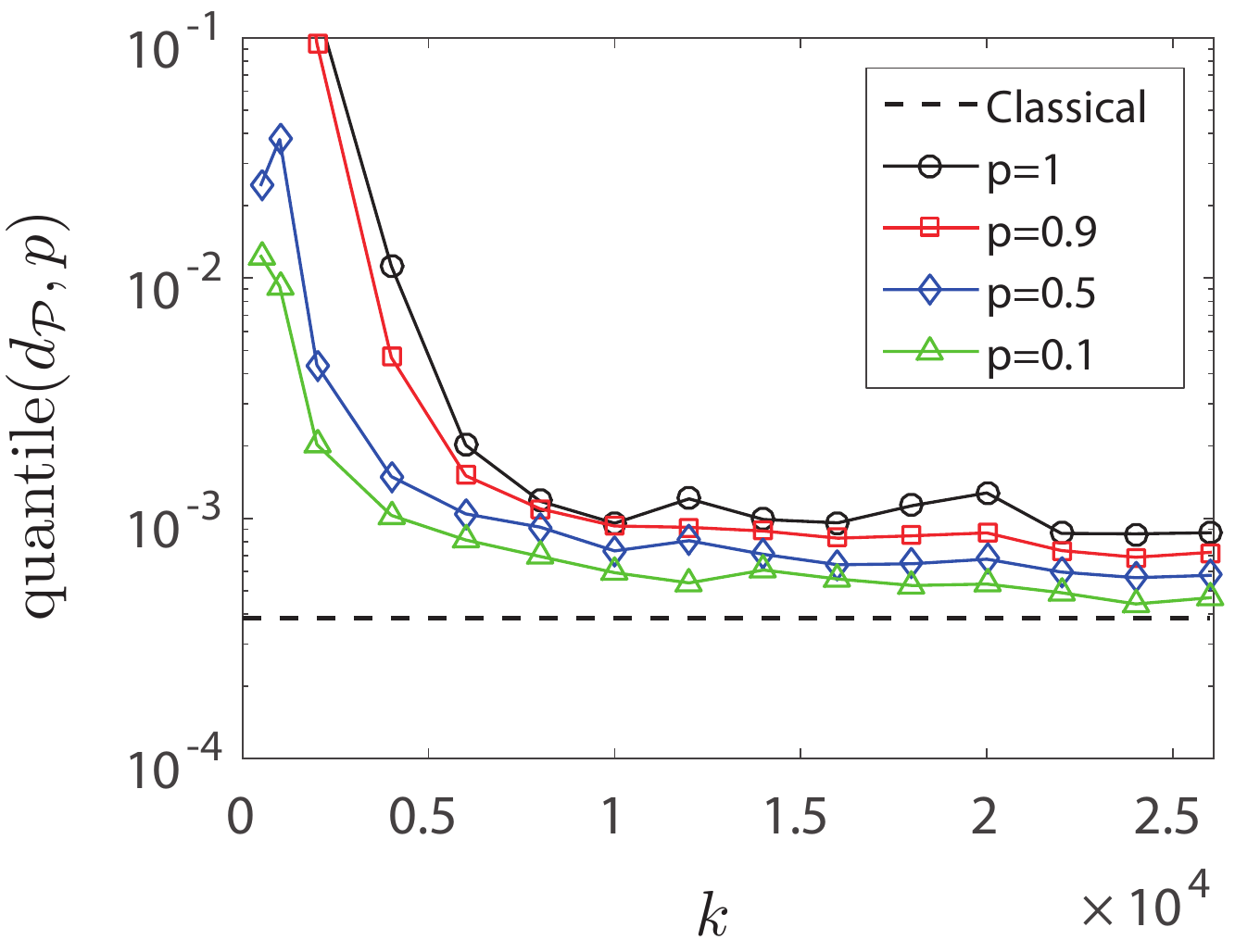}
  \caption{}
  \label{fig:Ex2_2c}
 \end{subfigure} \hspace{.01\textwidth}
 \begin{subfigure}{.4\textwidth}
  \centering
  \includegraphics[width=\textwidth]{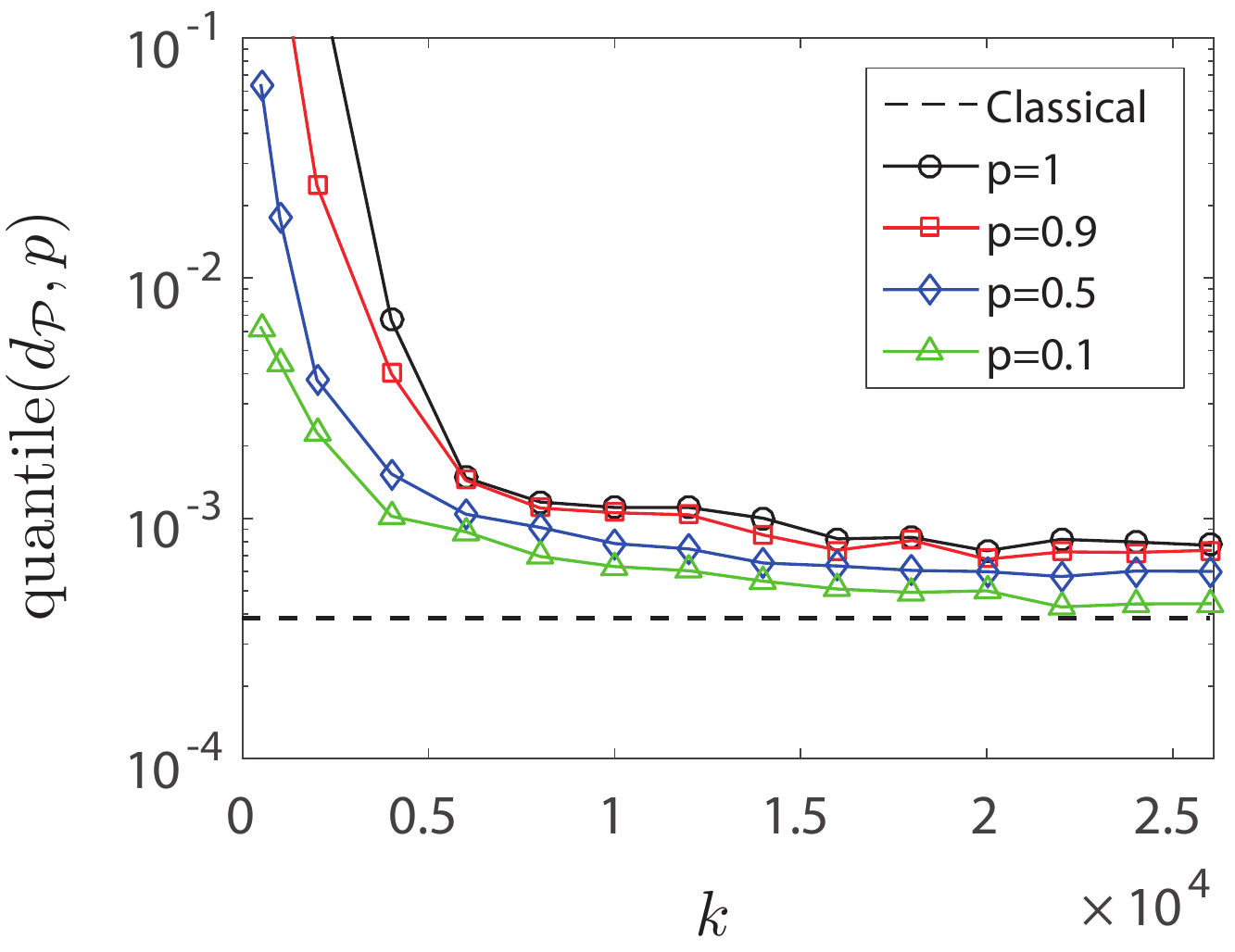}
  \caption{}
  \label{fig:Ex2_2d}
 \end{subfigure}
 \caption{The error $d_\mathcal{P}$ of the classical output quantity and quantiles of probabilities $p=1, 0.9, 0.5$ and $0.1$ over 20 samples of $d_\mathcal{P}$ of the output quantities computed with random sketching versus the number of rows of $\bOmega$. (a) The errors of the classical $s_r(\mu)$ and the randomized $s_r(\mu)$ with Gaussian matrix as $\bOmega$. (b) The errors of the classical $s_r(\mu)$ and the randomized $s_r(\mu)$ with P-SRHT distribution as $\bOmega$. (c) The errors of the classical $s_r(\mu)$ and $s^{\mathrm{spd+}}_r(\mu)$ with Gaussian matrix as $\bOmega$ and $W^\mathrm{du}_r:=U^{\mathrm{du}}_i$, $i^\mathrm{du}=30$.  (d) The errors of the classical $s_r(\mu)$ and $s^{\mathrm{spd+}}_r(\mu)$ with P-SRHT distribution as $\bOmega$ and $W^\mathrm{du}_r:=U^{\mathrm{du}}_i$, $i^\mathrm{du}=30$.}
 \label{fig:Ex2_2}
\end{figure}
 
\emph{Randomized greedy algorithm.} Finally, we performed $r=150$ iterations of the classical greedy algorithm (see Section\nobreakspace \ref {Greedy}) and its randomized version (Algorithm\nobreakspace \ref {alg:sk_greedy_online}) using different distributions and sizes for $\bOmega$, and a Gaussian random matrix with $k'=200$ rows for $\bGamma$. As in the thermal block benchmark, the error at $i$-th iteration is measured by $\Delta_\mathcal{P}:=\max_{\mu \in \mathcal{P}_{\mathrm{train}}} \| \br(\bu_i(\mu); \mu) \|_{U'}/\max_{\mu \in \mathcal{P}_{\mathrm{train}}} \|\bb(\mu)\|_{U'}$.  For $k=1000$ we reveal poor performance of~Algorithm\nobreakspace \ref {alg:sk_greedy_online} (see~Figure\nobreakspace \ref {fig:Ex2_3}). It can be explained by the fact that for such size of $\bOmega$ the randomized Galerkin projection has low accuracy. For $k=20000$, however, the {convergence} of the classical greedy algorithm is fully preserved.
\begin{figure}[h]
 \centering
 \begin{subfigure}{.4\textwidth}
  \centering  
  \includegraphics[width=\textwidth]{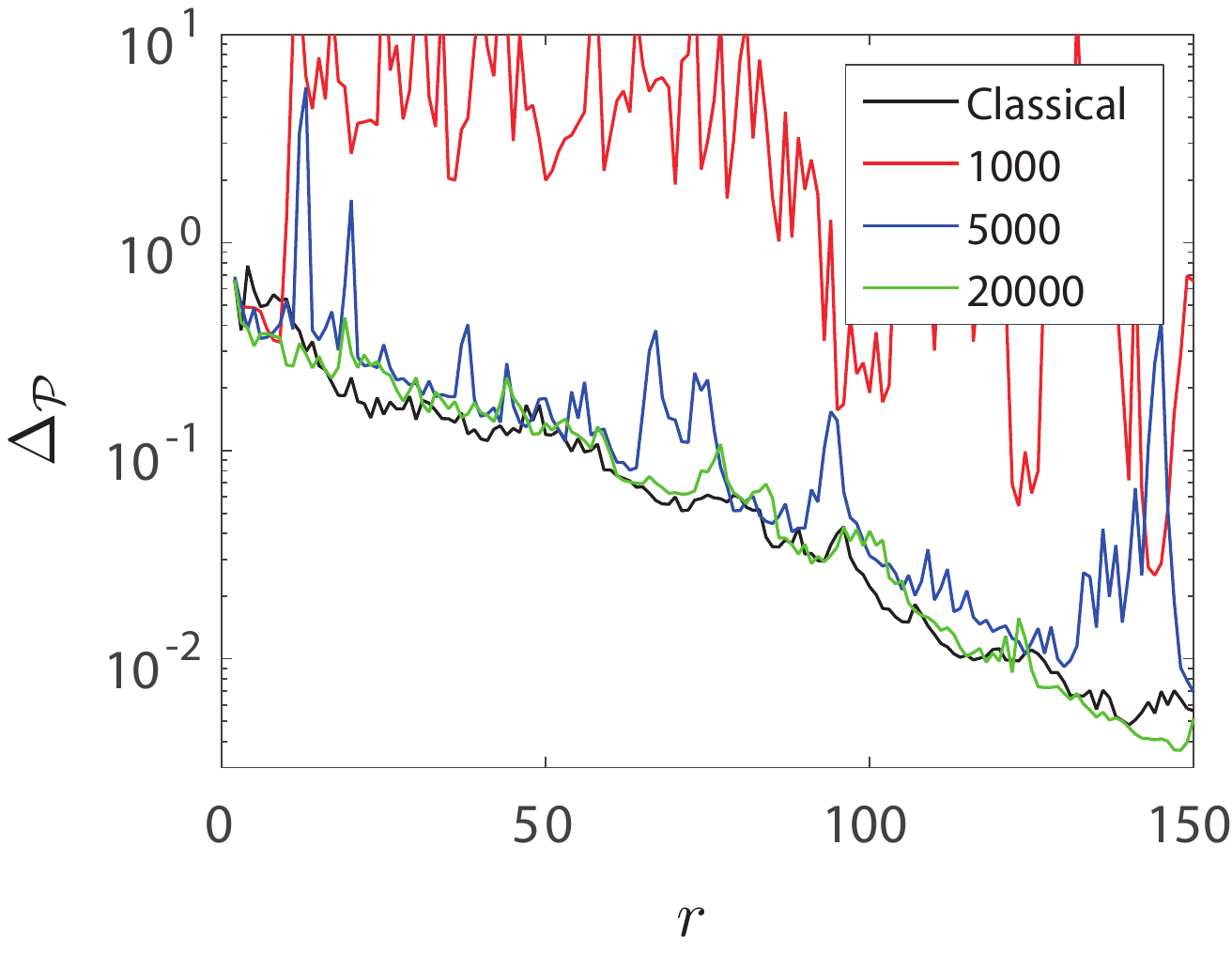}
  \caption{}
  \label{fig:Ex2_3a}
 \end{subfigure} \hspace{.03\textwidth}
 \begin{subfigure}{.4\textwidth}
  \centering
  \includegraphics[width=\textwidth]{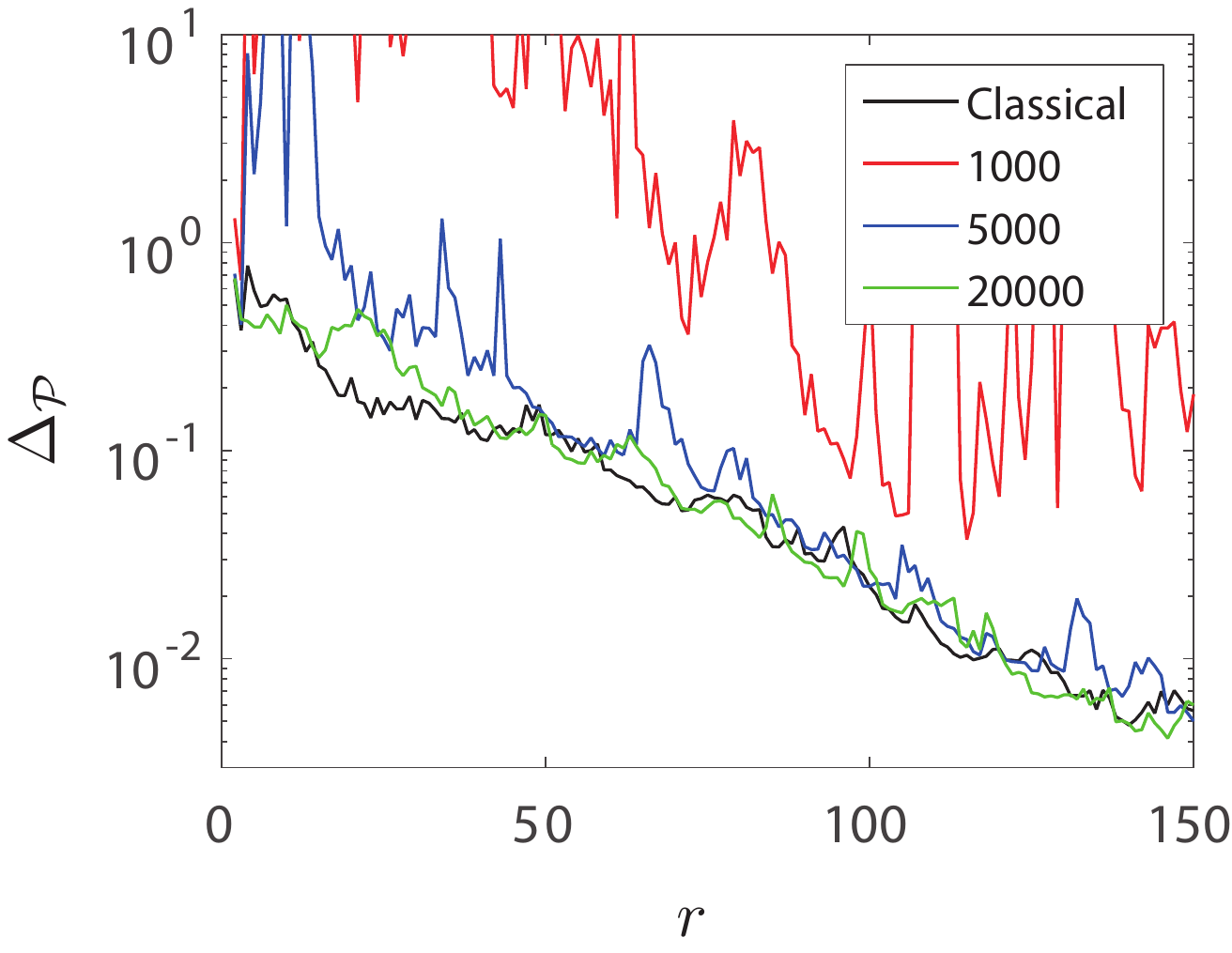}
  \caption{}
  \label{fig:Ex2_3b}
 \end{subfigure}
 \caption{{Convergence} of the classical greedy algorithm (see Section\nobreakspace \ref {Greedy}) and its efficient randomized version (Algorithm\nobreakspace \ref {alg:sk_greedy_online}) using $\bOmega$ drawn from (a) Gaussian distribution or (b) P-SRHT distribution.}
\label{fig:Ex2_3}
\end{figure}

{\emph{Comparison of computational costs.} }
Even though the size of $\bOmega$ has to be considered larger than for the thermal block problem, our methodology still yields considerable reduction of the computational costs compared to the classical approach.  The implementation was carried out in Matlab\textsuperscript{\textregistered} {R2015a} with an external $\verb!C++!$ function for the fast Walsh-Hadamard transform (see e.g. \url{https://github.com/sheljohn/WalshHadamard}). Our codes were not designed for a specific problem but rather for a generic multi-query MOR. The algorithms were executed on an Intel\textsuperscript{\textregistered} Core{\texttrademark} i7-7700HQ 2.8GHz CPU, with 16.0GB RAM memory.

Let us start with validation of the computational cost reduction of the greedy algorithm. In~Table\nobreakspace \ref {tab:runtimes} we provide the runtimes of the classical greedy algorithm and~Algorithm\nobreakspace \ref {alg:sk_greedy_online} employing $\bOmega$ drawn from P-SRHT distribution with $k=20000$ rows. In~Table\nobreakspace \ref {tab:runtimes} the computations are divided into three  basic categories: computing the snapshots (samples of the solution), precomputing the affine expansions for the online solver, and finding $\mu^{i+1} \in \mathcal{P}_\mathrm{train}$ which maximizes the error indicator with a provisional online solver. The first category includes evaluation of $\bA(\mu)$ and $\bb(\mu)$ using their affine expansions and solving the systems with a built in~Matlab\textsuperscript{\textregistered} linear solver. The second category consists of evaluating the random sketch in~Algorithm\nobreakspace \ref {alg:sk_greedy_online}; evaluating high-dimensional matrix-vector products and inner products for the Galerkin projection; evaluating high-dimensional matrix-vector products and inner products for the error estimation; and the remaining computations, such as precomputing a decomposition of $\bR_U$, memory allocations, orthogonalization of the basis, etc. In its turn, the third category of computations includes generating $\bGamma$ and evaluating the affine factors of $\bV^{\bPhi}_i(\mu)$ and $\bb^{\bPhi}(\mu)$ from the affine factors of $\bV^{\bTheta}_i(\mu)$ and $\bb^{\bTheta}(\mu)$ at each iteration of~Algorithm\nobreakspace \ref {alg:sk_greedy_online}; evaluating the reduced systems from the precomputed affine expansions and solving them with a built in~Matlab\textsuperscript{\textregistered} linear solver, for all $\mu \in \mathcal{P}_\mathrm{train}$, at each iteration; evaluating the residual terms from the affine expansions and using them to evaluate the residual errors of the Galerkin projections, for all $\mu \in \mathcal{P}_\mathrm{train}$, at each iteration. 

We observe that evaluating the snapshots occupied only $6\%$ of the overall runtime of the classical greedy algorithm. The other $94 \%$ could be subject to reduction with {the} random sketching technique. Due to operating on a large training set, the cost of solving (including estimation of the error) reduced order models on $\mathcal{P}_{\mathrm{train}}$ has a considerable impact on the runtimes of both  classical and randomized algorithms. This cost, however, is independent of the dimension of the full system of equations and will become negligible for larger problems. Nevertheless, {for $r=150$} the randomized procedure for error estimation (see~Section\nobreakspace \ref {efficient_res_norm}) yielded reduction of the aforementioned cost in about $2$ times. As expected, in the classical method the most expensive computations are numerous evaluations of high-dimensional matrix-vector and inner products. For large problems these computations can become a bottleneck of an algorithm. Their cost reduction by random sketching is drastic. We observe that for the classical algorithm the corresponding runtime grows quadratically with $r$ while for the randomized algorithm it grows only linearly. The cost of this step for $r=150$ iterations of the greedy algorithm was divided $15$. In addition, random sketching helped to reduce memory consumption. The memory required by $r=150$ iterations of the greedy algorithm has been reduced from $6.1$GB {(including storage of affine factors of $\bA(\mu)\bU_i$)} to only $1$GB, from which $0.4$GB is meant for the initialization, i.e., defining the discrete problem, precomputing the decomposition of $\bR_U$, etc.
\begin{table}[tbhp]
 \caption{The CPU times in seconds taken by each type of computations in the classical greedy algorithm (see Section\nobreakspace \ref {Greedy})~and the randomized greedy algorithm (Algorithm\nobreakspace \ref {alg:sk_greedy_online}). }
 \label{tab:runtimes}
 \centering
 \scalebox{0.88}{
 \begin{tabular}{|c|l|r|r|r|r|r|r|r|} \hline
    \multirow{2}{*}{Category}  &  \multirow{2}{*}{Computations} & \multicolumn{3}{c|}{Classical} & \multicolumn{3}{c|}{Randomized}  \\  \cline{3-8}
   &    & $r=50$  & $r=100$ & $r=150$ & $r=50$ & $r=100$ & $r=150$ \\  [2pt]  \hline 
 snapshots   &  & $143$ & $286$ & $430$ & $143$ & $287$ & $430$ \\  [2pt] \hline
 \multirow{5}{*}{
 \begin{tabular}{c c c} high-dimensional \\ matrix-vector \& \\ inner products \end{tabular}}  & sketch & $-$ & $-$ & $-$ & $54$ & $113$ & $177$ \\   \cline{2-8}
  & Galerkin & $59$ & $234$ & $525$ & $3$ & $14$ & $31$ \\  \cline{2-8}   
  & error    & $405$ & $1560$ & $3444$ & $-$ & $-$ & $-$ \\  \cline{2-8} 
  & remaining    & $27$ & $196$ & $236$ & $7$ & $28$ & $67$ \\ \cline{2-8}  
  & total    & $491$ & $1899$ & $4205$ & $64$ & $154$ & $275$ \\  \hline 
  \multirow{4}{*}{\begin{tabular}{c c} provisional \\ online solver  \end{tabular}}     & sketch & $-$ & $-$ & $-$ & $56$ & $127$ & $216$ \\ \cline{2-8}   
  & Galerkin& $46$ & $268$ & $779$ & $50$ & $272$ & $783$ \\ \cline{2-8}
  & error   & $45$ & $522$ & $2022$ & $43$ & $146$ & $407$ \\   \cline{2-8}
  & total   & $91$ & $790$ & $2801$ & $149$ & $545$ & $1406$ \\   \hline 
 \end{tabular}
 }
 \end{table}

The improvement of the efficiency of the online stage can be validated by comparing the CPU times of the provisional online solver in the greedy algorithms. Table\nobreakspace \ref {tab:runtimes} presents the CPU times taken by the provisional online solver at {the} $i$-th iteration of the classical and the sketched greedy algorithms, where the solver is used for efficient computation of the reduced models associated with {an} $i$-dimensional approximation space $U_i$ for all parameter's values from the training set. These computations consist of evaluating the reduced systems from the affine expansions and their solutions with the Matlab\textsuperscript{\textregistered} linear solver, and computing residual-based error estimates using~(\ref{eq:compute_res}) for the classical method or~(\ref{eq:eff_comp_res}) for the estimation with random sketching. Moreover, the sketched online stage also involves generation of $\bGamma$ and computing $\bV^{\bPhi}_i(\mu)$ and $\bb^{\bPhi}(\mu)$ from the affine factors of $\bV^{\bTheta}_i(\mu)$ and $\bb^{\bTheta}(\mu)$. Note that random sketching can reduce the online complexity (and improve the stability) associated with residual-based error estimation. The online cost of computation of a solution, however, remains the same for both the classical and the sketched methods. Table\nobreakspace \ref {tab:onlineruntimes} reveals that for this benchmark the speedups in the online stage are achieved for $i \geq 50$. The computational cost of the error estimation using the classical approach grows quadratically with $i$, while using the randomized procedure, it grows only linearly. For $i=150$ we report a reduction of the runtime for error estimation by a factor $5$ and a reduction of the total runtime by a factor $2.6$.

\begin{table}[tbhp]
	\caption{The CPU times in seconds taken by each type of computations of  the classical and the efficient sketched provisional online solvers during the $i$th iteration of the greedy algorithms. }
	\label{tab:onlineruntimes}
	\centering
		\begin{tabular}{|c|r|r|r|r|r|r|} \hline
			 \multirow{2}{*}{Computations} & \multicolumn{3}{c|}{Classical} & \multicolumn{3}{c|}{Randomized}  \\  \cline{2-7}
			&$i=50$  & $i=100$ & $i=150$ & $i=50$ & $i=100$ & $i=150$  \\  [2pt]  \hline 
			 sketch & $-$ & $-$ & $-$ & $1.3$ & $1.5$ & $2$ \\ \cline{1-7}   
			 Galerkin& $2$ & $7$ & $13.5$ & $2.3$ & $7$ & $14$ \\ \cline{1-7}
			 error   & $2.8$ & $18$ & $45.2$ & $1.3$ & $3.1$ & $7$ \\   \cline{1-7}
			 total   & $4.8$ & $24.9$ & $58.7$ & $4.8$ & $11.6$ & $22.8$ \\   \hline 
		\end{tabular}
\end{table}   

The benefit of using random sketching methods for POD is validated in the context of distributed or limited-memory environments, where the snapshots are computed on distributed workstations or when the storage of snapshots requires too much RAM. For these scenarios the efficiency is characterized  by the amount of communication or storage needed for constructing a reduced model. Let us recall that the classical POD requires maintaining and operating with the full basis matrix $\bU_m$, while the sketched POD requires the {precomputation} of a $\bTheta$-sketch of $\bU_m$ and then constructs a reduced model from the sketch. In particular, for distributed computing a random sketch of each snapshot should be computed on a separate machine and then efficiently transfered to the master workstation for post-processing. For this experiment, Gaussian matrices of different sizes were tested for $\bOmega$. A seeded random number generator was used for maintaining $\bOmega$ with negligible computational costs. In~Table\nobreakspace \ref{tab:storage} we provide the amount of storage needed to maintain a sketch of a single snapshot, which also reflects the required communication for its transfer to the master workstation in the distributed computational environment. We observe that random sketching methods yielded computational costs reductions when $k \leq 17000$. {It follows that for $k=10000$ a $\bTheta$-sketch of a snapshot consumes $1.7$ times less memory than the full snapshot. Yet, for $m=\# \mathcal{P}_\mathrm{train} \leq 10000$ and $r \leq 150$, the sketched method of snapshots (see Definition~\ref{thm:sk_pod}) using $\bOmega$ of size $k=10000$ provides almost optimal approximation of the training set of snapshots with an error which is only at most $1.1$ times higher than the error associated with the classical POD approximation. A Gaussian matrix of size $k=10000$, for $r \leq 150$, also yields with high probability very accurate estimation (up to a factor of $1.1$) of the residual error and sufficiently accurate  estimation of the Galerkin projection (increasing the residual error by at most a factor of $1.66$).} For coercive and well-conditioned problems such as the thermal-block benchmark, it can be sufficient to use much smaller sketching matrices than in the present benchmark, say with $k=2500$ rows. Moreover, this value for $k$ should be pertinent also for ill-conditioned problems, including the considered acoustic cloak benchmark, when the minimal residual methods are used alternatively to the Galerkin methods~\cite{balabanov2018}. From~Table\nobreakspace \ref{tab:storage} it follows that a random sketch of dimension $k=2500$ is $6.8$ times cheaper to maintain than a full snapshot vector.  It has to be mentioned that when the sketch is computed from the affine expansion of $\bA(\mu)$ with $m_A$ terms (here $m_A=11$), its maintenance/transfer costs are proportional to $k m_A$ and are independent of the dimension of the initial system of equations. Consequently, for problems with larger $n/m_A$ a better cost reduction is expected.

\begin{table}[tbhp]
	\caption{The amount of data in megabytes required to maintain/transfer a single snapshot or its $\bTheta$-sketch for post-processing.}
	\label{tab:storage}
	\centering
	\begin{tabular}{|c|c|c|c|c|c|c|} \hline
		full snapshot& $k=2500$  & $k=5000$ & $k=10000$ & $k=15000$ &  $k=17000$ & $k=20000$ \\  [2pt]  \hline
		$1.64$ & $0.24$  & $0.48$ & $0.96$ & $1.44$ & $1.63$& $1.92$  \\ [2pt]  \hline  
	\end{tabular}
\end{table}

\section{Conclusions and future work} \label{Conclusions}  
In this paper we proposed a methodology for reducing the cost of classical projection-based MOR methods such as RB method and POD. The computational cost of constructing a reduced order model is essentially reduced to evaluating the samples (snapshots) of the solution on the training set, which in its turn can be efficiently performed with state-of-the-art routine on a powerful server or distributed machines. Our approach can be beneficial in any computational environment. It improves efficiency of classical MOR methods in terms of complexity (number of flops), memory consumption, scalability, communication cost between distributed machines, etc. Unlike classical methods, our method does not require maintaining and operating with high-dimensional vectors. Instead, the reduced order model is constructed from a random sketch (a set of random projections), with a negligible computational cost. A new framework was introduced in order to adapt {the} random sketching technique to the context of MOR. We interpret random sketching as a random estimation of inner products between high-dimensional vectors. The projections are obtained with random matrices (called oblivious subspace embeddings), which are efficient to store and to multiply by. We introduced oblivious subspace embeddings for a general inner product defined by a self-adjoint positive definite matrix.  Thereafter, we introduced randomized versions of Galerkin projection, residual based error estimation, and primal-dual correction. The conditions for preserving the quality of the output of the classical method were provided. In addition, we discussed computational aspects for an efficient evaluation of a random sketch in different computational environments, and introduced a new procedure for estimating the residual norm. This procedure is not only efficient but also is less sensitive to round-off errors than the classical approach. Finally, we proposed randomized versions of POD and greedy algorithm for RB. Again, in both algorithms, standard operations are performed only on the sketch but not on high-dimensional vectors. 

The methodology has been validated in a series of numerical experiments. We observed that indeed random sketching can provide a drastic reduction of the computational cost. The experiments revealed that the theoretical bounds for the sizes of random matrices are pessimistic. In practice, it can be pertinent to use much smaller matrices. In such a case it is important to provide a posteriori certification of the solution. In addition, it can be helpful to have an indicator of the accuracy of random sketching, which can be used for an adaptive selection of {the random matrices'} sizes. The aforementioned issues are addressed in~\cite{balabanov2018}. It was also observed that the performance of random sketching for estimating the Galerkin projection depends on the operator's properties (more precisely on the constant $a_r(\mu)$ defined in  Proposition\nobreakspace \ref {thm:skcea}). Consequently, the accuracy of the output can degrade considerably for problems with ill-conditioned operators. A remedy is to replace Galerkin projection by  another type of projection for the approximation of $\bu(\mu)$ (and $\bu^\mathrm{du}(\mu)$). The randomized minimal residual projection proposed in~\cite{balabanov2018} preserves the quality of the classical minimal residual projection regardless of the operator's properties. Another remedy would be to  improve the condition number of $\bA(\mu)$ with {an} affine parameter-dependent preconditioner. We also have seen that preserving a high precision for the sketched primal-dual correction~(\ref {eq:skcorrection}) can require large sketching matrices. A way to overcome this issue was proposed. It consists in obtaining an efficient approximation $\bw^\mathrm{du}_r(\mu)$ of the solution $\bu_r^\mathrm{du}(\mu)$ (or $\bu_r(\mu)$). Such $\bw^\mathrm{du}_r(\mu)$ can be also used for reducing the cost of extracting the quantity of interest from $\bu_r(\mu)$, i.e., computing $\bl_r(\mu)$, which in general can be expensive (but was assumed to have a negligible cost). In addition, this approach can be used for problems with nonlinear quantities of interest. An approximation $\bw^\mathrm{du}_r(\mu)$ can be taken as a projection of $\bu_r^\mathrm{du}(\mu)$ (or $\bu_r(\mu)$) on a subspace $W^\mathrm{du}_r$. In the experiments $W^\mathrm{du}_r$ was constructed from the first several basis vectors of the approximation space $U^\mathrm{du}_r$. A better subspace can be obtained by approximating the manifold $\{{\bu_r^\mathrm{du}(\mu)} : \mu \in \mathcal{P}_\mathrm{train} \}$ with a greedy algorithm or POD. Here, random sketching can be again employed for improving the efficiency. The strategies for obtaining both accurate and efficient $W^\mathrm{du}_r$ with random sketching are discussed in details in~\cite{balabanov2018}.

\newpage 
\section*{Appendix} \label{appendix}
{Here we list the proofs of propositions and theorems from the paper.}
 
\begin{proof}[Proof of Proposition\nobreakspace \ref{thm:cea} (modified Cea's lemma)]
	For all $\bx \in U_r$, it holds
	\begin{align*} 
	\alpha_r(\mu) \| \bu_r(\mu)- \bx \|_{U} & \leq \| \br(\bu_r(\mu);\mu)- \br(\bx;\mu)\|_{U_r'} \leq \| \br(\bu_r(\mu);\mu) \|_{U_r'}+ \| \br(\bx; \mu) \|_{U_r'}\\
	& =  \| \br(\bx; \mu) \|_{U_r'} \leq \beta_r(\mu) \| \bu(\mu)-\bx \|_{U},
	\end{align*}
	where the first and last inequalities directly follow from the definitions of $\alpha_r(\mu)$ and $\beta_r(\mu)$, respectively. Now, 
	\begin{equation*} 
	\| \bu(\mu)-\bu_r(\mu)\|_{U} \leq \| \bu(\mu)- \bx \|_{U}+ \| \bu_r(\mu)- \bx \|_{U} 
	\leq \| \bu(\mu)- \bx \|_{U}+ \frac{\beta_r(\mu)}{\alpha_r(\mu)} \| \bu(\mu)- \bx \|_{U},
	\end{equation*}
	which completes the proof. 
\end{proof}

\begin{proof}[Proof of Proposition\nobreakspace \ref{thm:clsstability}]
	For all $\ba \in \mathbb{K}^{r}$ and $\bx:= \bU_r\ba$, it holds
	\begin{equation*} 
	\begin{split} 
	\frac{\| \bA_r(\mu)\ba \|}{\| \ba \|}&= \underset{\bz \in \mathbb{K}^{r} \backslash \{ \bnull \}} \max \frac{|\langle \bz, \bA_r(\mu)\ba \rangle|}{\| \bz \| \| \ba \|} = \underset{ \bz \in \mathbb{K}^{r} \backslash \{ \bnull \}} \max \frac{|\bz^{\mathrm{H}}\bU_r^{\mathrm{H}}\bA(\mu)\bU_r\ba|}{\| \bz \| \| \ba \|} \\
	&= \underset{\by \in U_r \backslash \{ \bnull \}} \max \frac{|\by^{\mathrm{H}}\bA(\mu)\bx|}{\| \by \|_U \| \bx \|_U}
	= \frac{\| \bA(\mu)\bx \|_{U'_r}}{ \| \bx \|_U}.
	\end{split} 
	\end{equation*} 
	Then the proposition follows directly from definitions of $\alpha_r(\mu)$ and $\beta_r(\mu)$.  
\end{proof}

 \begin{proof}[Proof of Proposition\nobreakspace \ref{thm:error_correction}] We have 
	\begin{align*}
	|s(\mu)- s_r^{\mathrm{pd}}(\mu)| &= |s(\mu) -  s_r(\mu)+ \langle \bu_r^\mathrm{du}(\mu), \br(\bu_r(\mu); \mu) \rangle| \\
	&= |\langle  \bl(\mu), \bu(\mu)- \bu_r(\mu) \rangle+ \langle \bA (\mu)^{\mathrm{H}}  \bu_r^\mathrm{du}(\mu), \bu(\mu)- \bu_r(\mu) \rangle| \\
	&= |\langle  \br^{\mathrm{du}} (\bu_r^\mathrm{du}(\mu); \mu), \bu(\mu)- \bu_r(\mu) \rangle |\\
	&\leq \| \br^{\mathrm{du}} (\bu_r^\mathrm{du}(\mu); \mu) \|_{U'} \| \bu(\mu) - \bu_r(\mu) \|_U,
	\end{align*}
	and the result follows from definition~(\ref {eq:errorind}). 
\end{proof}

\begin{proof}[Proof of Proposition\nobreakspace \ref{thm:approx_pod}]
	To prove the first inequality we notice that $\bQ\bP_{U_r}\bU_m$ has rank at most $r$. Consequently, 
	\begin{equation*}
	\| \bQ\bU_m - {\bB^*_r}\|^2_{F}\leq \| \bQ\bU_m- \bQ\bP_{{U_r}}\bU_m \|^2_{F}= \sum^{m}_{i=1} \| \bu (\mu^i) - \bP_{{U_r}} \bu (\mu^i)\|^2_{U}.
	\end{equation*}	
	For the second inequality let us denote the $i$-th column vector of {$\bB_r$} by {$\bb_i$}. Since $\bQ\bR_U^{-1}\bQ^\mathrm{H}= \bQ\bQ^\dagger$, with $\bQ^\dagger$ the pseudo-inverse of $\bQ$, is the orthogonal projection onto $\mathrm{range}(\bQ)$, we have
	\begin{equation*}
	\begin{split} 
	\| \bQ\bU_m- {\bB_r}\|^2_{F} &\geq \|\bQ \bR_{U}^{-1}\bQ^{\mathrm{H}} (\bQ\bU_m- {\bB_r})\|^2_{F}= \sum^{m}_{i=1} \|\bu (\mu^i) - \bR_{U}^{-1}\bQ^{\mathrm{H}}{\bb_i} \|^2_{U} \\
	&\geq \sum^{m}_{i=1} \|\bu (\mu^i)- \bP_{{U_r}}\bu (\mu^i) \|^2_{U}.
	\end{split} 
	\end{equation*}	
\end{proof}

\begin{proof}[Proof of Proposition\nobreakspace \ref{thm:innerproduct}]
	It is clear that $\langle \cdot, \cdot \rangle^{\bTheta}_{X}$ and $\langle  \cdot , \cdot  \rangle^{\bTheta}_{X'}$ satisfy (conjugate) symmetry, linearity and positive semi-definiteness properties. The definitenesses of $\langle \cdot, \cdot \rangle^{\bTheta}_{X}$ and $\langle  \cdot , \cdot  \rangle^{\bTheta}_{X'}$ on $Y$ and $Y'$, respectively, follow directly from Definition~\ref{def:epsilon_embedding} and Corollary~\ref{thm:dual_embedding}. 
\end{proof}

 \begin{proof}[Proof of Proposition\nobreakspace \ref{thm:skseminorm_ineq}]
	Using Definition\nobreakspace \ref {def:epsilon_embedding}, we have  
	\begin{equation*}
	\begin{split}
	\| \by' \|^{\bTheta}_{Z'} &= \underset{\bx \in Z \backslash \{ \bnull \}} \max \frac{|\langle \bR_X^{-1}\by', \bx \rangle^{\bTheta}_{X}|}{\| \bx \|^{\bTheta}_{X}}\leq \underset{\bx \in Z \backslash \{ \bnull \}} \max \frac{|\langle \bR_X^{-1}\by', \bx \rangle_{X}|+ \varepsilon \| \by' \|_{X'}  \| \bx \|_{X}} {\| \bx \|^{\bTheta}_{X}} \\ 
	&\leq \underset{\bx \in Z \backslash \{ \bnull \}} \max \frac{|\langle \bR_X^{-1}\by', \bx \rangle_{X}|+ \varepsilon  \| \by' \|_{X'} \| \bx \|_{X}} {\sqrt{1-\varepsilon}\| \bx \|_{X}} \\
	&\leq \frac{1}{\sqrt{1-\varepsilon}} \left ( \underset{\bx \in Z \backslash \{ \bnull \}} \max \frac{|\langle \by', \bx \rangle|} {\| \bx  \|_{X}} + \varepsilon\| \by' \|_{X'} \right ),
	\end{split}
	\end{equation*}
	which yields the right inequality. To prove the left inequality we assume that $ \| \by' \|_{Z'}-  \varepsilon\| \by' \|_{X'}\geq 0$.  Otherwise the relation is obvious because $\| \by' \|^{\bTheta}_{Z'}\geq 0$. By Definition\nobreakspace \ref {def:epsilon_embedding},
	\begin{equation*}
	\begin{split} 
	\| \by' \|^{\bTheta}_{Z'} &=\underset{\bx \in Z \backslash \{ \bnull \}} \max \frac{|\langle \bR_X^{-1}\by', \bx \rangle^{\bTheta}_{X}|} {\| \bx  \|^{\bTheta}_{X}}\geq \underset{\bx \in Z  \backslash \{ \bnull \}} \max \frac{|\langle \bR_X^{-1}\by', \bx \rangle_{X}|- \varepsilon \| \by' \|_{X'} \| \bx \|_{X} } {\| \bx \|^{\bTheta}_{X}} \\ 
	&\geq \underset{\bx \in Z \backslash \{ \bnull \}} \max \frac{|\langle \bR_X^{-1}\by', \bx \rangle_{X}|- \varepsilon \| \by' \|_{X'} \| \bx  \|_{X}} {\sqrt{1+\varepsilon}\| \bx  \|_{X}} \\
	&\geq \frac{1}{\sqrt{1+\varepsilon}} \left ( \underset{\bx \in Z \backslash \{ \bnull \}} \max \frac{|\langle \by', \bx \rangle|} {\| \bx \|_{X}}- \varepsilon \| \by' \|_{X'} \right ),
	\end{split}
	\end{equation*}
	which completes the proof. 
\end{proof}

 \begin{proof}[Proof of Proposition\nobreakspace \ref{thm:Rademacher}]
	{Let us start with the case $\mathbb{K} = \mathbb{R}$. For the proof we shall follow standard steps (see, e.g., \cite[Section~2.1]{woodruff2014sketching}). Given a $d$-dimensional subspace $V \subseteq \mathbb{R}^{n}$, let $\mathcal{S} = \{ \bx \in V : \ \|  \bx  \| = 1 \}$ be the {unit sphere} of $V$.  According to \cite[Lemma~2.4]{bourgain1989approximation}, for any $\gamma >0$ there exists a $\gamma$-net $\mathcal{N}$ of $\mathcal{S}$\footnote{{We have $\forall \bx \in \mathcal{S}, \exists \by \in \mathcal{N}$ such that $\|\bx - \by \| \leq \gamma$.}} satisfying $\# \mathcal{N}  \leq (1+2/\gamma)^d$. For $\eta$ such that $0<\eta<1/2$, let $\bTheta \in \mathbb{R}^{k \times n}$ be a rescaled Gaussian or Rademacher matrix with $k\geq {6} \eta^{-2} ({2} d \log (1+2/\gamma)+ \log ({1}/\delta))$. {By \cite[Lemmas~4.1 and 5.1]{achlioptas2003database} and an union bound argument we obtain for a fixed $\bx \in {V}$ $$\mathbb{P}(|\|\bx\|^2 - \|\bTheta \bx\|^2| \leq \eta \|\bx\|^2  ) \geq 1 - {2\exp(-k\eta^{2}/6)}.$$} Consequently, using a union bound for the probability of success, we have that
	\begin{equation*}
	\left\{\ | \| \bx + \by \|^2 - \|\bTheta(\bx + \by) \|^2 | \leq \eta \|\bx + \by \|^2, \quad \forall \bx, \by \in \mathcal{N}\right\},
	\end{equation*}
	holds with probability at least $1-\delta$. Then we deduce that
	\begin{equation}  \label{eq:JLT}
	\left\{\ | \langle \bx , \by  \rangle - \langle \bTheta \bx , \bTheta \by  \rangle| \leq \eta, \quad \forall \bx, \by \in \mathcal{N}\right\}
	\end{equation}
	holds with probability at least $1-\delta$. Now, let 
	$\bn$ be {some vector in $\mathcal{S}$}. Assuming $\gamma<1$, it can be proven by induction that 
	$  \bn = \sum_{i\ge 0} \alpha_i \bn_i,$ where 
	$\bn_i \in \mathcal{N}$ and  $0\le \alpha_i \leq \gamma^i$\footnote{{Indeed, $\exists \bn_0 \in \mathcal{N}$ such that  $\|\bn - \bn_0 \|:=\alpha_1 \leq \gamma$. Let $\alpha_0 = 1$. Then assuming that $\|\bn - \sum^m_{i=0}  {\alpha_i}\bn_i \|:=\alpha_{m+1} \leq \gamma^{m+1}$,  $\exists \bn_{m+1} \in \mathcal{N}$ such that $\| \frac{1}{\alpha_{m+1}} (\bn - \sum^m_{i=0} {\alpha_i} \bn_i) - \bn_{m+1} \| \leq  \gamma$ $\implies$ $\|\bn - \sum^{m+1}_{i=0} {\alpha_i} \bn_i \| \leq \alpha_{m+1} \gamma \leq \gamma^{m+2} $. }}. If~(\ref {eq:JLT}) is satisfied, then
	\begin{align*}
	\| \bTheta \bn \|^2 &  = \sum_{i,j \geq 0} \langle \bTheta \bn_i, \bTheta\bn_j  \rangle \alpha_i\alpha_j \\
	&\leq \sum_{i,j \geq 0} ( \langle \bn_i, \bn_j \rangle \alpha_i \alpha_j + \eta \alpha_i \alpha_j)  
	= 1 + \eta  ( \sum_{i \geq 0} \alpha_i  )^2 \le 1 + \frac{\eta}{(1-\gamma)^2},
	\end{align*}
	and similarly $
	\| \bTheta \bn \|^2 \geq 1-\frac{\eta}{(1-\gamma)^2}.$ Therefore, if~(\ref {eq:JLT}) is satisfied, we have
	\begin{equation} \label{eq:nbound}
	| 1- \| \bTheta \bn \|^2| \leq \eta / (1-\gamma)^2. 
	\end{equation}
	For a given $\varepsilon \leq 0.5/  (1-\gamma)^2$, let $\eta = (1-\gamma)^2 \varepsilon$. Since (\ref {eq:nbound}) holds for an arbitrary vector {$\bn \in \mathcal{S}$}, using the parallelogram identity, we easily obtain that 
	\begin{equation} 
	\ \left | \langle \bx, \by \rangle - \langle \bTheta \bx, \bTheta \by \rangle \right |\leq \varepsilon \| \bx \| \| \by \| 
	\end{equation}
	holds for all $\bx, \by \in V$ if~(\ref {eq:JLT}) is satisfied.
	We conclude that {if $k\geq {6} \varepsilon^{-2} (1-\gamma)^{-4} ({2} d \log (1+2/\gamma)+ \log ({1}/\delta))$ then}	
	 $\bTheta$ is a $\ell_2 \to \ell_2$ $\varepsilon$-subspace embedding for $V$ with probability at least $1-\delta$. The lower bound for the number of rows of $\bTheta$ is obtained by taking  $\gamma= \arg\min_{x \in (0,1)}(\log (1+2/x)/(1-x)^4)\approx 0.0656$. }
	
	The statement of the proposition for the case $\mathbb{K} = \mathbb{C}$ can be deduced from the fact that if $\bTheta$ is $( \varepsilon, \delta, 2d)$ oblivious $\ell_2 \to \ell_2$ subspace embedding for $\mathbb{K} = \mathbb{R}$, then it is $(\varepsilon, \delta, d)$ oblivious $\ell_2 \to \ell_2$ subspace embedding for $\mathbb{K} = \mathbb{C}$. {A detailed proof of this fact is provided in the supplementary material.} To show this we {first} note that the real part and the imaginary part of any vector from a $d$-dimensional subspace $V^* \subseteq \mathbb{C}^n$ belong to a certain subspace  $W \subseteq \mathbb{R}^n$ with $\mathrm{dim}(W) \leq 2d$. Further, one can show that if $\bTheta$ is $\ell_2 \to \ell_2$ $\varepsilon$-subspace embedding for $W$, then it is $\ell_2 \to \ell_2$ $\varepsilon$-subspace embedding for $V^*$.   
\end{proof}

\begin{proof}[Proof of Proposition\nobreakspace \ref{thm:P-SRHT}]
	Let $\bTheta \in \mathbb{R}^{k\times n}$ be a P-SRHT matrix, let $V$ be an arbitrary $d$-dimensional subspace of $\mathbb{K}^{n}$, and let $\bV \in \mathbb{K}^{n\times d}$ be a matrix whose columns form an orthonormal basis of $V$. Recall, $\bTheta$ is equal to the first $n$ columns of matrix $\bTheta^* = k^{-1/2} (\bR\bH_s\bD) \in \mathbb{R}^{k \times s}$.  {Next we shall use the fact that for any orthonormal matrix $\bV^* \in \mathbb{K}^{s\times d}$, all singular values of a matrix $\bTheta^*\bV^*$ belong to {the} interval $[\sqrt{1-\varepsilon}, \sqrt{1+\varepsilon}]$ with probability at least $1-\delta$. This result is basically a restatement of~\cite[Lemma 4.1]{boutsidis2013improved} and~\cite[Theorem 3.1]{tropp2011improved} including the complex case and with improved constants. It can be shown to hold by mimicking the proof in~\cite{tropp2011improved} with a few additional algebraic operations. For a detailed proof of the statement, see {the} supplementary material.}
	
	{By taking $\bV^*$ with the first $n\times d$ block equal to $\bV$ and zeros elsewhere, and using the fact that $\bTheta\bV$ and $\bTheta^*\bV^*$ have the same singular values, we obtain that
	\begin{equation} \label{eq:pf-SRHT}
	| \| \bV{\bz} \|^2 -  \| \bTheta\bV{\bz} \|^2 |=  |{\bz}^{\mathrm{H}} (\bI- \bV^{\mathrm{H}}\bTheta^{\mathrm{H}}\bTheta\bV) {\bz} | \leq \varepsilon \| {\bz} \|^2= {\varepsilon} \| \bV{\bz} \|^2, \quad \forall {\bz} \in \mathbb{K}^{d}
	\end{equation}
	holds with probability at least $1-\delta$. {Using {the} parallelogram identity, it can be easily proven that relation~(\ref{eq:pf-SRHT}) implies 
	$$
	 \ \left | \langle \bx, \by \rangle - \langle \bTheta\bx, \bTheta\by \rangle \right |\leq \varepsilon \| \bx \| \| \by \|, \quad  \forall \bx, \by \in V.
	$$}
	% can be brought to a form as in~(\ref {eq:epsilon_embedding}) using the parallelogram identity. 
	We conclude that $\bTheta$ is a $(\varepsilon, \delta, d)$ oblivious $\ell_2 \to \ell_2$ subspace embedding. }
\end{proof}

\begin{proof}[Proof of Proposition\nobreakspace \ref{thm:buildepsilon_embedding}]
	Let $V$ be any $d$-dimensional subspace of $X$ and let $V^*:= \{ \bQ\bx : \bx \in V \}$. Since the following relations hold $\langle \cdot, \cdot \rangle_U = \langle \bQ \cdot, \bQ \cdot \rangle$ and $\langle \cdot, \cdot \rangle^\bTheta_U = \langle \bQ \cdot, \bQ \cdot \rangle_2^\bOmega$, we have that {the} sketching matrix $\bTheta$ is an $\varepsilon$-embedding for $V$ if and only if $\bOmega$ is an $\varepsilon$-embedding for $V^*$. It follows from the definition of $\bOmega$ that this matrix is an $\varepsilon$-embedding for $V^*$ with probability at least $1-\delta$, which completes the proof. 
\end{proof}

\begin{proof}[Proof of Proposition\nobreakspace \ref{thm:SKquasi-opt} (sketched Cea's lemma)]
	The proof exactly follows the one of Proposition\nobreakspace \ref {thm:cea} with $\| \cdot \|_{U_r'}$ replaced by $\| \cdot \|^{\bTheta}_{U_r'}$. 
\end{proof} 

\begin{proof}[Proof of Proposition\nobreakspace \ref{thm:skcea}]
	According to Proposition\nobreakspace \ref {thm:skseminorm_ineq}, and by definition of $a_r(\mu)$, we have
	\begin{equation*}
	\begin{split} 
	\alpha^{\bTheta}_r(\mu) &=\underset{\bx \in U_r \backslash \{ \bnull \}} \min \frac{\| \bA(\mu)\bx \|^{\bTheta}_{U_r'}}{\| \bx \|_U}\geq \frac{1}{\sqrt{1+\varepsilon}}\underset{\bx \in U_r \backslash \{ \bnull \}} \min \frac{(\| \bA(\mu)\bx \|_{U_r'}- \varepsilon\| \bA(\mu)\bx\|_{U'})}{\| \bx \|_U} \\
	&\geq \frac{1}{\sqrt{1+\varepsilon}}(1-\varepsilon a_r(\mu)) \underset{\bx \in U_r \backslash \{ \bnull \}} \min \frac{\| \bA(\mu)\bx \|_{U_r'}}{\| \bx \|_U}.
	\end{split}
	\end{equation*}
	Similarly,
	\begin{equation*}
	\begin{split} 
	\beta^{\bTheta}_r(\mu) &= \underset{ \bx \in \left ( \mathrm{span} \{ \bu(\mu) \}+ U_r \right ) \backslash \{ \bnull \}} \max \frac{ \| \bA(\mu)\bx \|^{\bTheta}_{U_r'}}{\| \bx \|_U} \\
	&\leq \frac{1}{\sqrt{1-\varepsilon}} \underset{\bx \in \left ( \mathrm{span} \{ \bu(\mu) \} + U_r \right ) \backslash \{ \bnull \}} \max \frac{\| \bA(\mu)\bx \|_{U_r'}+ \varepsilon\| \bA(\mu)\bx \|_{U'}}{\| \bx \|_U} \\
	&\leq \frac{1}{\sqrt{1-\varepsilon}} \left ( \underset{\bx \in \left (\mathrm{span} \{ \bu(\mu) \}+ U_r \right ) \backslash \{ \bnull \}} \max \frac{\| \bA(\mu)\bx \|_{U_r'}}{\| \bx \|_U}+ \varepsilon \underset{\bx \in U \backslash \{ \bnull \}} \max \frac{\| \bA(\mu)\bx \|_{U'}}{\| \bx \|_U} \right ).
	\end{split}
	\end{equation*} 
\end{proof}

\begin{proof} [Proof of Proposition\nobreakspace \ref{thm:skalgstability}]
	Let $\ba \in \mathbb{K}^{r}$ and $\bx:= \bU_r\ba$.	Then
	\begin{equation} \label{eq:skcondproof1}
	\begin{split} 
	\frac{\| \bA_r(\mu)\ba \|}{\| \ba \|}&= \underset{\bz \in \mathbb{K}^{r} \backslash \{ \bnull \}} \max \frac{|\langle \bz, \bA_r(\mu)\ba \rangle|}{\| \bz \| \| \ba \|} = \underset{ \bz \in \mathbb{K}^{r} \backslash \{ \bnull \}} \max \frac{|\bz^{\mathrm{H}}\bU_r^{\mathrm{H}}\bTheta^{\mathrm{H}}\bTheta\bR_U^{-1}\bA(\mu)\bU_r\ba|}{\| \bz \| \| \ba \|} \\
	&= \underset{\by \in U_r \backslash \{ \bnull \}} \max \frac{|\by^{\mathrm{H}}\bTheta^{\mathrm{H}}\bTheta\bR_U^{-1}\bA(\mu)\bx|}{\| \by \|^{\bTheta}_U \| \bx \|^{\bTheta}_U}= \underset{\by \in U_r \backslash \{ \bnull \}} \max \frac{|\langle \by, \bR_U^{-1}\bA(\mu)\bx \rangle^{\bTheta}_{U}|}{\| \by \|^{\bTheta}_U \| \bx \|^{\bTheta}_U}	\\
	&= \frac{\| \bA(\mu)\bx \|^{\bTheta}_{U'_r}}{ \| \bx \|^{\bTheta}_U}.
	\end{split} 
	\end{equation}
	By definition, 
	\begin{equation} \label{eq:skcondproof2}
	\sqrt{1-\varepsilon} \| \bx \|_U \leq \| \bx \|^{\bTheta}_U \leq  \sqrt{1+\varepsilon} \| \bx \|_U.
	\end{equation}
	Combining~(\ref {eq:skcondproof1})~and~(\ref {eq:skcondproof2}) we conclude that  
	\begin{equation*}
	\frac{1}{\sqrt{1+\varepsilon}}\frac{\| \bA(\mu)\bx \|^{\bTheta}_{U'_r}}{\| \bx \|_U} \leq \frac{\|\bA_r(\mu)\ba \|}{\| \ba \|}\leq \frac{1}{\sqrt{1-\varepsilon}}\frac{\| \bA(\mu)\bx \|^{\bTheta}_{U'_r}}{\| \bx \|_U}.
	\end{equation*}		
	The statement of the proposition follows immediately from definitions of $\alpha^{\bTheta}_r(\mu)$ and $\beta^{\bTheta}_r(\mu)$. 
\end{proof}

\begin{proof} [Proof of Proposition\nobreakspace \ref{thm:skerrorind}]
 The proposition directly follows from relations~(\ref{eq:errorind}), (\ref{eq:thetainnerdef}), (\ref{eq:epsilon_embedding}) and (\ref{eq:skerrorind}). 
\end{proof}

\begin{proof} [Proof of Proposition\nobreakspace \ref{thm:skerror_correction}]
	We have
	\begin{equation} \label{eq:skerror_correction1}
	\begin{split} 
	|s^{\mathrm{pd}}(\mu)- s_r^{\mathrm{spd}}(\mu)|&= | \langle \bu_r^\mathrm{du}(\mu), \bR_U^{-1}\br(\bu_r(\mu);\mu) \rangle_{U}-\langle \bu_r^\mathrm{du}(\mu), \bR_U^{-1}\br(\bu_r(\mu);\mu) \rangle^{\bTheta}_{U}| \\
	&\leq \varepsilon \| \br(\bu_r(\mu); \mu) ||_{U'} \| \bu_r^\mathrm{du}(\mu) \|_{U} \\
	&\leq \varepsilon \| \br(\bu_r(\mu);\mu) ||_{U'} \frac{\| \bA(\mu)^{\mathrm{H}} \bu_r^\mathrm{du}(\mu) \|_{U'}}{\eta(\mu)}  \\
	&\leq \varepsilon \| \br(\bu_r(\mu);\mu) ||_{U'} \frac{\| \br^{\mathrm{du}}(\bu_r^\mathrm{du}(\mu);\mu)\|_{U'}+ \| \bl(\mu) \|_{U'}}{\eta (\mu)},
	\end{split} 
	\end{equation}	
	and  (\ref {eq:skerror_correction}) follows by combining~(\ref {eq:skerror_correction1}) with~(\ref {eq:error_correction}). 
\end{proof}

\begin{proof} [Proof of Proposition \nobreakspace \ref{thm:sk_greedy_finite}]
	{In total, there are at most $\binom{m}{r}$ $r$-dimensional subspaces that could be spanned from $m$ snapshots. Therefore, by using the definition of $\bTheta$, the fact that $\dim(Y_r(\mu))\leq 2 r+1$ and a union bound for the probability of success, we deduce that $\bTheta$ is a $U \to \ell_2$ $\varepsilon$-subspace embedding for $Y_r(\mu)$, for fixed $\mu \in \mathcal{P}_{\mathrm{train}}$, with probability at least $1- m^{-1}\delta$. The proposition then follows from another union bound.} 
\end{proof}

\begin{proof} [Proof of Proposition \nobreakspace \ref{thm:sk_poderror}]
We have,
$$\Delta^\mathrm{POD}(V) = \frac{1}{m} \|  \bU_m^\bTheta -  \bTheta \bP^\bTheta_{V} \bU_m\|_F. $$ 
Moreover, {the} matrix $\bTheta \bP^\bTheta_{U_r} \bU_m$ is the rank-$r$ truncated SVD approximation of $\bU_m^\bTheta$. The statements of the proposition can be then derived from the standard properties of SVD. 
\end{proof}

\begin{proof} [Proof of Theorem \nobreakspace \ref{thm:sk_podopt}]		
	Clearly, if $\bTheta$ is a $U \to \ell_2$ $\varepsilon$-subspace embedding for $Y$, then $\mathrm{rank}(\bU^\bTheta_m)\geq r$. Therefore $U_r$ is well-defined. Let $\{( \lambda_i, \bt_i) \}_{i=1}^{l}$ and $\bT_r$ be given by~Definition\nobreakspace\ref{thm:sk_pod}.
	In general, $\bP^\bTheta_{U_r}$ defined by~(\ref{eq:P*}) may not be unique. Let us further assume that $\bP^\bTheta_{U_r}$ is provided for $\bx \in U_m$  by
	$
	\bP^\bTheta_{U_r}\bx:= \bU_r\bU_r^\mathrm{H}\bTheta^\mathrm{H}\bTheta\bx, 
	$
	where $\bU_r= \bU_m[\frac{1}{\sqrt{\lambda_1}}\bt_1, ..., \frac{1}{\sqrt{\lambda_r}}\bt_r]$.
	Observe that 
	$
	\bP^\bTheta_{U_r} \bU_m = \bU_m \bT_r \bT_r^\mathrm{H}.
	$ 
	For the first part of the theorem, we establish the following inequalities. {Let $\bQ \in \mathbb{K}^{s\times n}$ be such that $\bQ^\mathrm{H} \bQ = \bR_U$, then}
	\begin{equation*}
	\begin{split}
	&\frac{1}{m} \sum^{m}_{i=1} \| (\bI -\bP_{Y})(\bu(\mu^i)- \bP^\bTheta_{U_r}\bu(\mu^i)) \|_U^2= \frac{1}{m} \| \bQ(\bI -\bP_{Y})\bU_m (\bI - \bT_r\bT_r^\mathrm{H}) \|_F^2 \\
	&\leq \frac{1}{m} \| \bQ(\bI- \bP_{Y})\bU_m\|_F^2 \| \bI- \bT_r\bT_r^\mathrm{H} \|^2= \Delta_Y \| \bI- \bT_r\bT_r^\mathrm{H} \|^2\leq \Delta_Y, 
	\end{split}
	\end{equation*}
	and
	\begin{equation*}
	\begin{split}
	&\frac{1}{m} \sum^{m}_{i=1} \left ( \| (\bI- \bP_{Y})(\bu(\mu^i)- \bP^\bTheta_{U_r}\bu(\mu^i)) \|^\bTheta_{U} \right )^2= \frac{1}{m} \| \bTheta(\bI- \bP_{Y})\bU_m(\bI- \bT_r\bT_r^\mathrm{H}) \|_F^2 \\
	&\leq \frac{1}{m} \| \bTheta(\bI- \bP_{Y})\bU_m \|_F^2 \| \bI- \bT_r\bT_r^\mathrm{H} \|^2 \leq(1+\varepsilon) \Delta_Y \| \bI- \bT_r\bT_r^\mathrm{H} \|^2\leq (1+\varepsilon)\Delta_Y. 
	\end{split}
	\end{equation*}
	Now, we have	
	\begin{equation*}
	\begin{split}
	&\frac{1}{m} \sum^{m}_{i=1} \| \bu(\mu^i)- \bP_{U_r}\bu(\mu^i) \|_{U}^2\leq \frac{1}{m} \sum^{m}_{i=1} \| \bu(\mu^i)- \bP^\bTheta_{U_r}\bu(\mu^i) \|_{U}^2 \\
	&= \frac{1}{m} \sum^{m}_{i=1} \left ( \| \bP_{Y}(\bu(\mu^i)- \bP^\bTheta_{U_r}\bu(\mu^i)) \|_{U}^2+ \| (\bI- \bP_{Y})(\bu(\mu^i)- \bP^\bTheta_{U_r}\bu(\mu^i)) \|_{U}^2 \right ) \\
	& \leq \frac{1}{m} \sum^{m}_{i=1} \| \bP_{Y}(\bu(\mu^i)- \bP^\bTheta_{U_r}\bu(\mu^i)) \|_{U}^2+ \Delta_Y
	\leq \frac{1}{m} \frac{1}{1-\varepsilon} \sum^{m}_{i=1} \left (\| \bP_{Y}(\bu(\mu^i)- \bP^\bTheta_{U_r}\bu(\mu^i)) \|^\bTheta_{U} \right )^2+ \Delta_Y \\
	&\leq \frac{1}{1-\varepsilon} \frac{1}{m} \sum^{m}_{i=1} 2 \left ( \left ( \| \bu(\mu^i)- \bP^\bTheta_{U_r}\bu(\mu^i) \|^\bTheta_{U} \right)^2 + \left ( \| (\bI- \bP_{Y})(\bu(\mu^i)- \bP^\bTheta_{U_r}\bu(\mu^i)) \|^\bTheta_{U} \right)^2 \right )+ \Delta_Y \\
	&\leq \frac{1}{1-\varepsilon} \frac{1}{m} \sum^{m}_{i=1} 2 \left ( \| \bu(\mu^i)- \bP_{U^*_r}\bu(\mu^i) \|^\bTheta_{U} \right)^2+ (\frac{2(1+\varepsilon)}{1-\varepsilon}+1)\Delta_Y \\
	&\leq \frac{2(1+\varepsilon)}{1-\varepsilon} \frac{1}{m} \sum^{m}_{i=1} \| \bu(\mu^i)- \bP_{U^*_r}\bu(\mu^i) \|_{U}^2+ (\frac{2(1+\varepsilon)}{1-\varepsilon}+1)\Delta_Y,
	\end{split}
	\end{equation*}
	which is equivalent to~(\ref{eq:sk_podoptY}). 
	
	The second part of the theorem can be proved as follows. Assume that $\bTheta$ is $U \to \ell_2$ $\varepsilon$-subspace embedding for $U_{m}$, then
	\begin{equation*}
	\begin{split}
	&\frac{1}{m} \sum^{m}_{i=1} \| \bu(\mu^i)- \bP_{U_r}\bu(\mu^i) \|_{U}^2 \leq \frac{1}{m} \sum^{m}_{i=1} \| \bu(\mu^i)- \bP^\bTheta_{U_r}\bu(\mu^i) \|_{U}^2\leq \frac{1}{m} \frac{1}{1-\varepsilon}\sum^{m}_{i=1} \left (\| \bu(\mu^i)- \bP^\bTheta_{U_r}\bu(\mu^i) \|^{\bTheta}_{U} \right )^2 \\
	&\leq \frac{1}{m} \frac{1}{1-\varepsilon} \sum^{m}_{i=1} \left (\| \bu(\mu^i)- \bP_{U^*_r}\bu(\mu^i) \|^{\bTheta}_{U} \right )^2\leq \frac{1}{m} \frac{1+\varepsilon}{1-\varepsilon} \sum^{m}_{i=1} \| \bu(\mu^i)- \bP_{U^*_r}\bu(\mu^i) \|_{U}^2,  
	\end{split}
	\end{equation*}
	which completes the proof.
\end{proof}

\end{document}